\documentclass[11pt]{amsart}

\usepackage{subcaption}
\usepackage{subfiles}
\usepackage{comment}
\usepackage{float}
\usepackage{marginnote}
\usepackage{tabu}
\usepackage[autostyle]{csquotes}
\usepackage{euscript}
\usepackage{mathdots}
\usepackage[dvipsnames]{xcolor}
\usepackage{graphicx}			
\usepackage{amssymb}
\usepackage{mathrsfs}
\usepackage{amsthm}
\usepackage{amsmath}
\usepackage{stmaryrd}
\usepackage{tikz}
\usepackage{tikz-cd}
\usepackage{accents}
\usepackage{upgreek}
\usepackage{enumerate}
\usepackage{bm}
\usepackage{mathtools}
\usepackage{thmtools}
\usetikzlibrary{patterns}
\usepackage[all]{xy}
\usepackage{caption}
\usepackage{url}
\usepackage{float}
\usepackage{todonotes} 
\usepackage{colonequals}
\usepackage{bbm}
\usepackage{longtable}
\usepackage[full]{textcomp}
\usepackage[cal=cm]{mathalfa}
\usepackage{xparse}
\usepackage{comment}
\usepackage{cite}
\usepackage{enumitem}
\usepackage{caption}
\usetikzlibrary{fit,backgrounds}

\usetikzlibrary{calc}
\usetikzlibrary{fadings}
\usetikzlibrary{decorations.pathmorphing}
\usetikzlibrary{decorations.pathreplacing}
\usepackage{tikz,tikz-cd,tikz-3dplot}
\usepackage{pgfplots}
\usetikzlibrary{arrows,shadows,positioning, calc, decorations.markings, 
	hobby,quotes,angles,decorations.pathreplacing,intersections,shapes}
\usepgflibrary{shapes.geometric}
\usetikzlibrary{fillbetween,backgrounds}

\usepackage[margin=0.9in]{geometry}

\tikzset{
	commutative diagrams/.cd, 
	arrow style=tikz, 
	diagrams={>=stealth}
}
\tikzset{
	arrow/.pic={\path[tips,every arrow/.try,->,>=#1] (0,0) -- +(0,4pt);},
	pics/arrow/.default={triangle 90}
}
\tikzset{->-/.style={decoration={
			markings,
			mark=at position .6 with {\arrow{latex}}},postaction={decorate}}
}
\tikzset{
	c/.style={every coordinate/.try}
}

\theoremstyle{theorem}

\theoremstyle{theorem}

\theoremstyle{theorem}

\makeatletter
\def\@tocline#1#2#3#4#5#6#7{\relax
	\ifnum #1>\c@tocdepth 
	\else
	\par \addpenalty\@secpenalty\addvspace{#2}%
	\begingroup \hyphenpenalty\@M
	\@ifempty{#4}{%
		\@tempdima\csname r@tocindent\number#1\endcsname\relax
	}{%
		\@tempdima#4\relax
	}%
	\parindent\z@ \leftskip#3\relax \advance\leftskip\@tempdima\relax
	\rightskip\@pnumwidth plus4em \parfillskip-\@pnumwidth
	#5\leavevmode\hskip-\@tempdima
	\ifcase #1
	\or\or \hskip 1em \or \hskip 2em \else \hskip 3em \fi%
	#6\nobreak\relax
	\dotfill\hbox to\@pnumwidth{\@tocpagenum{#7}}\par
	\nobreak
	\endgroup
	\fi}
\makeatother

\newcounter{marginnote}
\DeclareMathAlphabet{\mathpzc}{OT1}{pzc}{m}{it}

\usepackage[backref=page]{hyperref}
\hypersetup{
  colorlinks   = true,          
  urlcolor     = olive,          
  linkcolor    = olive,          
  citecolor   = olive             
}
\tikzset{
  vertex/.style={circle, fill=black, inner sep=2pt}
}

\usepackage{thmtools}
\usepackage{cleveref}

\theoremstyle{theorem}
\newtheorem{theorem}{Theorem}[section]
\newtheorem{claim}[theorem]{Claim}
\newtheorem{conjecture}[theorem]{Conjecture}
\newtheorem{corollary}[theorem]{Corollary}
\newtheorem{lemma}[theorem]{Lemma}
\newtheorem{proposition}[theorem]{Proposition}

\newtheorem*{theoremA}{Theorem A}
\newtheorem*{theoremB}{Theorem B}

\newtheorem*{theoremD}{Theorem D}
\newtheorem*{theoremE}{Theorem E}

\newtheorem*{corollaryC}{Corollary C}

\newtheorem*{corollaryF}{Corollary F}
\newtheorem*{corollaryG}{Corollary G}
\newtheorem*{corollaryH}{Corollary H}

\newtheorem*{conjectureI}{Conjecture I}

\theoremstyle{definition}
\newtheorem{observation}[theorem]{Observation}
\newtheorem{remark}[theorem]{Remark}

\newtheorem*{runningexample*}{Running example}

\newtheorem*{aside*}{Aside}

\newtheorem{definition}[theorem]{Definition}
\newtheorem{example}[theorem]{Example}

\newtheorem{proposition-definition}[theorem]{Proposition-Definition}
\newtheorem{theorem-definition}[theorem]{Theorem/Definition}
\newtheorem{question}[theorem]{Question}

\newtheorem{thmdef}{Theorem/Definition}[section]

\newcommand{\xdashleftrightarrow}[2][]{\ext@arrow 3359\leftrightarrowfill@@{#1}{#2}}

\newcommand{\bcd}{\begin{center}\begin{tikzcd}}
		\newcommand{\ecd}{\end{tikzcd}\end{center}}

\newcommand{\mgn}{\mathcal{M}_{g,n}}
\newcommand{\mgbar}{\overline{\mathcal{M}}_g}
\newcommand{\mgnbar}{\overline{\mathcal{M}}_{g,n}}

\newcommand{\Pone}{\mathbb{P}^1}
\newcommand{\lk}{\mathrm{Lk}}
\newcommand{\inde}{\mathrm{Ind}}
\newcommand{\wpsi}{\widehat{\psi}}

\newcommand{\mgraphbar}{\overline{\mathcal{M}}_{g,G}}
\newcommand{\mzerog}{M_{0,G}}
\newcommand{\lef}{\mathbb{L}}
\newcommand{\grogrp}{K_0(\mathrm{Var}_{k})}
\newcommand{\tw}{\chi^{\mathrm{top-wt}}}
\newcommand{\bound}{\Delta_{0,G}}
\newcommand{\pvir}{P_{M_{0,G}}^{\mathrm{vir}}}
\newcommand{\nkappa}{\Tilde{\kappa}}

\crefname{equation}{eq.}{eqs.}
\crefname{eqnarray}{eq.}{eqs.}
\crefname{conjecture}{conjecture}{conjectures}
\crefname{lemma}{lemma}{lemmas}
\crefname{theorem}{theorem}{theorems}
\crefname{claim}{claim}{claims}
\crefname{remark}{remark}{remarks}
\crefname{proposition}{proposition}{propositions}
\crefname{section}{section}{sections}
\crefname{appendix}{appendix}{appendices}
\crefname{corollary}{corollary}{corollaries}
\crefname{figure}{figure}{figures}
\crefname{table}{table}{tables}
\crefname{example}{example}{examples}
\crefname{assumption}{assumption}{assumptions}
\crefname{definition}{definition}{definitions}
\crefname{innercustomthm}{theorem}{theorems}
\crefname{innercustomconj}{conjecture}{conjectures}

\setlist[enumerate,1]{label=(\roman*),itemsep=0.9ex}
\setlist[itemize]{itemsep=0.9ex}

\begin{document}
    \title{Deletion--contraction properties of graphically stable spaces}
	\author{Andy Xiaoan Yang}
	
\begin{abstract}
Graphically stable spaces $\mgraphbar$ parametrize marked nodal curves whose permitted collisions of markings are determined by a graph. We study intersection numbers of $\psi$-classes on $\mgraphbar$, as well as the classes $[M_{g,G}]$ in the Grothendieck ring of varieties. In both settings, we show that the geometry is governed by an underlying graphical structure, expressed through deletion--contraction relations. As consequences, we derive string and dilaton equations and express several families of $\psi$-class integrals in terms of the chromatic polynomial. We also express the Grothendieck class of $M_{0,G}$ over an arbitrary field in terms of the chromatic polynomial and identify various Euler characteristics with combinatorial quantities. Along the way, we obtain a new formula for Crapo's $\beta$-invariant of graphs. Finally, we extend these relations to genus one and, under a chromatic condition, to higher genus.
\end{abstract}

	\maketitle
    \setcounter{tocdepth}{1}
	\tableofcontents
\section*{Introduction}
 For a finite graph $G$ with vertex set $\{1,...,n\}$ and a non-negative integer $g$, the graphically stable space 
\[\mgraphbar\]
provides an alternative modular compactification of $\mathcal{M}_{g,n}$. In contrast to the Deligne--Mumford compactification $\mgnbar$, graphically stable spaces allow collections of markings to coincide whenever the corresponding vertices in $G$ are pairwise non-adjacent. These spaces are closely related to Hassett's moduli spaces of weighted stable curves \cite{Hassett}; in particular, when $G$ is the complete graph $K_n$, we recover $\mgnbar$. In genus $0$, graphically stable spaces have been studied in \cite{Fry_trop, Fry, SRI}.

The central insight of this paper is that deletion--contraction relations should serve as a natural organizing principle for the study of graphically stable spaces in all genera, just as they do in graph theory. This principle appears in two settings: $\psi$-class intersection numbers and the Grothendieck classes of the open loci $M_{g,G}$.
\vspace{-0.5em}
\subsection{Deletion--contraction for $\psi$-class integrals}Among the most studied tautological classes on moduli spaces of curves are the cotangent classes, or $\psi$-classes, assigned to the markings. We are interested in their intersection numbers. 

In genus $0$ and $1$, we impose a mild stability condition on the graph $G$; graphs satisfying this condition will be called $g$-stable (Definition \ref{def:graph_stab}) and throughout this section we will assume $G$ is $g$-stable unless otherwise specified.
\begin{theoremA}[Theorem \ref{thm_del_con_psi}]\phantomsection\label{thm:A}
Let $e \in E(G)$ be an edge with endpoints $a$ and $b$. Suppose we are also given non-negative integers $k_1,...,k_{|V(G)|=n}$ such that $\sum k_i = 3g-3+n$, and $G-e$, $G/e$ are both $g$-stable graphs.
    \begin{enumerate}
        \item If $(k_a,k_b) \neq (0,0)$, we have
        \begin{align*}
        \int_{\overline{\mathcal{M}}_{g,G}} \psi_1^{k_1} \cdots \psi_n^{k_n} = \int_{\overline{\mathcal{M}}_{g,G - e}} \psi_1^{k_1} \cdots \psi_n^{k_n} + \int_{\overline{\mathcal{M}}_{g,G/e}} \psi_1^{k_1} \cdots \psi_{ab}^{k_a+k_b-1} \cdots \psi_n^{k_n}. 
    \end{align*}
    \item If $(k_a,k_b) = (0,0)$, we have
    \begin{align*} 
        \int_{\overline{\mathcal{M}}_{g,G}} \psi_1^{k_1} \cdots \psi_n^{k_n} = \int_{\overline{\mathcal{M}}_{g,G - e}} \psi_1^{k_1} \cdots \psi_n^{k_n} + \int_{\overline{\mathcal{M}}_{g,G \odot e}} \psi_1^{k_1} \cdots \psi^0_{ab}\psi^0_{\widetilde{ab}}\cdots \psi_n^{k_n}
    \end{align*}
    where $G \odot e$ is the graph obtained from attaching a leaf to the vertex $ab$ of $G/e$. The vertex of this leaf is labelled as $\widetilde{ab}$.
    \end{enumerate} 
\end{theoremA}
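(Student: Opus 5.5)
We compare the two moduli spaces through the reduction morphism and track how the $\psi$-classes change. Since $G-e$ has fewer edges than $G$, it allows more collisions: the markings $a$ and $b$ may now coincide. Hence there is a birational reduction morphism
\[ \rho\colon \overline{\mathcal{M}}_{g,G}\to \overline{\mathcal{M}}_{g,G-e}, \]
analogous to Hassett's weight-reduction maps. It contracts those rational components that become unstable once $a$ and $b$ are allowed to collide, namely components carrying exactly $a$, $b$ and one node (together with possibly other markings non-adjacent to both that could already collide). The first step is to identify the exceptional locus. We expect it to be the boundary divisor
\[ D_{ab}\cong \overline{\mathcal{M}}_{g,G/e}\times \overline{\mathcal{M}}_{0,3}, \]
parametrizing curves on which $a$ and $b$ bubble off on a rational tail. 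Under $\rho$ this divisor maps isomorphically onto the locus $\{x_a=x_b\}\subset\overline{\mathcal{M}}_{g,G-e}$, which is itself identified with $\overline{\mathcal{M}}_{g,G/e}$. The stability hypotheses on $G-e$ and $G/e$ are exactly what make both spaces exist.

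The second step is the pullback formula for $\psi$-classes. As in Hassett's setting and the classical string-equation comparison, we expect
\[ \psi_i=\rho^*\psi_i \ \ (i\neq a,b), \qquad \psi_a=\rho^*\psi_a+D_{ab}, \qquad \psi_b=\rho^*\psi_b+D_{ab}. \]
We would prove this by comparing cotangent lines at the section $x_a$ on the universal curves, which differ exactly along the contracted bubble. Two restrictions to $D_{ab}$ are also needed. First, $D_{ab}\cdot\psi_a=D_{ab}\cdot\psi_b=0$, since the cotangent line at $a$ is trivial on the $\overline{\mathcal{M}}_{0,3}$ factor. Second, $\rho^*\psi_a|_{D_{ab}}=\psi_{ab}$ on $\overline{\mathcal{M}}_{g,G/e}$, and $D_{ab}|_{D_{ab}}=-\psi_{ab}$, the usual normal-bundle formula.

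The third step expands $\psi_a^{k_a}\psi_b^{k_b}$ when, say, $k_a\ge1$. Writing $\psi_a^{k_a}=\psi_a^{k_a-1}(\rho^*\psi_a+D_{ab})$ and using $\psi_a D_{ab}=0$ repeatedly gives
\[ \psi_a^{k_a}=\rho^*\psi_a^{k_a}+(-1)^{k_a-1}D_{ab}^{k_a}. \]
Multiplying by the corresponding expansion for $b$ and by the other classes, then applying the projection formula with $\rho_*1=1$, produces the $G-e$ term. The remaining terms restrict to $D_{ab}$. The sign cancellations through $D_{ab}|_{D_{ab}}=-\psi_{ab}$ collapse them to the single integral $\int_{\overline{\mathcal{M}}_{g,G/e}}\cdots\psi_{ab}^{k_a+k_b-1}\cdots$. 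The bookkeeping of cross terms when both $k_a,k_b\ge1$ is the step most likely to go wrong: one must check that mixed terms like $\rho^*\psi_a^{k_a}\cdot D_{ab}^{k_b}$ contribute $\psi_{ab}^{k_a+k_b-1}$ with the right sign and that the other terms vanish. We would handle this by rewriting everything as $\rho^*(\ldots)$ plus a single class pushed forward from $D_{ab}$ before integrating.

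In case (ii), where $k_a=k_b=0$, the formula reduces to $\int 1\cdot(\text{other }\psi)$, and the difference between the two sides is $\int_{D_{ab}}(\text{other }\psi)$, with no $\psi_{ab}$ available to absorb a degree. Dimension counting then shows that $\overline{\mathcal{M}}_{g,G/e}$ has dimension one less than required. The correction term instead reflects that $\rho$ may fail to be flat or an isomorphism off $D_{ab}$, and this is where we expect the main obstacle. Our plan here is to avoid $\rho$ altogether. We would use the string equation, pulling back along forgetting a marking to reduce the $(0,0)$ case to case (i) applied to the graph $G\odot e$: adding the leaf $\widetilde{ab}$ and using $\psi^0_{ab}\psi^0_{\widetilde{ab}}$ encodes exactly the boundary stratum where $a$ and $b$ collide on a component attached at a new point. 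We would verify this identification by showing that $G\odot e$-stable curves with $\widetilde{ab}$ correspond to the degenerate locus $\rho^{-1}(\{x_a=x_b\})$ counted with the correct excess.
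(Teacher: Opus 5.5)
There is a genuine gap in your first two steps. Your description of $\rho\colon\overline{\mathcal{M}}_{g,G}\to\overline{\mathcal{M}}_{g,G-e}$ is correct only when $NN_G(a)\cap NN_G(b)=\varnothing$. That description has three parts: the exceptional locus is the single divisor $D_{ab}\cong\overline{\mathcal{M}}_{g,G/e}\times\overline{\mathcal{M}}_{0,3}$; this divisor maps isomorphically onto $\{x_a=x_b\}$; and $\psi_i=\rho^*\psi_i$ for $i\neq a,b$.

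In general, take any $L\in\lk_{\inde(G)}(a)\cap\lk_{\inde(G)}(b)$. A rational tail carrying exactly $\{a,b\}\sqcup L$ is $G$-stable but not $(G-e)$-stable, so $\rho$ contracts every divisor $D_{\{a,b\}\sqcup L}$. For $L\neq\varnothing$ the tail factor has dimension $|L|$ and is collapsed to a point. These divisors meet one another, and $\psi_l$ for $l\in L$ picks up corrections supported on them. Even $D_{ab}$ is $\overline{\mathcal{M}}_{g,(G-\{a,b\})*K_1}\times\overline{\mathcal{M}}_{0,3}$, because the attaching point may collide with no marking. It only maps to $\overline{\mathcal{M}}_{g,G/e}$ by a further reduction morphism.

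Example~\ref{example:counter00} makes the failure concrete. There $NN_G(1)\cap NN_G(3)=\{5\}$, and your pullback formulas together with the projection formula would force $\int_{\overline{\mathcal{M}}_{0,G}}\psi_5^2=\int_{\overline{\mathcal{M}}_{0,G-e}}\psi_5^2$, i.e.\ $-2=-1$. The missing term comes from the contracted divisor $D_{\{1,3,5\}}$, which your picture omits.

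Your case (i) computation rests on the same formulas, so it only proves (i) when $NN_G(a)\cap NN_G(b)=\varnothing$. In that case your bookkeeping, giving $(-1)^{k_a+k_b-1}D_{ab}^{k_a+k_b}$ and then $\psi_{ab}^{k_a+k_b-1}$, is correct. Handling all the $D_{\{a,b\}\sqcup L}$ and their intersections geometrically would be substantial extra work. The paper sidesteps it by wall-crossing all the way to $\mgnbar$ (Theorem~\ref{Theorem:wallcross}). Partitions in $\mathfrak{P}_{G-e}$ split into $\mathfrak{P}_G$ and those with a block $\{a,b\}\sqcup L$. Lemma~\ref{tech} matches the latter bijectively with $\mathfrak{P}_{G/e}$, with identical integrands and opposite signs; this is where $k_a+k_b-1\geq 0$ is needed. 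The blocks $\{a,b\}\sqcup L$ there are precisely the tails of the contracted divisors you missed.

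For case (ii), your sketch lacks the idea that makes it work. The proposed identification of $G\odot e$-curves with $\rho^{-1}(\{x_a=x_b\})$ ``with the correct excess'' is neither needed nor established. The paper's argument is purely formal given (i):
\begin{enumerate}
\item Let $G^+$ be $G$ with a leaf $\widetilde{ab}$ attached to $a$, and raise the exponent of $\psi_a$ to $1$.
\item Apply (i) to the leaf edge, whose deletion term vanishes by the isolated string equation (Theorem~\ref{Thm_cone_dilaton}). This shows the $G^+$ integral equals the original $G$ integral.
\item Apply (i) to $e$ in $G^+$, which is now a $(1,0)$-edge.
\item The resulting $G^+-e$ term reduces to the $G-e$ integral by the same leaf trick.
\item The remaining term is over $G^+/e=G\odot e$ with $\psi^0_{ab}\psi^0_{\widetilde{ab}}$.
\end{enumerate}
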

This structural property provides a unifying mechanism for the study of $\psi$-class integrals on $\mgraphbar$ and yields new results while recovering several known results from the literature as special cases. For instance, we derive graphical analogues of the classical string and dilaton equations for $\mgnbar$ \cite{Witten1991}. While these equations have been generalized in various directions, our framework appears to be the first to extend them uniformly to a large class of alternative compactifications of $\mathcal{M}_{g,n}$.
\begin{theoremB}[Theorem \ref{dilaton} \& \ref{string_complete}]\label{thm:B}Label the neighbours of the vertex $n$ as $u_1,...,u_d$, and the corresponding incident edges to the vertex $n$ as $e_1,...,e_d$.
\begin{enumerate}
    \item ($G$-dilaton equation)
    \begin{align*}
        \int_{\overline{\mathcal{M}}_{g,G}} \psi_1^{k_1} \cdots \psi_{n-1}^{k_{n-1}}\psi_n = (2g-2)\int_{\overline{\mathcal{M}}_{g,G - n}} \psi_1^{k_1} \cdots \psi_{n-1}^{k_{n-1}} + \sum_{i =1}^{d} \int_{\overline{\mathcal{M}}_{g,(G-e_1-\cdots - e_{i-1})/e_{i}}} \psi_1^{k_1} \cdots \psi_{nu_i}^{k_{u_i}} \cdots \psi_{n-1}^{k_{n-1}}.
    \end{align*}
    Here $G-n$ denotes the graph obtained from $G$ after deleting the vertex $n$ and we set $e_{0} = \varnothing$.
    \item ($G$-string equation) If either $k_{u_i} \neq 0$ for all $i=1,...,d$, or $G$ is a complete multipartite graph, then we have 
            \begin{align*}
            \int_{\overline{\mathcal{M}}_{g,G}} \psi_1^{k_1} \cdots \psi_{n-1}^{k_{n-1}} = \sum_{i =1}^{d} \int_{\overline{\mathcal{M}}_{g,(G-e_1-\cdots - e_{i-1})/e_{i}}} \psi_{nu_i}^{k_{u_i}-1} \prod_{j \neq u_i} \psi_j^{k_j}.
        \end{align*}

\end{enumerate}
These formulas hold provided that all graphs appearing above are $g$-stable.
\end{theoremB}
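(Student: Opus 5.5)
Both formulas will come from applying Theorem A repeatedly to the edges $e_1,\dots,e_d$ at the vertex $n$, peeling them off one at a time.

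\emph{Dilaton.} Here $k_n=1$, so $(k_n,k_{u_i})\neq(0,0)$ for every $i$, and case (i) of Theorem A applies to each edge at $n$. Set $G_0=G$ and $G_i=G-e_1-\cdots-e_i$. Applying Theorem A(i) to $e_i$ in $G_{i-1}$ gives
\[
\int_{\overline{\mathcal M}_{g,G_{i-1}}}\prod_j\psi_j^{k_j}\,\psi_n=\int_{\overline{\mathcal M}_{g,G_i}}\prod_j\psi_j^{k_j}\,\psi_n+\int_{\overline{\mathcal M}_{g,G_{i-1}/e_i}}\psi_{nu_i}^{k_{u_i}+1-1}\prod_{j\neq u_i}\psi_j^{k_j}.
\]
The contracted term is exactly the $i$-th summand of the claimed formula. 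Telescoping over $i=1,\dots,d$ leaves the integral over $G_d$. In $G_d$ the vertex $n$ is isolated.

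It then remains to prove the base case: if $n$ is isolated in $G$, then
\[
\int_{\overline{\mathcal M}_{g,G}}\prod_{j<n}\psi_j^{k_j}\,\psi_n=(2g-2+\#\{\text{relevant}\})\int_{\overline{\mathcal M}_{g,G-n}}\cdots .
\]
The statement predicts the constant is exactly $2g-2$. I would establish this geometrically. An isolated vertex $n$ may collide with every other marking, so I expect the forgetful map $\pi\colon\overline{\mathcal M}_{g,G}\to\overline{\mathcal M}_{g,G-n}$ to be the universal curve itself, with no blow-ups. In that case $\psi_j=\pi^*\psi_j$ for $j<n$, and $\psi_n$ is the relative dualizing sheaf $\omega_\pi$ with no section corrections, since the sections do not have to be disjoint from the section $n$. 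The projection formula then gives $\pi_*\psi_n=\deg\omega=2g-2$ on fibres, which is the claimed coefficient.

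The main obstacle is checking this universal-curve identification against the paper's definition of graphically stable curves, and confirming that no extra components appear. The $g$-stability hypotheses ensure that $G-n$ and all the intermediate graphs define nonempty spaces of the right dimension.

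\emph{String.} Now $k_n=0$. If every $k_{u_i}\neq0$, case (i) of Theorem A applies to each edge and the same telescoping produces the summands with exponent $k_{u_i}-1$. The final term is an integral over $G_d$ with $n$ isolated and $\psi_n$ absent. By the universal-curve description this equals $\int\pi^*(\text{class of degree }\dim+1)=0$, which finishes this case.

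In the complete multipartite case some $k_{u_i}$ may vanish, and then case (ii) produces $G\odot e$ terms. The plan is to show that these terms vanish, or cancel, when $G$ is complete multipartite. In a complete multipartite graph, contracting $e=\{n,u\}$ merges $n$ and $u$ into a vertex $nu$ adjacent to everything except the common non-neighbours. Attaching the leaf $\widetilde{nu}$ then gives a space in which the leaf marking is forced to be distinct only from $nu$. Integrating the monomial with $\psi^0_{nu}\psi^0_{\widetilde{nu}}$ over this space is again a pullback from a lower-dimensional space, so it vanishes by the same degree argument. I expect this vanishing to be the delicate point. I would first try to prove it by showing that $\widetilde{nu}$ is forgettable with $\psi$'s pulling back, possibly after reordering the edges $e_i$ so that those with $k_{u_i}\neq0$ are handled first.
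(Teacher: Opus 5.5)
Your treatment of the dilaton equation, and of the string equation when every $k_{u_i}\neq 0$, is correct and matches the paper's argument. You peel off $e_1,\dots,e_d$ with Theorem~\ref{thm_del_con_psi}(i) and finish with the isolated dilaton/string equations, which rest on exactly the universal-curve facts you state (Lemma~\ref{cone_psi}).

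The complete multipartite case, however, has a genuine gap. Set $H=G-e_1-\cdots-e_{i-1}$ and $e=nu_i$. You assert that each term $\int_{\overline{\mathcal{M}}_{g,H\odot e}}\prod_j\psi_j^{k_j}$ is a pullback from $\overline{\mathcal{M}}_{g,H/e}$. But your argument never uses complete multipartiteness, and in that generality it is false: in Example~\ref{example:counter00} one has $\int_{\overline{\mathcal{M}}_{0,G\odot e}}\psi_5^2=-1$. Even granting a morphism forgetting $\widetilde{nu}$, the class $\psi_j$ is \emph{not} a pullback when $j$ is non-adjacent to $nu$ in $H/e$. In that case $p_j$ can sit on a rational tail whose markings are $p_{nu}$, $p_{\widetilde{nu}}$ and markings independent of $nu$. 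Such a tail is stable thanks to the edge $nu\,\widetilde{nu}$, and it is contracted once $\widetilde{nu}$ is forgotten, so $\psi_j$ differs from its pullback by a boundary divisor. The $\odot$-term is therefore only guaranteed to vanish when $k_v=0$ for all $v\in NN_H(n)\cap NN_H(u_i)$. This is the hypothesis of Proposition~\ref{00_edge_stronger}, which the paper proves by wall-crossing: the correction sum $\mathcal S$ then has only negative $\psi$-exponents.

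What you are missing is the combinatorial step that verifies this hypothesis. Suppose $G$ is complete multipartite and $v\in NN_H(n)\cap NN_H(u_i)$. Then $v$ lies in the part of $u_i$, so it was adjacent to $n$ in $G$, and its edge to $n$ must already have been deleted; that is, $v=u_j$ with $j<i$. For this to force $k_v=0$, the neighbours with $k_{u_j}=0$ must be processed \emph{first}, as in Theorem~\ref{string_complete}. The ordering is part of the statement, and your suggested reordering goes the wrong way.

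Concretely, take $g=1$ and $G=K_{1,2}$ with centre $n$, edges $e_1=nu_1$, $e_2=nu_2$, and $k_{u_1}=0$, $k_{u_2}=3$. Peeling $e_2$ first makes $u_2$ a common non-neighbour of $n$ and $u_1$. Wall-crossing then gives $\int_{\overline{\mathcal{M}}_{1,(G-e_2)\odot e_1}}\psi_{u_2}^3=\tfrac{1}{24}-\tfrac{1}{24}-\tfrac{1}{24}=-\tfrac{1}{24}\neq 0$. Accordingly, the string formula in that order predicts $\int_{\overline{\mathcal{M}}_{1,G/e_2}}\psi_{nu_2}^2=\int_{\overline{\mathcal{M}}_{1,2}}\psi_1^2=\tfrac{1}{24}$, whereas $\int_{\overline{\mathcal{M}}_{1,G}}\psi_{u_2}^3=\tfrac{1}{24}-\tfrac{1}{24}=0$.

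With the zero-exponent neighbours processed first, the only $\psi_j$ that fail to pull back have $k_j=0$. Your forgetful-map idea can then be made to work, and it would give a geometric substitute for the paper's wall-crossing proof of Proposition~\ref{00_edge_stronger}.
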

A fundamental graph invariant satisfying the deletion--contraction relation is the \textit{chromatic polynomial} $\chi_G(x)$. By Theorem~\hyperref[thm:A]{A}, this allows us to express many intersection numbers on graphically stable spaces in terms of $\chi_G(x)$. Conversely, the intersection theory of $\mgraphbar$ determines the chromatic polynomial of $G$ completely (Proposition \ref{lemma_acyc_psi}).
\begin{corollaryC}\phantomsection \label{cor:C}
Given any graph $G$, not necessarily $g$-stable, we relate the following $\psi$-class integrals to the chromatic polynomial (see Section \ref{subsec_unweight_graph} for details and more examples):
\begin{align*}
    24 \cdot \int_{\overline{\mathcal{M}}_{1,G}} \psi_1 \cdots \psi_n &= (-1)^{|V(G)|-1}\left. \frac{d}{dx} \chi_G(x) \right|_{x = 0}. \\
    \int_{\overline{\mathcal{M}}_{0,G  \sqcup K_3 }} \psi_1 \cdots \psi_n &= (-1)^{|V(G)|} \chi_G(2). \\
    \int_{\overline{\mathcal{M}}_{0,G  * K_3 }} \psi_1 \cdots \psi_n &= (-1)^{|V(G)|} \chi_G(-1).
\end{align*}
\end{corollaryC}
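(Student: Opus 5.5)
We prove all three identities by induction on the number of edges of $G$, using Theorem~\hyperref[thm:A]{A}(i) as the recursion. In each formula every vertex of $G$ carries exponent $k_i=1$, so $(k_a,k_b)\neq(0,0)$ for every edge $e=ab$ of $G$. Contracting $e$ merges $\psi_a\psi_b$ into $\psi_{ab}^{1+1-1}=\psi_{ab}$. Hence the integrand $\psi_1\cdots\psi_n$ over the vertices of $G$ is sent to the analogous product over the vertices of $G/e$. Writing $I(G)$ for the left-hand side, Theorem~\hyperref[thm:A]{A}(i) therefore gives
\[
I(G)=I(G-e)+I(G/e).
\]
Set $\epsilon=(-1)^{|V(G)|}$. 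Contraction lowers $|V|$ by one while deletion preserves it, so the quantities $(-1)^{|V(G)|}\chi_G(x)$ and $(-1)^{|V(G)|-1}\chi_G'(0)$ also satisfy this additive recursion, because $\chi_G=\chi_{G-e}-\chi_{G/e}$. It therefore suffices to check that both sides are multiplicative or explicitly computable on edgeless graphs, together with the base cases.

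For the genus-$0$ formulas, the extra $K_3$ (disjoint union, resp.\ join) has vertices carrying no $\psi$-power. The degree count $\sum k_i=n=3\cdot0-3+(n+3)$ holds. Deletion and contraction are only applied to edges of $G$, and $(G-e)\sqcup K_3=(G\sqcup K_3)-e$, and similarly for joins and contractions. The base case is $G=\overline{K_n}$, the edgeless graph.
\begin{itemize}
\item For $\overline{K_n}\sqcup K_3$, the $n$ markings may all collide with each other but not with the three $K_3$ points. The space is the Hassett space with weights $(\epsilon,\dots,\epsilon,1,1,1)$, which is $(\mathbb{P}^1)^n$. Each $\psi_i$ restricts to the class $\mathcal{O}(2)$ pulled back from the $i$-th factor, up to the convention where $\psi_i=-\,$(point class)$\cdot$... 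We expect $\int\psi_i=-2$ per factor (degree of $\omega$), giving $(-2)^n=(-1)^n\chi_{\overline{K_n}}(2)$.
\item For $\overline{K_n}*K_3$, every marking of $G$ is adjacent to the three $K_3$ points. We will compute $\int\prod\psi_i=(-1)^n\cdot(-1)^n=(-1)^n\chi(-1)$, i.e.\ $1$, using the classical multinomial formula $\binom{n}{1,\dots,1}$-type evaluation on the appropriate Hassett space, or more economically via the $G$-string/dilaton equations of Theorem~\hyperref[thm:B]{B} to strip one isolated vertex at a time.
\end{itemize}
In both cases, stripping a vertex with $k=1$ by the $G$-dilaton equation gives a factor $2g-2=-2$ plus neighbour-contraction terms. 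In the disjoint-union case there are no neighbours, giving $(-2)^n$. In the join case each vertex has three neighbours in $K_3$, and we must check that the contraction terms sum to $+1$, which should reproduce the factor $-1=x$.

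For genus $1$, the base case $\overline{K_n}$ is the space where all $n$ points may coincide. The dilaton equation gives $I=(2g-2)I(\overline{K_{n-1}})=0$ for $n\ge2$, with $I(\overline{K_1})=\int_{\overline{\mathcal M}_{1,1}}\psi_1=1/24$. This matches $\chi'(0)$ of $x^n$, which is $1$ for $n=1$ and $0$ otherwise.

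\textbf{Main obstacle.} The statement says $G$ need not be $g$-stable, whereas Theorem~\hyperref[thm:A]{A} requires $G-e$ and $G/e$ to be $g$-stable. Contraction can create loops or parallel edges, so I would also verify that the paper's convention makes $I(G)$ vanish or behave correctly for graphs with loops, where $\chi_G=0$, and that multi-edges are harmless. The hardest part is confirming that the auxiliary $K_3$ (and genus $1$ with every $k_i=1$) keeps all graphs in the recursion stable. I would check this first against Definition~\ref{def:graph_stab}, adding $K_3$ precisely to guarantee three pairwise non-collidable points.
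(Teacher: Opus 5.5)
Your recursion is the paper's. With every $k_i=1$, only Theorem~\ref{thm_del_con_psi}(i) is ever used. Contraction returns the integrand $\psi_1\cdots\psi_n$ on $G/e$, the auxiliary $K_3$ is never touched, and the signed chromatic quantities obey the same additive rule. Your genus-one base case via the isolated dilaton equation is exactly the paper's. The ``main obstacle'' is not one:
\begin{itemize}
\item by the paper's convention $G/e$ is simple, so parallel edges are discarded and loops cannot arise;
\item every graph in the genus-$0$ recursions contains the untouched triangle, so it is $0$-stable;
\item in genus one you only need $G\neq\varnothing$.
\end{itemize}

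Two base-case arguments are wrong as written, though both are fixable with tools you already name.

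\textbf{The disjoint-union base case.} Your description of $\overline{\mathcal{M}}_{0,E_n\sqcup K_3}$ is really that of the join. In $E_n\sqcup K_3$ the $E_n$-vertices are isolated, so their markings may collide with each other \emph{and} with the three $K_3$ markings. The Hassett space with weights $(\epsilon,\dots,\epsilon,1,1,1)$ is the heavy/light space $\overline{\mathcal{M}}_{0,E_n*K_3}$ of your third base case. It is not $(\mathbb{P}^1)^n$: for $n=2$ it is $\overline{\mathcal{M}}_{0,5}$. On it, Lemma~\ref{psi_kappa} gives $\int\psi_1\cdots\psi_n=1$, not $(-2)^n$. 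The correct picture is the paper's: fix the $K_3$ markings at $0,1,\infty$; the universal curve is then the trivial $\mathbb{P}^1$-bundle and $\psi_i=\mathrm{pr}_i^*c_1(\mathcal{O}(-2))$, not $\mathcal{O}(2)$. Your fallback is rigorous: apply Theorem~\ref{Thm_cone_dilaton}(i) $n$ times down to $\overline{\mathcal{M}}_{0,3}$, which gives $(-2)^n$ directly. Rely on that.

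\textbf{The join base case.} Here the contraction terms do not sum to $+1$. In the $G$-dilaton equation, contract a light vertex into each of its three $K_3$-neighbours. Even after deleting the earlier edges $e_1,\dots,e_{i-1}$, the merged vertex inherits all neighbours of $u_i$. So each term is again an integral over $E_{n-1}*K_3$ with exponent $0$ at the merged vertex. This gives $I(E_n*K_3)=(-2+3)\,I(E_{n-1}*K_3)$, a factor $(-x)|_{x=-1}=1$ per vertex, and hence $I(E_n*K_3)=\int_{\overline{\mathcal{M}}_{0,3}}1=1$. Your stated expectation would instead give $(-1)^n$, contradicting your own target. Also, the multinomial formula \eqref{heavy_zero} does not apply here: it concerns $\psi$-classes on the heavy markings, not the light ones. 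Use Lemma~\ref{psi_kappa}, which gives $\int_{\overline{\mathcal{M}}_{0,3}}\kappa_0^n=1$, or the dilaton count above.

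\textbf{Comparison with the paper.} With these fixes, your treatment of the third identity is a genuinely different and more direct route. The paper does not run the recursion on $G*K_3$. It proves the general dummy-vertex formula $24^g g!\,\lambda_{G,g,m}=(-1)^{|V(G)|}\chi_G(-(2g-2+m))$ (Proposition~\ref{lemma_acyc_psi}). Its base case is the $\kappa$-integral $\int_{\overline{\mathcal{M}}_{g,m}}\kappa_0^n\kappa_{3g-3+m}$ from Lemma~\ref{psi_kappa}. It then specialises to $(g,m)=(0,3)$ and removes $\bullet$ by one $G$-dilaton step, using the same $-2+3=1$ count you use on each light vertex.

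Since $\psi_1\cdots\psi_n$ is already top-degree on $\overline{\mathcal{M}}_{0,G*K_3}$, your version needs neither the dummy vertex nor $\kappa$-classes. The paper's detour buys the whole family of evaluations $\chi_G(-(2g-2+m))$. That family is what lets the $\psi$-integrals determine $\chi_G$ and match the Reinke--Silversmith numbers. For the second identity, your isolated-dilaton base case likewise replaces the paper's direct computation on $(\mathbb{P}^1)^n$, and both are valid.
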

We also study $\kappa$-classes on $\mgraphbar$ in Section \ref{sec:kappa}. In particular, arbitrary $\psi$-class integrals on $\mgraphbar$ can be reduced, via the deletion--contraction relations of Theorem~\hyperref[thm:A]{A}, to intersection numbers of certain $\nkappa$-classes on graphically stable spaces (Observation \ref{obs:kappa}). This perspective also naturally recovers relations on Deligne--Mumford spaces; for example, Proposition \ref{prop:kappa_mg2} gives:
\begin{align*}
        \int_{\overline{\mathcal{M}}_{g,2}} \psi_1^{k_1}\psi_2^{k_2} = \int_{\mgbar} \kappa_{k_1-1}\kappa_{k_2-1} + \int_{\mgbar} \kappa_{k_1+k_2-2}.
    \end{align*}

\subsection{Deletion--contraction for Grothendieck classes} We also study the class $[M_{g,G}]$ in the Grothendieck ring of varieties $K_0(\mathrm{Var}_k)$, a universal setting for cut-and-paste invariants of algebraic varieties.
\begin{theoremD}[Theorem~\ref{Gro_Class}]

\label{thm:D}The following relation holds within $K_0(\mathrm{Var}_{k})$:
    \begin{align*}
        [M_{g,G}] = [M_{g, G-e}] - [M_{g, G/e}],
    \end{align*}
    providing all three graphs are $g$-stable.
\end{theoremD}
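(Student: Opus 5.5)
We want $[M_{g,G-e}] = [M_{g,G}] + [M_{g,G/e}]$. Here $M_{g,G}$ is the open locus of smooth curves $C$ of genus $g$ with markings $p_1,\dots,p_n$ such that $p_i \neq p_j$ whenever $ij \in E(G)$. Points indexed by non-adjacent vertices may coincide. We stratify $M_{g,G-e}$, with $e=ab$, into a closed and an open piece according to whether $p_a = p_b$.

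The plan has four steps, in this order.

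\textbf{Step 1: the decomposition.} Let $Z \subset M_{g,G-e}$ be the locus where $p_a = p_b$. This is a closed condition, since it is the pullback of the diagonal of the universal curve under the section $(p_a,p_b)$. Its complement $U = M_{g,G-e}\setminus Z$ is exactly the locus where, in addition to the constraints imposed by $G-e$, we have $p_a\neq p_b$. Those are precisely the constraints imposed by $G$, so $U = M_{g,G}$. The scissor relation then gives
\[
[M_{g,G-e}] = [M_{g,G}] + [Z].
\]

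\textbf{Step 2: identify $Z$ with $M_{g,G/e}$.} On $Z$, set $p_{ab} := p_a = p_b$. The constraint that the merged point differs from $p_c$ is
\[
p_a\neq p_c \text{ for } ac\in E, \qquad p_b\neq p_c \text{ for } bc\in E.
\]
These are precisely the adjacency conditions on the vertex $ab$ in $G/e$, which is taken to be simple, i.e.\ parallel edges are merged. All conditions not involving $a$ or $b$ are unchanged. So forgetting the duplicated marking gives a morphism $Z\to M_{g,G/e}$, and duplicating $p_{ab}$ gives its inverse. Both are defined on families, by copying a section, so $Z\cong M_{g,G/e}$. If $G/e$ is defined as a multigraph instead, the same argument applies, since parallel edges impose the same condition.

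\textbf{Step 3: check stability and the moduli problem.} The $g$-stability hypotheses on all three graphs ensure that each of $M_{g,G}$, $M_{g,G-e}$ and $M_{g,G/e}$ is a nonempty fine moduli space, or at least has a well-defined class, in which automorphisms behave compatibly. This is needed so that Step 1 is a genuine decomposition of the same moduli functor. We also need care about whether $M_{g,G}$ is a stack rather than a variety. In genus $0$ these are fine varieties; for general $g$ I would work with classes of stacks in the Grothendieck ring, or in Ekedahl's completion. The stratification argument applies equally there, since the closed/open decomposition is a decomposition of substacks.

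\textbf{Step 4: conclude.} Rearranging the relation from Steps 1 and 2 gives $[M_{g,G}] = [M_{g,G-e}] - [M_{g,G/e}]$.

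The main obstacle I expect is Step 2 at the level of schemes or stacks, rather than bijectively on points. We need $Z$, with its reduced structure, to be isomorphic to $M_{g,G/e}$. Since the Grothendieck ring only sees reduced structures, it is enough to produce mutually inverse morphisms on reduced families. The one subtlety there is that the stability condition on the universal family does not change when two sections are merged; this is where the $g$-stability of $G/e$ is used.
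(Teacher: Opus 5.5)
This is essentially the paper's first proof. The paper also decomposes $M_{g,G-e}$ into the open locus $\{p_a\neq p_b\}=M_{g,G}$ and the closed locus $\{p_a=p_b\}$, and identifies the closed locus with $M_{g,G/e}$ by merging $p_a,p_b$; your Step~2 neighbourhood computation is exactly its Lemma~\ref{tech}. The one thing to drop is the detour in Step~3. The theorem concerns the coarse spaces $M_{g,G}$, so passing to classes of stacks would prove a different identity, for $[\mathcal{M}_{g,G}]$, rather than the stated one; the same open/closed decomposition runs directly on coarse spaces, which is what the paper does.
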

This property also has several applications. Among them is an explicit formula for the Grothendieck class of $M_{0,G}$ in terms of the chromatic polynomial.
\begin{theoremE}[Theorem~{\ref{Gro_thm_zero}}]
\phantomsection
\label{thm:E}
    Let $G$ be a $0$-stable graph, and set
    $\lef=[\mathbb{A}^1]$. Then, we have
    \begin{align*}
        [\mzerog] = \frac{\chi_G(\lef + 1)}{\lef^3 - \lef} \in \mathbb{Z}[\lef] \subseteq K_0(\mathrm{Var}_k).
    \end{align*}
    Indeed, the divisibility $(\lef^3 - \lef)\! \mid \! \chi_G(\lef + 1)$ follows from the $0$-stability of $G$, see \eqref{eq:cancel}.
\end{theoremE}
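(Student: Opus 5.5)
Since both sides satisfy the same deletion--contraction recursion, the plan is an induction on the number of edges of $G$, using Theorem~\hyperref[thm:D]{D} as the inductive step. By Theorem~\hyperref[thm:D]{D}, whenever $G$, $G-e$ and $G/e$ are all $0$-stable we have $[M_{0,G}] = [M_{0,G-e}] - [M_{0,G/e}]$. The chromatic polynomial satisfies $\chi_G = \chi_{G-e} - \chi_{G/e}$ for every edge $e$. Hence the right-hand side $\chi_G(\lef+1)/(\lef^3-\lef)$ obeys the same recursion. It then suffices to choose, for each non-base graph, an edge $e$ for which $G-e$ and $G/e$ remain $0$-stable, and to verify the formula on the base cases where no such edge exists.

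\emph{Base cases.} Recall that $0$-stability should amount to $G$ not being $2$-colourable in the relevant sense: at least three markings must be pairwise distinct, i.e.\ $G$ contains a triangle. Then $\chi_G(1)=\chi_G(2)=0$ and $\chi_G(0)=0$, which gives the divisibility by $(x)(x-1)(x+1)$ evaluated at $x=\lef$, as in \eqref{eq:cancel}. The natural base case is $G=K_n$, where $M_{0,K_n}=M_{0,n}$. The known class is
\[
[M_{0,n}] = \prod_{i=2}^{n-2}(\lef - i)
= \frac{(\lef+1)\lef(\lef-1)\cdots(\lef-n+2)}{\lef^3-\lef}
= \frac{\chi_{K_n}(\lef+1)}{\lef^3-\lef}.
\]
This class is computed by iterated fibrations: the forgetful map $M_{0,n+1}\to M_{0,n}$ is Zariski-locally trivial with fibre $\mathbb{P}^1$ minus $n$ points, and this works over any field. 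Alternatively, the induction can go the other way. Adding edges raises the edge count, and contraction lowers the vertex count, so one can induct on $\binom{n}{2}-|E|$ together with $n$. Writing $\chi_{G} = \chi_{G+e} + \chi_{(G+e)/e}$ for a non-edge $e$ reduces everything to complete graphs, which are always $0$-stable. I would use this form, rewriting Theorem~\hyperref[thm:D]{D} as $[M_{0,G}] = [M_{0,G+e}] + [M_{0,(G+e)/e}]$ for a non-edge $e$ of $G$.

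\emph{Key check, and the main obstacle.} We must verify that when $G$ is $0$-stable and $e$ is a non-edge, both $G+e$ and $(G+e)/e$ are $0$-stable. For $G+e$ this is immediate, since stability is monotone in edges. The contraction merges two non-adjacent vertices; any triangle of $G$ survives the merge, possibly through the merged vertex, because merging non-adjacent vertices creates no loops and preserves adjacencies. Parallel edges created by the merge should be simplified, which changes neither $M_{0,G}$ nor $\chi_G$. The delicate point is the precise form of Definition~\ref{def:graph_stab}. If it is phrased differently, for example via a weight condition with no loops or small sizes, I would check the analogous monotonicity under adding an edge and under merging non-adjacent vertices. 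I expect this bookkeeping to be the only real obstacle.

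Finally, the identity $\chi_G(\lef+1)/(\lef^3-\lef)\in\mathbb{Z}[\lef]$ follows by the same induction, since it holds for each complete graph. The quotient is taken inside the polynomial subring $\mathbb{Z}[\lef]$, so no division in $K_0(\mathrm{Var}_k)$ is needed.
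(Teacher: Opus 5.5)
Your induction is sound once one point is corrected, and it is a genuinely different route from the paper's.

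\textbf{The correction.} $0$-stability (Definition~\ref{def:graph_stab}) means $n\ge 3$ and $G$ is neither edgeless nor bipartite. Equivalently, $G$ contains an \emph{odd cycle}, not necessarily a triangle: $C_5$ is $0$-stable. Checking that triangles survive the merge therefore only covers graphs that contain a triangle. The monotonicity you actually need is just as easy:
\begin{itemize}
\item adding an edge cannot make a graph bipartite;
\item if $a,b$ are non-adjacent in $G$, any proper $2$-colouring of $(G+e)/e$ pulls back to a proper $2$-colouring of $G$ by giving $a$ and $b$ the colour of the merged vertex. So $(G+e)/e$ is non-bipartite, and hence has at least three vertices.
\end{itemize}
With that, Theorem~\ref{Gro_Class} applies to the triple $G+e,\,G,\,(G+e)/e$. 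Your induction on $n$ and then on the number of non-edges terminates at $K_n$ with $n\ge 3$.

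A small point on your base case: $M_{0,n+1}\to M_{0,n}$ is not literally Zariski-locally trivial for $n\ge 4$, since the fibres vary in moduli. The class formula still holds because $M_{0,n+1}$ is the complement in $M_{0,n}\times\mathbb{P}^1$ of $n$ pairwise disjoint sections, so $[M_{0,n+1}]=(\lef+1-n)[M_{0,n}]$.

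\textbf{Comparison with the paper.} The paper's first proof uses that $\mathrm{Conf}_G(\mathbb{P}^1)\to\mzerog$ is a Zariski-locally trivial $\mathrm{PGL}_2$-torsor (Lemma~\ref{Zariski}), giving $(\lef^3-\lef)[\mzerog]=\chi_G(\lef+1)$. To cancel $\lef^3-\lef$ it then needs two further inputs:
\begin{itemize}
\item $[\mzerog]\in\mathbb{Z}[\lef]$ (Lemma~\ref{lem:poly_0}), proved via the charts $U_{ijk}$, inclusion--exclusion, and deletion--contraction down to $K_3\sqcup E_{n-3}$;
\item injectivity of $\mathbb{Z}[x]\to K_0(\mathrm{Var}_k)$ (Proposition~\ref{Lef_relation}).
\end{itemize}
Its second proof imports the stratification $[\mzerog]=\sum_r N_{G,r}[M_{0,r}]$ from \cite{Sidd_TopHass} (Proposition~\ref{sidd}) and matches it with $\chi_G(x)=\sum_r N_{G,r}(x)_r$ from \eqref{chrom}.

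Your recursion is essentially the inductive form of that second proof. Fully unwinding ``add $e$ / identify its endpoints'' down to complete graphs reproduces the stratification by $G$-stable partitions and the falling-factorial expansion.

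What your version buys is self-containedness given Theorem~\ref{Gro_Class}:
\begin{itemize}
\item Running deletion--contraction in the edge-adding direction never leaves the class of $0$-stable graphs. This is exactly the obstruction that forces Lemma~\ref{lem:poly_0} to pass to charts containing a triangle.
\item You need neither the torsor argument nor Proposition~\ref{Lef_relation}. With $Q_G$ as in Definition~\ref{def:Q_G}, the identity $Q_G=Q_{G+e}+Q_{(G+e)/e}$ already holds in $\mathbb{Z}[x]$ and is merely specialised along the ring map $x\mapsto\lef$.
\end{itemize}
State your last paragraph that way. ``Dividing inside the polynomial subring $\mathbb{Z}[\lef]$'' would tacitly use that injectivity.

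In return, the paper's first proof explains the denominator conceptually as $[\mathrm{PGL}_2]$. Its second gives the closed formula in the $N_{G,r}$, which is reused in Proposition~\ref{prop:conj_scattering} and in higher genus.
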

    By specializing the identity of Theorem~\hyperref[thm:E]{E} to different fields, we obtain the following results.
\begin{corollaryF}[Corollary \ref{lhop}]\phantomsection
\label{cor:F} The following topological invariants are determined by the chromatic polynomial:
\hfill
\begin{align}
    \chi(\mzerog(\mathbb{C}))
    &= \frac{1}{2}\left. \frac{d}{dx} \chi_G(x) \right|_{x = 2}
    \label{intro_eul} \\  
    \tw(\mzerog(\mathbb{C}))
    &= -\left.\frac{d}{dx} \chi_G(x) \right|_{x = 1}
    \label{intro_tw} \\
    \chi_c(\mzerog(\mathbb{R}))
    &= \frac{1}{2} \left. \frac{d}{dx} \chi_G(x) \right|_{x = 0}
    \label{intro_real} 
\end{align}
where $\tw$ denotes the top-weight Euler characteristic associated with Deligne's weight filtration.

\end{corollaryF}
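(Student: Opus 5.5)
We derive Corollary~\hyperref[cor:F]{F} by specializing Theorem~\hyperref[thm:E]{E}, $[\mzerog] = \chi_G(\lef+1)/(\lef^3-\lef)$, along three motivic measures, i.e.\ ring homomorphisms out of $K_0(\mathrm{Var}_k)$:
\begin{itemize}
\item the compactly supported Euler characteristic $\chi_c$ over $\mathbb{C}$, with $\lef \mapsto 1$;
\item the weight (Hodge--Deligne) polynomial, or Serre characteristic, over $\mathbb{C}$, with $\lef \mapsto uv$ or $\lef\mapsto t$ after specialization;
\item the compactly supported Euler characteristic of real points over $\mathbb{R}$, with $\lef \mapsto -1$.
\end{itemize}
Because the class is a polynomial in $\lef$, every invariant becomes the evaluation of the rational function $\chi_G(x+1)/(x^3-x)$ at a point where the denominator $x^3-x = x(x-1)(x+1)$ vanishes. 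We therefore compute each value as a limit, using the divisibility \eqref{eq:cancel}, which guarantees the quotient is a genuine polynomial. The cancellation requires that $x$, $x-1$ and $x+1$ divide $\chi_G(x+1)$. These correspond to the vanishing of $\chi_G$ at $1,0,2$, which holds because $0$-stability forces an edge and a triangle.

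\emph{Complex Euler characteristic.} Since $\mzerog(\mathbb{C})$ is smooth, $\chi = \chi_c$, so we specialize at $x = 1$. Write $P(y) = \chi_G(y)$. Then
\begin{align*}
\lim_{x\to 1}\frac{P(x+1)}{x(x-1)(x+1)} = \frac{P'(2)}{1\cdot 2}.
\end{align*}
This uses $P(2)=0$ and l'H\^opital, or equivalently factoring out the simple root. It gives \eqref{intro_eul}.

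\emph{Real points.} The measure $[X]\mapsto \chi_c(X(\mathbb{R}))$ is well defined on $K_0(\mathrm{Var}_{\mathbb{R}})$ and sends $\lef \mapsto -1$. The limit at $x=-1$ is
\begin{align*}
\frac{P'(0)}{(-1)(-2)} = \frac{P'(0)}{2},
\end{align*}
using $P(0)=0$. This gives \eqref{intro_real}.

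\emph{Top-weight Euler characteristic.} Here we need the most care. We use that the weight polynomial $E(X;u,v)$, specialized via $t = uv$, is a motivic measure sending $\lef \mapsto t$. For smooth $X$ of dimension $d$, Poincar\'e duality relates compactly supported cohomology to cohomology. Consequently the top-weight part of $H^*(X)$, namely weight $2i$ in $H^i$, is dual to the weight-zero part of $H^*_c$. Hence $\tw(X)$ equals, up to the sign convention of the paper, the weight-zero compactly supported Euler characteristic. This is the constant term of the weight polynomial, i.e.\ its value at $t=0$. The limit at $x=0$ is
\begin{align*}
\frac{P'(1)}{(-1)(1)} = -P'(1),
\end{align*}
which gives \eqref{intro_tw}.

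The main obstacle is pinning down this duality and its sign precisely. It depends on the paper's normalization of $\tw$, and we would check it against $\mathcal{M}_{0,n}$, i.e.\ $G = K_n$, where known values exist. We would also verify that the shift by $\dim = n-3$ and the relevant parity cancel, so that no extra $(-1)^{n-3}$ appears.
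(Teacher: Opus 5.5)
Your route is the paper's: specialize $[\mzerog]=Q_G(\lef)$ along the measures with $\lef\mapsto 1$, $\lef\mapsto 0$, $\lef\mapsto -1$, and evaluate the polynomial $Q_G$ at those points by l'H\^opital. Your three limit computations are correct. There is one minor slip: $\chi_G(2)=0$ holds because $G$ is non-bipartite, i.e.\ contains an odd cycle such as $C_5$, not necessarily a triangle.

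The step that does not hold as written is the identification of $\tw$ with the weight-zero compactly supported Euler characteristic.

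\begin{itemize}
\item \emph{What $\tw$ actually is.} The top-weight Euler characteristic is $\tw(X)=\sum_i(-1)^i\dim\mathrm{Gr}^W_{2d}H^i(X)$. This is weight $2d$ in every degree, not weight $2i$ in $H^i$.

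\item \emph{Why your description fails.} With weight $2i$ in $H^i$, the duality claim is false.
\begin{itemize}
\item For $X=\mathbb{G}_m$ you would get $\sum_i(-1)^i\dim\mathrm{Gr}^W_{2i}H^i=1-1=0$. But $\chi^0_c(\mathbb{G}_m)$ is the image of $\lef-1$ under $\lef\mapsto 0$, namely $-1$.
\item For $\mzerog$ with a dominant vertex, $H^i$ is pure of weight $2i$, so your invariant would return the ordinary Euler characteristic. For $G=K_4$ that gives $-1$ instead of $\tw(M_{0,4})=-2$.
\end{itemize}

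\item \emph{The correct input.} Use Poincar\'e duality for mixed Hodge structures at $m=0$: $\dim\mathrm{Gr}^W_0H^j_c(X)=\dim\mathrm{Gr}^W_{2d}H^{2d-j}(X)$ for smooth $X$ of dimension $d$.

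\item \emph{The sign.} Since $j$ and $2d-j$ have the same parity, $\chi^0_c(X)=\tw(X)$ exactly. The degree shift is by the real dimension $2d$, so no factor $(-1)^{n-3}$ can appear and no check against $K_n$ is needed.
\end{itemize}

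This is exactly the justification the paper records in Example~\ref{example_euler} before evaluating at $\lef=0$.
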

We also study the higher-genus case using the deletion--contraction relation from Theorem~\hyperref[thm:D]{D}; for instance, in genus $1$ we determine both the complex Euler characteristics and the top-weight Euler characteristics.
\begin{corollaryG}[Proposition \ref{lem_chi_one}]\phantomsection
\label{cor:G}
    Suppose $G$ has at least one vertex, we have
    \begin{align*}
        \chi(M_{1,G}(\mathbb{C})) &= - \frac{1}{12}\left. \frac{d}{dx} \chi_G(x) \right|_{x = 0} + \frac{\chi_G(1)}{3} +  \frac{\chi_G(2)}{4}+\frac{\chi_G(3)}{9}-\frac{\chi_G(4)}{48}. \\
        \tw (M_{1,G}(\mathbb{C}))& = -\frac{1}{2} \left. \frac{d}{dx} \chi_G(x) \right|_{x = 0} + \chi_G(1) -  \frac{\chi_G(2)}{4}.
    \end{align*}
\end{corollaryG}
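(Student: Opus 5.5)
Both sides of the two identities are additive invariants that satisfy the same deletion--contraction recursion, so I would argue by induction on the number of edges and reduce to edgeless graphs.

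\textbf{Recursion on the left.} The complex Euler characteristic (with compact support, which agrees with $\chi$ for complex varieties) and the top-weight Euler characteristic $\tw$ both factor through $K_0(\mathrm{Var}_{\mathbb{C}})$. Applying them to Theorem~\hyperref[thm:D]{D} gives
\[
\chi(M_{1,G}(\mathbb{C}))=\chi(M_{1,G-e}(\mathbb{C}))-\chi(M_{1,G/e}(\mathbb{C})),
\]
and the same relation for $\tw$. This holds whenever $G$, $G-e$ and $G/e$ are $1$-stable. I expect (to be checked against Definition~\ref{def:graph_stab}) that in genus one this only requires at least one vertex, which is preserved by both operations.

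\textbf{Recursion on the right.} Each right-hand side is a $\mathbb{Q}$-linear functional of the polynomial $\chi_G(x)$: a derivative at $0$ plus a combination of values. The chromatic polynomial satisfies $\chi_G=\chi_{G-e}-\chi_{G/e}$, so both right-hand sides obey the same recursion.

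\textbf{Multiple edges and loops.} Contracting one of several parallel edges produces a loop. I would first reduce to simple graphs. Parallel edges impose no extra collision condition, so $M_{1,G}$ depends only on the underlying simple graph, and so does $\chi_G$. I would therefore always contract an edge of a simple graph and then simplify $G/e$.

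\textbf{Induction and base case.} Induction on $|E(G)|$ reduces everything to the edgeless graph $\overline{K}_n$, $n\ge1$. Here $\chi_{\overline{K}_n}(x)=x^n$, so $\frac{d}{dx}\chi_{\overline{K}_n}(x)\big|_{x=0}=\delta_{n,1}$ and $\chi_{\overline{K}_n}(k)=k^n$. The target formulas become
\begin{align*}
\chi(M_{1,\overline{K}_n}(\mathbb{C}))&=-\tfrac{\delta_{n,1}}{12}+\tfrac13+\tfrac{2^n}{4}+\tfrac{3^n}{9}-\tfrac{4^n}{48},\\
\tw(M_{1,\overline{K}_n}(\mathbb{C}))&=-\tfrac{\delta_{n,1}}{2}+1-\tfrac{2^n}{4}.
\end{align*}
Since all markings may coincide, $M_{1,\overline{K}_n}$ is the (coarse) space of smooth genus-one curves $(E,p_1)$ together with $n-1$ arbitrary further points. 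It fibres over the $j$-line $\mathbb{A}^1$ with fibre $E^{n-1}/\mathrm{Aut}(E,p_1)$.

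I would stratify by $j$: generic $j$ with automorphism group $\mathbb{Z}/2$, $j=1728$ with $\mathbb{Z}/4$, and $j=0$ with $\mathbb{Z}/6$. The Euler characteristic of each quotient $E^{n-1}/H$ is the orbit average of fixed-point counts. For an automorphism $\sigma\neq1$, the fixed locus on $E^{n-1}$ is $(E^\sigma)^{n-1}$. The relevant counts are $|E[2]|=4$ for $-1$, $2$ fixed points for the order-$4$ automorphism, and $3$ or $1$ for the order-$3$ and order-$6$ ones.

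Summing these strata, with the generic stratum $\mathbb{C}$ minus two points having $\chi=-1$, should produce exactly the powers $2^n$, $3^n$, $4^n$ and constants above. The case $n=1$ is treated separately ($M_{1,1}$ coarse is $\mathbb{A}^1$, with $\chi=1$), which accounts for the $\delta_{n,1}$ term. For $\tw$ I would do the same computation with the weight-graded Euler characteristic. The invariant part of $H^*(E^{n-1})$ and the weights of the strata determine the top-weight piece, for instance $\tw(\mathbb{A}^1)=0$ and $\tw(\mathbb{G}_m)=1$. Alternatively, I would compute $[M_{1,\overline{K}_n}]$ via point counts over $\mathbb{F}_q$ and read off both specializations.

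\textbf{Main obstacle.} The main obstacle is this base-case computation, specifically getting the orbifold-versus-coarse bookkeeping at $j=0,1728$ right, and the weight assignment for $\tw$. The inductive step is formal once the stability hypotheses of Theorem~\hyperref[thm:D]{D} are verified for simple graphs with at least one vertex.
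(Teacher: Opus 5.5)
\textbf{Where the proposal matches the paper.} Your reduction is the paper's. Both sides satisfy deletion--contraction: the left side by Theorem~\ref{Gro_Class} and Proposition~\ref{prop_invarient}, the right side because it is linear in $\chi_G$. In genus one, $1$-stability only asks for a vertex, so everything reduces to $G=E_n$. Your Euler characteristic base case is exactly the appendix computation: stratify the $j$-line and apply the fixed-point formula with $4$, $2$, $3$, $1$ fixed points.

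\textbf{The gap: the top-weight base case.} You need $\tw(M_{1,E_n})=1-2^{n-2}$ for $n>1$, and your plan for it does not work as described.
\begin{itemize}
\item Weight-graded Euler characteristics such as $\chi_c^0=\tw$ are additive and multiplicative for Zariski-locally trivial fibrations. The family $E^{n-1}/\mu_2$ over $\mathbb{C}_j\setminus\{0,1728\}$ is only analytically locally trivial, with nontrivial $\mathrm{SL}_2(\mathbb{Z})$-type monodromy.
\item If you stratify and multiply, each fibre is compact with pure cohomology and contributes $1$. You then get $(-2)\cdot 1+1+1=0$ for every $n\ge 2$, but the required value at $n=3$ is $-1$.
\item Incidentally, $\tw(\mathbb{G}_m)=-1$, not $1$.
\item The fibres' invariant cohomology plus the weights of the strata is not enough information. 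Write $V=R^1\pi_*\mathbb{Q}$ for the universal curve. Weight-$0$ compactly supported classes also come from the nontrivial local systems $\mathrm{Sym}^qV$, $q\ge 2$ even, through the weight-$0$ Eisenstein class in $H^1_c(\mathcal{M}_{1,1},\mathrm{Sym}^qV)$.
\item There is one such class for each of the $\binom{n-1}{q}$ untwisted copies of $\mathrm{Sym}^qV$ in $H^q(E^{n-1})=\wedge^q(V\otimes\mathbb{Q}^{n-1})$. These sit in odd degree $q+1$, so they give $-\sum_{q\ge 2\text{ even}}\binom{n-1}{q}=1-2^{n-2}$. This Eichler--Shimura input is the missing idea if you want to stay on your route.
\item The point-count fallback does not fix this as stated. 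For $n\ge 11$, $\mathrm{Sym}^{10}V$ appears, so the $\mathbb{F}_q$-count involves cusp-form coefficients and is not a polynomial in $q$. There is then no ``specialization at $\lef=0$'' to read off.
\end{itemize}

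\textbf{How the paper avoids this.} The paper uses no Hodge theory of local systems.
\begin{itemize}
\item It writes $\tw(M_{1,E_n})=-\widetilde{\chi}(\Delta_{1,E_n})$, citing \cite{CGP21}.
\item $E_n$-stable curves have no rational tails. So the strata are cycles $\Gamma_r$ of $r+1$ genus-$0$ vertices with surjective marking maps.
\item It checks that the $1$-skeleton, including the $\mathbb{Z}/2$ symmetries for $r=0,1$, is contractible.
\item It counts the $r$-cells for $r\ge 2$ as $(r+1)!\,S(n,r+1)/(2(r+1))$.
\item The Stirling identity $\sum_k(-1)^{k-1}(k-1)!\,S(n,k)=\delta_{n,1}$ (Claim~\ref{claim_An}) then gives $\chi(\Delta_{1,E_n})=2^{n-2}$.
\end{itemize}
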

\subsection{Deletion--contraction perspectives on existing results}

The deletion--contraction framework developed in this paper provides a unified perspective on several constructions and results in the literature, allowing us to recover, extend, or formulate precise analogues of them.

\subsubsection{Wall-crossing for $\psi$-class integrals}

The proof of Theorem~\hyperref[thm:A]{A} relies on a wall-crossing formula relating $\psi$-class integrals on $\mgraphbar$ to those on $\mgnbar$ (Theorem \ref{Theorem:wallcross}), via the natural proper birational morphism 
\[ \rho_G \colon \mgnbar \longrightarrow \ \mgraphbar.\]
This formula is adapted from the work of Alexeev--Guy \cite{AG08} and Blankers--Cavalieri \cite{BlankersRenzo}, where wall-crossing from Hassett spaces to the Deligne--Mumford compactification is studied.

At the intersection of Hassett spaces and graphically stable spaces lie the \textit{heavy/light} spaces $\overline{\mathcal{M}}_{g,K_m * E_d}$, with $m$ heavy markings and $d$ light markings. These spaces have rich geometric structures. For example, the space $\overline{\mathcal{M}}_{g,K_m * E_d}/S_d$ coincides with Marian--Oprea--Pandharipande's moduli space of stable quotients $\overline{Q}_{g,m}(\mathrm{Gr}(1,1),d)$ \cite{StableQuotient}. Moreover, $\psi$-class integrals on heavy/light spaces were studied in \cite[§4]{StableQuotient}, also using wall-crossing techniques. 

When $g=0$ and $m=2$, the heavy/light space $\overline{\mathcal{M}}_{0,K_2 * E_d}$ recovers the Losev--Manin space, first introduced in \cite{LosevManin}. Its $\psi$-class intersection theory was studied in detail by Dastidar--Ross \cite{Matriod_psi}, where it serves as a prototype for the theory of \textit{matroid $\psi$-classes}. We will review the intersection theory of $\psi$-classes over $\overline{\mathcal{M}}_{g,K_m * E_d}$ in detail in Section \ref{sec: heavy_light}.

\subsubsection{Reinke--Silversmith type integrals}\label{section:intro_rob}

Corollary~\hyperref[cor:C]{C} has intriguing connections to the results of Reinke--Silversmith \cite{RobReinke}, who study certain graph-associated integrals over $\mgnbar$. More precisely, for each vertex $i$ of $G$, let
\[
    \pi_{G,i}\colon \mgnbar \longrightarrow \overline{\mathcal{M}}_{g,|N_G[i]|}
\]
be the morphism between Deligne--Mumford spaces forgetting all markings outside the closed neighbourhood of $i$. Reinke--Silversmith consider integrals $\omega_{G,g,m}$ (Definition \ref{def_rob}) built from the classes $\pi_{G,i}^*(\psi_i)$, defined formally similar but conceptually different to the integrals appearing in Corollary~\hyperref[cor:C]{C}. It was shown in \cite{RobReinke} that the integrals $\omega_{G,g,m}$ satisfy the same deletion--contraction relation as Theorem~\hyperref[thm:A]{A} and take the same values as those in Corollary~\hyperref[cor:C]{C}.

We explore the comparison between the two settings in detail in Section~\ref{subsec_comparison_psi}, where we conjecture (Conjecture \ref{conjecture_psi} \& ~ \ref{conj:psi_2}) a precise relation that would imply that the two theories are completely analogous. 

\subsubsection{Graph-theoretic interpretations} Many of the graph invariants arising from the geometry of graphically stable spaces have been studied in graph theory and admit interpretations in terms of counting certain graph-related quantities. Assuming that $G$ is connected, we summarize these results in Table~ \ref{table:1}.
\begin{table}[h]
\centering
\renewcommand{\arraystretch}{1.8}
\begin{tabular}{|
>{\centering\arraybackslash}m{5cm}|
>{\centering\arraybackslash}m{5cm}|
>{\centering\arraybackslash}m{5cm}|}
\hline
Evaluation of $\chi_G$ & As a count of... & Connection to graphically stable spaces \\
\hline
\hline
$\chi_G(q)$, \; $q \geq 0$ & Proper $q$-colourings of $G$
& Propositions \ref{lem_two_colour}, \ref{lem_chi_one} \\
\hline
$(-1)^{|V(G)|}\chi_G(-m)$, \; $m
\geq 1$ &Pairs $(\sigma,\mathcal O)$, where
$\sigma:V(G)\to \{1,\ldots,m\}$ and $\mathcal O$ is an acyclic orientation such that, whenever $u\to v$ in $\mathcal O$, one has $\sigma(u)\geq \sigma(v)$\cite{STANLEY_Acyc}
& Propositions \ref{lemma_acyc_psi}, \ref{lem_tw_g} \\
\hline
$(-1)^{|V(G)|-1}\left. \frac{d}{dx} \chi_G(x) \right|_{x = 0}$   & Acyclic orientations of $G$ with a unique prescribed source 
\cite{CurtisZaslavsky1983} & Propositions \ref{balanced_elliptic}, \ref{lem:double_cone}, \ref{lem_chi_one} and Section~\ref{real_locus}\\
\hline
$(-1)^{|V(G)|} \left.\frac{d}{dx} \chi_G(x) \right|_{x = 1}$      & Acyclic orientations of $G$ with a unique prescribed source and a unique prescribed sink, where the source and sink are adjacent \cite{CurtisZaslavsky1983} & Section \ref{dual_complex} \\
\hline
$\left.\frac{d}{dx} \chi_G(x) \right|_{x = 2}$ & ----- & Section \ref{subsec_euler_char}\\
\hline
\end{tabular}
\caption{Graph invariants and their interpretations.}
\label{table:1}
\end{table}

\subsubsection{Sign-reversing involutions}

In \cite{SRI}, Blankers, Gillespie, and Levinson construct sign-reversing involutions (SRIs) to study the geometry of graphically stable spaces. One of their main results \cite[Theorem~3.1]{SRI} relates, for graphs $G$ with two dominant vertices, an alternating sum over the set of genus-$0$ $G$-stable graphs $\Gamma_{0,G}$, to acyclic orientations of $G$ with the two dominant vertices removed.

We generalize their result to arbitrary $0$-stable graphs as a consequence of our computation of the top-weight Euler characteristic of $\mzerog$ in Corollary~\hyperref[cor:F]{F}, \eqref{intro_tw}.

\begin{corollaryH}[Corollary~\ref{beta}]\phantomsection\label{cor:H}
For any $0$-stable graph $G$ and any edge $e$ with endpoints labelled $P$ and $Q$, we have
\[
    \sum_{T \in \Gamma_{0,G}} (-1)^{|E(T)|}
    =
    (-1)^{n-k(G)}\beta(G)
    =
    (-1)^{n-k(G)}|\mathrm{ACO}_{P,Q}(G)|.
\]
Here $\beta(G)$ is Crapo's $\beta$-invariant of $G$ \cite{CRAPO_beta}, $k(G)$ is the number of connected components of $G$, and $\mathrm{ACO}_{P,Q}(G)$ denotes the set of acyclic orientations of $G$ with prescribed unique source at $P$ and prescribed unique sink at $Q$.
\end{corollaryH}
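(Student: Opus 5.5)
The plan is to read the left-hand alternating sum as the top-weight Euler characteristic of $\mzerog(\mathbb{C})$ and then to evaluate it using Corollary~\hyperref[cor:F]{F}, \eqref{intro_tw}.

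\textbf{Step 1: the alternating sum is a top-weight Euler characteristic.} The space $\overline{\mathcal{M}}_{0,G}$ is a smooth proper compactification of $\mzerog$. Its boundary is a normal crossings divisor, and the dual complex of this boundary is the graph complex on $\Gamma_{0,G}$: strata correspond to $G$-stable trees $T$, and a stratum has codimension $|E(T)|$. The standard result on the weight filtration (Deligne; see also Chan--Galatius--Payne) identifies the top-weight cohomology of $\mzerog$ with the reduced cohomology of this dual complex. Counting cells gives
\[
\tw(\mzerog(\mathbb{C})) \;=\; \sum_{T\in\Gamma_{0,G}} (-1)^{|E(T)|},
\]
up to a global sign convention. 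I will fix the sign by checking a base case such as $G=K_3$: here $\mzerog$ is a point, $\Gamma_{0,G}$ consists only of the single-vertex tree, and both sides equal $1$. I expect the main obstacle to be verifying the normal crossings and stratification claims for $\overline{\mathcal{M}}_{0,G}$, which I will take from the earlier sections.

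\textbf{Step 2: apply Corollary~\hyperref[cor:F]{F}.} By \eqref{intro_tw}, the sum equals $-\chi_G'(1)$.

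\textbf{Step 3: identify $-\chi_G'(1)$ with $\beta(G)$.} Recall the Tutte specialization $\chi_G(x)=(-1)^{n-k}x^{k}T_G(1-x,0)$. If $k(G)\ge 2$, then $x^2\mid\chi_G$, so $\chi_G'(1)$ need not vanish; in that case I also need $\beta(G)=0$. I therefore handle the disconnected case separately, either directly or by arguing that a $0$-stable graph forces the appropriate behaviour, and I will be careful here.

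For connected $G$, differentiating at $x=1$ gives
\[
\chi_G'(1)=(-1)^{n-1}\bigl(T_G(0,0)-\partial_xT_G(0,0)\bigr)=(-1)^{n}\,\partial_xT_G(0,0)=(-1)^n\beta(G),
\]
using that $T_G(0,0)=0$ for any graph with at least one edge. Hence $-\chi_G'(1)=(-1)^{n-1}\beta(G)$, which matches $(-1)^{n-k(G)}\beta(G)$.

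For the disconnected case, I expect the stability hypothesis to reduce to the connected case, or else both sides vanish by a multiplicativity argument; I will check the sign there explicitly.

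\textbf{Step 4: the orientation count.} The equality $\beta(G)=|\mathrm{ACO}_{P,Q}(G)|$ for adjacent $P,Q$ is Greene--Zaslavsky's theorem (see Table~\ref{table:1}). Alternatively, both quantities satisfy deletion--contraction for edges other than $e$, with matching base cases, so the equality also follows by induction.
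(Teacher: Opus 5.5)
Your route is the paper's: Step 1 is Lemma~\ref{prop_euler_complex} combined with \eqref{CGP}, Step 2 is \eqref{tw}, and Step 4 is the Greene--Zaslavsky theorem (Theorem~\ref{Las}). The one real defect is in Step 3, in the disconnected case.

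\textbf{The disconnected case of Step 3.} The outcomes you anticipate there are wrong.
\begin{itemize}
\item $0$-stability does not force connectedness.
\item $\beta(G)$ need not vanish.
\item The observation $x^2\mid\chi_G$ concerns $x=0$ and says nothing about $\chi_G'(1)$.
\end{itemize}
For example, take $G=K_3\sqcup E_1$. It is $0$-stable, with $n=4$ and $k(G)=2$. Its chromatic polynomial is $\chi_G(x)=x^2(x-1)(x-2)$, so $-\chi_G'(1)=1$. Also $\beta(G)=\beta(K_3)=1=|\mathrm{ACO}_{P,Q}(G)|$. Geometrically, $M_{0,G}\simeq\mathbb{P}^1$ is already compact, so $\Gamma_{0,G}$ contains only the trivial tree. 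All three quantities in the corollary therefore equal $1$; nothing vanishes.

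No case split is needed. Differentiate $\chi_G(x)=(-1)^{n-k}x^kT_G(1-x,0)$ at $x=1$ for arbitrary $k$:
\[
\chi_G'(1)=(-1)^{n-k}\bigl(k\,T_G(0,0)-\partial_xT_G(0,0)\bigr)=(-1)^{n-k+1}\beta(G).
\]
This uses $T_G(0,0)=0$ whenever $G$ has an edge, and it is exactly your connected computation with $x$ replaced by $x^k$. In the paper, $\beta(G)$ is defined as $(-1)^{n-k(G)+1}\chi_G'(1)$, so this step is a tautology there.

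\textbf{The sign in Step 1.} Fixing the sign by checking $G=K_3$ is valid only because the sign is independent of $n$. It is cleaner to derive it directly. The isomorphism $\mathrm{Gr}^W_{2d}H^{2d-j}(M_{0,G})\cong\widetilde{H}_{j-1}(\Delta_{0,G})$, together with the fact that $2d$ is even, gives $\tw(M_{0,G})=-\widetilde{\chi}(\Delta_{0,G})$. Counting cells then gives $\sum_{T}(-1)^{|E(T)|}=-\widetilde{\chi}(\Delta_{0,G})$: a stratum of codimension $|E(T)|\geq 1$ contributes an $(|E(T)|-1)$-cell, and the open stratum contributes the remaining term.
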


Similar SRI techniques were used by Clader and Valverde \cite{Clader_SRI} to evaluate the analogous sum in genus one when $G=K_n$. The authors of \cite{SRI} also uses SRI methods to study $\psi$-class integrals. In particular, they obtain a combinatorial interpretation of arbitrary $\psi$-class integrals on $\overline{\mathcal{M}}_{0,G}$ when $G$ is complete multipartite \cite[Theorem~4.43]{SRI}.

\subsubsection{CHY scattering potentials}

In \cite{GraphScattering}, the authors associate to each graph $G$ a scattering potential on $M_{0,n}$, drawing on the CHY scattering formalism from particle physics \cite{CHY_scattering}. They conjecture an explicit formula for the number of critical points of this potential; see \cite[Conjecture~7.1]{GraphScattering}. 

We will show that their conjectural formula agrees with our computation of the complex Euler characteristic of $M_{0,G}$ in Corollary~\hyperref[cor:F]{F}, see Proposition \ref{prop:conj_scattering}. Using the theory of maximum likelihood degrees \cite{Huh_likelihood}, together with the work of Fry \cite{Fry}, we give an alternative proof of their conjecture when $G$ has a dominant vertex. Moreover, to prove the full conjecture, it is enough to understand when $\mzerog$ is very affine. See Section \ref{subsec_euler_char} for a detailed discussion.

Finally, we remark that there is also a connection between $\psi$-class integrals over $\overline{\mathcal{M}}_{0,n}$ considered in \cite{RobReinke} and scattering potentials/maximum likelihood degrees, see \cite[§1.6]{RobReinke}. Such integrals are, in turn, related to $\psi$-class integrals on $\overline{\mathcal{M}}_{0,G}$; see Section~\ref{section:intro_rob} \& \ref{subsec_comparison_psi}.
 
\subsubsection{Wall-crossing for Grothendieck classes}

In \cite{Sidd_TopHass}, the authors study the topology of Hassett spaces parametrizing smooth $w$-weighted curves $M_{g,w}$, with particular emphasis on the heavy/light spaces. One of their methods is to relate the Grothendieck classes of $M_{g,w}$ to those of $M_{g,n}$ \cite[Proposition~4.1]{Sidd_TopHass}. This result can be viewed as a wall-crossing formula for Grothendieck classes, analogous to the wall-crossing formula for $\psi$-class integrals. We generalize this perspective in Section \ref{sec:alternative_GroClass} and use it to give alternative proofs of {Theorem \hyperref[thm:D]{D}} and {\hyperref[thm:E]{E}}. We also use this perspective to study Euler characteristics in genus $g\geq 2$ for graphs satisfying a chromatic condition; see Section \ref{section:arb_genus}.

\subsubsection{Relation to hyperplane arrangements}\label{section_hyperplane}

When $G$ has a dominant vertex $v$, the space $\mzerog$ is closely related to complements of hyperplane arrangements in affine space. Indeed, using the $\mathrm{PGL}_2$-action, we may send the marking labelled by $v$ to $\infty \in \mathbb{P}^1$. The remaining automorphisms form the affine group $\mathrm{Aff}_1$, and we obtain an isomorphism
\[
    \mzerog \simeq \mathrm{Conf}_{G-v}(\mathbb{A}^1)/\mathrm{Aff}_1.
\]
Here $\mathrm{Conf}_{G-v}(\mathbb{A}^1)$ is the complement of the graphical hyperplane arrangement associated to $G-v$ inside $\mathbb{A}_k^{n-1}$. Thus, Theorem~\hyperref[thm:E]{E} may be viewed as an analogue of a result of Aluffi~\cite{Aluffi_Hyperplane}, who showed that the Grothendieck class of a hyperplane arrangement complement is determined by its characteristic polynomial. Similarly, the topological invariants computed in Corollary~\hyperref[cor:F]{F} are analogous to known results for hyperplane arrangements; see, for example, \cite[Theorem~4.1]{ArdilaTutteHyperplane}.

In particular, when $G$ has a dominant vertex, the theory of hyperplane arrangements allows us to compute the Poincaré polynomial of $M_{0,G}$ in Section \ref{sec:poincare_poly}. It also gives a combinatorial explanation of the real Euler characteristic in Corollary~\hyperref[cor:F]{F}, \eqref{intro_real}, via a classical result of Zaslavsky \cite{Zaslavsky1975FacingUT}; see Section~\ref{real_locus}.

Moreover, when $G$ has a dominant vertex, Khoroshkin and Lyskov recently studied the wonderful compactification \cite{Wonderful} of $\mzerog$ from an operadic point of view \cite{Khoroshkin_Lyskov, GraphicalConf}.

\subsection{Future directions} We expect the deletion--contraction philosophy to provide a fruitful framework for further study of graphically stable spaces.

\subsubsection{Weighted graph invariants}\label{section:future_weighted_graph}

One possible direction is to view $\psi$-class integrals as invariants of weighted graphs. A vertex-weighted graph is a pair $(G,\mathbf w)$, where $V(G)=[n]$ and $\mathbf w=(w_1,\ldots,w_n)\in\mathbb Z^n$. For fixed $g$, define
\[
    \Psi_g(G,\mathbf w)
    :=
    \int_{\mgraphbar}
    \psi_1^{w_1+1}\cdots \psi_n^{w_n+1}.
\]
By dimension considerations, $\Psi_g(G,\mathbf w)=0$ unless $\sum_i w_i=3g-3$ and all $w_i\geq -1$.

The deletion--contraction relation for $\psi$-class integrals can then be reformulated as a weighted deletion--contraction relation. If $e=ab$ is an edge, deletion leaves the weights unchanged, while contraction assigns the new vertex $ab$ the weight $ w_{ab}=w_a+w_b$.
Thus by Theorem~\hyperref[thm:A]{A}, when $(w_a,w_b)\neq(-1,-1)$, one obtains
\[
    \Psi_g(G,\mathbf w)
    =
    \Psi_g(G-e,\mathbf w)
    +
    \Psi_g(G/e,\mathbf w/e).
\]
In the exceptional case $(w_a,w_b)=(-1,-1)$, the relation is replaced by a \textit{deletion--near--contraction} relation involving $G\odot e$.

Weighted graph invariants satisfying deletion--contraction relations have been studied in \cite{Noble,V_poly,Crew_del_con}; see 
\cite{UVW} for a survey of results. The deletion--near--contraction relation appears, for instance, in \cite{M_poly}. 

\begin{question}
How is the invariant $\Psi_g(G,\mathbf w)$ related to previously studied weighted graph invariants?
\end{question}
The case when $\mathbf w = \mathbf 0$ is answered in Corollary~\hyperref[cor:C]{C}, see also Section \ref{subsec_unweight_graph}.

\subsubsection{Comparison with Reinke--Silversmith type integrals}

Recall from Section \ref{section:intro_rob} that Reinke--Silversmith \cite{RobReinke} study graph-associated integrals over $\mgnbar$ built from the pullback classes $\pi_{G,i}^*(\psi_i)$. In Section~\ref{subsec_comparison_psi}, we conjecture a precise relation between their construction and the corresponding integrals on graphically stable spaces.

\begin{conjectureI}[Conjecture~\ref{conjecture_psi}]\label{conj:I}
Let $g \geq 1$, and let $k_1,\ldots,k_n$ be positive integers, with at most one $k_i$ allowed to be $0$, satisfying  $ \sum_{i=1}^n k_i = 3g-3+n$. Then we have
\[
    \int_{\mgraphbar} \psi_1^{k_1}\cdots \psi_n^{k_n}
    =
    \int_{\mgnbar}
    \pi^*_{G,1}(\psi_1^{k_1})\cdots
    \pi^*_{G,n}(\psi_n^{k_n}).
\]
\end{conjectureI}

This conjecture would identify two \textit{a priori} different graph-theoretic constructions of tautological integrals, revealing a close connection between the tautological ring of $\mgnbar$ and $\mgraphbar$. We have proved this conjecture for several infinite classes of examples (Lemma \ref{lem:complement_matching}), and verified it in several other cases using \textsf{admcycles} \cite{admcycles}. In fact, we show that Conjecture~{\hyperref[conj:I]{I}} is equivalent to the assertion that Reinke--Silversmith type integrals satisfy the same deletion--contraction relation as the corresponding integrals on graphically stable spaces; see Conjecture~\ref{conj:psi_2}.

\subsubsection{Generating series of $\psi$-class integrals}

Motivated by $2$-dimensional quantum gravity, Witten \cite{Witten1991} conjectured that the generating series of $\psi$-class intersection numbers on $\mgnbar$ satisfies the KdV hierarchy; this was later proved by Kontsevich \cite{Kontsevich1992} and others \cite{Witten_conj_rahul,Witten_conj_Mir,Witten_conj_lando}. A natural question, currently under investigation, is whether the analogous generating series for graphically stable spaces satisfies an integrable hierarchy.

\subsubsection{Topology of $\overline{M}_{0,G}$}

Let $p_G(\lef)$ denote the Grothendieck class of the compactification $\overline{M}_{0,G}$, viewed as a polynomial in $\lef$. When $G=K_n$, questions concerning the log-concavity of $p_{K_n}$ have recently been studied in \cite{Aluffi_Log_M0n,eur2026buildingsetschowrings,kiem_realrooted}. A natural question is whether analogous results hold for $p_G$ for more general graphs $G$.

\subsection{Conventions} Throughout this paper, all \textit{graphs} are assumed to be simple (containing no multiple edges or self-loops) and possibly disconnected, with vertex set $[n]:=\{1,...,n\}$.

\subsection{Acknowledgements}

The author is deeply grateful to their supervisor, Navid Nabijou, for countless inspiring discussions and constant encouragement. The author has also benefited from helpful correspondence with Vance Blankers, Maria Gillespie, David Klompenhouwer, Jake Levinson, Evgeny Shinder, and Rob Silversmith. We also thank Sebastian Bozlee for comments on an earlier draft of this work.

\section{Background}
\subsection{Graph-theoretic background} 
The \textit{complete graph} $K_n$ is the graph on $n$ vertices containing every possible edge, whereas its complement, the \textit{edgeless graph} $E_n$, is the graph on $n$ vertices with no edges. We say that a graph is empty if it has no vertices. A vertex is called \textit{dominant} if it is adjacent to every other vertex.

\subsubsection{Graph operations}
\begin{definition}[Graph join $G*H$]
    The join of two disjoint graphs $G$ and $H$, denoted $G * H$, is the graph obtained by taking their disjoint union and adding all possible edges between a vertex in $G$ and a vertex in $H$.
\end{definition}
\begin{definition}[Edge contraction $G/e$]
    The contraction of an edge $e=ab$ in a graph $G$, denoted $G/e$, is the operation that removes $e$ and merges its endpoints $a$ and $b$ into a single new vertex labelled as $ab$, which inherits all edges formerly incident to either $a$ or $b$. Moreover, any resulting duplicate edges or self-loops are automatically deleted so $G/e$ is also simple.
\end{definition}

\subsubsection{Chromatic polynomial} As the title suggests, we are interested in the deletion--contraction relations arising from graph theory. One of the most famous examples is the \textit{chromatic polynomial} $\chi_G(x)$. For a positive integer $x$, the evaluation $\chi_G(x)$ counts the number of proper $x$-colourings of $G$---that is, assignments of an integer from $\{1, \dots, x\}$ to each vertex such that adjacent vertices receive distinct colours. Moreover, $\chi_G(x)$ can be defined recursively by deletion--contraction.

\begin{definition}\label{def_chrom}
    The \textit{chromatic polynomial} $\chi_G(x) \in \mathbb{Z}[x]$ of a graph $G$ is the graph invariant characterized by the deletion--contraction relation, that is for every edge $e$ we have:
    \begin{align*}
        \chi_G(x) = \chi_{G-e}(x) - \chi_{G/e}(x);
    \end{align*}
    with initial condition in the case of edgeless graph $\chi_{E_n}(x)=x^n$.
\end{definition}

\subsection{Graphically stable spaces} \label{sec: background_mgraph} 
 In this section we recall some basics about graphically stable spaces. 
\begin{definition}\label{def:graph_stab}
    Let $G$ be a graph with vertex set $[n]$ and $g$ be a non-negative integer. We say $G$ is $g$-$stable$ if 
    \begin{itemize}
        \item $2g-2+n > 0$ and 
        \item if $g=0$, then $G$ is not the edgeless graph $E_n$ nor bipartite. 
    \end{itemize}
\end{definition}
\begin{remark}
    In genus zero, it is useful to recall that a graph is non-bipartite if and only if it contains an odd cycle.
\end{remark}
\begin{definition}\label{def_G}Fix a $(g,n)$-stable graph $G$ and let $(C,p_1,...,p_n)$ be an at-worst nodal $n$-marked curve of genus $g$. We say $(C,p_1,...,p_n)$ is $G$-$stable$ if and only if 
\begin{enumerate}
    \item If $Z \subseteq C$ is an irreducible rational component with two nodes, then there exists at least one marked point on $Z^{\mathrm{sm}}$.
    \item If $Z \subseteq C$ is an irreducible rational component with one node, then there exist at least two markings $p_i$ and $p_j$ on $Z^{\mathrm{sm}}$ such that $ij$ is an edge in $G$. 
    \item For each smooth point $x \in C^{\mathrm{sm}}$, the set $\{p_i \mid p_i = x\}$ forms an \textit{independent set} in $G$ i.e. no two vertices from this set are adjacent in $G$. This is equivalent to imposing that a set of markings are allowed to coincide if and only if they form an independent set. 
\end{enumerate}
\end{definition}
\begin{figure}[ht]
    \centering

\begin{tikzpicture}
[
    scale=0.9,
    every path/.style={line width=0.7pt, line cap=round},
    dot/.style={circle, fill=black, inner sep=1.7pt},
lab/.style={font=\normalfont\scriptsize}  
]

\begin{scope}[shift={(0,0)}]
    \coordinate (a1) at (0,0);
    \coordinate (a2) at (0.95,0.55);
    \coordinate (a3) at (0.95,-0.55);
    \coordinate (a4) at (2.35,0);

    \draw (a1) -- (a2);
    \draw (a1) -- (a3);

    \node[dot] at (a1) {};
    \node[dot] at (a2) {};
    \node[dot] at (a3) {};
    \node[dot] at (a4) {};

    \node[lab, left] at (a1) {1};
    \node[lab, above] at (a2) {2};
    \node[lab, below] at (a3) {3};
    \node[lab, right] at (a4) {4};
\end{scope}

\begin{scope}[shift={(6,0)}]
    \coordinate (b2) at (0,0.65);
    \coordinate (b3) at (0,-0.65);
    \coordinate (b4) at (1.15,0);
    \coordinate (b1) at (2.25,0);

    \filldraw[fill=gray!25, draw=black]
        (b2) -- (b3) -- (b4) -- cycle;

    \draw (b4) -- (b1);

    \node[dot] at (b2) {};
    \node[dot] at (b3) {};
    \node[dot] at (b4) {};
    \node[dot] at (b1) {};

    \node[lab, above] at (b2) {2};
    \node[lab, below] at (b3) {3};
    \node[lab, above] at (b4) {4};
    \node[lab, right] at (b1) {1};
\end{scope}
\end{tikzpicture}
\caption{The graph (left) and its corresponding independence complex (right).}
\end{figure}

We can associate an abstract simplicial complex called the \textit{independence complex} to $G$, whose $k$-faces are independent sets of size $k$. In \cite{BlankersBozlee}, Blankers and Bozlee introduced \textit{simplicial stable spaces}, which are compactifications of $\mgn$ associated to an abstract simplicial complex used to specify collision conditions. Roughly speaking, a set of markings are allowed to collide if they span a face of the simplicial complex. This further generalizes the work of Hassett \cite{Hassett} and Alexeev--Guy \cite{AG08}. By taking the independence complex of a graph, simplicial stable spaces specialise to the \textit{graphically stable spaces} that we are interested in.

\begin{thmdef}[{\!\cite[Theorem 4.18]{BlankersBozlee}}] Fix a non-negative integer $g$ and a $g$-stable graph $G$, there exists a smooth proper Deligne--Mumford stack $\mgraphbar$ over $\mathbb{Z}$ parametrizing flat families of $G$-stable curves of genus $g$. We will refer to $\mgraphbar$ as \textit{graphically stable spaces}. 
\end{thmdef}

\begin{remark}
    The locus $\mathcal{M}_{g,G} \subseteq \mgraphbar$ that parametrizes smooth genus $g$ $G$-stable curves is a partial compactification of $\mathcal{M}_{g,n}$. Their coarse space $M_{g,G}$ will be studied in Section \ref{Sec_Gro}.
\end{remark}
 
\begin{remark}
    We explain why we require a $0$-stable graph $G$ to be non-bipartite nor edgeless. Suppose $G$ is bipartite with parts $A$ and $B$, so $A$ and $B$ form two independent sets of $G$. Thus, markings belonging to $A$ are allowed to coincide with one another, as are markings belonging to $B$. We label the coincided points as $A$ and $B$, respectively. But we wish to exclude $(\mathbb{P}^1, A, B)$, a rational curve with two marked points, as it has infinite automorphisms. Similarly when $G$ is edgeless. These conditions correspond exactly to the \textit{triparted} condition on simplicial complexes in \cite{BlankersBozlee}.
\end{remark}

\begin{example}  \label{Ex_Gra} We present some examples of graphically stable spaces.
    \begin{enumerate}
        \item $G = K_n$, the complete graph on $n$ vertices. In this case no markings are allowed to collide with each other, and $\mgraphbar = \mgnbar$.
        \item  $G = K_m * E_n$, the join of an edgeless graph with a complete graph. The $n$ markings from $E_n$ can collide arbitrarily with each other but cannot collide with the other $m$ markings, whereas the other $m$ markings cannot collide with any markings. These spaces are also known as  \textit{heavy/light} spaces with Hassett weights \[\mathcal{A} = \left( \underbrace{1, \dots, 1}_{m}, \underbrace{\varepsilon, \dots, \varepsilon}_{n} \right), \quad \text{such that } n\varepsilon \leq 1.\] Such spaces have been studied in  \cite{CAVALIERI_HAMPE_MARKWIG_RANGANATHAN_2016,ChowHeavyLight,HodgeHeavyLight}. We will refer to the $m$ markings corresponding to the vertices of $K_m$ as \textit{heavy} markings, and the $n$ markings corresponds to $E_n$ as \textit{light} markings. When $g =0$ and $m = 2$, the corresponding graphically stable space coincides with Losev--Manin space \cite{LosevManin}.
        \item $G$ is complete multipartite. The heavy/light graphs $K_m * E_n$ are special cases of complete multipartite graphs: the heavy markings are in singleton parts and all the light markings are in one part. When $g=0$ the stability requires $G$ to be at least tripartite and has been studied in \cite{Fry,Khoroshkin_Lyskov,SRI}.

        \item $G = E_n \sqcup K_3 $, an edgeless graph disjoint union a complete graph on three vertices. When $g=0$, we can use $\mathrm{Aut}(\mathbb{P}^1)$ to fix the three markings of $K_3$, and the remaining $n$ markings can collide arbitrarily with each other and with these three fixed markings. Therefore, in this case the underlying $G$-stable curve must be smooth and we have $\overline{\mathcal{M}}_{0,G} \simeq \mathcal{M}_{0,G} \simeq (\mathbb{P}^1)^n$.
    \end{enumerate}
\end{example}

\subsection{Universal curve} \label{universal}
Let $G=H\sqcup\{*\}$ be a $g$-stable graph, where $*$ is an isolated vertex, and assume that $H$ is also $g$-stable.

\begin{proposition} \label{prop:forget}
    There is a natural forgetful morphism
    \begin{align} \label{eq:fgt}
        \mathrm{fgt}\colon \overline{\mathcal{M}}_{g,G = H \sqcup \{*\}} \longrightarrow \overline{\mathcal{M}}_{g,H}.
    \end{align}
\end{proposition}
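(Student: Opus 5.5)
The plan is to construct $\mathrm{fgt}$ on families: forget the marking $p_*$ and then stabilize with respect to $H$ by contracting the components that become unstable. Since $*$ is isolated in $G$, it lies in every maximal independent set containing it, so $p_*$ may coincide with any other marking. Hence the $H$-collision condition (iii) of Definition~\ref{def_G} is automatically inherited from $G$-stability after $p_*$ is deleted, because the independent sets of $H$ are exactly the independent sets of $G$ that avoid $*$. Only conditions (i) and (ii) can fail.

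First I would classify the components that become $H$-unstable. Let $(C,p_1,\dots,p_n,p_*)$ be a $G$-stable curve. A rational component $Z$ with two nodes needs a marking, so it becomes unstable only if $p_*$ is its sole marking. A rational component $Z$ with one node needs two markings joined by an edge; such an edge cannot involve $*$, so deleting $p_*$ never breaks condition (ii). In addition, $Z$ cannot have three or more special points with $p_*$ forming part of a violation, because higher-valence components carry no marking requirement. So the only unstable components are two-noded rational bridges carrying only $p_*$ (possibly coinciding with nothing else). Contracting such a bridge produces a node, and chains of them cannot occur, since the result after one contraction is stable. Similarly, in genus $0$ and genus $1$ the edge cases (for example, $g=1$ with a single irreducible component) are handled by the $g$-stability of $H$. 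This gives a well-defined stabilization on geometric points.

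Second, I would globalize the construction. For a flat family $C\to S$ of $G$-stable curves, I take the line bundle $\omega_{C/S}(\sum_{i\in H} p_i)$, or a suitable twisted variant as in Hassett's construction, and form the relative $\mathrm{Proj}$ of its section ring. On each fibre this bundle has degree $0$ exactly on the bridges found above, and it is positive elsewhere because of $H$-stability. The expected argument is then the standard Knudsen/Hassett one: the bundle is relatively semiample, its formation commutes with base change (by cohomology-and-base-change vanishing of $R^1$ of its powers), and the resulting contraction is a flat family of $H$-stable curves. This yields a morphism of stacks $\overline{\mathcal{M}}_{g,G}\to\overline{\mathcal{M}}_{g,H}$. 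Alternatively, I could cite the general reduction and forgetful morphisms for simplicial stable spaces in \cite{BlankersBozlee}, since the independence complex of $H$ is the link-deletion of that of $G$.

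The main obstacle is this second step, namely checking the base-change and positivity statements in families, especially near points where $p_*$ collides with a node-adjacent marking. If I cannot verify the positivity directly, I would reduce to the Hassett case, following \cite{Hassett} Theorem 4.3 on reduction morphisms, by working locally where the relevant collisions are governed by weights.
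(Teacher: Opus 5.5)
Your proposal is correct and takes essentially the same route as the paper. You show that the only components destabilized by forgetting $p_*$ are two-noded rational bridges whose sole marking is $p_*$; rational tails are unaffected because the required edge can never involve the isolated vertex $*$. You then contract these bridges via the relative $\mathrm{Proj}$ of the section ring of $\omega_\pi(\sum_{i\in H} D_i)$, using that this bundle has degree zero exactly on those bridges and positive degree elsewhere, vanishing of $R^1$ of its high powers, and cohomology and base change — all of which the paper does as well.
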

\begin{proof}
We use the standard stabilization construction. Starting with a family of $G$-stable curves, we forget the marking $p_* $ and contract every irreducible component that becomes unstable, so we end up with a family of $H$-stable curves.

Since $*$ is isolated, the usual rational-tail instability cannot occur. Indeed, if a rational tail contains $p_*$, then $G$-stability requires the labels of the remaining markings on that tail to span at least one edge in $H$. Consequently, the only components that become unstable after forgetting $p_*$ are rational bridges whose only marking is $p_*$.

Label the vertices of $H$ by $1,\ldots,n-1$, and let $\pi\colon \mathcal{C}_{g,G}
    \rightarrow
    \overline{\mathcal{M}}_{g,G}$
be the universal curve. We define
\[
    \mathcal{L}
    :=
    \omega_\pi
    \left(
         \sum_{i=1}^{n-1}D_i
    \right),
\]
where $D_i$ is the image of the section corresponding to the marking $p_i$. 

Let $B$ be a rational bridge with two nodes whose only marking is $p_*$. Since none of the divisors $D_i$ meets $B$, we have
\[
    \deg\bigl(\mathcal{L}|_B\bigr)
    =
    \deg\bigl(\omega_{\mathbb{P}^1}(q_1+q_2)\bigr)
    =
    -2+2
    =
    0,
\]
where $q_1$ and $q_2$ are the two nodes of $B$. On can easily check on every other irreducible component, $\mathcal{L}$ has positive degree. In particular, a rational tail containing $p_*$ must also contain markings whose labels do not form an independent set in $H$, and hence contains at least two markings indexed by vertices of $H$. These markings contribute positively to the degree of $\mathcal{L}$. The same
degree statements remain true after replacing $\mathcal{L}$ by
$\mathcal{L}^{\otimes k}$ for any $k\geq 1$. 

For $k$ sufficiently large, one can deduce that $R^1 \pi_* \mathcal{L}^{\otimes ki}  = 0$ for $i \geq 0$. This can be seen fibrewise by analysing the long exact sequence in
cohomology induced by the normalization sequence $0 \rightarrow \mathcal{L}^{\otimes ki} \rightarrow \nu_*\nu^* \mathcal{L}^{\otimes ki}\rightarrow \bigoplus_{p \in C^{\mathrm{sing}}}\mathcal{L}^{\otimes ki}_p \rightarrow 0$, where $\nu$ is the normalization of a fibre $C$. Cohomology and base
change then implies that
$\pi_*\mathcal{L}^{\otimes ki}$ is locally free and commutes with
arbitrary base change. 

Therefore, we obtain the desired contraction and a flat family of $H$-stable curves over $\mgraphbar$ : 
\[ \begin{tikzcd} \mathcal{C}_{g,G} \arrow[rr] \arrow[dr,"\pi"] && \underline{\mathrm{Proj}} \bigoplus^{\infty}_{i=0} \pi_* \mathcal{L}^{\otimes ki} \arrow[dl, "\widetilde{\pi}"] \\ & \mgraphbar & \end{tikzcd} \]
In addition, the required morphism $\operatorname{fgt}$ is the morphism induced by this new family $\widetilde{\pi}$. 
\end{proof}

\vspace{-0.5em}
Denote $\inde(G)$ as the independence complex of $G$ which was discussed earlier. Notice that for all $S \in \inde(H)$, we have $S \sqcup \{*\} \in \inde(G)$. This implies that $\inde(G) = \mathrm{Cone}(\inde(H))$ with $*$ being the cone point. Geometrically this means the marking $p_*$ is allowed to collide with any other markings and can be placed anywhere on the smooth locus of a $H$-stable curve. If $p_*$ collides with a node, then we replace the node with a rational bridge with just $p_*$ on it. In terms of Hassett weights, the weight of $p_*$ should be treated as infinitesimal. The following statement is essentially a restatement of \cite[Proposition 5.4]{Hassett} in the case of graphically stable spaces: 
\begin{proposition}\label{prop:universal_curve}
     The universal curve $\mathcal{C} \rightarrow \overline{\mathcal{M}}_{g,H}$ is isomorphic to the forgetful morphism \eqref{eq:fgt}.  
\end{proposition}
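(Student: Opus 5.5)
We will follow Hassett's argument for \cite[Proposition 5.4]{Hassett} and construct mutually inverse morphisms between $\mathcal{C} \to \overline{\mathcal{M}}_{g,H}$ and $\mathrm{fgt}\colon \overline{\mathcal{M}}_{g,G}\to\overline{\mathcal{M}}_{g,H}$, both over $\overline{\mathcal{M}}_{g,H}$.

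\textbf{From $\mathcal{C}$ to $\overline{\mathcal{M}}_{g,G}$.} Consider the pullback family $\mathcal{C}\times_{\overline{\mathcal{M}}_{g,H}}\mathcal{C}\to\mathcal{C}$ (first projection). It has the $n-1$ pulled-back sections $D_1,\dots,D_{n-1}$ and the diagonal section $\Delta$, which we take as $p_*$. Since $\inde(G)=\mathrm{Cone}(\inde(H))$, the diagonal may meet any $D_i$ without violating condition (iii) of Definition~\ref{def_G}. The only failure of $G$-stability occurs where $\Delta$ passes through a node of a fibre, where the family is not even a family of sections in the smooth locus.

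To fix this, we blow up $\mathcal{C}\times_{\overline{\mathcal{M}}_{g,H}}\mathcal{C}$ along the locus $\{(x,x) : x \text{ a node}\}$, exactly as in Knudsen's construction of $\overline{\mathcal{M}}_{g,n+1}\to\overline{\mathcal{M}}_{g,n}$. This inserts a rational bridge carrying only $p_*$ at each node. Such a bridge has two nodes and one marking, so it satisfies condition (i), and the strict transform of $\Delta$ now lies in the smooth locus. Conditions (i)--(iii) then hold fibrewise: any rational tail of the original curve still contains an edge of $H\subseteq G$. Flatness and the properties of the family follow from Knudsen's local computation, since near a node the family is étale-locally $xy=t$. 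The result is a family of $G$-stable curves over $\mathcal{C}$, and hence a morphism $s\colon\mathcal{C}\to\overline{\mathcal{M}}_{g,G}$ over $\overline{\mathcal{M}}_{g,H}$.

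\textbf{From $\overline{\mathcal{M}}_{g,G}$ to $\mathcal{C}$.} By the proof of Proposition~\ref{prop:forget}, the universal $G$-curve $\mathcal{C}_{g,G}$ maps to the stabilized $H$-family $\underline{\mathrm{Proj}}\bigoplus_i\pi_*\mathcal{L}^{\otimes ki}$, which is the pullback of $\mathcal{C}$ along $\mathrm{fgt}$. The image of the section $p_*$ under this contraction is a section of $\mathrm{fgt}^*\mathcal{C}$, which is the same as a morphism $t\colon\overline{\mathcal{M}}_{g,G}\to\mathcal{C}$ lifting $\mathrm{fgt}$.

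\textbf{Mutual inverses.} Checking that $t\circ s=\mathrm{id}$ amounts to showing that stabilizing the blown-up family contracts exactly the inserted bridges, on which $\mathcal{L}$ has degree $0$, and sends $\Delta$ back to the diagonal. For $s\circ t=\mathrm{id}$, note that a $G$-stable curve is recovered from its $H$-stabilization together with the point $p_*$. The only contracted components are bridges carrying only $p_*$, and these are precisely the ones reinserted when $p_*$ maps to a node. Both identities hold on geometric points. To upgrade them to isomorphisms of stacks, we use that both spaces are smooth (and $\mathcal{C}$ is at least normal), so a representable bijective birational morphism onto a normal target is an isomorphism by Zariski's main theorem. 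Alternatively, we can verify the compositions as natural transformations on families over an arbitrary base.

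\textbf{Main obstacle.} The hardest step is the construction of $s$ in families: showing that the blow-up produces a flat family whose fibres are exactly the expected curves, including over loci where several nodes degenerate simultaneously. Here we will rely directly on Knudsen's local analysis, noting that the presence of other markings colliding with $p_*$ plays no role, since $*$ is isolated in $G$.
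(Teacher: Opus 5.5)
Your overall route matches the paper's. The paper gives no separate argument; it simply identifies the statement with Hassett's Proposition~5.4 \cite{Hassett}, i.e.\ Knudsen's construction for a marking of infinitesimal weight. Your observation that $\inde(G)=\mathrm{Cone}(\inde(H))$ leaves only the nodes to be repaired is exactly the relevant point.

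However, the construction of $s$ as you state it fails. \'Etale locally near $(c,c)$ with $c$ a node, $\mathcal{C}$ has coordinates $(x,y)$ with $xy=t$. The fibre product $\mathcal{C}\times_{\overline{\mathcal{M}}_{g,H}}\mathcal{C}$ is then $\{xy=uv\}$ (times smooth parameters), the first projection is $(x,y,u,v)\mapsto(x,y)$, and the diagonal is $\{u=x,\ v=y\}$. Your centre $\{(c,c) : c \text{ a node}\}$ is the codimension-$3$ locus $x=y=u=v=0$ of threefold ordinary double points. Blowing it up creates an exceptional quadric $\mathbb{P}^1\times\mathbb{P}^1$ lying entirely over the single point $(0,0)$ of $\mathcal{C}$. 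The resulting family over $\mathcal{C}$ therefore has $2$-dimensional fibres along the node locus: it is not flat, and no bridge is inserted. This is not Knudsen's construction, so deferring to his local analysis does not cover it.

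What you need is a small resolution, and specifically the correct one of the two. Blowing up the ideal $(x,u)$ works. In the chart $u=xw$, $y=vw$ the total space is $\mathbb{A}^3_{x,v,w}$, mapping to $\mathbb{A}^2_{x,y}$ by $(x,v,w)\mapsto(x,vw)$; this is flat with nodal fibres. Over $(0,0)$ the fibre is branch, exceptional $\mathbb{P}^1$, branch, attached at $w=0$ and $w=\infty$, and the diagonal extends as the section $w=1$.

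By contrast, blowing up the diagonal itself (a Weil divisor that is non-Cartier exactly at these points) gives the other small resolution $\mathrm{Bl}_{(x,v)}$. There the lift of the diagonal over $(x_0,y_0)$ sits at $[x:v]=[x_0:y_0]$, so its closure contains the whole exceptional curve and $p_*$ does not extend to a section.

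Globally, the right object is Knudsen's stabilization $\mathbf{P}(\mathcal{K})$. Since $*$ is isolated, you can omit the twist by $\sum s_i$ in Knudsen's exact sequence, so that simply $\mathcal{K}=\mathcal{I}_\Delta^\vee$. Near the bad points this sheaf is isomorphic to the ideal $(x,u)$, so $\mathbf{P}(\mathcal{K})$ is precisely $\mathrm{Bl}_{(x,u)}$ there, and it is the identity wherever $\Delta$ is Cartier. Because this construction commutes with base change, it also lets you verify $t\circ s=\mathrm{id}$ and $s\circ t=\mathrm{id}$ on families over an arbitrary base, rather than via Zariski's main theorem. It is also consistent with Proposition~\ref{small_res}, which asserts that the comparison map to the fibre product is small.
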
 
\begin{remark}
    This is also the same statement as \cite[Corollary 2.10]{AG08}. However, in \cite{AG08} the authors treated $p_*$ as a marking with weight $0$, hence it can even collide with nodes. Let us denote their space as $\overline{\mathscr{M}}_{g, H\sqcup \{*\}}$. If we assume $H$ has no isolated vertices, then both spaces are isomorphic to the universal curve of $\overline{\mathcal{M}}_{g,H} \simeq \overline{\mathscr{M}}_{g,H}$. The difference between the two conventions appears when there are two or more isolated vertices. When in the case of weight $0$, their space $\overline{\mathscr{M}}_{g, H \sqcup \{*,\star\}}$ is just the fibre product $\overline{\mathscr{M}}_{g, H \sqcup \{*\}} \times_{\overline{\mathscr{M}}_{g, H}} \overline{\mathscr{M}}_{g, H \sqcup \{\star\}}$. This space is singular when $p_*$ and $p_{\star}$ collide at a node \cite[§2.1.1]{Hassett}.
    Whereas our space $\overline{\mathcal{M}}_{g, H \sqcup \{*,\star\}}$ is smooth, and as we will see in Proposition \ref{small_res}, it is in fact a small resolution of the former space $\overline{\mathscr{M}}_{g, H \sqcup \{*,\star\}}$, i.e. the codimension of the exceptional loci is strictly greater than one.
\end{remark}

\subsection{$\psi$-classes} For each marking of the graphically stable space, there is a corresponding \textit{cotangent class}
\[\psi_i \in A^1(\mgraphbar; \mathbb{Q}).\] We are interested in the intersection numbers of such classes. When $G$ is complete multipartite, genus zero $\psi$-class integrals have been studied recently by Blankers, Gillespie and Levinson in \cite{SRI}, where they gave a combinatorial interpretation of such intersection numbers. \vspace{-1em}
\subsubsection{Isolated on and string equations} Suppose $n$ is an isolated vertex of $G$ i.e. $G = H \sqcup \{n\}$ and $H$ is $g$-stable. As discussed earlier in Section \ref{universal}, the geometric marking $p_n$ can collide with any other markings. The morphism forgetting $p_n$:
\begin{align*}
            \pi_{n} \colon \mgraphbar \longrightarrow \overline{\mathcal{M}}_{g,H}
        \end{align*}
coincides with the universal curve $\mathcal{C} \rightarrow \overline{\mathcal{M}}_{g,H}$ (Proposition \ref{prop:universal_curve}). We study how $\psi$-classes behave under this morphism. 

\begin{lemma}\label{cone_psi}
    Suppose $n$ is an isolated vertex of $G$, then the following holds in $A^1(\mgraphbar)$:
    \begin{enumerate}
        \item $\psi_n = c_1(\omega_{\pi_{n}})$, 
        \item $\psi_i = \pi_{n}^*\psi_i$, if $i=1,...,n-1$.
    \end{enumerate}  
\end{lemma}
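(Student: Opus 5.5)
The plan is to use Proposition \ref{prop:universal_curve}, which identifies $\pi_n\colon \mgraphbar \to \overline{\mathcal{M}}_{g,H}$ with the universal curve $\pi\colon\mathcal{C}\to\overline{\mathcal{M}}_{g,H}$. Under this identification the section $\sigma_n$ of the universal family over $\mgraphbar$ should be the diagonal-type section. The difference from Deligne--Mumford is that $p_n$ never bubbles off with another marking, because collisions are allowed by the isolatedness of $n$. Concretely, the universal curve over $\mgraphbar$ is the fibre product $\mathcal{C}\times_{\overline{\mathcal{M}}_{g,H}}\mathcal{C}$, modified only over the locus where $p_n$ lies at a node. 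There $p_n$ sits on a rational bridge carrying only $p_n$.

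For (i), I compute $\psi_n=c_1(\sigma_n^*\omega_{\pi_G})$, where $\pi_G$ is the universal curve over $\mgraphbar$. Two cases cover every point. First, if $p_n$ is a smooth point of the $H$-stable curve, the stabilisation map $\mathcal{C}_G\to \mathcal{C}\times_{\overline{\mathcal{M}}_{g,H}}\mathcal{C}$ is an isomorphism near $\sigma_n$. Hence the cotangent line at $p_n$ equals the relative cotangent line of $\pi$ at the point, i.e.\ the fibre of $\omega_{\pi_n}$ at the corresponding point of $\mathcal{C}\cong\mgraphbar$.

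The second case is where $p_n$ lies at a node $q$. Then $p_n$ is on a bridge $B\cong\mathbb{P}^1$ with nodes $0,\infty$, and $T^*_{p_n}B$ is canonically $\omega_B(0+\infty)|_{p_n}$, since the log form $dz/z$ trivialises it. Meanwhile $\omega_\pi$ at a node of a nodal fibre is the dualising sheaf fibre, also trivialised by $dx/x=-dy/y$. These two identifications should glue over the whole family into an isomorphism $\sigma_n^*\omega_{\pi_G}\cong\omega_{\pi_n}$ of line bundles on $\mgraphbar$. To prove the gluing rigorously, I would write $\sigma_n^*\omega_{\pi_G}=\sigma_n^*\omega_{\pi_G}(\sum_{i<n}D_i)$, using that $D_i$ is disjoint from $\sigma_n$ away from collisions. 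I would then compare with the pullback of $\omega_{\pi_n}$ along the contraction, as in Hassett's and Alexeev--Guy's treatment of weight-zero/infinitesimal markings \cite[Corollary 2.10]{AG08}. The difference would a priori be a divisor supported on the node locus, and there the computation above shows it has multiplicity zero.

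Collisions of $p_n$ with $p_i$ need separate care. On the Deligne--Mumford side these produce the correction $\psi_n=\omega+D_{in}$, but here no bubbling occurs, so no boundary divisor appears and no correction term is needed.

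For (ii), I compare $\psi_i$ with $\pi_n^*\psi_i$ via the natural map $\sigma_i^*\omega_{\pi_G}\to \pi_n^*\sigma_i^*\omega_\pi$ induced by the stabilisation contraction of the universal curves. The contraction only collapses rational bridges containing $p_n$ alone, and no $p_i$ with $i<n$ lies on such a bridge. Thus the contraction is an isomorphism in a neighbourhood of each section $\sigma_i$, so the two line bundles agree. Again, no divisor $D_{in}$ appears, because coinciding with $p_n$ does not create a new component.

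I expect the main obstacle to be the global identification in (i) across the locus where $p_n$ is at a node, i.e.\ making the local log-form computation into an isomorphism of line bundles in families. I would first try the bridge-contraction argument: $\omega_{\pi_G}$ restricted to a neighbourhood of the bridge equals the pullback of $\omega_\pi$ under the contraction, since bridges have degree-$0$ dualising sheaf by the computation in Proposition \ref{prop:forget}.
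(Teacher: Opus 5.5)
Your part (ii) is fine. The contracted bridges carry only $p_n$, so the contraction $\rho\colon\mathcal{C}_G\to\mathcal{C}\times_{\overline{\mathcal{M}}_{g,H}}\mathcal{C}$ is an isomorphism near each $\sigma_i$, $i<n$, and that is all (ii) needs. (This is the paper's $\rho$, with $\mathcal{C}_G\cong\overline{\mathcal{M}}_{g,H\sqcup\{n,m\}}$.) This is slightly more economical than the paper, which deduces (ii) from a global identity of dualising sheaves.

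Your set-up for (i) also matches the paper's. Let $Z\subset\overline{\mathcal{M}}_{g,G}$ be the locus where $p_n$ sits at a node. Off $Z$, $\rho$ is an isomorphism and $\rho\circ\sigma_n$ is the diagonal $\delta$. Let $f$ and $h$ be the projections of the fibre product onto $\overline{\mathcal{M}}_{g,G}\cong\mathcal{C}$ and onto the other copy of $\mathcal{C}$. Base change then gives $\delta^*\omega_f=\delta^*h^*\omega_\pi=\omega_{\pi_n}$.

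The gap is the extension across $Z$. Your proposed rigorous step $\sigma_n^*\omega_{\pi_G}=\sigma_n^*\omega_{\pi_G}(\sum_{i<n}D_i)$ is false. Because $n$ is isolated, $\sigma_n$ meets $D_i$ along the collision locus $\Delta_{in}=\{p_n=p_i\}$. This is a genuine reduced divisor on $\overline{\mathcal{M}}_{g,G}\cong\mathcal{C}$ (the image of the $i$-th section of $\pi$), so $\sigma_n^*\mathcal{O}(D_i)=\mathcal{O}(\Delta_{in})$ is nontrivial. Twisting by the $D_i$ therefore reintroduces exactly the correction $\sum_i\Delta_{in}$ that your own collision paragraph correctly says must not appear. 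Nor can the log-form computation at a node establish ``multiplicity zero''. Identifying two one-dimensional fibres at a point says nothing about the vanishing order of a comparison map along a divisor.

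The missing idea is dimensional. $Z$ consists of nodes of fibres lying over the boundary of $\overline{\mathcal{M}}_{g,H}$, so it has codimension $2$ in the smooth stack $\overline{\mathcal{M}}_{g,G}$. No divisor is supported on it, and two line bundles isomorphic off $Z$ are isomorphic. This is exactly the content of Proposition \ref{small_res}: $\rho$ is small, since the bridges sweep out a codimension-$2$ locus in $\overline{\mathcal{M}}_{g,H\sqcup\{n,m\}}$. Hence $\rho^*\omega_f=\omega_{\pi_G}$ globally (the paper's $\omega_{\tilde{\pi}_m}$). Then (i) follows by pulling back along $\sigma_n$ and using that $\tilde{\pi}_n\circ\sigma_n$ is the relabelling isomorphism.

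Your closing remark that the bridges carry degree-$0$ dualising sheaf is consistent with this. On its own, though, it does not show that $\omega_{\pi_G}$ descends through a contraction onto the singular fibre product; the codimension-$2$ input is what does the work. Once you add it, your Case 1 alone proves (i), and both the node computation and the twist by $\sum_i D_i$ can be dropped.
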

These two assertions follow heuristically from the observation below, and we will also give a more formal justification. 
\begin{observation}
    There is no $G$-stable curve that supports a rational tail with $p_n$ and $p_i$ on it, such that this tail gets contracted after forgetting $p_n$. 
\end{observation}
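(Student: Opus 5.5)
The goal is to prove the Observation itself: no $G$-stable curve carries a rational tail containing $p_n$ and some $p_i$ that is contracted when $p_n$ is forgotten. I will argue by contradiction directly from Definition \ref{def_G}, using the assumption that $n$ is an isolated vertex of $G = H \sqcup \{n\}$.

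Suppose $(C,p_1,\dots,p_n)$ is $G$-stable and $Z\subseteq C$ is an irreducible rational component with exactly one node, carrying $p_n$ and some $p_i$ on $Z^{\mathrm{sm}}$. By condition (ii) of Definition \ref{def_G}, $Z^{\mathrm{sm}}$ contains two markings $p_j,p_k$ with $jk\in E(G)$. Since $n$ is isolated, neither $j$ nor $k$ equals $n$. Hence $Z$ carries at least two markings indexed by vertices of $H$, and these span an edge of $H$.

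Next I show that forgetting $p_n$ does not contract $Z$. The stabilization is the one constructed in the proof of Proposition \ref{prop:forget}, via $\mathcal{L}=\omega_\pi(\sum_{i<n} D_i)$. On $Z$ this restricts to
\[
\deg \mathcal{L}|_Z \;\ge\; -2+1+2 \;=\; 1 > 0,
\]
coming from $\omega_{\mathbb{P}^1}$, the single node, and the markings $p_j,p_k$. So $Z$ is not contracted at the first step.

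Contraction could in principle cascade: after rational bridges carrying only $p_n$ are contracted, some component might lose a node. Such a bridge is replaced by a node, so a neighbouring component's node count is unchanged. Its markings from $H$ are also untouched, because the bridge carried only $p_n$.

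Alternatively, I can avoid the cascade argument entirely. Forgetting $p_n$ from $Z$ leaves a one-noded rational component still carrying the $H$-edge $jk$. By condition (ii) for $H$, $Z$ is already $H$-stable, so it survives in the $H$-stable model. This contradicts the assumption that the tail is contracted, which proves the Observation.

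The only subtle point I expect is this cascade issue: a component becoming unstable only after others are contracted. I would handle it with the remark above that contracting $p_n$-only bridges preserves both node counts and $H$-markings on every other component. The degree computation via $\mathcal{L}$ makes this uniform, since the contracted components are exactly those on which $\mathcal{L}$ has degree $0$.
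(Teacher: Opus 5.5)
Your argument is correct and is essentially the paper's own reasoning, which appears inside the proof of Proposition~\ref{prop:forget}. There, condition (ii) of Definition~\ref{def_G} forces an edge among the markings on the tail, and isolatedness of $n$ puts that edge in $H$, so $\mathcal{L}=\omega_\pi(\sum_{i<n}D_i)$ has positive degree on the tail and it is not contracted. Your separate treatment of cascading contractions is harmless but not needed: the relative Proj construction contracts exactly the components on which $\mathcal{L}$ has degree zero, as you note at the end.
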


Consider the following commutative diagram where we omit the genus $g$ for simplicity: 
\begin{equation} \label{diag:fiber_square_blowup}
\begin{tikzcd}[row sep=huge, column sep=large]
|[xshift=1.5em, yshift=-1.5em]| 
\overline{\mathcal{M}}_{H \sqcup \{n,m\}}
\arrow[dr, "\rho"]
\arrow[drr, "\tilde{\pi}_n", bend left=25]
\arrow[ddr, <-, "\sigma_{1,\dots,n}"', bend right=25] 
& & \\
& 
\overline{\mathcal{M}}_{H \sqcup \{m\}} 
\times_{\overline{\mathcal{M}}_{H}} 
\overline{\mathcal{M}}_{H \sqcup \{n\}}
\arrow[r, "h"]
\arrow[d, "f"']
\arrow[dr, phantom, "\scriptstyle\Box" description]
& 
\overline{\mathcal{M}}_{H \sqcup \{m\}} \simeq \mathcal{C}
\arrow[d, "\pi_m"'] \\
& 
\overline{\mathcal{M}}_{H \sqcup \{n\} = G}
\arrow[uul, <-, "\tilde{\pi}_m"]
\arrow[r, "\pi_n"'] 
& 
\overline{\mathcal{M}}_{H}
\arrow[u, "s_{i=1,\dots,n-1}"', bend right=35]
\end{tikzcd}
\end{equation}

\begin{proposition}\label{small_res}
    The morphism $\rho$ is a small resolution. In particular, this implies $\rho^* \omega_f = \omega_{\tilde{\pi}_m}$. 
\end{proposition}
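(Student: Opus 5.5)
The strategy is to identify the target of $\rho$ with Hassett's fibre product over the universal curve, describe its singular locus explicitly, and check that $\rho$ is an isomorphism away from a small locus. First I would construct $\rho$ itself. The two maps $\tilde{\pi}_n$ and $\tilde{\pi}_m$ forget $p_n$ and $p_m$ respectively. They are well defined by Proposition~\ref{prop:forget}, because in $H\sqcup\{n,m\}$ both $n$ and $m$ are isolated vertices. Their composites to $\overline{\mathcal{M}}_H$ agree, since forgetting the two markings in either order gives the same stabilization. By the universal property of the fibre product this yields $\rho$.

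By Proposition~\ref{prop:universal_curve}, the fibre product $\mathcal{C}\times_{\overline{\mathcal{M}}_H}\mathcal{C}$ is the fibred square of the universal curve. Étale locally it is smooth wherever at least one of the two points is a smooth point of its fibre. At a pair $(q,q)$ with $q$ a node it has the local form $\{xy=t,\ uv=t\}$, which is a $3$-fold ordinary double point times a smooth factor.

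Next I would check that $\rho$ is an isomorphism over the complement of the locus $Z$ of pairs (node, same node). Off $Z$, a point of the fibre product determines the curve uniquely:
\begin{itemize}
\item if neither point lies on a node, or the two points are distinct, $p_n$ and $p_m$ are placed directly, and they may coincide on the smooth locus because $\{n,m\}$ is independent;
\item if exactly one point lies on a node, that node is replaced by a rational bridge carrying the corresponding marking, exactly as in the description following Proposition~\ref{prop:forget}.
\end{itemize}
So $\rho$ is bijective there, and since both sides are smooth of the same dimension there, it is an isomorphism.

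Over $Z$, the $G$-stable curves lying above a pair (node, node) are those with a chain of rational bridges at the node carrying $p_n$ and $p_m$. These are either one bridge containing both points, with the two points distinct or coincident, or two bridges with one marking each. Up to automorphisms these form a $\mathbb{P}^1$: the two-bridge configurations in either order give the two endpoints, and the one-bridge configurations, with the points' relative position in $\mathbb{G}_m$, fill in the rest. So the fibres of $\rho$ over $Z$ are $\mathbb{P}^1$'s. Since $Z$ has codimension $3$ in the fibre product, the exceptional locus has codimension $2$ in the source. Hence $\rho$ is a small resolution of the $A_1$ singularities, locally the familiar small resolution of the conifold $xy=uv$. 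Smoothness of the source is given by the Theorem/Definition of $\mgraphbar$, and properness and birationality are then clear.

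\textbf{Main obstacle.} I expect the hardest part to be making the fibre identification over $Z$ rigorous scheme-theoretically. To do this I would compute in local deformation coordinates, writing the smoothing parameters of the two new nodes as $t_1,t_2$ with $t=t_1t_2$. This should show that the map is the standard blowup of the Weil divisor $\{x=u=0\}$ in the conifold.

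For the consequence, since $f$ is flat with Gorenstein fibres, $\omega_f$ is a line bundle on the normal fibre product, and $\omega_f = h^*\omega_{\pi_m}$. The relative dualizing sheaf $\omega_{\tilde{\pi}_m}$ is also a line bundle. The two agree on the locus where $\rho$ is an isomorphism, because formation of $\omega$ commutes with base change and $\tilde{\pi}_m$ is pulled back from $f$ there. Line bundles on a smooth (hence normal) stack that agree off a codimension-$\ge 2$ locus are isomorphic. Therefore $\rho^*\omega_f\cong\omega_{\tilde{\pi}_m}$.
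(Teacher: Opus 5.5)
Your proposal is correct and follows the same route as the paper's own proof. Both identify the exceptional locus of $\rho$ as the curves where $p_n,p_m$ sit on rational bridge(s) replacing a common node, and observe that this locus has codimension $2$. You also supply more than the paper does: the conifold local model, the $\mathbb{P}^1$ fibres over $Z$ (including the two-bridge endpoints the paper's description omits), and a Hartogs-type extension argument for $\rho^*\omega_f\cong\omega_{\tilde{\pi}_m}$, which the paper leaves implicit.

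One small fix is needed. Your first bullet should only cover the case where neither point is a node; as written, it overlaps the second bullet and mishandles two distinct nodes. When the two points are distinct nodes, each node is replaced by its own bridge. The fibre product is still smooth there, since distinct nodes have independent smoothing parameters $xy=t_1$, $uv=t_2$.
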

\begin{proof}
    A point of the fibre product $\overline{\mathcal{M}}_{H \sqcup \{m\}} \times_{\overline{\mathcal{M}}_{H}} \overline{\mathcal{M}}_{H \sqcup \{n\}}$ can be thought as a curve with two free markings $p_m$ and $p_n$. We should allow $p_n$ and $p_m$ to collide at smooth points but they cannot collide at a node. If they collide at a node, we replace this node with a rational bridge with $p_n$ and $p_m$ on it, and this is exactly the exceptional loci of $\rho$. Since the loci of rational bridges has codimension $2$ inside $\overline{\mathcal{M}}_{H \sqcup \{n,m\}}$, this implies  $\rho$ is small.
\end{proof}

\begin{proof}[Proof of Lemma \ref{cone_psi}]
We first prove $(ii)$. By the definition of $\psi$-classes, we need to show $\pi_n^* s_i^* \omega_{\pi_m} = \sigma_i^* \omega_{\tilde{\pi}_m}$, for $i = 1,...,n-1$. Indeed, by the commutativity of the diagram, the fact that relative dualizing sheaf of a flat family of nodal curves is compatible with base change and Proposition \ref{small_res}, we have 
\begin{align*}
    \pi_n^* s_i^* \omega_{\pi_m} = \sigma_i^* \rho^* h^*  \omega_{\pi_m} = \sigma_i^* \rho^* \omega_f = \sigma_i^* \omega_{\tilde{\pi}_m}.
\end{align*}
To prove $(i)$, we need to show that $\sigma_n^*\omega_{\tilde{\pi}_m} = \omega_{\pi_n} $. First of all, the composition
\begin{align*}
    \tilde{\pi}_n \circ \sigma_n \colon \overline{\mathcal{M}}_{H \sqcup \{n\}} \longrightarrow \overline{\mathcal{M}}_{H \sqcup \{m\}}
\end{align*}
is an isomorphism --- simply relabels $p_n$ as $p_m$, the rest of the markings $p_{i \neq n}$ are unchanged. Therefore, we get 
\begin{align}\label{eq_lem211_a}
    \sigma_n^*\tilde{\pi}_n^* \omega_{\pi_m} = \omega_{\pi_n}.
\end{align}
Now by a similar argument as before, we also have
\begin{align}\label{eq_lem211_b}
    \tilde{\pi}_n^* \omega_{\pi_m} = \rho^* h^* \omega_{\pi_m} = \omega_{\tilde{\pi}_m}.
\end{align}
Substitute (\ref{eq_lem211_b}) into (\ref{eq_lem211_a}), we obtained the desired identity.
 \end{proof}

\begin{theorem}[{\! \cite[Theorem 8.1 \& 8.4]{AG08}}]\label{Thm_cone_dilaton}
    Suppose $n$ is an isolated vertex in $G$ and $G-n$ is $g$-stable. Then we have:
    \begin{enumerate}
        \item (Isolated dilaton equation)
            \begin{align}\label{Isolated dilaton}
                \int_{\overline{\mathcal{M}}_{g,G}} \psi_1^{k_1} \cdots \psi_{n-1}^{k_{n-1}}\psi_n = (2g-2)\int_{\overline{\mathcal{M}}_{g,G - n}}\psi_1^{k_1} \cdots \psi_{n-1}^{k_{n-1}}.
            \end{align}
        \item (Isolated string equation)
            \begin{align}\label{Isolated string}
            \int_{\overline{\mathcal{M}}_{g,G}} \psi_1^{k_1} \cdots \psi_{n-1}^{k_{n-1}}= 0.
            \end{align}
    \end{enumerate}
\end{theorem}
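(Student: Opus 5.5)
By Proposition \ref{prop:universal_curve}, the forgetful map $\pi_n\colon \overline{\mathcal{M}}_{g,G}\to\overline{\mathcal{M}}_{g,G-n}$ is the universal curve $\mathcal{C}\to\overline{\mathcal{M}}_{g,H}$, where $H=G-n$. It is flat and proper of relative dimension one. By Lemma \ref{cone_psi}, $\psi_i=\pi_n^*\psi_i$ for $i\le n-1$ and $\psi_n=c_1(\omega_{\pi_n})$. The plan is to rewrite both integrands as a pullback class times at most one extra factor, and then apply the projection formula. This is the familiar argument for the Deligne--Mumford string and dilaton equations. The difference is that on $\mgraphbar$ there are no correction terms from the divisors $D_{i,n}$, precisely because of Lemma \ref{cone_psi}(ii).

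For the string equation (ii), set $\alpha=\psi_1^{k_1}\cdots\psi_{n-1}^{k_{n-1}}\in A^{*}(\overline{\mathcal{M}}_{g,H})$. By Lemma \ref{cone_psi}(ii) the integrand equals $\pi_n^*\alpha$. The degree condition $\sum k_i=\dim\overline{\mathcal{M}}_{g,G}=\dim\overline{\mathcal{M}}_{g,H}+1$ forces $\alpha$ to have degree exceeding $\dim\overline{\mathcal{M}}_{g,H}$, so $\alpha=0$. Hence the integral vanishes. Alternatively, $\pi_{n*}\pi_n^*\alpha=0$ for dimension reasons, and the projection formula gives the same conclusion.

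For the dilaton equation (i), the integrand is $\pi_n^*\alpha\cdot c_1(\omega_{\pi_n})$. The projection formula gives
\[
\int_{\overline{\mathcal{M}}_{g,G}}\pi_n^*\alpha\cdot c_1(\omega_{\pi_n})=\int_{\overline{\mathcal{M}}_{g,H}}\alpha\cdot \pi_{n*}c_1(\omega_{\pi_n}).
\]
Since $\pi_n$ has relative dimension one, $\pi_{n*}c_1(\omega_{\pi_n})$ lies in $A^0$. It therefore equals the degree of $\omega$ on a fibre, which is $2g-2$ for every genus $g$ nodal curve. This yields \eqref{Isolated dilaton}.

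The main obstacle is making sure the identification of $\pi_n$ with the universal curve holds in a way that lets $\omega_{\pi_n}$ be used as the relative dualizing sheaf of a flat family of nodal curves. In particular, the stack-theoretic projection formula and the fibrewise degree computation should apply without correction. The remaining concern is checking that Lemma \ref{cone_psi} has no hidden boundary terms, but that lemma is exactly the content established above. Everything else is routine.
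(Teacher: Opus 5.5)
Your proposal is correct and follows the same route as the paper: you identify $\pi_n$ with the universal curve, use Lemma \ref{cone_psi} to write the integrand as a pullback (times $c_1(\omega_{\pi_n})$ in the dilaton case), and apply the projection formula. The paper compresses this to "integrating $\psi_n$ along the fibre". You make explicit the two steps it leaves implicit: that $\pi_{n*}c_1(\omega_{\pi_n})=(2g-2)[\overline{\mathcal{M}}_{g,G-n}]$, and the degree count that kills the pulled-back class in the string case.
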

\begin{proof}
    Recall $\pi_n$ is the forgetful morphism from $\overline{\mathcal{M}}_{g,G}$ to $\overline{\mathcal{M}}_{g,G - n}$. Then $(i)$ follows from integrating $\psi_n$ along the fibre of $\pi_n$, Lemma \ref{cone_psi} and the projection formula. On the other hand, $(ii)$ follows from Lemma \ref{cone_psi} $(ii)$ and the projection formula. 
\end{proof}

\subsubsection{Wall-crossing} Instead of forgetting markings, another way to study intersection theory over graphically stable spaces is to \enquote{wall-cross} back to $\mgnbar$, where the intersection theory is better understood. Here \enquote{walls} should mimic the fine chamber decomposition within the space of stability conditions of Hassett space \cite{Hassett}. Specifically, one considers the natural proper birational morphism (\!\cite[Theorem 6.3]{BlankersBozlee}):
\begin{align}\label{reduction_morphism}
    \rho_G: \mgnbar \longrightarrow \ \mgraphbar 
\end{align}
and study how classes behave under pull-back/push-forward along this morphism. For example, Newman \cite{Newman} recently showed that the Chow ring of $\overline{\mathcal{M}}_{0,G}$, and more generally genus zero simplicial stable spaces, are generated by boundary divisors with relations coming from push-forward of WDVV relations on $\overline{\mathcal{M}}_{0,n}$.

The wall-crossing formula for $\psi$-class integrals was proved for Hassett spaces in \cite[Theorem~7.9]{AG08} \cite[Corollary 3.8]{BlankersRenzo}. A straightforward adaption (e.g. \cite[Theorem 2.13]{SRI}) of their work yields an analogous wall-crossing formula for graphically stable spaces as follow.

\begin{definition}[$G$-stable partition]\label{def:g_stable_partition}
    We say a partition $\mathcal{P} = \{P_1,...,P_{l(\mathcal{P})}\} \vdash V(G) = [n]$ is $G$-$stable$ if each part $P_i$ forms an independent set in $G$, where $l(\mathcal{P})$ is the length of partition. We denote $\mathfrak{P}_G $ as the set of $G$-stable partitions. 
\end{definition}
\begin{theorem}[{\!\cite{AG08,BlankersRenzo}}]\label{Theorem:wallcross} For $i=1,...,n$, let $k_i$ be non-negative integers such that $\sum k_i = 3g-3+n$. Then
    \begin{align}\label{wallcross}
        \int_{\overline{\mathcal{M}}_{g,G}} \psi_1^{k_1} \cdots \psi_n^{k_n} = \sum_{\mathcal{P} \in \mathfrak{P}_G}(-1)^{l(\mathcal{P})+n}\int_{\overline{\mathcal{M}}_{g,l(\mathcal{P})}} \psi_1^{\alpha_1-|P_1|+1} \cdots \psi_{l(P)}^{\alpha_{l(\mathcal{P})}-|P_{l(\mathcal{P})}|+1};
    \end{align}
    where $\alpha_m = \sum_{i \in P_m}k_i$ and $|P_m|$ is the size of the $m$-th part.
\end{theorem}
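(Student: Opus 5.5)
We prove the wall-crossing formula by working with the reduction morphism $\rho_G\colon \mgnbar \to \mgraphbar$ of \eqref{reduction_morphism}. It is proper and birational, so $\int_{\mgraphbar}\prod\psi_i^{k_i}=\int_{\mgnbar}\prod(\rho_G^*\psi_i)^{k_i}$. The task is then to express $\rho_G^*\psi_i$ through $\psi_i$ on $\mgnbar$ and boundary divisors. We then expand the product and evaluate each term by restricting to boundary strata.

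\textbf{Step 1: the pullback formula.} Let $D_S\subseteq\mgnbar$ be the boundary divisor whose generic point has a rational tail carrying exactly the markings in $S$, with $|S|\ge 2$. We claim
\[
\rho_G^*\psi_i=\psi_i-\sum_{\substack{S\ni i,\ |S|\geq 2\\ S\in \inde(G)}} D_S .
\]
We argue as in Alexeev--Guy and Blankers--Cavalieri. The map $\rho_G$ contracts exactly those rational tails whose markings form an independent set. Such a tail is contracted to a single smooth point at which those markings coincide. Comparing the cotangent line at $p_i$ before and after contraction gives the correction terms. Only rational tails contribute: a bridge or an internal component is never contracted unless it becomes unstable, and that happens only inside such tails. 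If we factor $\rho_G$ as a sequence of reductions through intermediate independence complexes, each step contributes one divisor. The result is the displayed formula.

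\textbf{Step 2: expansion and localization to strata.} Substitute the formula into $\prod(\rho_G^*\psi_i)^{k_i}$ and expand. Any two divisors $D_S,D_T$ that appear together with nonzero product have $S$ and $T$ either nested or disjoint. We use the standard self-intersection identity $D_S|_{D_S}=-\psi_\bullet-\psi_\star$, where $\bullet$ and $\star$ are the two branches at the node. Using it, one collapses the expansion exactly as in the proof of \cite[Theorem 7.9]{AG08}. For each monomial, the maximal independent sets $S$ form a partition $\mathcal P$ of $[n]$; singletons are allowed and correspond to no divisor. Restricting to the stratum $\prod_m \overline{\mathcal M}_{0,|P_m|+1}\times \overline{\mathcal M}_{g,l(\mathcal P)}$, the genus $0$ factors integrate to multinomial-type numbers. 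The $\psi_\bullet$ powers on the spine then assemble into $\psi_m^{\alpha_m-|P_m|+1}$ on $\overline{\mathcal{M}}_{g,l(\mathcal{P})}$. Each divisor contributes one factor of $-1$, and the parts of $\mathcal P$ account for $n-l(\mathcal P)$ such factors in total, giving the sign $(-1)^{l(\mathcal P)+n}$. Summing over all partitions whose parts are independent gives exactly the sum over $\mathfrak P_G$.

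\textbf{Main obstacle.} The hard part is the combinatorial collapse in Step 2. Nested divisors must telescope, through the string equation on the genus $0$ tails, into the single exponent $\alpha_m-|P_m|+1$. The cleanest route is to avoid this bookkeeping directly: since $\inde(G)$ is a simplicial complex, the Hassett-type argument is local in each rational tail. One checks that the proof of \cite[Corollary 3.8]{BlankersRenzo} uses only the allowed colliding sets, and they are closed under subsets. The Hassett argument then transfers verbatim, replacing the weight condition $\sum_{i\in S}w_i\le1$ by $S\in\inde(G)$, as in \cite[Theorem 2.13]{SRI}.
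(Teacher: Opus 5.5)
This is essentially the paper's route: the paper gives no independent proof. Exactly as in your final paragraph, it obtains \eqref{wallcross} by transferring the Hassett wall-crossing of \cite{AG08,BlankersRenzo} along $\rho_G$, replacing the weight condition with membership in $\inde(G)$ and citing \cite[Theorem~2.13]{SRI}. The only caveat is that your Step~2 sign count is not literally right---for a part $P=\{a,b\}$ the surviving term $(-D_{ab})^{k_a+k_b}$ carries $k_a+k_b$ divisor signs, and the net factor $(-1)^{|P_m|-1}$ per part only appears after applying $D_S|_{D_S}=-\psi_\bullet-\psi_\star$ and summing the genus-$0$ contributions---but since you defer that bookkeeping to the cited argument, it does not affect the proof.
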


In principle, wall-crossing techniques solve the problem of computing intersection numbers of $\psi$-classes over graphically stable spaces---for example we know the $\psi$-class integrals over Deligne--Mumford spaces are governed by the KdV hierarchy \cite{Witten1991,Kontsevich1992}. In the case of genus $0$, the integrals over $\overline{\mathcal{M}}_{0,n}$ simplify to the multinomial and the wall-crossing formula (\ref{wallcross}) becomes: 
\begin{align*}
    \int_{\overline{\mathcal{M}}_{0,G}} \psi_1^{k_1} \cdots \psi_n^{k_n} = \sum_{\mathcal{P} \in \mathfrak{P}_G}(-1)^{l(\mathcal{P})+n} \binom{l(\mathcal{P}) - 3}{\alpha_1-|P_1|+1 ,\dots, \alpha_{l(\mathcal{P})}-|P_{l(\mathcal{P})}|+1}.
\end{align*}
Nevertheless, explicitly computing these sums over $\mathfrak{P}_G$ is challenging. One may wonder whether a nicer formula exists, or if these integrals satisfy notable structural properties. 

\subsubsection{Heavy/light spaces}\label{sec: heavy_light} In this subsection we review some results about $\psi$-class integrals over $\overline{\mathcal{M}}_{g,K_m * E_n}$. Denote $\psi_j$ as the $\psi$-class associated to a heavy point from $K_m$ and $\wpsi_i$ as the $\psi$-class associated to a light point from $E_n$. We are interested in the integrals of the following form:
\begin{align}\label{integral_heavy_light}
    \int_{\overline{\mathcal{M}}_{g,K_m * E_n}} \psi_1^{k_1}\cdots \psi_m^{k_m} \wpsi_1^{l_1} \cdots \wpsi_n^{l_n}. 
\end{align}
We first focus on the case when $\psi$-classes with positive exponents are all concentrated on the heavy markings i.e. $l_i = 0$ for all $i =1,...,n$. 

\begin{lemma}\label{lem:heavy}
For $i=1,...,m$, let $k_i$ be non-negative integers such that $\sum k_i = 3g-3+n+m$. Then
    \begin{align}\label{heavy}
        \int_{\overline{\mathcal{M}}_{g,K_m * E_n}} \psi_1^{k_1} \cdots \psi_m^{k_m} = \int_{\overline{\mathcal{M}}_{g,m+n}} \psi_1^{k_1} \cdots \psi_m^{k_m}.
    \end{align}
\end{lemma}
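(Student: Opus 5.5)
We will deduce Lemma \ref{lem:heavy} from the wall-crossing formula, Theorem \ref{Theorem:wallcross}. For $G = K_m * E_n$, a $G$-stable partition $\mathcal{P}$ of the $m+n$ vertices has each heavy vertex in its own part; such a part may also contain some light vertices. The light vertices may be grouped arbitrarily, provided each part contains at most one heavy vertex. We assign exponent $k_j$ to heavy vertex $j$ and exponent $0$ to every light vertex. Then a part $P$ containing heavy vertex $j$ receives exponent $k_j - |P| + 1$ in \eqref{wallcross}. A part consisting only of light vertices receives exponent $-|P|+1$.

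First, only a few partitions contribute. Any part made only of light vertices with $|P|\ge 2$ has negative exponent, so its term vanishes. Hence the surviving partitions are those in which every purely light part is a singleton, while each heavy vertex $j$ absorbs a set $S_j$ of light vertices, with exponent $k_j - |S_j|$.

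Second, we remove the singleton light parts with the string equation on $\overline{\mathcal{M}}_{g,l}$. Fix the sets $S_j$, let $s=\sum|S_j|$, and note the remaining $n-s$ light points are singletons with $\psi^0$. The term is then
\[
(-1)^{s}\int_{\overline{\mathcal{M}}_{g,m+n-s}}\prod_j \psi_j^{k_j-|S_j|}.
\]
The sign comes from $l+(m+n) = 2(m+n)-s$. We cannot simply drop the $n-s$ points: the dimensions do not match, since $\sum_j (k_j - |S_j|) = 3g-3+m+n-s$. Instead, we apply the string equation $n-s$ times. Each application lowers one heavy exponent by $1$ and distributes over the heavy points. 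Summing over the choices of $S_j$, the task becomes a purely combinatorial identity in Deligne--Mumford integrals.

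We expect this identity to be the main obstacle. A cleaner route may avoid it. We would instead compare both sides directly through the reduction morphism $\rho\colon \overline{\mathcal{M}}_{g,m+n}\to\overline{\mathcal{M}}_{g,K_m*E_n}$ and show that $\rho^*\psi_j = \psi_j$ for heavy $j$. A heavy marking never lies on a contracted component with light points only. This is because $\rho$ contracts exactly the rational tails carrying only light markings, and $\psi_j$ is insensitive to such tails. Then the projection formula, together with the birationality of $\rho$, gives \eqref{heavy} immediately. We would present this geometric argument as the proof. The hardest point is making $\rho^*\psi_j=\psi_j$ rigorous: both are $\sigma_j^*\omega$ of universal curves, and the contraction map of universal curves is an isomorphism near the section $\sigma_j$, since tails with only light markings are disjoint from heavy sections. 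The combinatorial wall-crossing computation above then serves only as a consistency check, for example when $g=0$.
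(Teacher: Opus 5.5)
Your main argument, $\rho^*\psi_j=\psi_j$ for heavy $j$ followed by the projection formula, is correct and takes a different route from the paper. Your wall-crossing analysis, however, contains a genuine error.

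\textbf{The partition analysis.} In $K_m * E_n$ every heavy vertex is joined to \emph{every} other vertex, light ones included. So in a $(K_m*E_n)$-stable partition each heavy vertex is a singleton part, and all your sets $S_j$ are empty. The partitions you allow (at most one heavy vertex per part, lights grouped arbitrarily) are instead the stable partitions of $K_m\sqcup E_n$.

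Once this is corrected, \eqref{wallcross} collapses at once, and this is exactly the paper's proof. A part consisting only of light vertices has exponent $1-|P|$, which forces $|P|=1$. Hence only the partition into singletons survives. Its sign is $(-1)^{2(m+n)}=1$, and its term is literally $\int_{\overline{\mathcal{M}}_{g,m+n}}\psi_1^{k_1}\cdots\psi_m^{k_m}$, with the $n$ light markings still present and carrying $\psi^0$.

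The right-hand side of the lemma lives on $\overline{\mathcal{M}}_{g,m+n}$, so nothing has to be removed and no string equation enters. The ``combinatorial identity'' you flag as the main obstacle is an artifact of the misreading, and it is in fact false. For $g=0$, $m=3$, $n=1$, $(k_1,k_2,k_3)=(1,0,0)$, your sum gives $1-1=0$; this is the value for $K_3\sqcup E_1$, where marking $4$ is isolated. Both sides of the lemma, however, equal $\int_{\overline{\mathcal{M}}_{0,4}}\psi_1=1$. So the consistency check, as you set it up, would fail.

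\textbf{Comparison of the two routes.} The paper uses the wall-crossing formula of Theorem \ref{Theorem:wallcross} as a black box, after which the lemma is a two-line observation about which partitions survive. Your route uses $\rho_*[\overline{\mathcal{M}}_{g,m+n}]=[\overline{\mathcal{M}}_{g,K_m*E_n}]$ and bypasses wall-crossing entirely. It also gives a stronger, cycle-level statement: the equality holds for any integrand built from heavy $\psi$-classes and classes pulled back along $\rho$. The same reasoning gives $\rho_G^*\psi_i=\psi_i$ for any dominant vertex $i$ of any $g$-stable $G$. This is the same mechanism the paper uses for $\wpsi_1=\rho^*\psi_1$ in Lemma \ref{LM} and for $\wpsi_i=f_i^*\psi_i$ in Lemma \ref{psi_kappa}.

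The price is that you must use how $\rho$ is constructed: as a contraction $\mathcal{C}_{g,m+n}\to\rho^*\mathcal{C}_{g,K_m*E_n}$ of universal curves that is an isomorphism away from the contracted locus.

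\textbf{What to fix when writing it up.} Describe the contracted locus correctly. It consists of the maximal genus-$0$ subcurves meeting the rest of the curve in a single node and carrying only light markings. Single light tails are not enough, because contracting one can destabilize its parent component. The point to verify is that a component carrying a heavy marking is never contracted. After contraction such a component still has at least three special points. If only one of them is a node, the heavy marking shares the component with another marking, to which it is adjacent in $G$, so the component remains stable.
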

\begin{proof}
    We evaluate the integral on the left hand side using the wall-crossing formula (\ref{wallcross}). Let $\mathcal{P}$ be a $(K_m * E_n)$-stable partition and we wish to compute the term corresponds to $\mathcal{P}$ of the wall-crossing formula (\ref{wallcross}): 
    \begin{align}\label{term_light}
        (-1)^{l(\mathcal{P})+n}\int_{\overline{\mathcal{M}}_{g,l(\mathcal{P})}} \psi_1^{\alpha_1-|P_1|+1} \cdots \psi_t^{\alpha_t-|P_t|+1} \cdots \psi_{l(P)}^{\alpha_{l(\mathcal{P})}-|P_{l(\mathcal{P})}|+1}.
    \end{align}
    Since the $m$ heavy markings are connected to all the other vertices, they cannot form an independent set with any other vertices so they must be singletons in $\mathcal{P}$. Any arbitrary non-empty subset of the $n$ light markings forms an independent set, and suppose they made up the $t$-th part $P_t$. So the exponent of $\psi_t$ in (\ref{term_light}) is 
    \begin{align*}
        \alpha_t - |P_t|+1 = \sum_{i \in P_t}k_i - |P_t| +1 = 0 - |P_t| +1.  
    \end{align*}
    This number is non-negative if and only if $|P_t|=1$. Thus in order for (\ref{term_light}) not to vanish, all the $n$ light markings must be singletons in $\mathcal{P}$ too. Hence, in this case the wall-crossing formula (\ref{wallcross}) simplifies to a sum over one partition where all parts are singletons: 
    \begin{align*}
        \int_{\overline{\mathcal{M}}_{g,K_m * E_n}} \psi_1^{k_1} \cdots \psi_m^{k_m} &= (-1)^{n+m+n+m}\int_{\overline{\mathcal{M}}_{g,m+n}} \psi_1^{k_1-1+1} \cdots \psi_{m}^{k_m-1+1}\psi_{m+1}^{0-1+1}\cdots \psi_{m+n}^{0-1+1} \\ 
        & = \int_{\overline{\mathcal{M}}_{g,m+n}} \psi_1^{k_1} \cdots \psi_m^{k_m}
    \end{align*}
    as desired. 
\end{proof}
In the case of genus $0$, Lemma \ref{lem:heavy} simplifies to a closed formula: 
\begin{align}\label{heavy_zero}
    \int_{\overline{\mathcal{M}}_{0,K_m * E_n}} \psi_1^{k_1} \cdots \psi_m^{k_m} = \binom{m+n - 3}{{k_1, \dots, k_m}}. 
\end{align} 
When $m=2$, this formula specializes to a known formula: 
\begin{proposition}[\!\cite{StableQuotient}]\label{LM_psi}
    Let $k_1 + k_2 = n-1$, then
    \begin{align*}
        \int_{\overline{\mathcal{M}}_{0,K_2 * E_{n}}} \psi_1^{k_1}\psi_2^{k_2} = \binom{n-1}{k_1}.
    \end{align*}
\end{proposition}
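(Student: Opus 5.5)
This is the specialization $g=0$, $m=2$ of \eqref{heavy_zero}, so I would deduce it directly from Lemma \ref{lem:heavy} rather than redo any geometry.

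First, check the hypotheses. The heavy/light graph $K_2 * E_n$ on $n+2$ vertices contains a triangle as soon as $n\geq 1$: the two heavy vertices together with any light vertex. It is therefore non-bipartite and not edgeless. Moreover $2\cdot 0 - 2 + (n+2) = n > 0$, so $K_2 * E_n$ is $0$-stable and $\overline{\mathcal{M}}_{0,K_2*E_n}$ is defined.

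Second, check the dimension. The dimension is $3\cdot 0 - 3 + (n+2) = n-1$, which matches the constraint $k_1+k_2 = n-1$ required in Lemma \ref{lem:heavy}, with $m=2$ heavy markings.

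Next, apply Lemma \ref{lem:heavy}. It gives
\[
\int_{\overline{\mathcal{M}}_{0,K_2 * E_n}} \psi_1^{k_1}\psi_2^{k_2} = \int_{\overline{\mathcal{M}}_{0,n+2}} \psi_1^{k_1}\psi_2^{k_2}.
\]
The right-hand side is the classical genus-zero multinomial formula $\binom{(n+2)-3}{k_1,k_2,0,\dots,0}$, which equals $\binom{n-1}{k_1}$ since $k_2 = n-1-k_1$. This is exactly \eqref{heavy_zero} with $m=2$.

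The only point needing any care is the reduction inside Lemma \ref{lem:heavy}: the heavy markings must be singletons in any stable partition, and a light part $P_t$ contributes exponent $1-|P_t|$, so every light part must also be a singleton. That lemma already handles this, so I expect no real obstacle. An independent sanity check would be the case $n=1$, where the space is $\overline{\mathcal{M}}_{0,3}$, a point, and the formula gives $\binom{0}{0}=1$. An alternative route, via the Losev--Manin identification and the stable quotients computation of \cite{StableQuotient}, is not needed.
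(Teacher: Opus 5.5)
Your proof is correct and follows the paper's own derivation: the paper presents this proposition as the $m=2$ specialization of \eqref{heavy_zero}, that is, Lemma~\ref{lem:heavy} combined with the genus-zero multinomial formula. The paper also gives a second, independent derivation in Example~\ref{example_LM}, using the recursion from the complete multipartite string equation (Theorem~\ref{string_complete}) together with a geometric computation of the base case $n=2$.
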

\begin{remark}
    Recall that $\overline{\mathcal{M}}_{0,K_2 * E_n}$ is the Losev--Manin space. This formula was proved in \cite[§4.5]{StableQuotient}. Instead of using wall-crossing techniques, the authors derived the formula by studying the geometry of the forgetful morphism 
    $\overline{\mathcal{M}}_{0,K_2 * E_{n+1}} \rightarrow \overline{\mathcal{M}}_{0,K_2 * E_{n}}$. Their argument closely parallels our later proof in Example \ref{example_LM}, where we obtain an identical recursion via a string equation (Theorem~\ref{string_complete}). 
\end{remark}

We now study $\wpsi$-classes which are associated to the light markings. The following statement was proved in \cite{AG08}, and here we present a different proof. 
\begin{lemma}[{\!\cite[Lemma 5.5]{AG08}}]\label{LM}
    When $m=2$, the class $\wpsi_i = 0$ in $A^1(\overline{\mathcal{M}}_{0,K_2 * E_{n}})$ for all $i = 1,...,n$.
\end{lemma}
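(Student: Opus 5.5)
Since $\overline{\mathcal{M}}_{0,K_2 * E_n}$ is smooth and proper, I would show $\wpsi_i = 0$ by exhibiting a trivialization of the cotangent line bundle $\mathbb{L}_i$ at the light marking $p_i$. Label the two heavy markings $0$ and $\infty$. Every $(K_2*E_n)$-stable curve is a chain of rational components $C_1\cup\cdots\cup C_r$, with $p_0\in C_1$ and $p_\infty\in C_r$. Each component must be stable: an end component carries one node, so it needs an edge among its markings, which forces it to contain a heavy marking together with some light marking. Middle components carry two nodes and need at least one light marking. Hence each component $C_j$ has exactly two special points that are heavy markings or nodes; call them $0_j$ and $\infty_j$.

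The light marking $p_i$ lies on some component $C_j$ in the smooth locus away from $0_j,\infty_j$. I would choose the coordinate $z$ on $C_j$ with $z(0_j)=0$ and $z(\infty_j)=\infty$. Then the logarithmic form $dz/z$ is canonical up to the scaling $z\mapsto\lambda z$, which leaves it unchanged, and it is nonzero at $p_i$. So $(dz/z)|_{p_i}$ gives a nowhere-vanishing section of $\mathbb{L}_i$. In families I would make this precise as follows. Let $\pi\colon\mathcal{C}\to\overline{\mathcal{M}}$ be the universal curve with sections $\sigma_0,\sigma_\infty$. Then $\omega_\pi(\sigma_0+\sigma_\infty)$ restricts on each component to $\omega_{\mathbb{P}^1}(0_j+\infty_j)\simeq\mathcal{O}$, because at the nodes the residues are opposite.

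I would therefore show that $\omega_\pi(\sigma_0+\sigma_\infty)$ is trivial on every fibre. Its degree is $0$ on each component, and the chain is a tree of $\mathbb{P}^1$'s with trivial restrictions, so it is fibrewise trivial. By cohomology and base change, $\pi_*\omega_\pi(\sigma_0+\sigma_\infty)$ is a line bundle $N$ on the base and $\omega_\pi(\sigma_0+\sigma_\infty)\simeq\pi^*N$. Since $p_i$ avoids $\sigma_0$ and $\sigma_\infty$, we get $\mathbb{L}_i=\sigma_i^*\omega_\pi=\sigma_i^*\omega_\pi(\sigma_0+\sigma_\infty)\simeq N$. Hence $\wpsi_i=c_1(N)$, and this class is independent of $i$.

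The main obstacle is identifying $N$ as trivial. Residue at $\sigma_0$ gives a map $\pi_*\omega_\pi(\sigma_0+\sigma_\infty)\to\sigma_0^*\mathcal{O}(\sigma_0)\otimes\omega=\mathcal{O}$, since the residue map $\mathrm{Res}_{\sigma_0}$ is canonically valued in $\mathcal{O}$. This map is an isomorphism fibrewise, because the global log form has residue $1$ at $0_1$ after normalizing. Therefore $N\simeq\mathcal{O}$ and $\wpsi_i=0$.

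A backup argument, if the residue step is delicate, is to check numerically with the wall-crossing formula (Theorem \ref{Theorem:wallcross}) that $\wpsi_i\cdot\beta=0$ for a spanning set of curve classes. The line-bundle argument is cleaner, so I would pursue it first.
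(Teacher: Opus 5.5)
Your argument is correct, and it takes a genuinely different route from the paper. The paper handles one light marking at a time through the reduction morphism $\rho\colon \overline{\mathcal{M}}_{0,K_2*E_n}\to\overline{\mathcal{M}}_{0,K_3\sqcup E_{n-1}}\simeq(\Pone)^{n-1}$, which keeps the triangle on $1,P,Q$ and isolates the other light vertices. Over the target, the universal curve is the trivial $\Pone$-bundle and $p_1,p_P,p_Q$ are constant sections, so $\psi_1=0$ there. The paper then concludes $\wpsi_1=\rho^*\psi_1$, since the component carrying $p_1$ is never contracted; it justifies this by noting that independent sets containing $1$ are unchanged.

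You instead stay on the universal curve of the Losev--Manin space itself. There $\omega_\pi(\sigma_0+\sigma_\infty)$ has degree $0$ on every component, so it equals $\pi^*N$, and adjunction at $\sigma_0$ gives $N\simeq\sigma_0^*\omega_\pi(\sigma_0)\simeq\mathcal{O}$. Since light sections avoid the heavy ones, each $\mathbb{L}_i\simeq N$ is trivial.

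What each approach buys:
\begin{itemize}
\item Your version is intrinsic and uniform in $i$. It produces a canonical trivialization (the log form with residues $\pm1$ at the heavy points) and avoids the pullback compatibility $\wpsi_1=\rho^*\psi_1$, which the paper only sketches.
\item The paper's version fits its reduction-morphism and wall-crossing framework. It also reuses the explicit trivial-bundle geometry of $(\Pone)^{n-1}$, which it needs again in Proposition~\ref{lem_two_colour}.
\end{itemize}

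One point to tighten: you assert every fibre is a chain but only examine end and middle components. There are two easy fixes:
\begin{itemize}
\item A tree with a vertex of valence $\geq 3$ has at least three leaves, and each leaf must carry one of only two heavy markings.
\item Alternatively, skip the chain structure. For a component $C_j$ with $k_j$ nodes and $h_j$ heavy points, the degree of $\omega_\pi(\sigma_0+\sigma_\infty)|_{C_j}$ is $k_j+h_j-2\geq 0$ by $G$-stability, while the total degree is $0$. Hence every component has degree $0$.
\end{itemize}
The numerical wall-crossing backup is unnecessary.
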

\begin{proof}
    Without loss of generality, we prove this statement for $i=1$. There is a proper birational morphism: 
    \begin{align*}
        \rho\colon \overline{\mathcal{M}}_{0,K_2 * E_{n}} \longrightarrow (\mathbb{P}^1)^{n-1}.
    \end{align*}
    Which corresponds to the reduction of graphs from $K_2 * E_{n}$ to $K_3 \sqcup E_{n-1}$ as shown below:

    \[
    \scalebox{0.75}{%
\begin{tikzpicture}[
  every node/.style={circle, fill=black, inner sep=2pt},
  label distance=2pt
]

\begin{scope}[local bounding box=L]
  \node[label=above:$P$] (P) at (0,2) {};
  \node[label=above:$Q$] (Q) at (1.6,2) {};

  \node[label=below:$1$] (1) at (-0.9,0) {};
  \node[label=below:$2$] (2) at (0,0) {};
  \node[label=below:$3$] (3) at (0.9,0) {};
  \node[fill=none, inner sep=0pt] (dots) at (1.8,0) {$\cdots$};
  \node[label=below:$n$] (n) at (2.7,0) {};

  \draw (P)--(Q);

  \foreach \x in {1,2,3,n} {
    \draw (P)--(\x);
    \draw (Q)--(\x);
  }
\end{scope}

\begin{scope}[xshift=6.4cm, local bounding box=R]
  \node[label=above:$P$] (P2) at (0,2) {};
  \node[label=above:$Q$] (Q2) at (1.6,2) {};

  \node[label=below:$1$] (b1) at (-0.9,0) {};
  \node[label=below:$2$] (b2) at (0,0) {};
  \node[label=below:$3$] (b3) at (0.9,0) {};
  \node[fill=none, inner sep=0pt] (dots2) at (1.8,0) {$\cdots$};
  \node[label=below:$n$] (bn) at (2.7,0) {};

  \draw (P2)--(Q2);
  \draw (P2)--(b1);
  \draw (Q2)--(b1);
\end{scope}

\draw[->, thick] 
  ($(L.east)+(0.5,0)$) -- ($(R.west)+(-0.5,0)$);

\end{tikzpicture} 
}
\]

    We now claim that the universal curve of $\overline{\mathcal{M}}_{0, K_3 \sqcup E_{n-1}} \simeq (\mathbb{P}^1)^{n-1}$ is simply the trivial $\Pone$-bundle. Recall from Example \ref{Ex_Gra}, all $(K_3 \sqcup E_{n-1})$-stable rational curves are smooth, thus the universal curve is a $\Pone$-bundle. The three sections that correspond to the three vertices of $K_3$ are disjoint, thus trivialises this $\Pone$-bundle. Immediately we get $\psi_{1,P,Q} = 0$ as they correspond to constant sections $0,1,\infty$. Other $\psi$-classes on this space will be studied later in Proposition \ref{lem_two_colour}.

    The proposition now follows from observing that $\wpsi_1 = \rho^* \psi_1 = 0 $, which graphically corresponds to the fact that independent sets containing the vertex $1$ are unchanged after the graph reduction. 
\end{proof}
\begin{remark}
    In terms of Hassett spaces, the map $\rho$ corresponds to the reduction map (\!\cite[Theorem 4.1]{Hassett}) associated to the weight reduction: 
    \begin{align*}
        (1,1,\varepsilon,\varepsilon,...,\varepsilon) \geq (1-\frac{\varepsilon}{2}, 1-\frac{\varepsilon}{2}, \varepsilon, \frac{\varepsilon}{4(n-1)},...,\frac{\varepsilon}{4(n-1)}), \quad \text{such that } n\varepsilon \leq 1.
    \end{align*}
        One can check the Hassett space associated to the latter weight is exactly $(\mathbb{P}^1)^{n-1}$ (cf. \cite[p. 29]{Hassett}). 
\end{remark}
To recap, Lemma \ref{lem:heavy} resolves the case involving exclusively $\psi$-classes on heavy markings. Furthermore, Proposition~\ref{LM_psi} and Lemma~\ref{LM} effectively solve the problem of computing intersection numbers of $\psi$-classes over Losev--Manin spaces; indeed, the integrals studied in Proposition~\ref{LM_psi} are the only non-trivial ones. We now study the case when $m>2$ and $\psi$-classes with positive exponents are all concentrated on the light markings over arbitrary heavy/light spaces i.e. only involving $\wpsi$-classes. Such intersection numbers are closely related to the intersection numbers of Miller--Morita--Mumford $\kappa$-classes. 

\begin{lemma}[\!\cite{AG08,Pan12}]\label{psi_kappa} Suppose $\sum l_i=3g-3+n+m$ and $2g-2+m > 0$, then
\begin{align}
    \int_{\overline{\mathcal{M}}_{g,K_m * E_n}} \wpsi_1^{l_1} \cdots \wpsi_n^{l_n} = \int_{\overline{\mathcal{M}}_{g,m}} \kappa_{l_1-1} \cdots \kappa_{l_n-1}.
\end{align} 
\end{lemma}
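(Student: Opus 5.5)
The plan is to reduce to Deligne--Mumford spaces using the wall-crossing formula (Theorem \ref{Theorem:wallcross}), push the result forward to $\overline{\mathcal{M}}_{g,m}$, and then cancel terms with a Stirling-number identity. Throughout, the light exponents $l_1,\dots,l_n$ are those of the statement, and we set $a_i := l_i - 1$.

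\textbf{Step 1: wall-crossing.} The graph is $K_m * E_n$. The heavy exponents are all $0$, and the light exponents are $l_i$. As in the proof of Lemma \ref{lem:heavy}, a $(K_m*E_n)$-stable partition $\mathcal{P}$ consists of:
\begin{itemize}
\item the $m$ heavy singletons, which contribute $\psi^{0-1+1}=\psi^0$;
\item an arbitrary set partition $\{B_1,\dots,B_t\}$ of the light markings $[n]$.
\end{itemize}
The light block $B_j$ receives the exponent
\[
\sum_{i\in B_j} l_i - |B_j| + 1 = \Bigl(\sum_{i\in B_j} a_i\Bigr) + 1 .
\]
Since $l(\mathcal{P}) = m+t$, the sign is $(-1)^{m+t+n+m} = (-1)^{n-t}$. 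Hence
\[
\int_{\overline{\mathcal{M}}_{g,K_m*E_n}} \wpsi_1^{l_1}\cdots \wpsi_n^{l_n} = \sum_{\{B_1,\dots,B_t\}\vdash [n]} (-1)^{n-t} \int_{\overline{\mathcal{M}}_{g,m+t}} \prod_{j=1}^t \psi_{m+j}^{a_{B_j}+1}, \qquad a_B := \sum_{i\in B} a_i .
\]

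\textbf{Step 2: push forward.} Push each term forward along the map $\overline{\mathcal{M}}_{g,m+t}\to\overline{\mathcal{M}}_{g,m}$ forgetting the $t$ block markings. This is legitimate because $2g-2+m>0$. We use the standard formula (Faber; Arbarello--Cornalba)
\[
\pi_*\Bigl(\prod_{j=1}^t \psi_{m+j}^{b_j+1}\Bigr) = \sum_{\sigma\in S_t} \prod_{c \text{ cycle of }\sigma} \kappa_{\sum_{j\in c} b_j},
\]
with the convention $\kappa_0 = 2g-2+m$. This is also where exponents $b_j+1=0$ enter, coming from blocks with $a_B=-1$, i.e. singletons with $l_i=0$. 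For these I would either invoke the string equation directly, or adopt the convention $\kappa_{-1}=0$. I would then check that this convention is consistent with the formula: the dimension constraint $\sum a_i = 3g-3+m$ makes the string-equation contributions match. I expect this bookkeeping to be the main technical nuisance.

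\textbf{Step 3: regroup and cancel.} A pair (light partition $\{B_j\}$, permutation $\sigma$ of its blocks) determines a coarser partition $Q$ of $[n]$: each block of $Q$ is the union of the blocks $B_j$ lying in one cycle of $\sigma$. The resulting product of $\kappa$'s is $\prod_{C\in Q}\kappa_{a_C}$, and it depends only on $Q$. Conversely, fixing $Q$, the data factor over the blocks $C\in Q$: for each $C$ we choose a set partition of $C$ into $k$ blocks together with a cyclic order on those blocks. Therefore the coefficient of $\prod_{C\in Q}\kappa_{a_C}$ is
\[
\prod_{C\in Q}\ \sum_{k} S(|C|,k)\,(-1)^{|C|-k}(k-1)!,
\]
where $S(q,k)$ denotes the Stirling numbers of the second kind. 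The classical identity $\sum_k S(q,k)(-1)^{k-1}(k-1)! = \delta_{q,1}$ comes from composing $\log(1+x)$ with $e^x-1$. It shows that each factor equals $\delta_{|C|,1}$. So only $Q$ = all singletons survives, and the total is $\int_{\overline{\mathcal{M}}_{g,m}}\kappa_{l_1-1}\cdots\kappa_{l_n-1}$, as claimed.

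I expect Step 3 to be routine. The main obstacle is the handling in Step 2 of light markings with $l_i=0$ (and of $\kappa_0$), i.e. making sure the pushforward formula and the stated identity use compatible conventions.
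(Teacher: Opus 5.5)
Your route differs from the paper's. The paper does not wall-cross here. It maps $\overline{\mathcal{M}}_{g,K_m*E_n}$ by $f=(f_1,\dots,f_n)$ to the $n$-fold fibre power of the universal curve over $\overline{\mathcal{M}}_{g,m}$ and uses $\wpsi_i=f_i^*\psi_i$, the projection formula and base change along the diagonal. Each factor then pushes forward separately to $\pi_*\psi^{l_i}=\kappa_{l_i-1}$, and $l_i=0$ is handled automatically by $\pi_*1=0$. You replace that geometric input with Theorem~\ref{Theorem:wallcross}, the Faber/Arbarello--Cornalba multi-pushforward formula, and a Stirling inversion (the same identity as Claim~\ref{claim_An}). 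When every $l_i\ge 1$, all block exponents $a_B+1$ are $\ge 1$ and the pushforward formula applies as stated. In that case your Steps 1--3 are a complete and correct proof.

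The gap is the zero-exponent case: the consistency check you propose in Step 2 would fail. With $\kappa_j:=0$ for $j<0$, the cycle formula is false once two forgotten markings carry $\psi^0$. For $\overline{\mathcal{M}}_{g,m+3}\to\overline{\mathcal{M}}_{g,m}$, two string equations give $\pi_*(\psi_{m+3}^{b+1})=\kappa_{b-2}$. The sum over $S_3$ instead gives $2\kappa_{b-2}$, from the two $3$-cycles, and the dimension constraint does not change this.

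Step 3 has a second problem: it needs the cycle formula for every set partition of $[n]$. This includes partitions with a block containing two markings with $l_i=0$. There the wall-crossing term is $0$ (negative exponent), but the cycle formula is not. Concretely, take $g=0$, $m=3$, $(l_1,l_2,l_3)=(0,0,3)$. The all-singletons term is $1$, but your formula gives $2$. The partition $\{1,2\},\{3\}$ contributes $0$, but your formula gives $-1$. These errors cancel only because the lemma is true, so the argument does not prove it.

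A smaller inaccuracy: blocks with $a_B=-1$ are not only singletons. Any block with one $l=0$ marking and all other markings having $l=1$ also has $a_B=-1$.

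If exactly one $l_i=0$, every block has $a_B\ge -1$, and the convention $\kappa_{-1}=0$ can be justified by induction on $t$ via the string equation. In general, split off the case $\min_i l_i=0$. There the right-hand side is $0$, and the left-hand side vanishes directly. Take a light vertex $\ell$ with $l_\ell=0$. Every heavy $h$ is dominant and keeps all its edges while you delete edges at $\ell$, so $NN(\ell)\cap NN(h)=\varnothing$ at each stage. Proposition~\ref{00_edge_stronger} therefore deletes the $m$ edges at $\ell$ one at a time. Once $\ell$ is isolated, the Isolated string equation \eqref{Isolated string} gives $0$.
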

\begin{proof}
    The case when $m=0$ was proved in \cite[Lemma 6.4]{AG08} and Pandharipande proved this statement for the loci containing curves of compact type \cite[Lemma
    3.1]{Pan12}. The ideas behind the two proofs are essentially the same, and can be easily adapted to prove our statement in full generality. For the sake of completeness, we lay out the proof here.

    We have a commutative diagram with a Cartesian square:
    \[
\begin{tikzcd}[row sep=huge, column sep=large]
\overline{\mathcal{M}}_{g,E_n * K_m}
\arrow[r, "f"]
\arrow[dr, "s\circ f"']
&
\underbrace{
\mathcal{C}_{g,m}
\times_{\overline{\mathcal{M}}_{g,m}}
\dots
\times_{\overline{\mathcal{M}}_{g,m}}
{\mathcal{C}}_{g,m}}
_{n\;\mathrm{times}}
\arrow[r, "{(\mathrm{pr}_1,\dots,\mathrm{pr}_n)}"]
\arrow[d, "s"']
\arrow[dr, phantom, "{\scriptstyle\Box}"{description, pos=0.3, xshift=0.52em}]
&
\mathcal{C}_{g,m} \times \cdots  \times \mathcal{C}_{g,m}
\arrow[d,"{(\pi,...,\pi)}"]
\\
&
\overline{\mathcal{M}}_{g,m}
\arrow[r, hook, "\Delta"']
&
\overline{\mathcal{M}}_{g,m} \times \cdots \times \overline{\mathcal{M}}_{g,m}
\end{tikzcd}
\]
        We now explain what the morphisms are:
        \begin{enumerate}
            \item For each light marking $p_{i=1,...,n}$, consider the forgetful morphism, $f_i$, which forgets all the other light markings except the $i$-th marking:
            \begin{align*}
                f_i \colon \overline{\mathcal{M}}_{g,E_n * K_m}  \longrightarrow \overline{\mathcal{M}}_{g,E_1 * K_m} = \overline{\mathcal{M}}_{g,K_{m+1}}.
            \end{align*}
            Recall $\overline{\mathcal{M}}_{g,K_{m+1}}$ is simply $\overline{\mathcal{M}}_{g,m+1}$, which in turn coincides with the universal curve $\pi: \mathcal{C}_{g,m} \rightarrow \overline{\mathcal{M}}_{g,m}$. So $f$ is simply the collection $(f_1,...,f_n)$. 
            \item $\mathrm{pr}_i$ is the projection to the $i$-th term of the fibre product and $s$ is the structure morphism. The composition $s \circ f $ is the forgetful morphism forgetting all the light markings and clearly we have $f_i = \mathrm{pr}_i \circ f$.
            \item $\Delta$ is the diagonal embedding. 
        \end{enumerate}
        The crucial observation is that $\wpsi_i = f_i^* \psi_i$, for all $i=1,..,n$. One way to see this is to consider the contracted loci of $f_i$. Alternatively, one can observe that $f$ is a proper birational morphism and, more specifically, a small resolution. Hence, by applying the projection formula to $f$ and chasing the Cartesian square, we get:
        \begin{align*}
            \int_{\overline{\mathcal{M}}_{g,E_n * K_m}} \wpsi_1^{l_1} \cdots \wpsi_n^{l_n} &= \int_{\mathcal{C}_{g,m} \times_{\overline{\mathcal{M}}_{g,m}} \dots \times_{\overline{\mathcal{M}}_{g,m}} \mathcal{C}_{g,m}}\mathrm{pr}_1^*\psi_1^{l_1} \cdots \mathrm{pr}_n^*\psi_n^{l_n} \\ 
            & = \int_{\overline{\mathcal{M}}_{g,m}} s_*\bigl( \mathrm{pr}_1^*\psi_1^{l_1} \cdots \mathrm{pr}_n^*\psi_n^{l_n} \bigl) \\ 
            & = \int_{\overline{\mathcal{M}}_{g,m}} \Delta^*(\pi_*\psi_1^{l_1}\boxtimes \cdots \boxtimes\pi_*\psi_n^{l_n}) \\
            & = \int_{\overline{\mathcal{M}}_{g,m}} \pi_*\psi_1^{l_1} \cdots \pi_*\psi_n^{l_n} \\
            & = \int_{\overline{\mathcal{M}}_{g,m}} \kappa_{l_1-1} \cdots \kappa_{l_n-1}.
        \end{align*}
        The last equality follows from the definition of $\kappa$-classes.  
\end{proof}
    \begin{remark}
            The morphism $f$ from the proof of Lemma \ref{psi_kappa} can also be viewed as a weight-reduction morphism from Hassett space with weights $(1^m, \varepsilon^n)$, to a 
            space with non-Hassett weights $(1^m, 0^n)$. See \cite[§5.2]{AG08} for a detailed discussion. So even though the wall-crossing formula wasn't used, Lemma \ref{psi_kappa} can still be seen as a wall-crossing phenomenon --- the \enquote{wall} here is the boundary of the space of Hassett stability conditions. 
    \end{remark}
Properties of intersection numbers of $\kappa$-classes were studied extensively in \cite{HighWP}, which in some sense concludes the study of the integrals only involving $\wpsi$-classes. 

In summary, we have now characterized the $\psi$-class integrals over heavy/light spaces for cases where the $\psi$-classes are supported exclusively on either the heavy markings or the light markings. Finally, we remark that arbitrary intersection numbers involving $\psi$-classes supported on both heavy and light markings still remain largely a mystery, except a combinatorial description is known in the case of genus zero \cite{SRI}.

\section{$\psi$-class integrals}

In this section, we study the graphical structure of $\psi$-class integrals on $\overline{\mathcal{M}}_{g,G}$. Our main result, Theorem~\ref{thm_del_con_psi}, establishes a deletion--contraction relation expressing integrals for $G$ in terms of integrals for $G-e$ and $G/e$. As applications, we derive $G$-dilaton and $G$-string equations; see Theorems~\ref{dilaton} and~\ref{string_complete}. We then discuss connections with $\kappa$-classes in Section \ref{sec:kappa}. Finally, in Section~\ref{subsec_unweight_graph}, we relate certain $\psi$-class integrals to classical graph invariants, namely evaluations of the chromatic polynomial.

\subsection{Main result: deletion--contraction for $\psi$-class integrals} Throughout this section, we fix a non-negative integer $g$ and a $g$-stable graph $G$.
\begin{theorem}[{Theorem \hyperref[thm:A]{A}}]\label{thm_del_con_psi}
    Let $e \in E(G)$ be an edge with endpoints $a$ and $b$. Suppose we are also given non-negative integers $k_1,...,k_{|V(G)|=n}$ such that $\sum k_i = 3g-3+n$, and $G-e$, $G/e$ are all $g$-stable graphs.
    \begin{enumerate}
        \item If $(k_a,k_b) \neq (0,0)$, we have
        \begin{align}\label{del_con_psi}
        \int_{\overline{\mathcal{M}}_{g,G}} \psi_1^{k_1} \cdots \psi_n^{k_n} = \int_{\overline{\mathcal{M}}_{g,G - e}} \psi_1^{k_1} \cdots \psi_n^{k_n} + \int_{\overline{\mathcal{M}}_{g,G/e}} \psi_1^{k_1} \cdots \psi_{ab}^{k_a+k_b-1} \cdots \psi_n^{k_n}. 
    \end{align}
    \item If $(k_a,k_b) = (0,0)$, we have
    \begin{align} \label{del_con_00psi}
        \int_{\overline{\mathcal{M}}_{g,G}} \psi_1^{k_1} \cdots \psi_n^{k_n} = \int_{\overline{\mathcal{M}}_{g,G - e}} \psi_1^{k_1} \cdots \psi_n^{k_n} + \int_{\overline{\mathcal{M}}_{g,G \odot e}} \psi_1^{k_1} \cdots \psi^0_{ab}\psi^0_{\widetilde{ab}}\cdots \psi_n^{k_n}. 
    \end{align}
    Where $G \odot e$ is the graph obtained from attaching a leaf to the vertex $ab$ of $G/e$. The vertex of this leaf is labelled as $\widetilde{ab}$.
    \end{enumerate}
\end{theorem}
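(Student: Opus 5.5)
The plan is to prove both parts by comparing the wall-crossing expansions of Theorem~\ref{Theorem:wallcross} for $G$, $G-e$ and $G/e$ (respectively $G\odot e$) term by term. Every graph involved is assumed $g$-stable, so the formula \eqref{wallcross} applies to each of them.

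The first step is to split $\mathfrak{P}_{G-e}$. A partition is $(G-e)$-stable if and only if either it is $G$-stable (then $a,b$ lie in different parts), or $a$ and $b$ lie in a common part $P$ with $P\setminus\{a,b\}\cup\{a\}$ and $P\setminus\{a,b\}\cup\{b\}$ independent in $G$. Call the second set $\mathfrak{P}^{ab}_{G-e}$. It is in bijection with $\mathfrak{P}_{G/e}$, by replacing $a,b$ with the single vertex $ab$. This uses that $ab$ is adjacent in $G/e$ exactly to the union of the neighbourhoods of $a$ and $b$. The bijection preserves the length $l$, and every part other than the one containing $ab$ is unchanged. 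Hence
\[
\int_{\overline{\mathcal{M}}_{g,G-e}} \prod\psi_i^{k_i} \;=\; \int_{\overline{\mathcal{M}}_{g,G}} \prod\psi_i^{k_i} \;+\; \sum_{\mathcal{P}\in\mathfrak{P}^{ab}_{G-e}} (\text{term of }\mathcal{P}),
\]
and it remains to identify the last sum with minus the $G/e$ integral in case (i), and with minus the $G\odot e$ integral in case (ii).

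For case (i), I would compare exponents directly. Let $\mathcal{P}\in\mathfrak{P}^{ab}_{G-e}$ have part $P\ni a,b$. In the corresponding $G/e$-partition this part has size $|P|-1$ and exponent sum $\alpha_P-1$, since $k_{ab}=k_a+k_b-1$. Its $\psi$-exponent is therefore $(\alpha_P-1)-(|P|-1)+1=\alpha_P-|P|+1$, the same as before, and all other exponents agree. The sign changes from $(-1)^{l+n}$ to $(-1)^{l+n-1}$, because $G/e$ has $n-1$ vertices. The extra sum is thus exactly $-\int_{\overline{\mathcal{M}}_{g,G/e}}$, which gives \eqref{del_con_psi}. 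This argument needs $k_a+k_b-1\ge 0$, which is precisely the hypothesis $(k_a,k_b)\neq(0,0)$.

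For case (ii), write a $(G\odot e)$-stable partition as a $G/e$-stable partition $R=\{R_1,\dots,R_l\}$ together with a placement of the leaf $\widetilde{ab}$. Either $\widetilde{ab}$ forms a singleton, or it joins some part $R_j$ not containing $ab$; $G\odot e$ has $n$ vertices. Fix $R$, and let $e_j$ be its exponents computed with $k_{ab}=0$.
\begin{itemize}
\item The singleton placement contributes $(-1)^{l+1+n}\int_{\overline{\mathcal{M}}_{g,l+1}}\psi^{e}\psi_{l+1}^0$.
\item Placing $\widetilde{ab}$ in $R_j$ lowers $e_j$ by one and contributes $(-1)^{l+n}\int_{\overline{\mathcal{M}}_{g,l}}\psi^{e-\delta_j}$.
\end{itemize}
Apply the classical string equation on $\overline{\mathcal{M}}_{g,l+1}$ to the singleton term. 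Its summands then cancel all the non-singleton placements, leaving only $(-1)^{l+n+1}\int\psi^{e-\delta_{ab}}$. The exponent at the $ab$-part is $\alpha_P-(|P|-1)+1-1=\alpha_P-|P|+1$, matching the $G-e$ term of the corresponding $\mathcal{P}\in\mathfrak{P}^{ab}_{G-e}$ with the opposite sign. Summing over $R$ gives \eqref{del_con_00psi}.

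I expect the main obstacle to be the boundary cases of this string-equation step. These are the unstable $(g,l)$, such as $g=0$ with $l\le 2$, or $g=1$ with $l=0$, where $\overline{\mathcal{M}}_{g,l}$ does not exist; there is also the convention that terms with negative exponents vanish. I would check that, in the situations allowed by $g$-stability of $G-e$, $G/e$ and $G\odot e$, the relevant terms are either zero for dimension or negativity reasons, or covered by the conventions of \eqref{wallcross}. The cleanest way to phrase the string equation is as a term-wise identity that also covers $\overline{\mathcal{M}}_{0,3}$ and $\overline{\mathcal{M}}_{1,1}$.
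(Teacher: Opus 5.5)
Your proposal is correct. Part (i) is the paper's argument: your bijection $\mathfrak{P}^{ab}_{G-e}\leftrightarrow\mathfrak{P}_{G/e}$ is Lemma \ref{tech}. Part (ii), however, follows a genuinely different route.

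The paper never expands the $G\odot e$ integral via \eqref{wallcross}. Instead it attaches a leaf $\widetilde{ab}$ to $a$ to form $G^+$, with $\psi_a^1\psi_{\widetilde{ab}}^0$, and applies (i) twice:
\begin{enumerate}
\item To the leaf edge $a\widetilde{ab}$. The deletion term vanishes by the isolated string equation (Theorem \ref{Thm_cone_dilaton}), and the contraction term returns $\int_{\overline{\mathcal{M}}_{g,G}}$.
\item To $e$. Here $G^+/e=G\odot e$, and $G^+-e$ collapses to $G-e$ by the same leaf argument.
\end{enumerate}
So in the paper, (ii) is a formal consequence of (i) plus one geometric input on graphically stable spaces, namely the universal-curve description of Section \ref{universal}.

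Your argument stays entirely inside the wall-crossing sums. Its only extra input is the classical string equation on $\overline{\mathcal{M}}_{g,l+1}$. It is a little longer, but it does not depend on Section \ref{universal}. It also shows why $G\odot e$ is the right substitute for the meaningless $\psi_{ab}^{-1}$ on $G/e$. For each $R\in\mathfrak{P}_{G/e}$, the sum over placements of $\widetilde{ab}$ collapses to a single term. In that term the part containing $ab$ carries exponent $\sum_{K}k-|K|-1$, which is exactly what the wall-crossing formula would give formally with $k_{ab}=-1$. By contrast, Example \ref{example:counter00} shows that setting $\psi^{-1}=0$ on $G/e$ is wrong.

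The boundary cases you flag do not actually arise:
\begin{itemize}
\item $G\odot e$ is $g$-stable, because it has $n$ vertices and contains $G/e$ as an induced subgraph.
\item Every $R\in\mathfrak{P}_{G/e}$ has $l(R)\ge 3$ when $g=0$, since $G/e$ is non-bipartite, and $l(R)\ge 1$ otherwise. So $\overline{\mathcal{M}}_{g,l}$ is always stable.
\item The exponents satisfy $\sum_j e_j=3g-2+l$, which is the correct dimension.
\end{itemize}
The one point worth stating explicitly is that the string equation needs all $e_j\ge 0$. If some $e_j<0$, then every term attached to $R$ contains a negative exponent and vanishes. This covers all placements of $\widetilde{ab}$ and the matching $G-e$ term, so your term-wise identity holds trivially in that case.
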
   
\begin{remark}
    We remark that \eqref{del_con_psi} can also be written in the form \eqref{del_con_00psi}, but that the formulation \eqref{del_con_psi} is easier to use and moreover the proof proceeds by first proving \eqref{del_con_psi} then deducing \eqref{del_con_00psi}. 
\end{remark}

\begin{example}\label{example:counter00} One might hope that \eqref{del_con_psi} continues to hold uniformly in both cases if one interprets $\psi^{-1}$ as zero. The following example shows that this naive convention fails. Consider the graphs below and let $e=13$. Then $(k_1,k_3)=(0,0)$, so we are in the exceptional case.

\begin{figure}[ht]
\centering
\resizebox{\textwidth}{!}{%
\begin{tikzpicture}[
  vertex/.style={circle,fill=black,inner sep=0pt,minimum size=7pt},
  edge/.style={line width=0.45pt},
  title/.style={font=\large}
]

\def\topY{1.2}
\def\midY{0}
\def\botY{-1.2}
\def\titleY{3.0}

\begin{scope}[shift={(0,0)}]
\node[title] at (0,\titleY) {$G$};

\node[vertex,label=above:{\large$\psi^0_1$}]      (g1) at (0,\topY) {};
\node[vertex,label=right:{\large$\psi^0_2$}]      (g2) at (1.4,\midY) {};
\node[vertex,label=below:{\large$\psi^0_3$}]      (g3) at (0,\botY) {};
\node[vertex,label=above left:{\large$\psi^0_4$}] (g4) at (-1.4,\midY) {};
\node[vertex,label=left:{\large$\psi^2_5$}]       (g5) at (-2.8,\midY) {};

\draw[edge]
(g5)--(g4)
(g4)--(g1)
(g1)--(g2)
(g2)--(g3)
(g3)--(g4)
(g4)--(g2)
(g1)--(g3);
\end{scope}

\begin{scope}[shift={(6.7,0)}]
\node[title] at (0,\titleY) {$G-e$};

\node[vertex,label=above:{\large$\psi^0_1$}]      (h1) at (0,\topY) {};
\node[vertex,label=right:{\large$\psi^0_2$}]      (h2) at (1.4,\midY) {};
\node[vertex,label=below:{\large$\psi^0_3$}]      (h3) at (0,\botY) {};
\node[vertex,label=above left:{\large$\psi^0_4$}] (h4) at (-1.4,\midY) {};
\node[vertex,label=left:{\large$\psi^2_5$}]       (h5) at (-2.8,\midY) {};

\draw[edge]
(h5)--(h4)
(h4)--(h1)
(h1)--(h2)
(h2)--(h3)
(h3)--(h4)
(h4)--(h2);
\end{scope}

\begin{scope}[shift={(13.4,0)}]
\node[title] at (0,\titleY) {$G/e$};

\node[vertex,label=above:{\large$\psi^{-1}_{13}$}] (k13) at (0,\topY) {};
\node[vertex,label=right:{\large$\psi^0_2$}]    (k2)  at (1.4,\midY) {};
\node[vertex,label=below:{\large$\psi^0_4$}]    (k4)  at (-1.4,\midY) {};
\node[vertex,label=left:{\large$\psi^2_5$}]     (k5)  at (-2.8,\midY) {};

\draw[edge]
(k5)--(k4)
(k4)--(k2)
(k4)--(k13)
(k13)--(k2);
\end{scope}

\begin{scope}[shift={(20.1,0)}]
\node[title] at (0,\titleY) {$G \odot e$};

\begin{scope}[shift={(0,-0.45)}]
\node[vertex,label=above:{\large$\psi^0_{\widetilde{13}}$}] (lleaf) at (0,2.25) {};
\node[vertex,label=left:{\large$\psi^0_{13}$}]              (l13)   at (0,\topY) {};
\node[vertex,label=right:{\large$\psi^0_2$}]                (l2)    at (1.4,\midY) {};
\node[vertex,label=below:{\large$\psi^0_4$}]                (l4)    at (-1.4,\midY) {};
\node[vertex,label=left:{\large$\psi^2_5$}]                 (l5)    at (-2.8,\midY) {};

\draw[edge]
(lleaf)--(l13)
(l5)--(l4)
(l4)--(l2)
(l4)--(l13)
(l13)--(l2);
\end{scope}

\end{scope}

\end{tikzpicture}%
}
\end{figure}

 We can compute the integrals associated to these graphs using the wall-crossing formula (\ref{wallcross}) as:
\begin{align*}
    \int_{\overline{\mathcal{M}}_{0,G}} \psi_5^2 = -2;\quad
     \int_{\overline{\mathcal{M}}_{0,G-e}} \psi_5^2 = -1;\quad
     \int_{\overline{\mathcal{M}}_{0,G/e}} \psi_5^2\psi^{-1}_{13} =0;\quad
     \int_{\overline{\mathcal{M}}_{0,G\odot e}} \psi_5^2 =-1 .
\end{align*}
Therefore, we see that (\ref{del_con_psi}) does not hold naively in this case. Instead, it is necessary to modify $G/e$ to $G \odot e$ and implement (\ref{del_con_00psi}). However, under certain circumstances, (\ref{del_con_psi}) does hold for $(0,0)$-edges as well, see Proposition \ref{00_edge_stronger}.

\end{example}   

\begin{remark}
        We discuss why it is necessary to assume $G$, $G-e$ and $G/e$ to all be $g$-stable, instead of just declaring the integral vanishes when the graph is unstable. Using the wall-crossing formula $(\ref{wallcross})$, one can show the following integral over the cyclic graph $C_5$ does not vanish:
        \begin{align*}
            \int_{\overline{\mathcal{M}}_{0,C_5}} \psi_1^2 = -1.
        \end{align*}
        However, if we delete an edge, say $e=12$, the graph becomes the path graph $P_5$, and if we contract an edge, it becomes $C_4$. In either case, the graph we obtain is bipartite, hence not $0$-stable. So if we set 
        \[\int_{\overline{\mathcal{M}}_{0,P_4}} \psi_{12} =\int_{\overline{\mathcal{M}}_{0,C_4}} \psi_{12} = 0,\]
        then \eqref{del_con_psi} fails as the original integral over $C_5$ is non-zero.
\end{remark}

\subsubsection{Proof of Theorem \ref{thm_del_con_psi}} Before proving the theorem, we introduce some notation and prove a technical lemma. Let $\inde(G)$ denote the independence complex of $G$, as discussed in Section \ref{sec: background_mgraph}. For a vertex $v$ of $\inde(G)$, we define the link of $v$ to be
\[
\lk_{\inde(G)}(v):=\{\sigma\in\inde(G) : v\notin \sigma \text{ and } \sigma\cup\{v\}\in \inde(G)\}.
\]
We also write $N_G(i)\subseteq [n]\setminus\{i\}$ for the set of neighbours of the vertex $i$ in $G$.
\begin{lemma}\label{tech}
    Let $e$ be an edge joining $a$ and $b$. As abstract simplicial complexes over ground set $[n] \setminus \{a,b\}$, there is a natural identification
    \[\lk_{\inde(G)}(a) \cap \lk_{\inde(G)}(b) = \lk_{\inde(G/e)}(ab).\] 
    In particular, there is a bijection between their faces. 
\end{lemma}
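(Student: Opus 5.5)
The plan is to unwind both sides into conditions on subsets $\sigma \subseteq [n]\setminus\{a,b\}$ and check that the conditions coincide. The identification of ground sets is the natural one: the vertex set of $G/e$ is $([n]\setminus\{a,b\})\sqcup\{ab\}$, so $\lk_{\inde(G/e)}(ab)$ lives on $[n]\setminus\{a,b\}$, as does the left-hand side. For the left-hand side, $a\notin\sigma$ and $b\notin\sigma$ are automatic by the choice of ground set, and $\sigma\cup\{a\}$, $\sigma\cup\{b\}$ are both independent in $G$.

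Step 1: characterize the left-hand side. A set $\sigma\subseteq[n]\setminus\{a,b\}$ lies in $\lk_{\inde(G)}(a)\cap\lk_{\inde(G)}(b)$ if and only if
\begin{enumerate}
\item $\sigma$ is independent in $G$;
\item $\sigma\cap N_G(a)=\varnothing$;
\item $\sigma\cap N_G(b)=\varnothing$.
\end{enumerate}
This is immediate, since $\sigma\cup\{a\}$ is independent exactly when $\sigma$ is independent and contains no neighbour of $a$, and similarly for $b$.

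Step 2: characterize the right-hand side. A set $\sigma$ lies in $\lk_{\inde(G/e)}(ab)$ if and only if $\sigma$ is independent in $G/e$ and $\sigma\cap N_{G/e}(ab)=\varnothing$. By the definition of contraction, two vertices $u,v\neq a,b$ are adjacent in $G/e$ exactly when they are adjacent in $G$; merging $a$ and $b$ only affects edges touching $ab$, and the deleted duplicate edges and self-loops are irrelevant. Hence $\sigma$ is independent in $G/e$ iff it is independent in $G$. Moreover
\[
N_{G/e}(ab)=\bigl(N_G(a)\cup N_G(b)\bigr)\setminus\{a,b\},
\]
so the second condition is equivalent to conditions (ii) and (iii) of Step 1.

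Step 3: conclude. Steps 1 and 2 show that the two families of subsets of $[n]\setminus\{a,b\}$ are equal. This equality preserves inclusion, so it is an identification of simplicial complexes and in particular a bijection on faces. Since $\lk_{\inde(G/e)}(ab)$ has the empty face, the empty face matches on both sides.

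Nothing here is a real obstacle. The only care needed is the bookkeeping in Step 2: checking that contraction neither creates nor destroys adjacencies among vertices other than $a$ and $b$, and that the neighbourhood of the merged vertex is exactly the union of the old neighbourhoods with $a$ and $b$ removed.
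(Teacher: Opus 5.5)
Your proposal is correct and follows essentially the same route as the paper. Both sides reduce to the conditions that $\sigma$ is independent in $G-\{a,b\}$ and avoids $N_G(a)\cup N_G(b)$, using that contraction leaves adjacencies away from $ab$ unchanged and that $N_{G/e}(ab)=(N_G(a)\cup N_G(b))\setminus\{a,b\}$.
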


\begin{proof} We prove this lemma by showing           both sides have the same membership criteria for faces.
    Choose $L \subseteq [n] \setminus \{a,b\}$ such that $L \in \lk_{\inde(G)}(a)$ and $
    L\in \lk_{\inde(G)}(b)$. By definition this means $L$ is independent in $G - \{a,b\}$ with no vertices of $L$ adjacent to either $a$ or $b$. That is, 
    \begin{align}\label{eq:membership_L}
        L \in \lk_{\inde(G)}(a) \cap \lk_{\inde(G)}(b) &\iff L \text{ is independent in $G - \{a,b\}$ and } L \cap N_G(a) =  L \cap N_G(b) = \varnothing \\
        &\iff L \text{ is independent in $G - \{a,b\}$ and } L \cap (N_G(a) \cup  N_G(b)) = \varnothing. \nonumber
    \end{align}
    Now we take $K \subseteq [n] \setminus \{a,b\}$ such that $K \in \lk_{\inde(G/e)}(ab)$. Similarly, this means $K$ is independent in $G/e - \{ab\}$ and no vertices of $K$ are adjacent to $ab$. Notice that away from $ab$, the graph $G/e$ has exactly the same edges as $G$. So being independent in $G/e - \{ab\}$ is same as being independent in $G - \{a,b\}$:
    \begin{align*}
         K \in \lk_{\inde(G/e)}(ab) \iff K \text{ is independent in $G - \{a,b\}$ and } K \cap N_{G/e}(ab) = \varnothing.
    \end{align*}
    Now we claim there is a natural identification: 
    \begin{align*}
        N_{G/e}(ab) = (N_G(a) \cup N_G(b)) \setminus \{a,b\}.
    \end{align*}
    Indeed, $x$ is adjacent to $ab$ in $G/e$ if and only if $x$ is adjacent to $a$ or to $b$ in $G$. Since $K$ avoids $a,b$, we have 
    \begin{align*}
         K \in \lk_{\inde(G/e)}(ab) &\iff K \text{ is independent in $G - \{a,b\}$ and } K \cap N_{G/e}(ab) = \varnothing \\
         & \iff K \text{ is independent in $G - \{a,b\}$ and } K \cap (N_G(a) \cup N_G(b)) = \varnothing \\
         & \iff K \in \lk_{\inde(G)}(a) \cap \lk_{\inde(G)}(b);
    \end{align*}
    where last equivalence follows from the discussion about $L \in \lk_{\inde(G)}(a) \cap \lk_{\inde(G)}(b)$ \eqref{eq:membership_L}.
\end{proof}

We are now ready to prove Theorem \ref{thm_del_con_psi}. 
\begin{proof}[Proof of Theorem \ref{thm_del_con_psi}]
    We will first prove (\ref{del_con_psi}), which is when $(k_a,k_b) \neq (0,0)$. The idea is simply to compute the right hand side of (\ref{del_con_psi}) using the wall-crossing formula (\ref{wallcross}) and show it equals to the left hand side. We first proceed with the integral over $\overline{\mathcal{M}}_{g,G - e}$:
    \begin{align*}
        \int_{\overline{\mathcal{M}}_{g,G - e}} \psi_1^{k_1} \cdots \psi_n^{k_n} = \sum_{\mathcal{P} \in \mathfrak{P}_{G-e}}(-1)^{l(\mathcal{P})+n}\int_{\overline{\mathcal{M}}_{g,l(\mathcal{P})}} \psi_1^{\alpha_1-|P_1|+1} \cdots \psi_{l(P)}^{\alpha_{l(\mathcal{P})}-|P_{l(\mathcal{P})}|+1}.
        \end{align*}
         Let $\mathcal{P}$ be a partition from $\mathfrak{P}_{G-e}$, then $\mathcal{P}$ falls into two cases:
        \begin{enumerate}
    \item The vertices $a$ and $b$ lie in two different parts of $\mathcal P$.
    In this case, adding the edge $e$ does not violate the independence of any
    part. Hence $\mathcal P$ is a $G$-stable partition.

    \item The vertices $a$ and $b$ lie in the same part. After relabelling the
    parts, assume that $a,b\in P_1$. Then $P_i\in \mathrm{Ind}(G)$ for every
    $i\neq 1$, while $P_1\in \mathrm{Ind}(G-e)$ but $P_1\notin \mathrm{Ind}(G)$. The remaining vertices in $P_1$, together with $a$ and $b$, must form an independent set in $G-e$. Equivalently, both $P_1\setminus\{a\}$ and $P_1\setminus\{b\}$ must form independent sets in $G$. In other words, we can write $P_1$ as:
    \[P_1=\{a,b\}\sqcup L, \quad\text{where } L \in \lk_{\inde(G)}(a) \cap \lk_{\inde(G)}(b).\]
\end{enumerate}

        To summarize we now have,
        \begin{align*}
        \int_{\overline{\mathcal{M}}_{g,G - e}} \psi_1^{k_1} \cdots \psi_n^{k_n} &= \sum_{\mathcal{P} \in \mathfrak{P}_{G}}(-1)^{l(\mathcal{P})+n}\int_{\overline{\mathcal{M}}_{g,l(\mathcal{P})}} \psi_1^{\alpha_1-|P_1|+1} \cdots \psi_{l(\mathcal{P})}^{\alpha_{l(\mathcal{P})}-|P_{l(\mathcal{P})}|+1} \\
        &  + \sum_{\substack{\mathcal{P} \vdash [n]; P_1 = \{a,b\} \sqcup L ;\\ \text{where $L \in \lk_{\inde(G)}(a) \cap \lk_{\inde(G)}(b)$}; \\ P_{i \neq 1} \in \mathrm{Ind}(G)}}(-1)^{l(\mathcal{P})+n}\int_{\overline{\mathcal{M}}_{g,l(\mathcal{P})}} \psi_1^{\alpha_1-|P_1|+1} \cdots \psi_{l(\mathcal{P})}^{\alpha_{l(\mathcal{P})}-|P_{l(\mathcal{P})}|+1}.
        \end{align*}
         Notice that by the wall-crossing formula, the first term above is simply the integral over $\overline{\mathcal{M}}_{g,G}$. For the second term, we can rewrite the power of $\psi_1$ and get:
        \begin{align*}
        \int_{\overline{\mathcal{M}}_{g,G - e}} \psi_1^{k_1} \cdots \psi_n^{k_n} & = \int_{\overline{\mathcal{M}}_{g,G}} \psi_1^{k_1} \cdots \psi_n^{k_n} \\
        & \hspace{-1.8cm} + \sum_{\substack{\mathcal{P} \vdash [n]; P_1 = \{a,b\} \sqcup L ;\\ \text{where $L \in \lk_{\inde(G)}(a) \cap \lk_{\inde(G)}(b)$}; \\ P_{i \neq 1} \in \mathrm{Ind}(G)}}(-1)^{l(\mathcal{P})+n}\int_{\overline{\mathcal{M}}_{g,l(\mathcal{P})}} \psi_1^{k_a+k_b + \sum_{l\in L}k_l-|L|-2+1} \cdots \psi_{l(\mathcal{P})}^{\alpha_{l(\mathcal{P})}-|P_{l(\mathcal{P})}|+1}. 
        \end{align*}
        For simplicity, we denote the second term in the above expression as $\mathcal{S}$ i.e.
        \begin{align}\label{second_term}
            \int_{\overline{\mathcal{M}}_{g,G - e}} \psi_1^{k_1} \cdots \psi_n^{k_n} = \int_{\overline{\mathcal{M}}_{g,G}} \psi_1^{k_1} \cdots \psi_n^{k_n}  + \mathcal{S}.
        \end{align}
        We will show $\mathcal{S}$ cancels out with the remaining integral over $\overline{\mathcal{M}}_{g,G/e}$, hence proving our theorem. Again using the wall-crossing formula, we now compute the integral over $\overline{\mathcal{M}}_{g,G/e}$, which has vertex set $V(G/e)=([n]\setminus\{a,b\})\sqcup\{ab\}$:
        \begin{align}\label{eq_proof_G/e}
            \int_{\overline{\mathcal{M}}_{g,G/e}} \psi_1^{k_1} \cdots \psi_{ab}^{k_a+k_b-1} \cdots \psi_n^{k_n} = 
             \sum_{\mathcal{Q} \in \mathfrak{P}_{G/e}}(-1)^{l(\mathcal{Q})+n-1}\int_{\overline{\mathcal{M}}_{g,l(\mathcal{Q})}} \psi_1^{\alpha_1-|Q_1|+1} \cdots \psi_{l(Q)}^{\alpha_{l(\mathcal{Q})}-|Q_{l(\mathcal{Q})}|+1}.
        \end{align}
        Without loss of generality, we assume the vertex $ab$ falls into the part $Q_1$ for all $\mathcal{Q} \in \mathfrak{P}_{G/e}$. By a similar argument as before, we may rewrite $Q_1$ as $\{ab\} \sqcup K$, for some $K \in \lk_{\inde(G/e)}(ab)$: 
        \begin{align*}
           \int_{\overline{\mathcal{M}}_{g,G/e}} \psi_1^{k_1} \cdots \psi_{ab}^{k_a+k_b-1} \cdots \psi_n^{k_n} & = \\ & \hspace{-3cm} \sum_{\substack{\mathcal{Q} \vdash V(G/e); Q_1 = \{ab\} \sqcup K ;\\ \text{where $K \in \lk_{\inde(G/e)}(ab)$}; \\ Q_{i \neq 1} \in \mathrm{Ind}(G)}}(-1)^{l(\mathcal{Q})+n-1}\int_{\overline{\mathcal{M}}_{g,l(\mathcal{Q})}} \psi_1^{k_a+k_b-1 + \sum_{l\in K}k_l-|K|-1+1} \cdots \psi_{l(Q)}^{\alpha_{l(\mathcal{Q})}-|Q_{l(\mathcal{Q})}|+1}.
        \end{align*}
        We now compare this sum with the sum $\mathcal{S}$ from (\ref{second_term}). Recall from Lemma \ref{tech} that there is a bijection between faces of $\lk_{\inde(G)}(a) \cap \lk_{\inde(G)}(b)$ and $\lk_{\inde(G/e)}(ab)$, so in both sums we are summing over the same partitions. The integrands in both sums are identical but the coefficients are different by $-1$. In other words, we have 
        \begin{align}\label{S_expression}
            \int_{\overline{\mathcal{M}}_{g,G/e}} \psi_1^{k_1} \cdots \psi_{ab}^{k_a+k_b-1} \cdots \psi_n^{k_n}  = -\mathcal{S}.
        \end{align}
        Finally, substitute (\ref{S_expression}) into (\ref{second_term}) and re-arrange, we obtained the first desired relation (\ref{del_con_psi}) in Theorem~\ref{thm_del_con_psi}.

        If $(k_a,k_b) = (0,0)$, we no longer have (\ref{eq_proof_G/e}). This is because the wall-crossing formula (\ref{wallcross}) no longer applies as the exponent of $\psi_{ab}$ is now negative. We prove a modified deletion--contraction relation $(\ref{del_con_00psi})$ in this case by replacing the integral:
        \begin{align*}
            \int_{\overline{\mathcal{M}}_{g,G}} \psi_1^{k_1} \cdots \psi^0_a\psi^0_b \cdots \cdots \psi_n^{k_n}
        \end{align*}
        with an integral to which the deletion–contraction relation \eqref{del_con_psi} from the previous case applies along the edge $e = ab$.

        Let $G^+$ be the graph obtained by attaching a leaf labelled as $\widetilde{ab}$ to $a$, and we assign a $\psi$-class to $a$. 
                \begin{figure}[ht]
\centering
\resizebox{0.5\textwidth}{!}{%
\begin{tikzpicture}[
  vertex/.style={circle,fill=black,inner sep=0pt,minimum size=6pt},
  edge/.style={line width=0.45pt},
  graphlabel/.style={font=\large}
]

\def\topY{1.2}
\def\midY{0}
\def\botY{-1.2}
\def\labelY{-2.0}

\begin{scope}[shift={(0,0)}]

\node[vertex,label=left:{\large$\psi_a^0$}] (g1) at (0,\topY) {};
\node[vertex] (g2) at (1.4,\midY) {};
\node[vertex,label=left:{\large$\psi_b^0$}] (g3) at (0,\botY) {};
\node[vertex] (g4) at (-1.4,\midY) {};
\node[vertex] (g5) at (-2.8,\midY) {};

\draw[edge]
(g5)--(g4)
(g4)--(g1)
(g1)--(g2)
(g2)--(g3)
(g3)--(g4)
(g4)--(g2)
(g1)--(g3);

\node[graphlabel] at (0,\labelY) {$G$};
\end{scope}

\begin{scope}[shift={(7.5,0)}]

\node[vertex,label=left:{\large$\psi^0_{\widetilde{ab}}$}] (h6) at (0,2.35) {};
\node[vertex,label=left:{\large$\psi_a^1$}] (h1) at (0,\topY) {};
\node[vertex] (h2) at (1.4,\midY) {};
\node[vertex,label=left:{\large$\psi_b^0$}] (h3) at (0,\botY) {};
\node[vertex] (h4) at (-1.4,\midY) {};
\node[vertex] (h5) at (-2.8,\midY) {};

\draw[edge]
(h6)--(h1)
(h5)--(h4)
(h4)--(h1)
(h1)--(h2)
(h2)--(h3)
(h3)--(h4)
(h4)--(h2)
(h1)--(h3);

\node[graphlabel] at (0,\labelY) {$G^+$};
\end{scope}

\end{tikzpicture}%
}

\label{fig:g-plus-example}
\end{figure}

        Now we can apply the previous deletion--contraction $(\ref{del_con_psi})$ to this leaf:
        \begin{align*}
            \int_{\overline{\mathcal{M}}_{g,G^+}} \psi_1^{k_1} \cdots \psi_{\widetilde{ab}}^{0}\psi^1_a \psi^0_b \cdots \psi_n^{k_n} &= \int_{\overline{\mathcal{M}}_{g,G \sqcup \{\widetilde{ab}\}}} \psi^{k_1}_1 \cdots \psi_{\widetilde{ab}}^{0}\psi^1_a \psi^0_b\cdots \psi^{k_n}_n + 
            \int_{\overline{\mathcal{M}}_{g,G / a  \widetilde{ab}}} \psi^{k_1}_1 \cdots \psi^0_{a\widetilde{ab}}\psi^0_b \cdots \psi^{k_n}_n
        \end{align*}
        The first integral vanishes by the Isolated string equation (\ref{Isolated string}) and we relabel the vertex $a\widetilde{ab}$ as $a$ in the second integral:
        \[          \int_{\overline{\mathcal{M}}_{g,G / a  \widetilde{ab}}} \psi^{k_1}_1 \cdots \psi^0_{a\widetilde{ab}}\psi^0_b \cdots \psi^{k_n}_n = \int_{\overline{\mathcal{M}}_{g,G}} \psi_1^{k_1} \cdots \psi^0_a \psi^0_b\cdots \cdots \psi_n^{k_n}. \]
         We see that this integral over $G^+$ is exactly the integral on the left-hand side of $(\ref{del_con_00psi})$. Also notice that in this integral over $G^+$, we have $(k_a,k_b) = (1,0)$ so we can apply the previous deletion--contraction relation (\ref{del_con_psi}) to the edge $e$ too, and get:
         \begin{align}\label{eq_proof00}
             \int_{\overline{\mathcal{M}}_{g,G^+}} \psi_1^{k_1} \cdots \psi_{\widetilde{ab}}^{0}\psi^1_a \psi^0_b \cdots \psi_n^{k_n} &= \int_{\overline{\mathcal{M}}_{g,G^+ - e}} \psi_1^{k_1} \cdots \psi_{\widetilde{ab}}^{0}\psi^1_a \psi^0_b \cdots \psi_n^{k_n} + \int_{\overline{\mathcal{M}}_{g,G^+ / e}} \psi_1^{k_1} \cdots \psi_{\widetilde{ab}}^{0} \psi^0_{ab} \cdots \psi_n^{k_n}.
         \end{align}
         Notice that $G^+ / e = G \odot e$, this second integral is exactly the integral over $G \odot e$ in $(\ref{del_con_00psi})$. By applying the deletion--contraction to the leaf and the Isolated string equation (\ref{Isolated string}) similarly as above, the first integral equals to the integral over $G-e$ in $(\ref{del_con_00psi})$:
         \begin{align*}
             \int_{\overline{\mathcal{M}}_{g,G^+ - e}} \psi_1^{k_1} \cdots \psi_{\widetilde{ab}}^{0}\psi^1_a \psi^0_b \cdots \psi_n^{k_n} = \int_{\overline{\mathcal{M}}_{g,G - e}} \psi_1^{k_1} \cdots \psi^0_a \psi^0_b \cdots \psi_n^{k_n}.
         \end{align*}
         Thus, (\ref{eq_proof00}) is exactly the desired statement $(\ref{del_con_00psi})$ and we are done. 
\end{proof}

\subsubsection{Deletion of $(0,0)$-edge} \label{subsec_del_00}
We show that the $\psi$-class integral sometimes does not change when we delete an edge $e = ab$ and $(k_a,k_b)= (0,0)$. 

\begin{definition}
    Let $NN_G(v)$ be the set of non-neighbours of the vertex $v$ in $G$. Alternatively, $NN_G(v)$ is also the $0$-skeleton of $\lk_{\inde(G)}(v)$.  
\end{definition}
\begin{proposition}\label{00_edge_stronger} Suppose $(k_a,k_b)=(0,0)$ and $k_{v} = 0$ for all $v \in NN_G(a) \cap NN_G(b)$, then we have
\begin{align*}
        \int_{\overline{\mathcal{M}}_{g,G}} \psi_1^{k_1} \cdots  \psi_n^{k_n} = \int_{\overline{\mathcal{M}}_{g,G - e}} \psi_1^{k_1} \cdots \psi_n^{k_n}.
    \end{align*}
     In particular, this holds if $NN_G(a) \cap NN_G(b) = \varnothing$.
\end{proposition}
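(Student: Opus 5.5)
We compare the two sides directly via the wall-crossing formula (Theorem \ref{Theorem:wallcross}), exactly as in the proof of Theorem \ref{thm_del_con_psi}. There we showed that
\[
\int_{\overline{\mathcal{M}}_{g,G-e}} \psi_1^{k_1}\cdots\psi_n^{k_n} = \int_{\overline{\mathcal{M}}_{g,G}} \psi_1^{k_1}\cdots\psi_n^{k_n} + \mathcal{S}.
\]
This decomposition only uses the splitting of $\mathfrak{P}_{G-e}$ into $\mathfrak{P}_G$ and the partitions with a part $P_1=\{a,b\}\sqcup L$, where $L\in \lk_{\inde(G)}(a)\cap\lk_{\inde(G)}(b)$. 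It does not use the hypothesis $(k_a,k_b)\neq(0,0)$. So it suffices to prove $\mathcal{S}=0$ under the present hypotheses.

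Each term of $\mathcal{S}$ contains the factor $\psi_1^{e_1}$, where
\[
e_1=\alpha_1-|P_1|+1 = k_a+k_b+\sum_{l\in L}k_l-|L|-1 .
\]
With $k_a=k_b=0$ this becomes $e_1=\sum_{l\in L}(k_l-1)-1$. Every $l\in L$ lies in $NN_G(a)\cap NN_G(b)$, because $L$ is in both links, so $L\cup\{a\}$ and $L\cup\{b\}$ are independent. By hypothesis $k_l=0$ for all such $l$, so $e_1=-|L|-1<0$.

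A negative exponent makes the corresponding Deligne--Mumford integral vanish. This is the convention implicit in the wall-crossing formula: terms with negative exponents are absent, which is exactly how the sum was truncated in Lemma \ref{lem:heavy}. Hence every summand of $\mathcal{S}$ is zero, so $\mathcal{S}=0$ and the claimed equality follows. In the special case $NN_G(a)\cap NN_G(b)=\varnothing$, the condition on the $k_v$ is vacuous; then the only choice is $L=\varnothing$, which gives $e_1=-1$ and the same conclusion.

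The only delicate step is whether this convention for negative exponents in \eqref{wallcross} is legitimate. I will check that the derivation of the wall-crossing formula in \cite{AG08,BlankersRenzo} produces only terms with nonnegative exponents, so that any term with a negative exponent should be read as zero. This is consistent with the paper's use of the formula in Lemma \ref{lem:heavy}. I also need the expression for $\mathcal{S}$ from \eqref{second_term}, applied now in the $(0,0)$ case. Its derivation used only wall-crossing on $G$ and $G-e$, never on $G/e$, so it remains valid when $(k_a,k_b)=(0,0)$.
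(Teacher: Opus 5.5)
Your proposal is correct and follows essentially the same route as the paper. You reuse the identity \eqref{second_term} from the proof of Theorem~\ref{thm_del_con_psi}, which does not depend on $(k_a,k_b)\neq(0,0)$, and observe that every $L$ lies in $NN_G(a)\cap NN_G(b)$, so the exponent $\sum_{l\in L}k_l-|L|-1=-|L|-1$ of $\psi_1$ is negative in every summand and $\mathcal{S}=0$. The convention you flag, that negative-exponent terms in \eqref{wallcross} vanish, is the same one the paper uses in Lemma~\ref{lem:heavy}, and like the paper you tacitly assume $G-e$ is $g$-stable so that the right-hand side is defined.
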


\begin{proof} Recall from the proof of Theorem \ref{thm_del_con_psi},
    even when $(k_a,k_b)=(0,0)$, the relation (\ref{second_term}) still holds: 
        \begin{align}
            \int_{\overline{\mathcal{M}}_{g,G - e}} \psi_1^{k_1} \cdots \psi_n^{k_n} = \int_{\overline{\mathcal{M}}_{g,G}} \psi_1^{k_1} \cdots \psi_n^{k_n}  + \mathcal{S},
        \end{align}
    where 
    \begin{align*}
         \mathcal{S} = \sum_{\substack{\mathcal{P} \vdash [n]; P_1 = \{a,b\} \sqcup L ;\\ \text{where $L \in \lk_{\inde(G)}(a) \cap \lk_{\inde(G)}(b)$}; \\ P_{i \neq 1} \in \mathfrak{P}_{G}}}(-1)^{l(\mathcal{P})+n}\int_{\overline{\mathcal{M}}_{g,l(\mathcal{P})}} \psi_1^{0 + \sum_{l\in L}k_l-|L|-1} \cdots \psi_{l(\mathcal{P})}^{\alpha_{l(\mathcal{P})}-|P_{l(\mathcal{P})}|+1}. 
    \end{align*}

    Thus it is enough to show $\mathcal{S}$ vanishes under this assumption that $k_{v} = 0$ for all $v \in NN_G(a) \cap NN_G(b)$. Indeed, we must have $\sum_{l\in L} k_l = 0$ for all $L \in \lk_{\inde(G)}(a) \cap \lk_{\inde(G)}(b)$. This implies the exponent of $\psi_1$ in every integral in the sum is always negative, thus $\mathcal{S} = 0$.
\end{proof}

\subsection{Application: $G$-dilaton and $G$-string equations} Using the deletion--contraction property proved in the previous section, we derive a dilaton equation for graphically stable spaces in full generality, a string equation under some assumptions and a string equation in full generality in the case of complete multipartite graphs.

\begin{theorem}[{Theorem \hyperref[thm:B]{B}}]\label{dilaton} Label the neighbourhood of the vertex $n$ as 
\begin{align*}
    N_G(n) = \{u_1 , ..., u_d\} \subseteq [n-1],
\end{align*}
where $d = |N_G(n)|$. We also label the edge joining $n$ and $u_i$ as $e_i$. 
\begin{enumerate}
    \item ($G$-dilaton equation)
    \begin{align}\label{G_dilaton}
        \int_{\overline{\mathcal{M}}_{g,G}} \psi_1^{k_1} \cdots \psi_{n-1}^{k_{n-1}}\psi_n = (2g-2)\int_{\overline{\mathcal{M}}_{g,G - n}} \psi_1^{k_1} \cdots \psi_{n-1}^{k_{n-1}} + \sum_{i =1}^{d} \int_{\overline{\mathcal{M}}_{g,(G-e_1-\cdots - e_{i-1})/e_{i}}} \psi_1^{k_1} \cdots \psi_{nu_i}^{k_{u_i}} \cdots \psi_{n-1}^{k_{n-1}}.
    \end{align}
    Here $G-n$ denotes the graph obtained from $G$ after deleting the vertex $n$ and we set $e_{0} = \varnothing$.
    \item ($G$-string equation) Assume $k_{u_i} \neq 0$ for all $i=1,...,d$, we have 
            \begin{align}\label{G_string}
            \int_{\overline{\mathcal{M}}_{g,G}} \psi_1^{k_1} \cdots \psi_{n-1}^{k_{n-1}} = \sum_{i =1}^{d} \int_{\overline{\mathcal{M}}_{g,(G-e_1-\cdots - e_{i-1})/e_{i}}} \psi_{nu_i}^{k_{u_i}-1} \prod_{j \neq u_i} \psi_j^{k_j}.
        \end{align}
\end{enumerate}
These formulas hold provided that every graph appeared in the above formulas are $g$-stable.
\end{theorem}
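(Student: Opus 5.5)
The plan is to delete the edges $e_1,\dots,e_d$ at the vertex $n$ one at a time using Theorem~\ref{thm_del_con_psi}. After all $d$ deletions, $n$ becomes an isolated vertex, and the remaining integral is handled by Theorem~\ref{Thm_cone_dilaton}. Set $G_0=G$ and $G_i=G-e_1-\cdots-e_i$. Then $G_d=(G-n)\sqcup\{n\}$, and at stage $i$ the edge $e_i$ joins $n$ and $u_i$ in $G_{i-1}$. The stability hypotheses in the statement are exactly what is needed for Theorem~\ref{thm_del_con_psi} to apply to $e_i$ in $G_{i-1}$.

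For the dilaton equation, the exponent at $n$ is $k_n=1$, so $(k_n,k_{u_i})\neq(0,0)$ and the first case \eqref{del_con_psi} applies at every stage. It gives
\[
\int_{\overline{\mathcal{M}}_{g,G_{i-1}}}\psi_1^{k_1}\cdots\psi_{n-1}^{k_{n-1}}\psi_n=\int_{\overline{\mathcal{M}}_{g,G_i}}\psi_1^{k_1}\cdots\psi_{n-1}^{k_{n-1}}\psi_n+\int_{\overline{\mathcal{M}}_{g,G_{i-1}/e_i}}\psi_{nu_i}^{k_{u_i}+1-1}\prod_{j\neq u_i,n}\psi_j^{k_j}.
\]
The contracted term is exactly the $i$-th summand in \eqref{G_dilaton}, since $G_{i-1}/e_i=(G-e_1-\cdots-e_{i-1})/e_i$. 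Telescoping from $i=1$ to $d$ leaves the integral over $G_d$. There $n$ is isolated and $G_d-n=G-n$ is $g$-stable, so the isolated dilaton equation \eqref{Isolated dilaton} turns this last integral into $(2g-2)\int_{\overline{\mathcal{M}}_{g,G-n}}\psi_1^{k_1}\cdots\psi_{n-1}^{k_{n-1}}$.

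For the string equation the argument is identical, now with $k_n=0$. The hypothesis $k_{u_i}\neq 0$ ensures $(k_n,k_{u_i})\neq(0,0)$, so \eqref{del_con_psi} still applies at each step and the contracted vertex $nu_i$ receives exponent $k_{u_i}-1$. After telescoping, the final term is an integral over $G_d$ with no $\psi_n$. It vanishes by the isolated string equation \eqref{Isolated string}, which yields \eqref{G_string}.

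The only real subtlety is bookkeeping. The labels and exponents of the vertices other than $n$ and $u_i$ must be unchanged along the way, and each contraction must be performed in the partially deleted graph $G_{i-1}$ rather than in $G$. This is why the statement has the nested graphs $(G-e_1-\cdots-e_{i-1})/e_i$. I also need to check that each $G_i$, each $G_{i-1}/e_i$, and $G-n$ is $g$-stable; that is guaranteed by the standing assumption that all graphs appearing in the formulas are $g$-stable. I would additionally note that the intermediate graphs $G_i$ must be $g$-stable, and that this follows in genus $0$ because $G_i\supseteq G_d$, which contains the non-bipartite graph $G-n$.
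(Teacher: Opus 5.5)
Your proposal is correct and follows the paper's proof exactly: you apply case (i) of Theorem~\ref{thm_del_con_psi} to $e_1,\dots,e_d$ in turn, telescope, and finish with the isolated dilaton or string equation once $n$ is isolated. Your extra check that the intermediate graphs $G_i$ are $g$-stable is left implicit in the paper; for the string equation, where $G-n$ does not appear in the formula, the non-bipartiteness you need comes from the last contracted graph $(G-e_1-\cdots-e_{d-1})/e_d$, which is isomorphic to $G-n$.
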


\begin{remark}\label{classic_dilaton}
    When $G$ is the complete graph, the $G$-dilaton equation (\ref{G_dilaton}) specializes to the classical dilaton equation for $\mgnbar$ \cite{Witten1991}. Indeed, since every vertex of $K_n$ is dominant, we have $d = n-1$ and $(K_n-e_1-\cdots - e_{i-1})/e_{i} \simeq K_{n-1}$. Thus ($\ref{G_dilaton}$) becomes:
    \begin{align*}
        \int_{\overline{\mathcal{M}}_{g,K_n}} \psi_1^{k_1} \cdots \psi_{n-1}^{k_{n-1}}\psi_n &= (2g-2)\int_{\overline{\mathcal{M}}_{g,K_n - n \simeq K_{n-1}}} \psi_1^{k_1} \cdots \psi_{n-1}^{k_{n-1}} + (n-1) \int_{\overline{\mathcal{M}}_{g,K_{n-1}}} \psi_1^{k_1} \cdots \psi_{n-1}^{k_{n-1}} \\ 
        & = (2g-2+n-1) \int_{\overline{\mathcal{M}}_{g,K_{n-1}}} \psi_1^{k_1} \cdots \psi_{n-1}^{k_{n-1}}.
    \end{align*}
    For the $G$-string equation, the extra assumption on the exponents of $\psi$-classes prevents the specialization to the classical string equation. However, as we will see in the next section \ref{subsub_complete_string}, this assumption can be dropped in the case of complete multipartite graphs and we thus recover the classical string equation for $\mgnbar$.
\end{remark}

\begin{proof}[Proof of Theorem \ref{dilaton}]
    We first prove the $G$-dilaton equation (\ref{G_dilaton}). Notice that since $k_n =1$, the first deletion--contraction relation (Theorem \ref{thm_del_con_psi} (i)) applies to all the edges incident to $n$. We first apply it to $e_1$ and get: 
    \begin{align*}
         \int_{\overline{\mathcal{M}}_{g,G}} \psi_1^{k_1} \cdots \psi_{n-1}^{k_{n-1}}\psi_n = \int_{\overline{\mathcal{M}}_{g,G-e_1}}\psi_1^{k_1} \cdots \psi_{n-1}^{k_{n-1}}\psi_n + \int_{\overline{\mathcal{M}}_{g,G/e_1}} \psi_{nu_1}^{k_1} \cdots \psi_{n-1}^{k_{n-1}}. 
    \end{align*}
    Now we apply deletion--contraction again to $e_2$ for the integral over $\overline{\mathcal{M}}_{g,G-e_1}$ and get:
    \begin{align*}
        \int_{\overline{\mathcal{M}}_{g,G}} \psi_1^{k_1} \cdots \psi_{n-1}^{k_{n-1}}\psi_n = \int_{\overline{\mathcal{M}}_{g,G-e_1-e_2}}\psi_1^{k_1} \cdots \psi_{n-1}^{k_{n-1}}\psi_n + \int_{\overline{\mathcal{M}}_{g,(G-e_1)/e_2}}\psi_1^{k_1} \psi_{nu_2}^{k_2} \cdots \psi_{n-1}^{k_{n-1}} & \\
        & \hspace{-2.5cm} + \int_{\overline{\mathcal{M}}_{g,G/e_1}} \psi_{nu_1}^{k_1} \cdots \psi_{n-1}^{k_{n-1}}. 
    \end{align*}
    Then we apply deletion--contraction inductively to $e_3,...,e_d$ in a similar way. Eventually we get: 
    \begin{align*}
        \int_{\overline{\mathcal{M}}_{g,G}} \psi_1^{k_1} \cdots \psi_{n-1}^{k_{n-1}}\psi_n = \int_{\overline{\mathcal{M}}_{g,G-e_1-\cdots - e_d}}\psi_1^{k_1} \cdots \psi_{n-1}^{k_{n-1}}\psi_n + \sum_{i =1}^{d} \int_{\overline{\mathcal{M}}_{g,(G-e_1-\cdots - e_{i-1})/e_{i}}} \psi_1^{k_1} \cdots \psi_{nu_i}^{k_{u_i}} \cdots \psi_{n-1}^{k_{n-1}}. 
    \end{align*}
    Now notice that the vertex $n$ is an isolated vertex in the graph $G-e_1-\cdots - e_d$. So by the Isolated dilaton equation from Theorem \ref{Thm_cone_dilaton}, we have 
    \begin{align*}
        \int_{\overline{\mathcal{M}}_{g,G-e_1-\cdots - e_d}}\psi_1^{k_1} \cdots \psi_{n-1}^{k_{n-1}}\psi_n = (2g-2)\int_{\overline{\mathcal{M}}_{g,G-n}}\psi_1^{k_1} \cdots \psi_{n-1}^{k_{n-1}}.
    \end{align*}
    Substituting this back into the previous expression yields the $G$-dilaton equation (\ref{G_dilaton}). 

    The proof of $G$-string equation (\ref{G_string}) is similar. Under the assumption that $k_{u_i} \neq 0$ for all $i=1,...,d$, we apply deletion--contraction inductively to $e_1,...,e_d$ and get:
    \begin{align*}
        \int_{\overline{\mathcal{M}}_{g,G}} \psi_1^{k_1} \cdots \psi_{n-1}^{k_{n-1}} = \int_{\overline{\mathcal{M}}_{g,G-e_1-\cdots - e_d}}\psi_1^{k_1} \cdots \psi_{n-1}^{k_{n-1}} + 
        \sum_{i =1}^{d} \int_{\overline{\mathcal{M}}_{g,(G-e_1-\cdots - e_{i-1})/e_{i}}} \psi_{nu_i}^{k_{u_i}-1} \prod_{j \neq u_i} \psi_j^{k_j}.
    \end{align*}
    Similarly, by the Isolated string equation from Theorem \ref{Thm_cone_dilaton}, we have
    \begin{align*}
        \int_{\overline{\mathcal{M}}_{g,G-e_1-\cdots - e_d}}\psi_1^{k_1} \cdots \psi_{n-1}^{k_{n-1}} = 0
    \end{align*}
    and we are left with the desired $G$-string equation (\ref{G_string}). 
\end{proof}

\subsubsection{String equation for complete multipartite graphs}\label{subsub_complete_string}

One might hope to remove the hypothesis in the $G$-string equation \eqref{G_string} by setting $\psi^{-1}=0$. However, an adaptation of Example~\ref{example:counter00} shows that this naive convention fails in general. In the complete multipartite case, this convention does hold, allowing us to derive a fully general string equation for complete multipartite graphs.
\begin{theorem}[Complete multipartite string equation]\label{string_complete} Let $G$ be complete multipartite. Suppose we are also given non-negative integers $k_1,...,k_{n-1}$ such that $\sum k_i = 3g-3+n$. Order the neighbours of the vertex $n$ as 
\begin{align*}
    N_G(v) = \{u_1,...,u_k,u_{k+1},...,u_d\} \subseteq [n-1],
\end{align*}
such that $k_{u_i} = 0$ for $i=1,...,k$ and $k_{u_j} \neq 0 $ for $j = k+1,...,d$. We also label the edge joining $u_i$ and $n$ as $e_i$. Then: 
\begin{align}\label{formula_complete_string}
    \int_{\overline{\mathcal{M}}_{g,G}} \psi_1^{k_1} \cdots \psi_{n-1}^{k_{n-1}} = \sum_{i =1}^{d} \int_{\overline{\mathcal{M}}_{g,(G-e_1-\cdots - e_{i-1})/e_{i}}} \psi_{nu_i}^{k_{u_i}-1} \prod_{t \neq u_i} \psi_t^{k_t}.
\end{align}
These formulas hold provided that all graphs appearing above are $g$-stable.
\end{theorem}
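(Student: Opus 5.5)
The plan is to peel off the edges $e_1,\dots,e_d$ at the vertex $n$ one at a time, in the given order, exactly as in the proof of the $G$-string equation (Theorem \ref{dilaton}(ii)), with the convention $\psi^{-1}=0$. Under that convention the terms $i=1,\dots,k$ on the right-hand side of \eqref{formula_complete_string} vanish. So it suffices to show two things. First, deleting the edges $e_1,\dots,e_k$, where $(k_n,k_{u_i})=(0,0)$, does not change the integral. Second, the remaining edges $e_{k+1},\dots,e_d$ can be handled by Theorem \ref{thm_del_con_psi}(i).

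\textbf{Step 1 (deleting the $(0,0)$-edges).} For $j=1,\dots,k$, I apply Proposition \ref{00_edge_stronger} to the edge $e_j=nu_j$ in the graph $G_{j-1}:=G-e_1-\cdots-e_{j-1}$. Note that $k_n=0$, since $\psi_n$ does not appear in the integrand. The hypothesis to verify is that $k_v=0$ for every $v\in NN_{G_{j-1}}(n)\cap NN_{G_{j-1}}(u_j)$; this is where complete multipartiteness enters.
\begin{itemize}
\item In $G_{j-1}$, the non-neighbours of $n$ are the other vertices of the part of $n$, together with $u_1,\dots,u_{j-1}$.
\item The non-neighbours of $u_j$ are just the other vertices of the part containing $u_j$. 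The vertex $n$ is still adjacent to $u_j$, and no edge at $u_j$ other than $nu_j$ has been touched.
\item The parts of $n$ and $u_j$ are distinct, so the intersection consists only of those $u_l$ with $l<j$ that lie in the same part as $u_j$.
\item Each such $u_l$ has $k_{u_l}=0$ by the chosen ordering.
\end{itemize}
Hence the proposition applies at each stage, giving
\[
\int_{\overline{\mathcal{M}}_{g,G}}\prod_t\psi_t^{k_t}=\int_{\overline{\mathcal{M}}_{g,G_k}}\prod_t\psi_t^{k_t}.
\]

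\textbf{Step 2 (the remaining edges).} For $j=k+1,\dots,d$, apply Theorem \ref{thm_del_con_psi}(i) to $e_j$ in $G_{j-1}$. This is legitimate because $k_{u_j}\neq 0$, and it produces the contraction term over $G_{j-1}/e_j$ with $\psi_{nu_j}^{k_{u_j}-1}$. After all $d$ edges are removed, $n$ is isolated in $G_d$. The leftover integral vanishes by the Isolated string equation \eqref{Isolated string}. Collecting terms gives \eqref{formula_complete_string}, where the terms with $i\le k$ are zero by the convention.

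\textbf{Main obstacle.} The delicate point is Step 1. Once edges have been deleted, the intermediate graphs are no longer complete multipartite, so the non-neighbour sets must be tracked carefully. This is precisely why the zero-exponent neighbours are ordered first: every vertex that enters $NN(n)\cap NN(u_j)$ is then some earlier $u_l$, which has exponent $0$. Had a neighbour with nonzero exponent been deleted earlier, it could violate the hypothesis of Proposition \ref{00_edge_stronger}.

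One should also check that the stability hypotheses of Theorem \ref{thm_del_con_psi} and Proposition \ref{00_edge_stronger} hold at each step. These are supplied by the standing assumption that all graphs appearing are $g$-stable. For the deleted graphs $G_j$ with $j\le k$, which do not appear in the statement, I would verify this directly: for $g=0$, such a graph still contains a triangle through $n$ or among the other parts, since $G$ has at least three parts.
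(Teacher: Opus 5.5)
Your proposal is correct and is essentially the paper's own proof. The paper also deletes $e_1,\dots,e_k$ by applying Proposition~\ref{00_edge_stronger} inductively, using the same observation that in $G-e_1-\cdots-e_{j-1}$ the common non-neighbours of $n$ and $u_j$ are earlier $u_l$ from the part of $u_j$; it then treats $e_{k+1},\dots,e_d$ with Theorem~\ref{thm_del_con_psi}(i) and finishes with the isolated string equation.

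One small correction to your genus-$0$ stability aside. If $n$ is alone in its part and $G$ has exactly three parts, an intermediate $G_j$ can become bipartite. That case is already excluded by the hypothesis: take $u_i$ to be the last, in your ordering, of the neighbours of $n$ lying in one fixed part; then $(G-e_1-\cdots-e_{i-1})/e_i$ is bipartite. In every other case there is a triangle avoiding $n$, and it keeps all $G_j$ and $G_d-n$ non-bipartite.
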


\begin{remark}
    The complete graph is a complete multipartite graph. So in this case we recover the classic string equation for $\mgnbar$ \cite{Witten1991}, similarly to the discussion in Remark~\ref{classic_dilaton}. However, in general the graphs appear on the right hand side of (\ref{formula_complete_string}) need not all be complete multipartite. 
\end{remark}

The idea of the proof is similar to the proof of $G$-string equation. Except in this case, we may appeal to Proposition~ \ref{00_edge_stronger} to delete all the $(0,0)$-edges incident to $n$, and thus we no longer need the assumption that the exponents of $\psi$-classes corresponding to the neighbours of $n$ to be non-zero. 

\begin{proposition} Retaining the same notation and assumption as Theorem \ref{string_complete}, we have 
\begin{align*}
    \int_{\overline{\mathcal{M}}_{g,G}} \psi_1^{k_1} \cdots \psi_{n-1}^{k_{n-1}} = \int_{\overline{\mathcal{M}}_{g,G-e_1-\cdots - e_{k}}} \psi_1^{k_1} \cdots \psi_{n-1}^{k_{n-1}}.
\end{align*}
Moreover, since integrals with negative exponents of $\psi$-classes are zero, we can write:
\begin{align} \label{prop_complete_2}
    \int_{\overline{\mathcal{M}}_{g,G}} \psi_1^{k_1} \cdots \psi_{n-1}^{k_{n-1}} = \int_{\overline{\mathcal{M}}_{g,G-e_1-\cdots - e_{k}}} \psi_1^{k_1} \cdots \psi_{n-1}^{k_{n-1}} + \sum_{i =1}^{k} \int_{\overline{\mathcal{M}}_{g,(G-e_1-\cdots - e_{i-1})/e_{i}}} \psi_{nu_i}^{0-1} \prod_{t \neq u_i} \psi_t^{k_t}.
\end{align}
\begin{proof}\label{Prop_del_complete_multi} 
        Fix $i\in\{1,\ldots,k\}$, and set
        \[ H_i := G-e_1-\cdots-e_{i-1}.\]
        Define
        \[ N_i := NN_{H_i}(n)\cap NN_{H_i}(u_i),\]
        where $NN_H(v)$ denotes the set of non-neighbours of the vertex $v$ in a graph $H$. It suffices to show that
        \[N_i\subseteq \{u_1,\ldots,u_{i-1}\}.\]
        Indeed, this implies that $k_v=0$ for all $v\in N_i$, and the desired assertion then follows by applying Proposition~\ref{00_edge_stronger} inductively on $i$.
        
        We now prove the inclusion. Since $n$ and $u_i$ are joined by an edge in the complete multipartite graph $G$, they lie in different parts. Let $K_i$ be the part containing $u_i$, and let $v\in N_i$. Since $v$ is a non-neighbour of $u_i$ in $H_i$, and the only edges deleted in passing from $G$ to $H_i$ are $e_1,\ldots,e_{i-1}$, none of which is incident to $u_i$, so the neighbours of $u_i$ in $H_i$ are the same as its neighbours in $G$. Hence $v$ is also a non-neighbour of $u_i$ in $G$. Because $G$ is complete multipartite, this implies that $v$ lies in the same part as $u_i$, namely $v\in K_i$.
        
        On the other hand, $v\in NN_{H_i}(n)$. Since $n$ lies in a different part from $K_i$, it is adjacent in $G$ to every vertex of $K_i$. Therefore the only way for $v$ to be a non-neighbour of $n$ in $H_i$ is for the edge $nv$ to have been deleted among $e_1,\ldots,e_{i-1}$. Thus $v=u_j$ for some $j<i$, and hence $ v\in \{u_1,\ldots,u_{i-1} \}$.
        This proves the claimed inclusion $ N_i\subseteq \{u_1,\ldots,u_{i-1}\}$.
\end{proof}
\begin{remark}
In \cite[Corollary~2.14]{SRI}, the authors also consider the deletion of $(0,0)$-edges. As Example~\ref{example:counter00} shows, however, the statement does not hold in full generality. The proof uses the assertion that, when $i$ is not an endpoint of $e$, the independent sets of $G$ containing $i$ coincide with those of $G-e$ containing $i$. Although this assertion fails in general, it does hold when $G$ is complete multipartite, which is the only case needed in their application.
\end{remark}
    
\end{proposition}
\begin{proof}[Proof of Theorem \ref{string_complete}]
    First apply Proposition \ref{Prop_del_complete_multi} to delete $e_1,...,e_k$ and obtain (\ref{prop_complete_2}). Now we are left with edges $e_{j}$ for $j = k+1,...,d$, which we can apply deletion--contraction (Theorem~\ref{thm_del_con_psi}) to. We do so inductively to all $e_j$ and eventually $n$ becomes an isolated vertex, similar to the proof of $G$-string equation. Finally, we apply Isolated string equation (\ref{Isolated string}) to the isolated vertex and our theorem follows. 
\end{proof}

\begin{example}[Losev--Manin Space]\label{example_LM} We use the string equation for complete multipartite graphs (Theorem \ref{string_complete}) to prove Proposition \ref{LM_psi}. Suppose $n \geq 2$ and $k_1+k_2 = n-1 $ and recall we are interested in the integral:
\begin{align*}
        \int_{\overline{\mathcal{M}}_{0,K_2 * E_{n}}} \psi_1^{k_1}\psi_2^{k_2},
    \end{align*}
where the $\psi$-classes are concentrated on the two heavy points. Choose a light point, say vertex $l$ of the graph $E_n$. Notice that there are only two edges incident to $l$, namely the edges $e$ and $f$ that join the heavy points $1$ and $2$ respectively. We apply (\ref{formula_complete_string}) to this light point $l$ and obtain a recursion:
\begin{align*}
    \int_{\overline{\mathcal{M}}_{0,K_2 * E_{n}}} \psi_1^{k_1}\psi_2^{k_2} &=  \int_{\overline{\mathcal{M}}_{0,(K_2 * E_{n})/e}} \psi_1^{k_1-1}\psi_2^{k_2} + \int_{\overline{\mathcal{M}}_{0,(K_2 * E_{n}-e)/f}}\psi_1^{k_1}\psi_2^{k_2-1} \\
    & = \int_{\overline{\mathcal{M}}_{0,K_2 * E_{n-1}}} \psi_1^{k_1-1}\psi_2^{k_2} + \int_{\overline{\mathcal{M}}_{0,K_2 * E_{n-1}}}\psi_1^{k_1}\psi_2^{k_2-1}.
\end{align*}
We now compute the base-case which is when $n=2$. The Hassett weight of $\overline{M}_{0,K_2*E_2}$ is $(1,1,\varepsilon,\varepsilon)$, which belongs to the same coarse chamber (\!\cite{Hassett}) as the weight data $(1,1,1,1)$, thus we have $\overline{\mathcal{M}}_{0,K_2*E_2} \simeq \overline{\mathcal{M}}_{0,4} \simeq \Pone$. However, their universal curves are different. It is not so hard to see that the universal curve of our space is isomorphic to 
\begin{align*}
    \pi: \mathrm{Bl}_{(0,0),(\infty,\infty)}(\Pone \times \Pone) \longrightarrow \overline{\mathcal{M}}_{0,K_2*E_2} \simeq \Pone.
\end{align*}
The images of the sections $\sigma_1$, $\sigma_2$ are the strict transforms of constant sections $0$, $\infty$ respectively. Blowing-up a smooth point on a curve inside a smooth surface decreases the self-intersection number of that curve by one, hence $\sigma_1$, $\sigma_2$ both have self-intersection $-1$. Recall $\psi_i$ is also the first Chern class of the relative conormal bundle $N_{\sigma_i}^{\vee}$, so we deduce that $\psi_1 = \psi_{2} = [\mathrm{pt}] \in A^1(\overline{\mathcal{M}}_{0,K_2*E_2})$. Thus the base case is simply: 
\begin{align*}
    \int_{\overline{\mathcal{M}}_{0,K_2 * E_{2}}} \psi_1 = \int_{\overline{\mathcal{M}}_{0,K_2 * E_{2}}} \psi_2= 1.
\end{align*}
Solving the recursion with this base case yields the desired formula in Proposition \ref{LM_psi}: 
\begin{align*}
        \int_{\overline{\mathcal{M}}_{0,K_2 * E_{n}}} \psi_1^{k_1}\psi_2^{k_2} = \binom{n-1}{k_1}.
    \end{align*}

\end{example}

\subsection{Kappa classes}\label{sec:kappa}Given any $\psi$-class integral over $\mgraphbar$, we can repetitively apply the first deletion--contraction relation from Theorem \ref{thm_del_con_psi} to all the edges such that at least one of the endpoints supports a $\psi$-class. This reduces the original integral to a sum of integrals, such that each integral only has $\psi$-classes concentrated on isolated vertices. To be precise, we are left with integrals of the following form:
\begin{align}\label{eq:int_kappa}
    \int_{\overline{\mathcal{M}}_{g,H\sqcup \{u_1,...,u_t\}}} \psi_{u_1}^{k_{u_1}} \cdots  \psi_{u_t}^{k_{u_t}}.
\end{align}
Moreover, since all the edges in $H$ are $(0,0)$-edges, we can apply the second deletion-contraction relation from Theorem \ref{thm_del_con_psi} to all of them. If $g>0$, this will reduce the integral \eqref{eq:int_kappa} to a sum of integrals such that $H$ is a \textit{star} graph, see Figure \ref{fig:star_graph}. In genus $0$, the star graph is not $0$-stable, hence we cannot reduce to it.
\begin{figure}[htbp]
\centering
\begin{tikzpicture}[
    scale=1.0,
    every path/.style={line width=0.75pt, line cap=round, line join=round},
    vertex/.style={circle, fill=black, inner sep=1.8pt},
    lab/.style={font=\large}
]

\begin{scope}[shift={(0,0)}]
    \coordinate (a) at (1.2,0.75);
    \coordinate (b) at (0.25,-0.65);
    \coordinate (c) at (-0.85,0.25);
    \coordinate (d) at (2.15,0.15);

    \draw (a) -- (b);
    \draw (b) -- (c);
    \draw (c) -- (a);

    \draw (a) -- (d);

    \node[vertex] at (a) {};
    \node[vertex] at (b) {};
    \node[vertex] at (c) {};
    \node[vertex] at (d) {};

    \node[lab, above] at (a) {\small$\psi_a^0$};
    \node[lab, below] at (b) {\small$\psi_b^0$};
\end{scope}

\draw[->, line width=0.9pt] (2.9,0) -- (4.2,0);

\begin{scope}[shift={(5.0,0)}]
    \coordinate (ab) at (0,0);
    \coordinate (t)  at (0.55,1.05);
    \coordinate (l)  at (-0.90,-0.70);
    \coordinate (r)  at (1.05,-0.75);

    \draw (ab) -- (t);
    \draw (ab) -- (l);
    \draw (ab) -- (r);

    \node[vertex] at (ab) {};
    \node[vertex] at (t) {};
    \node[vertex] at (l) {};
    \node[vertex] at (r) {};

    \node[lab, right] at (ab) {\small$\psi_{ab}^0$};
    \node[lab, right] at (t) {\small$\psi_{\widetilde{ab}}^0$};
\end{scope}

\end{tikzpicture}
\caption{Illustration of turning $H$ into a star graph, by applying Theorem \ref{thm_del_con_psi} (ii) to the edge $ab$.}
\label{fig:star_graph}
\end{figure}

In Lemma \ref{lemma_psikappa}, we will show that an integral of the form \eqref{eq:int_kappa} equals to a integral of a monomial consisting of $\nkappa$-classes (Definition \ref{def:nkappa}). This leads to the following philosophical observation.
\begin{observation}\label{obs:kappa}
        The intersection number of $\psi$-classes over $\mgraphbar$ is determined by the deletion--contraction relations from Theorem \ref{thm_del_con_psi} and intersection numbers of $\nkappa$-classes over graphically stable spaces. Moreover, if $g>0$, we can further reduce the $\nkappa$-class integrals to graphically stable spaces defined by star graphs.
\end{observation}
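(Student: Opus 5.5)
The plan is to run the reduction described before the statement as an induction, and then to invoke Lemma~\ref{lemma_psikappa} on the terminal integrals. Throughout I use only Theorem~\ref{thm_del_con_psi} and the isolated string and dilaton equations of Theorem~\ref{Thm_cone_dilaton}.

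\textbf{Step 1 (clearing edges at $\psi$-supporting vertices).} Start from $\int_{\mgraphbar}\psi_1^{k_1}\cdots\psi_n^{k_n}$. Choose any edge $e=ab$ with $(k_a,k_b)\neq(0,0)$ and apply Theorem~\ref{thm_del_con_psi}(i). Both resulting terms are smaller for the lexicographic order on $(|V|,|E|)$: deletion lowers $|E|$, and contraction lowers $|V|$.
\begin{itemize}
\item In the deleted term the exponents are unchanged.
\item In the contracted term the merged vertex carries exponent $k_a+k_b-1\geq 0$. If this exponent is $0$, the merged vertex simply joins the set of zero-exponent vertices.
\end{itemize}
Hence this step terminates. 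Every surviving term has its positive exponents only on isolated vertices $u_1,\dots,u_t$, so it has the form \eqref{eq:int_kappa}, where every vertex of $H$ carries exponent $0$. Lemma~\ref{lemma_psikappa} then converts each such term into a $\nkappa$-monomial integral on a graphically stable space, which proves the first sentence of the Observation.

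\textbf{Step 2 ($g>0$, reduction to stars).} Now every edge of $H$ is a $(0,0)$-edge, so only Theorem~\ref{thm_del_con_psi}(ii) is available. It replaces $G$ by $G-e$ and $G\odot e$, and in general the second graph does not have fewer edges. I would therefore measure progress by
\[
m(H)=\#\{\text{edges of } H \text{ with no endpoint of degree } 1\},
\]
and only ever apply (ii) to an edge $e=ab$ with $\deg a,\deg b\geq 2$. One checks the two terms as follows.
\begin{itemize}
\item Deletion: $m$ drops by at least one, since deleting $e$ can only create new leaves, and new leaves exclude further edges from the count.
\item $G\odot e$: the edge $e$ disappears, parallel edges merge, and the new pendant edge $ab\,\widetilde{ab}$ is not counted. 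Every former leaf of $H$ stays a leaf, so again $m$ drops.
\end{itemize}
Terms in which some vertex of $H$ becomes isolated with exponent $0$ vanish by the isolated string equation \eqref{Isolated string}. For $g\geq 1$, the $g$-stability hypothesis only requires $2g-2+n>0$, and neither operation lowers $n$ below what is needed, so all intermediate graphs stay $g$-stable. The terminal graphs have $m=0$, meaning every edge of $H$ meets a leaf, so $H$ is a disjoint union of stars. Applying Lemma~\ref{lemma_psikappa} to these terms gives the second sentence of the Observation. In genus $0$ this step is unavailable, because star graphs are bipartite and hence not $0$-stable, exactly as noted before the statement.

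\textbf{Main obstacle.} The delicate point is the terminal shape in Step~2. The measure $m$ only forces $H$ to be a \emph{disjoint union} of stars (including $K_2$-components), not a single star. Two further checks are needed.
\begin{itemize}
\item I would first verify that $m$ genuinely decreases for $G\odot e$ when $a$ and $b$ have common neighbours.
\item I would then try to show that several star components can be merged, or that the $\nkappa$-formula of Lemma~\ref{lemma_psikappa} is insensitive to this. Failing that, I would read ``star graph'' in the Observation as ``disjoint union of stars''.
\end{itemize}
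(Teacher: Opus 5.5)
Your argument follows the paper's own sketch: clear edges at $\psi$-supporting vertices with Theorem~\ref{thm_del_con_psi}(i), convert with Lemma~\ref{lemma_psikappa}, then apply (ii) to the $(0,0)$-edges of $H$. You add the termination bookkeeping the paper omits.

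That bookkeeping is genuinely needed in Step~2. Applied to a pendant edge, (ii) is an identity: the deletion term vanishes by the isolated string equation and $G\odot e\cong G$. So ``apply (ii) to all edges'' does not terminate as literally stated. Your measure $m$ works, including the case you flagged:
\begin{itemize}
\item a common neighbour $c$ of $a,b$ contributes the two counted edges $ac,bc$ before, but at most the single edge $(ab)c$ after;
\item every other edge keeps or loses its counted status, since degrees outside $\{a,b\}$ can only drop;
\item $e$ disappears, and the new pendant edge is uncounted.
\end{itemize}

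Your ``obstacle'' is not a defect of your proof but an imprecision in the statement. Stars are fixed points of (ii), and no relation in Theorem~\ref{thm_del_con_psi} can join components. Disconnected terminal graphs already arise in the paper's own example of Figure~\ref{fig:star_graph}: the deletion branch is $P_4$, whose middle edge then deletes to $K_2\sqcup K_2$. Their contribution is nonzero. For $g=1$ and one isolated vertex $u$, the wall-crossing formula gives $\int_{\overline{\mathcal{M}}_{1,(K_2\sqcup K_2)\sqcup\{u\}}}\psi_u^{5}=\tfrac{1}{24}$. So ``star graph'' must indeed be read as ``disjoint union of stars''.

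The one genuine gap, which the paper's sketch shares, is Step~1 in genus $0$. Theorem~\ref{thm_del_con_psi} may only be applied when $G-e$ and $G/e$ are $g$-stable, and you never check this in Step~1.
\begin{itemize}
\item For $g\ge 1$ it is automatic: deletion keeps $n$, and contraction starts from $n\ge 2$.
\item For $g=0$ it can fail at once. Take $G=C_5$ with integrand $\psi_1^2$. The only edges at vertex $1$ delete to $P_5$ and contract to $C_4$, both bipartite, so Step~1 cannot even start. Yet the integral equals $-1$ (remark after Example~\ref{example:counter00}).
\end{itemize}
As written, you have proved the first sentence only for $g\ge 1$.

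To cover $g=0$ you would need an extra device together with a termination argument for the combined procedure. One candidate is to run (i) backwards by adding an edge $f=uv$ with $(k_u,k_v)\neq(0,0)$. This is always admissible in genus $0$: identifying two non-adjacent vertices is a graph homomorphism, so it preserves non-bipartiteness. For $C_5$, adding the chord $13$ makes the edge $15$ admissible. Alternatively, restrict the first sentence to $g\ge 1$.
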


\subsubsection{\texorpdfstring{$\tilde{\kappa}_i$-classes}{Tilde kappa\_i-classes}}
Let $\pi_{n+1} \colon \overline{\mathcal{M}}_{g,G \sqcup \{n+1\}} \rightarrow \mgraphbar$ be the morphism forgetting the marking $p_{n+1}$, which corresponds to an isolated vertex in $G$. Recall from Section \ref{universal} that this map is isomorphic to the universal curve of $\mgraphbar$.

\begin{definition}\label{def:nkappa}
    We define the $\nkappa_i$\textit{-class} to be:
    \begin{align*}
        \nkappa_i := (\pi_{n+1})_*(\psi_{n+1}^{i+1}) \in A^i(\mgraphbar; \mathbb{Q}).
    \end{align*}
\end{definition}\label{nkappa}
By Lemma \ref{cone_psi}, this is equivalent to
    \begin{align*}
        \nkappa_i = (\pi_{n+1})_* \bigl( c_1(\omega_{\pi_{n+1}})^{i+1} \bigl).
    \end{align*}
Immediately we have $\nkappa_0 = (2g-2)[\mgraphbar] \in A^0(\mgraphbar)$.
\begin{remark}
    When $G = K_n$, the $\nkappa$-classes are also known as the \textit{naive kappa classes} and they are related to the usual Miller--Morita--Mumford $\kappa$-classes (\!\cite[(1.5]{Arbarello_kappa}):
    \begin{align}\label{eq:kappa_wallcross}
        \kappa_i = \nkappa_i + \sum_{k=1}^{n} \psi_k^i \in A^i(\mgnbar).
    \end{align}
    In particular, when $n=0$ the two notions of kappa classes coincide.
\end{remark}
\begin{proposition}\label{lemma_kappa}Suppose $G$ also has an isolated vertex i.e. $G = H \sqcup \{n\}$, then in $A^i(\overline{\mathcal{M}}_{g,G})$ we have:
    \begin{align*}
        \nkappa_i = \pi_{n}^*\nkappa_i.
    \end{align*}   
\end{proposition}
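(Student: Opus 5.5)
Set $G' := G \sqcup \{n+1\}$, so $\nkappa_i$ on $\overline{\mathcal{M}}_{g,G}$ is defined by the forgetful map $\pi_{n+1}\colon \overline{\mathcal{M}}_{g,G'} \to \overline{\mathcal{M}}_{g,G}$. On $\overline{\mathcal{M}}_{g,H}$ we have $\nkappa_i = (\pi'_{n+1})_*(\psi_{n+1}^{i+1})$ with $\pi'_{n+1}\colon \overline{\mathcal{M}}_{g,H\sqcup\{n+1\}} \to \overline{\mathcal{M}}_{g,H}$. The plan is to compare these two pushforwards through a commutative square and a base-change argument, the same diagram-chasing used in Lemma~\ref{cone_psi}. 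Consider the square
\[
\begin{tikzcd}
\overline{\mathcal{M}}_{g,H\sqcup\{n,n+1\}} \arrow[r,"\tilde\pi_n"] \arrow[d,"\pi_{n+1}"'] & \overline{\mathcal{M}}_{g,H\sqcup\{n+1\}} \arrow[d,"\pi'_{n+1}"] \\
\overline{\mathcal{M}}_{g,H\sqcup\{n\}} \arrow[r,"\pi_n"'] & \overline{\mathcal{M}}_{g,H}.
\end{tikzcd}
\]
This is exactly diagram \eqref{diag:fiber_square_blowup} with $m=n+1$. The top-left space maps to the fibre product $F$ of the two forgetful maps by the small resolution $\rho$ of Proposition~\ref{small_res}, and $\tilde\pi_n = h\circ\rho$, $\pi_{n+1} = f\circ\rho$.

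The first step is to express $\psi_{n+1}$ on the top-left space as a pullback along $\tilde\pi_n$. Both $n$ and $n+1$ are isolated there, so Lemma~\ref{cone_psi}(ii), applied to the forgetful map $\tilde\pi_n$ that forgets the isolated vertex $n$, gives $\psi_{n+1} = \tilde\pi_n^*\psi_{n+1}$. Hence
\[
\psi_{n+1}^{i+1} = \rho^* h^* \psi_{n+1}^{i+1}.
\]

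The second step is to push forward. Since $\rho$ is proper and birational, $\rho_*\rho^* = \mathrm{id}$ on Chow groups (projection formula with $\rho_*[\cdot]=[F]$). Therefore
\[
(\pi_{n+1})_*\psi_{n+1}^{i+1} = f_* \rho_*\rho^* h^*\psi_{n+1}^{i+1} = f_* h^* \psi_{n+1}^{i+1}.
\]
Next, $\pi'_{n+1}$ is flat, being the universal curve (Proposition~\ref{prop:universal_curve}), and the square with $F$ is Cartesian. Flat base change then gives $f_*h^* = \pi_n^*(\pi'_{n+1})_*$. The result is
\[
\nkappa_i = \pi_n^*(\pi'_{n+1})_*\psi_{n+1}^{i+1} = \pi_n^*\nkappa_i,
\]
which is the statement.

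The main obstacle I expect is justifying the Chow-theoretic manipulations on the possibly singular fibre product $F$, specifically the identity $\rho_*\rho^*\alpha = \alpha$ and flat pullback and base change on stacks. I would handle this as follows. Work with operational classes: $h^*\psi_{n+1}^{i+1}$ is an operational (Chern) class on $F$. Then $\rho_*(\rho^*c\cap[\tilde F]) = c\cap\rho_*[\tilde F] = c\cap[F]$, using that $F$ is irreducible of the expected dimension (it is flat over the irreducible space $\overline{\mathcal{M}}_{g,H\sqcup\{n\}}$) and that $\rho$ is birational. Flat base change for proper pushforward and flat pullback (Fulton, Prop.~1.7, extended to DM stacks with $\mathbb{Q}$-coefficients) then completes the argument.
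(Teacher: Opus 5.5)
Your proposal is correct and follows essentially the same route as the paper: both use diagram \eqref{diag:fiber_square_blowup} with $m=n+1$, Lemma~\ref{cone_psi}(ii) to write $\psi_{n+1}^{i+1}=\tilde\pi_n^*\psi_{n+1}^{i+1}$, the identity $\rho_*\rho^*=\mathrm{id}$ for the proper birational $\rho$, and base change along the Cartesian square. Your operational-class justification spells out what the paper leaves implicit; one small precision is that the base-change step actually uses flatness of $\pi_n$ and properness of $\pi'_{n+1}$, but both maps are universal curves, so this is harmless.
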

\begin{proof}
    Recall the diagram (\ref{diag:fiber_square_blowup}) from earlier. By Lemma \ref{cone_psi} $(ii)$, we have $\tilde{\pi}_n^* \psi_m^{i+1} = \psi_m^{i+1}$. Pushing-forward to $\mgraphbar$:
    \begin{align*}
        (\tilde{\pi}_m)_* \tilde{\pi}_n^* \psi_m^{i+1} = (\tilde{\pi}_m)_* \psi_m^{i+1} = \nkappa_i.
    \end{align*}
    Now chasing the arrows in (\ref{diag:fiber_square_blowup}), we get
    \begin{align*}
        (\tilde{\pi}_m)_* \tilde{\pi}_n^* \psi_m^{i+1} = f_* \rho_* \rho^* h^* \psi_m^{i+1} = f_*h^* \psi_m^{i+1} = \pi_n^* (\pi_m)_*\psi_m^{i+1} =  \pi_n^* \nkappa_i.
    \end{align*}
    Here $\rho_* \rho^* = \mathrm{id}$ as $\rho$ is proper birational and we also used the fact that the square in (\ref{diag:fiber_square_blowup}) is Cartesian.
\end{proof}

\begin{lemma}\label{lemma_psikappa} Let $H$ be a $g$-stable graph, we have
    \begin{align*}
        \int_{\overline{\mathcal{M}}_{g,H\sqcup \{u_1,...,u_t\}}} \psi_{u_1}^{k_{u_1}} \cdots  \psi_{u_t}^{k_{u_t}} = \int_{\overline{\mathcal{M}}_{g,H}} \nkappa_{k_{u_1-1}} \cdots \nkappa_{k_{u_t-1}}.  
    \end{align*}
\end{lemma}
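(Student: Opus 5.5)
The plan is to induct on $t$, the number of isolated vertices carrying $\psi$-classes. Each step forgets one isolated marking with the morphism $\pi_{u_t}$ and pushes forward. The tools are Lemma \ref{cone_psi}, Proposition \ref{lemma_kappa} and the projection formula; this mirrors the proof of Lemma \ref{psi_kappa}, but the fibre product is replaced by iterated universal curves. (The subscripts in the statement should read $\nkappa_{k_{u_i}-1}$.)

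For the base case $t=0$ both sides are the same integral over $\overline{\mathcal{M}}_{g,H}$, and there is nothing to prove. For the inductive step, set $G' = H\sqcup\{u_1,\dots,u_{t-1}\}$, so that $G = G'\sqcup\{u_t\}$. By Proposition \ref{prop:universal_curve}, the map $\pi_{u_t}\colon \overline{\mathcal{M}}_{g,G}\to \overline{\mathcal{M}}_{g,G'}$ is the universal curve; note that $G'$ is $g$-stable because it contains the $g$-stable graph $H$ together with isolated vertices. Lemma \ref{cone_psi}(ii) gives $\psi_{u_j} = \pi_{u_t}^*\psi_{u_j}$ for $j<t$. Definition \ref{def:nkappa} gives $(\pi_{u_t})_*\psi_{u_t}^{k_{u_t}} = \nkappa_{k_{u_t}-1}$ on $\overline{\mathcal{M}}_{g,G'}$. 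The projection formula then yields
\[
\int_{\overline{\mathcal{M}}_{g,G}} \psi_{u_1}^{k_{u_1}}\cdots\psi_{u_t}^{k_{u_t}} = \int_{\overline{\mathcal{M}}_{g,G'}} \psi_{u_1}^{k_{u_1}}\cdots\psi_{u_{t-1}}^{k_{u_{t-1}}}\,\nkappa_{k_{u_t}-1}.
\]
When $k_{u_t}=0$ the pushforward vanishes for dimension reasons, which is consistent with the convention $\nkappa_{-1}=0$.

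This leaves a mixed integrand, so I would strengthen the inductive statement. The claim to prove is that for any monomial $\Theta$ in $\nkappa$-classes on $\overline{\mathcal{M}}_{g,H}$,
\[
\int_{\overline{\mathcal{M}}_{g,H\sqcup\{u_1,\dots,u_t\}}} \psi_{u_1}^{k_{u_1}}\cdots\psi_{u_t}^{k_{u_t}}\,\Theta = \int_{\overline{\mathcal{M}}_{g,H}} \nkappa_{k_{u_1}-1}\cdots\nkappa_{k_{u_t}-1}\,\Theta.
\]
Here $\Theta$ on the larger space means the same $\nkappa$-monomial, defined via the universal curve of that space. For the step, apply Proposition \ref{lemma_kappa} with the isolated vertex $u_t$: it gives $\nkappa_i = \pi_{u_t}^*\nkappa_i$, so $\Theta = \pi_{u_t}^*\Theta$. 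The projection formula argument above then goes through with $\Theta$ carried along. The $\nkappa$-classes introduced at earlier steps are now pulled back as well, so they join $\Theta$ and the induction closes. At $t$ steps down we land on $\overline{\mathcal{M}}_{g,H}$ with exactly the product $\prod_i \nkappa_{k_{u_i}-1}$, as claimed.

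The main obstacle is making sure the pullback identities hold on the nose in the graphically stable setting. The forgetful map is not the Deligne--Mumford one, so the classical comparison $\psi_j = \pi^*\psi_j + D$ has no correction divisor here. This is exactly what Lemma \ref{cone_psi}(ii) and Proposition \ref{lemma_kappa} supply, and both rely on the small-resolution statement, Proposition \ref{small_res}. I expect the argument to need nothing beyond those results. The one point I would double-check is the $g$-stability of each intermediate graph, which follows since each one contains $H$.
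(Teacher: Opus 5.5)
Your proposal is correct and follows essentially the same route as the paper. The paper's proof is the one-line instruction to inductively apply Proposition \ref{lemma_kappa}, Lemma \ref{cone_psi}(ii) and the projection formula; your strengthened induction, which carries a $\nkappa$-monomial $\Theta$ along, is simply that induction written out explicitly, and your reading of the subscripts as $\nkappa_{k_{u_i}-1}$ with $\nkappa_{-1}=0$ is the intended one.
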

\begin{proof}
    This follows from inductively applying Proposition \ref{lemma_kappa}, Lemma \ref{cone_psi} $(ii)$ and the projection formula. 
\end{proof}
We invite the readers to compare Lemma \ref{lemma_psikappa} with Lemma \ref{psi_kappa} in the case when $H$ is a complete graph. The next proposition demonstrates that the deletion–contraction relation naturally clarifies the connection between $\psi$-class and $\kappa$-class integrals.

\begin{proposition}[\!\cite{Witten1991}]\label{prop:kappa_mg2} Suppose $g \geq 2$ and $k_1+k_2 = 3g-1$, then
    \begin{align*}
        \int_{\overline{\mathcal{M}}_{g,K_2}} \psi_1^{k_1}\psi_2^{k_2} = \int_{\mgbar} \kappa_{k_1-1}\kappa_{k_2-1} + \int_{\mgbar} \kappa_{k_1+k_2-2}.
    \end{align*}
\end{proposition}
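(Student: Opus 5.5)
Since $K_2$ is the single edge $e=12$, we apply the deletion--contraction relation of Theorem~\ref{thm_del_con_psi} to $e$. Because $k_1+k_2=3g-1\ge 5$, we have $(k_1,k_2)\neq(0,0)$, so we are in case (i). For $g\ge 2$ both $K_2-e=E_2$ and $K_2/e=K_1$ are $g$-stable, since $2g-2+n>0$ and the extra non-bipartite condition only matters in genus $0$. Relation~\eqref{del_con_psi} then gives
\begin{align*}
\int_{\overline{\mathcal{M}}_{g,K_2}} \psi_1^{k_1}\psi_2^{k_2} = \int_{\overline{\mathcal{M}}_{g,E_2}} \psi_1^{k_1}\psi_2^{k_2} + \int_{\overline{\mathcal{M}}_{g,K_1}} \psi_{12}^{k_1+k_2-1}.
\end{align*}

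Next we identify each term on the right with a $\kappa$-integral on $\mgbar$. For the first term, $E_2=\varnothing\sqcup\{1,2\}$, and the empty graph $H$ is $g$-stable for $g\ge2$ with $\overline{\mathcal{M}}_{g,H}=\mgbar$. Lemma~\ref{lemma_psikappa} therefore gives $\int_{\mgbar}\nkappa_{k_1-1}\nkappa_{k_2-1}$. If some $k_i=0$, the corresponding factor is $\nkappa_{-1}=0$; this agrees with the isolated string equation, and we interpret $\kappa_{-1}=0$ on the right-hand side. For the second term, $K_1=\varnothing\sqcup\{1\}$, so the same lemma with $t=1$ gives $\int_{\mgbar}\nkappa_{k_1+k_2-2}$.

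Finally we compare $\nkappa$ with the Mumford--Morita--Miller classes. By the remark following Definition~\ref{def:nkappa}, when $n=0$ the classes $\nkappa_i$ on $\overline{\mathcal{M}}_{g,K_0}=\mgbar$ coincide with $\kappa_i$, because the correction terms $\sum\psi_k^i$ in \eqref{eq:kappa_wallcross} are absent. Substituting yields the claimed formula.

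The main obstacle is bookkeeping: checking that the empty graph counts as a $g$-stable graph with $\overline{\mathcal{M}}_{g,\varnothing}=\mgbar$, so that Lemma~\ref{lemma_psikappa} and Definition~\ref{def:nkappa} apply with $H$ empty, and correctly handling the boundary exponents $k_i=0$ through the convention $\kappa_{-1}=0$. I would first confirm that the universal curve description in Proposition~\ref{prop:universal_curve} holds for $H=\varnothing$, which is the classical statement $\overline{\mathcal{M}}_{g,1}\simeq\mathcal{C}_g$.
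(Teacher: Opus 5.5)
Your proposal is correct and follows the paper's proof essentially verbatim. You apply deletion--contraction to the single edge of $K_2$, then Lemma~\ref{lemma_psikappa} with $H=\varnothing$, then the identification $\nkappa_i=\kappa_i$ on $\mgbar$ from \eqref{eq:kappa_wallcross}. Your observation that $k_1+k_2=3g-1\geq 5$ also correctly shows that the separate $(k_1,k_2)=(0,0)$ case the paper disposes of is vacuous.
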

\begin{proof}
    If $(k_1,k_2)=(0,0)$, then the statement follows easily as $\kappa_{-1} = 0$. So we assume $(k_1,k_2) \neq (0,0)$. We apply the first deletion--contraction relation from Theorem \ref{thm_del_con_psi} to the only edge of $K_2$ and get:
    \[\int_{\overline{\mathcal{M}}_{g,K_2}} \psi_1^{k_1}\psi_2^{k_2} = \int_{\overline{\mathcal{M}}_{g,E_2}} \psi_1^{k_1}\psi_2^{k_2} + \int_{\overline{\mathcal{M}}_{g,E_1}} \psi_{12}^{k_1+k_2-1}.\]
    Then by applying Lemma~\ref{lemma_psikappa} with $H = \varnothing$ and equation \eqref{eq:kappa_wallcross}, we have:
    \[\int_{\overline{\mathcal{M}}_{g,E_2}} \psi_1^{k_1}\psi_2^{k_2} = \int_{\mgbar} \nkappa_{k_1-1}\nkappa_{k_2-1} = \int_{\mgbar} \kappa_{k_1-1}\kappa_{k_2-1} .\]
    The same argument applies to the integral over $E_1$. Alternatively, the last steps also follow from Lemma~\ref{psi_kappa}.
\end{proof}

\subsection{Unweighted graph invariants}\label{subsec_unweight_graph} Recall from Section \ref{section:future_weighted_graph}, given a set of weights $\mathbf w \in\mathbb Z^n$ and  $g$, we define
\[
    \Psi_g(G,\mathbf w)
    :=
    \int_{\mgraphbar}
    \psi_1^{w_1+1}\cdots \psi_n^{w_n+1}.
\]
By Theorem \ref{thm_del_con_psi}, if we set all the weights on vertices to be $0$, then we obtain
\begin{align}\label{non_weight_del_con}
    \Psi_{g}(G, \mathbf{0}) = \Psi_{g}(G-e, \mathbf{0}) +\Psi_{g}(G/e, \mathbf{0}).
\end{align}
Here the weights are always the same, hence (\ref{non_weight_del_con}) is essentially a deletion--contraction relation for unweighted graphs (or, more simply, graphs). Also recall from Definition \ref{def_chrom} that the chromatic polynomial $\chi_G(x)$ satisfies 
\begin{align*}
    \chi_G(x) = \chi_{G-e}(x) - \chi_{G/e}(x);
\end{align*}
with base case $\chi_{E_n}(x)=x^n$. It turns out many $\psi$-class integrals over $\mgraphbar$ coincide with $\chi_G(x)$ evaluated at various numbers, and in fact these $\psi$-class integrals determines $\chi_G(x)$ completely. 
\begin{remark}\label{rmk_chromatic}
    We first make two observations regarding chromatic polynomials:
    \begin{itemize} 
        \item Notice that $(-1)^{|V(G)|}\chi_G(x)$ is the graph               invariant that satisfies
        \begin{align*}
            (-1)^{|V(G)|}\chi_G(x) = (-1)^{|V(G)|}\chi_{G-e}(x) + (-1)^{|V(G)|-1}\chi_{G/e}(x).
        \end{align*}
        \item The derivatives of $\chi_G(x)$ also satisfy the deletion--contraction relation. 
    \end{itemize}
\end{remark}
Thus computing the base case $\Psi_{g}(E_n, \mathbf{0})$ is the key. However, there are two small issues:
\begin{enumerate}
    \item In genus $0$, the graph $E_n$ is not $0$-stable.
    \item For dimension reasons $\Psi_{g}(G, \mathbf{0})= 0 $ if $g \neq 1$. 
\end{enumerate}
Thus in most cases we must modify our settings a little. 

\subsubsection{Example I: Balanced elliptic integral and acyclic orientations with unique sources}
Let us first consider the only meaningful integral in this unweighted setting. 
\begin{definition}[Balanced elliptic integral]For any non-empty graph $G$, we have
    \begin{align*}
    \Psi_{1}(G, \mathbf{0}) = \int_{\overline{\mathcal{M}}_{1,G}} \psi_1 \cdots \psi_n.
\end{align*}
\end{definition}

\begin{proposition}
\label{balanced_elliptic} For any non-empty graph $G$, we have
    \begin{align}\label{equation_balanced_elliptic}
        24 \cdot\Psi_{1}(G, \mathbf{0}) = (-1)^{|V(G)|-1}\left. \frac{d}{dx} \chi_G(x) \right|_{x = 0}.
    \end{align}
\end{proposition}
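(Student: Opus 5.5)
Both sides of \eqref{equation_balanced_elliptic} satisfy the same deletion--contraction recursion, so we argue by induction on the number of edges of $G$ and reduce to edgeless graphs. In genus one every graph with $n\geq 1$ vertices is $1$-stable, since $2g-2+n=n>0$ and there is no bipartiteness condition. Hence $G$, $G-e$ and $G/e$ are always $1$-stable. Moreover all exponents are $k_i=1$, so no edge is a $(0,0)$-edge. Theorem~\ref{thm_del_con_psi}(i) therefore applies to any edge $e=ab$. Contraction gives $\psi_{ab}^{1+1-1}=\psi_{ab}$, so the weights stay uniformly one, and we obtain
\[
\Psi_1(G,\mathbf 0)=\Psi_1(G-e,\mathbf 0)+\Psi_1(G/e,\mathbf 0),
\]
which is \eqref{non_weight_del_con}. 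The dimension check holds: $\sum k_i=n=3g-3+n$.

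On the other side, put $D(G):=(-1)^{|V(G)|-1}\chi_G'(0)$. Differentiating $\chi_G=\chi_{G-e}-\chi_{G/e}$ gives $\chi'_G=\chi'_{G-e}-\chi'_{G/e}$. Since $|V(G/e)|=|V(G)|-1$, the sign flip turns this into $D(G)=D(G-e)+D(G/e)$, exactly as in Remark~\ref{rmk_chromatic}. Both $24\Psi_1(\cdot,\mathbf 0)$ and $D$ thus obey the same recursion on nonempty graphs. Each step either removes an edge or removes an edge and a vertex, and contraction never empties the graph. So it suffices to check that the two sides agree on the edgeless graphs $E_n$ for $n\geq1$.

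For the base case, $\chi_{E_n}(x)=x^n$, so $\chi_{E_n}'(0)$ equals $1$ if $n=1$ and $0$ if $n\geq2$. Hence $D(E_1)=1$ and $D(E_n)=0$ for $n\geq2$. For the geometric side we use Lemma~\ref{lemma_psikappa}, but in genus one we cannot take $H=\varnothing$ because $\overline{\mathcal M}_{1,0}$ is unstable. Instead we take $H=E_1=\{1\}$, so that $\overline{\mathcal M}_{1,E_1}=\overline{\mathcal M}_{1,1}$, and the remaining $n-1$ vertices are isolated. We get
\[
\Psi_1(E_n,\mathbf 0)=\int_{\overline{\mathcal M}_{1,1}}\psi_1\,\nkappa_0^{\,n-1}.
\]
Since $\nkappa_0=(2g-2)[\mgraphbar]=0$ in genus one, this vanishes for $n\geq2$. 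Equivalently, the isolated dilaton equation \eqref{Isolated dilaton} produces a factor $2g-2=0$. For $n=1$ we get $\int_{\overline{\mathcal M}_{1,1}}\psi_1=\tfrac1{24}$, so $24\Psi_1(E_1,\mathbf 0)=1=D(E_1)$. This completes the induction.

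The main obstacle is the care needed to stay within the hypotheses. All graphs in the recursion must be nonempty and $1$-stable, which holds automatically in genus one. The $(0,0)$-exception of Theorem~\ref{thm_del_con_psi}(ii) must be avoided, which holds because every exponent equals one. Finally, the base case must be handled via $\overline{\mathcal M}_{1,1}$ rather than $\overline{\mathcal M}_{1,0}$. Beyond these checks the argument is formal.
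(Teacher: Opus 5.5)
Your proof is correct and follows essentially the same argument as the paper. Both sides satisfy the same deletion--contraction recursion: every exponent is one, so Theorem~\ref{thm_del_con_psi}(i) always applies and all nonempty graphs are automatically $1$-stable. This reduces everything to the edgeless graphs $E_n$, where your vanishing via $\tilde{\kappa}_0=(2g-2)=0$ is exactly the paper's use of the isolated dilaton equation \eqref{Isolated dilaton}, leaving $\int_{\overline{\mathcal{M}}_{1,1}}\psi_1=\tfrac{1}{24}$ for $n=1$. (Strictly, Lemma~\ref{lemma_psikappa} as stated carries no $\psi$-class on $H$, so you would also need Lemma~\ref{cone_psi}(ii) to pull back $\psi_1$; your dilaton alternative avoids this.)
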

\begin{remark}\label{rmk_aco_source}
        The quantity $(-1)^{|V(G)|-1}\left. \frac{d}{dx} \chi_G(x) \right|_{x = 0}$ equals to the the number of acyclic orientations of $G$ with a unique, specified source (a vertex with all outgoing orientations)\cite[Theorem 7.3]{CurtisZaslavsky1983}.
    \end{remark}
\begin{proof}[Proof of Proposition \ref{balanced_elliptic}]
    Recall $\Psi_{1}(G, \mathbf{0})$ satisfies the deletion--contraction relation: 
    \begin{align*}
        \Psi_{1}(G, \mathbf{0}) = \Psi_{1}(G-e, \mathbf{0}) +\Psi_{1}(G/e, \mathbf{0}).
    \end{align*}
    This is the same deletion--contraction relation satisfied by the right hand side of (\ref{equation_balanced_elliptic}). Thus, in order to prove both sides are equal, it suffices to compare the base case which is when $G = E_n$. 

    By the Isolated dilaton equation (\ref{Isolated dilaton}), we see that
    \begin{align*}
        \Psi_{1}(E_n, \mathbf{0})  = \int_{\overline{\mathcal{M}}_{1,E_n}} \psi_1 \cdots \psi_n = 
    \begin{cases}
    \int_{\overline{\mathcal{M}}_{1,1}} \psi_1 = \frac{1}{24} & \text{if } n = 1, \\
    0 & \text{if } n > 1.
    \end{cases}
    \end{align*}
    On the other hand, we have
    \begin{align}\label{base_case_aco_s}
        (-1)^{|V(E_n)|-1}\left. \frac{d}{dx} \chi_{E_n}(x) \right|_{x = 0} = (-1)^{n-1}\left. \frac{d}{dx} x^n \right|_{x = 0} = \begin{cases}
    1 & \text{if } n = 1, \\
    0 & \text{if } n > 1.
    \end{cases}
    \end{align}
    Hence, both sides attain the same value in the base case and satisfy the same deletion--contraction relation, which implies the desired statement. 
\end{proof}

\subsubsection{Example II: Rational double-cone integrals and acyclic orientations with unique sources} To consider any other similar integrals, modifications must be made. For example in genus $0$, if a graph has more than three vertices and two of them are dominant, then this graph must contain a copy of $K_3$, hence it cannot be bipartite and it is $0$-stable. One way to make sure our graph always has two dominant vertices is to join a copy of $K_2$. 
\begin{definition}[Rational double-cone integrals] For any non-empty graph $G$, we define
    \begin{align*} 
        \delta_G : = \int_{\overline{\mathcal{M}}_{0,G *K_2 }} \psi_1 \cdots \psi_{n-1}.
    \end{align*}
\end{definition}
\begin{proposition}
\label{lem:double_cone}For any non-empty graph $G$, we have
    \begin{align}\label{eq_double_cone}
         \delta_G = (-1)^{|V(G)|-1}\left. \frac{d}{dx} \chi_G(x) \right|_{x = 0}.
    \end{align}
    In particular, recall the right hand side is the  the number of acyclic orientations of G with a unique, specified source (Remark \ref{rmk_aco_source}).
\end{proposition}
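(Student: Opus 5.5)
The plan is to prove \eqref{eq_double_cone} the same way as Proposition~\ref{balanced_elliptic}. We show that $\delta_G$ satisfies the deletion--contraction relation with a plus sign, which Remark~\ref{rmk_chromatic} says is also satisfied by $(-1)^{|V(G)|-1}\frac{d}{dx}\chi_G(x)|_{x=0}$. Then we compare the two sides on edgeless graphs.

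\textbf{Conventions.} Label the vertices of $G$ as $1,\dots,n$ and the two vertices of $K_2$ as $P,Q$. Then $\dim \overline{\mathcal{M}}_{0,G*K_2}=n-1$, and the integrand puts exponent $1$ on the vertices $1,\dots,n-1$. It puts exponent $0$ on the distinguished vertex $n$ (the ``specified source'') and on $P,Q$. The right-hand side of \eqref{eq_double_cone} does not depend on which vertex is distinguished. So it suffices to prove the identity for the family of integrals
\[
\delta_{(G,v)}=\int_{\overline{\mathcal{M}}_{0,G*K_2}} \prod_{i\in V(G)\setminus\{v\}}\psi_i ,
\]
one for each choice of distinguished vertex $v\in V(G)$, and we induct on the number of edges of $G$.

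\textbf{Applying Theorem~\ref{thm_del_con_psi}.} We only ever delete or contract edges $e=ab$ of $G$; the edges to $P$ and $Q$ are left alone. Every such edge has at least one endpoint carrying exponent $1$, because only $v$ has exponent $0$ among the vertices of $G$. So part (i) applies, and we never meet the problematic case $(0,0)$.

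\textbf{Deletion.} Deleting $e$ gives $(G-e)*K_2$ with the same exponents, that is, $\delta_{(G-e,v)}$.

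\textbf{Contraction.} We get $(G/e)*K_2$, since contracting an edge of $G$ commutes with joining $K_2$. The merged vertex $ab$ receives exponent $k_a+k_b-1$.
\begin{itemize}
\item If $v\notin\{a,b\}$, this exponent is $1+1-1=1$, giving $\delta_{(G/e,v)}$.
\item If, say, $a=v$, it is $0+1-1=0$, so $ab$ becomes the new distinguished vertex, giving $\delta_{(G/e,ab)}$.
\end{itemize}
Hence $\delta_{(G,v)}=\delta_{(G-e,\cdot)}+\delta_{(G/e,\cdot)}$, which is exactly the recursion satisfied by the right-hand side.

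\textbf{Stability.} The $0$-stability hypotheses hold throughout. Every graph that appears has the form $H*K_2$ with $H$ non-empty, so it contains a triangle $P,Q,h$, is non-bipartite, and has at least $3$ vertices.

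\textbf{Base case $G=E_n$.} Here $\overline{\mathcal{M}}_{0,E_n*K_2}$ is the Losev--Manin space, and all $\psi$-classes in the integrand sit on light markings. If $n\ge 2$ the integrand contains some $\wpsi_i$, which vanishes by Lemma~\ref{LM}, so $\delta_{E_n}=0$. If $n=1$ the space is $\overline{\mathcal{M}}_{0,3}$ and the integrand is $1$, so $\delta_{E_1}=1$. These values agree with \eqref{base_case_aco_s}, which completes the induction.

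\textbf{Main obstacle.} The delicate step is the bookkeeping in the contraction. One must check that the contracted integrand stays in the same family, with exactly one exponent $0$ among the vertices of $G/e$ and the rest equal to $1$, so that the induction closes. One must also check that no $(0,0)$-edge is ever used, which is why we never touch the edges to $P,Q$. Both points are handled by the case analysis in the Contraction paragraph.
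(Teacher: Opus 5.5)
Your proof is correct and follows the same route as the paper's. Both arguments use deletion--contraction via Theorem~\ref{thm_del_con_psi}(i), match it against the recursion for $(-1)^{|V(G)|-1}\frac{d}{dx}\chi_G(x)|_{x=0}$, and settle the Losev--Manin base case with Lemma~\ref{LM}; your explicit tracking of the exponent-$0$ vertex (which moves to the merged vertex $ab$ when $v\in e$), and your checks that no $(0,0)$-edge or unstable graph ever appears, spell out what the paper dismisses as ``clear''.
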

\begin{proof}
    It is clear that $\delta_G$ also satisfies the deletion--contraction relation: 
    \begin{align*}
        \delta_G = \delta_{G-e}+\delta_{G/e}.
    \end{align*}
    The right hand side of (\ref{eq_double_cone}) also satisfies this relation. So now we need to compare the base cases which is when $G = E_n$.

    On one hand, when $G =  E_n * K_2$, the moduli space $\overline{\mathcal{M}}_{0,G}$ is the Losev--Manin space. The $\psi$-classes in $\delta_G$ are all concentrated at the light points. Recall from Lemma \ref{LM}, we know $\psi_i = 0$ for all $i =1,...,n-1$. So we deduce that
    \begin{align*}
       \delta_{E_n}  = \int_{\overline{\mathcal{M}}_{0,E_n *K_2 }} \psi_1 \cdots \psi_{n-1} = \begin{cases}
    \int_{\overline{\mathcal{M}}_{0,K_3}} 1 = 1 & \text{if } n = 1, \\
    0 & \text{if } n > 1.
    \end{cases}
    \end{align*}
    This is exactly the base case of the right hand side of (\ref{eq_double_cone}), which was computed earlier in (\ref{base_case_aco_s}). Thus, both sides must be equal and we are done.
\end{proof}

\subsubsection{Example III: Rational $K_3$-augmented integrals and 2-colourings}
Similarly, we can consider $G \sqcup K_3$ which is $0$-stable for any $G$, including $E_n$. 
\begin{definition}[Rational $K_3$-augmented integrals] For any graph $G$, we define
    \begin{align*}
    \xi_G : = \int_{\overline{\mathcal{M}}_{0,G  \sqcup K_3 }} \psi_1 \cdots \psi_n.
\end{align*}
\end{definition}

\begin{proposition}
\label{lem_two_colour} For any graph $G$, we have 
    \begin{align} \label{equation_two_colour}
         \xi_G = (-1)^{|V(G)|} \chi_G(2).
    \end{align}
    In addition, $\chi_G(2) = 2^{k(G)}$ if $G$ is bipartite, $0$ otherwise. Here $k(G)$ denotes the number of connected components of $G$.    
\end{proposition}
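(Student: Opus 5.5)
The plan is the same template as Propositions \ref{balanced_elliptic} and \ref{lem:double_cone}: show both sides satisfy the same deletion--contraction relation and agree on edgeless graphs.

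\textbf{Deletion--contraction.} Let $e=ab$ be an edge of $G$. Since every vertex of $G$ carries $\psi^1$, we have $(k_a,k_b)=(1,1)\neq(0,0)$. Moreover $G\sqcup K_3$, $(G-e)\sqcup K_3$ and $(G/e)\sqcup K_3$ all contain a triangle, so they are $0$-stable. Theorem~\ref{thm_del_con_psi}(i) therefore applies and gives
\[
\xi_G=\xi_{G-e}+\int_{\overline{\mathcal{M}}_{0,(G/e)\sqcup K_3}}\psi_{ab}\prod_{i\neq a,b}\psi_i=\xi_{G-e}+\xi_{G/e}.
\]
Here the exponent of $\psi_{ab}$ is $k_a+k_b-1=1$. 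By Remark~\ref{rmk_chromatic}, $(-1)^{|V(G)|}\chi_G(2)$ satisfies the same recursion. It therefore suffices to treat $G=E_n$, where the target value is $(-1)^n2^n$.

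\textbf{Base case.} For $G=E_n$, Example~\ref{Ex_Gra}(iv) gives $\overline{\mathcal{M}}_{0,E_n\sqcup K_3}\simeq(\mathbb{P}^1)^n$, and we must compute $\int\psi_1\cdots\psi_n$ there. There are two routes.
\begin{itemize}
\item \emph{Via the universal curve.} As in the proof of Lemma~\ref{LM}, the universal curve is the trivial $\mathbb{P}^1$-bundle, trivialized by the three constant sections $0,1,\infty$. The section $\sigma_i$ is the graph of the $i$-th coordinate projection $x_i$. Its conormal bundle is $x_i^*T^\vee_{\mathbb{P}^1}=x_i^*\mathcal{O}(-2)$, so $\psi_i=-2H_i$, where $H_i$ is the hyperplane class pulled back from the $i$-th factor. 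Then $\int\prod_i(-2H_i)=(-2)^n$, which matches $(-1)^n\chi_{E_n}(2)$.
\item \emph{Via the isolated dilaton equation.} Theorem~\ref{Thm_cone_dilaton}(i) with $g=0$ gives $\xi_{E_n}=(-2)\,\xi_{E_{n-1}}$. Stability holds because $K_3$ persists. This reduces to $\xi_{E_0}=\int_{\overline{\mathcal{M}}_{0,3}}1=1$, so $\xi_{E_n}=(-2)^n$.
\end{itemize}
I prefer the second route: it uses only results already stated and avoids any sign check on the conormal bundle.

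\textbf{Final claim and main obstacle.} The last statement is standard. A proper $2$-colouring exists iff $G$ is bipartite. In that case each connected component admits exactly $2$ colourings, since a component's colouring is determined by the colour of one vertex, giving $\chi_G(2)=2^{k(G)}$. The only delicate point in the whole argument is verifying that every graph produced by deletion--contraction remains $0$-stable. This is immediate here because the $K_3$ component is never touched, which is precisely why $K_3$ is adjoined in the first place.
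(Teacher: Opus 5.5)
Your proof is correct. The deletion--contraction step is exactly the paper's: the adjoined $K_3$ keeps $G\sqcup K_3$, $(G-e)\sqcup K_3$ and $(G/e)\sqcup K_3$ all $0$-stable, and $(k_a,k_b)=(1,1)$ puts you in case (i) of Theorem~\ref{thm_del_con_psi}.

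The paper's base case is your first route. It identifies the universal curve over $(\mathbb{P}^1)^n$ with the trivial bundle, so $\omega_\pi\simeq p^*\mathcal{O}_{\mathbb{P}^1}(-2)$, $\psi_i=\mathrm{pr}_i^*\mathcal{O}(-2)$, and $\xi_{E_n}=(-2)^n$.

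Your preferred route via the isolated dilaton equation is also valid. Each vertex of $E_n$ is isolated in $E_n\sqcup K_3$, and deleting it leaves $E_{n-1}\sqcup K_3$, which is still $0$-stable. Theorem~\ref{Thm_cone_dilaton}(i) with $g=0$ then gives $\xi_{E_n}=-2\,\xi_{E_{n-1}}$, down to $\overline{\mathcal{M}}_{0,3}$.

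Both routes rest on the same geometric input: the $\psi$-class of an isolated marking is $c_1(\omega)$ of the universal curve, of fibre degree $2g-2=-2$. The dilaton route packages this as a formal recursion and needs no explicit model of the space. The paper's computation yields more: the explicit classes $\psi_i=-2H_i$ determine every $\psi$-monomial on $\overline{\mathcal{M}}_{0,E_n\sqcup K_3}$ (for instance $\psi_i^2=0$), not just the balanced one.

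Your colouring argument for $\chi_G(2)=2^{k(G)}$ supplies a fact the paper states without proof.
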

\begin{proof}
    Again, notice that $\xi_G$ satisfies the deletion--contraction relation: 
    \begin{align*}
        \xi_G = \xi_{G-e}+\xi_{G/e}.
    \end{align*}
     This is the same deletion relation satisfied by the right hand side of (\ref{equation_two_colour}). So we need to compute the base cases of both sides and compare. On one side we have
    \begin{align*}
        \xi_{E_n} = \int_{\overline{\mathcal{M}}_{0,E_n  \sqcup K_3 }} \psi_1 \cdots \psi_n.
    \end{align*}
    Recall from Example \ref{Ex_Gra} that $\overline{\mathcal{M}}_{0,E_n  \sqcup K_3 }  \simeq \mathcal{M}_{0,E_n  \sqcup K_3 } \simeq (\Pone)^n$. Let $\pi \colon \mathcal{C} \rightarrow \overline{\mathcal{M}}_{0,E_n  \sqcup K_3 }$ be the universal curve. As explained in the proof of  Lemma $\ref{LM}$, $\pi$ is simply the trivial $\Pone$-bundle and we have a commutative diagram:
    \[
    \begin{tikzcd}[column sep=large, row sep=large]
    \mathcal{C} \simeq \Pone \times \overline{\mathcal{M}}_{0,E_n  \sqcup K_3 } \arrow[rr, "p"] \arrow[dr, "\pi"] &                                  & \Pone \\
                                  & \overline{\mathcal{M}}_{0,E_n  \sqcup K_3 } \simeq (\Pone)^n \arrow[ul, "\sigma_{1,...,n}", bend left=20] \arrow[ur, "\mathrm{pr}_{1,...,n}"'] & 
    \end{tikzcd}
    \]
    Here $p$ is the projection, $\sigma_{1,...,n}$ are the $n$ sections that correspond to the vertices of $E_n$ and $\mathrm{pr}_i$ is the projection to the $i$-th copy in the product. 
    
    In this case we have $\omega_{\pi} \simeq p^* \omega_{\Pone} \simeq p^*\mathcal{O}_{\Pone}(-2)$. Thus for $i=1,...,n$, we obtain
    \begin{align*}
        \psi_i := \sigma_i^* \omega_{\pi} \simeq \sigma_i^* p^*\mathcal{O}_{\Pone}(-2) \simeq \mathrm{pr}_i^* \mathcal{O}_{\Pone}(-2). 
    \end{align*}
    In other words, 
    \begin{align*}
        \psi_i = \mathcal{O}(0, \dots, \underset{i\text{-th entry}}{\underline{-2}}, \dots, 0) \in A^1((\Pone)^n).
    \end{align*}
    Hence, the base case is simply
    \begin{align*}
        \xi_{E_n} = \int_{\overline{\mathcal{M}}_{0,E_n  \sqcup K_3 }} \psi_1 \cdots \psi_n = (-2)^n.
    \end{align*}
    This is exactly the value of $(-1)^n\chi_{E_n}(2)$. Together with the deletion--contraction relation satisfied by $\xi_G$, we conclude $\xi_G = (-1)^{|V(G)|} \chi_G(2)$ as desired.
\end{proof}

\subsubsection{Example IV: Dummy vertex and acyclic orientations} We can also introduce a dummy vertex to host more $\psi$-classes so that our integrand is always a top form. 
\begin{definition}[Dummy vertex integrals] Let $g$, $m$ be non-negative integers such that $2g-2+m > 0$, we define
    \begin{align*}
    \lambda_{G,g,m} := \int_{\overline{\mathcal{M}}_{g,(G \sqcup \bullet) * K_m }}\psi_1 \cdots \psi_n \psi_{\bullet}^{3g-2+m}.
    \end{align*}
\end{definition}

\begin{figure}[htbp]
\centering

\begin{tikzpicture}[
    vertex/.style={circle,fill=black,inner sep=2.2pt},
    join edge/.style={gray!55,thin},
    box/.style={draw,dashed,rounded corners,inner sep=8pt}
]

\node[vertex] (g1) at (0,1.5) {};
\node[vertex] (g2) at (1,2.2) {};
\node[vertex] (g3) at (1,0.8) {};
\node[vertex] (g4) at (2,1.5) {};

\node[vertex] (b) at (1,-1.1) {};

\node[vertex] (k1) at (5,2) {};
\node[vertex] (k2) at (5,0) {};
\node[vertex] (k3) at (6.7,1) {};

\begin{scope}[on background layer]
    \foreach \u in {g1,g2,g3,g4,b}{
        \foreach \v in {k1,k2,k3}{
            \draw[join edge] (\u)--(\v);
        }
    }
\end{scope}

\draw[thick] (g1)--(g2);
\draw[thick] (g2)--(g4);
\draw[thick] (g4)--(g3);

\draw[thick] (k1)--(k2);
\draw[thick] (k2)--(k3);
\draw[thick] (k3)--(k1);

\node[
    box,
    fit=(g1)(g2)(g3)(g4),
    label=above:$G$
] {};

\node[
    box,
    fit=(k1)(k2)(k3),
    label=above:$K_3$
] {};

\end{tikzpicture}

\caption{Illustration of $\bigl(G\sqcup\{\bullet\}\bigr)*K_3$}
\label{fig:join-example}
\end{figure}

\begin{proposition}\label{lemma_acyc_psi} For any graph $G$, we have
\begin{align}
    24^g g!\cdot \lambda_{G,g,m} = (-1)^{|V(G)|} \chi_G(-(2g-2+m)).
\end{align}
Moreover, since $\chi_G$ is a polynomial of finite degree, it is completely determined by its values at finitely many points.
\end{proposition}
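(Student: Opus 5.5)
Following the pattern of Propositions \ref{balanced_elliptic}, \ref{lem:double_cone} and \ref{lem_two_colour}, I will check that $\lambda_{G,g,m}$ obeys the same signed deletion--contraction relation as $(-1)^{|V(G)|}\chi_G(-(2g-2+m))$, and then compare the two sides on edgeless graphs. Take an edge $e=ab$ of $G$; it is also an edge of $(G\sqcup\bullet)*K_m$. Because $k_a=k_b=1$, Theorem \ref{thm_del_con_psi}(i) applies. Deleting $e$ gives $((G-e)\sqcup\bullet)*K_m$ with the same integrand. Contracting $e$ gives $((G/e)\sqcup\bullet)*K_m$, since $a$ and $b$ have the same neighbours in the join, and the merged vertex carries $\psi_{ab}^{1+1-1}=\psi_{ab}$. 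So the integrand keeps its shape, and
\[
\lambda_{G,g,m}=\lambda_{G-e,g,m}+\lambda_{G/e,g,m}.
\]
By Remark \ref{rmk_chromatic}, $(-1)^{|V(G)|}\chi_G(x)$ satisfies this same relation for every $x$. We also need all the graphs to be $g$-stable. For $g\ge 1$ this is just $2g-2+n>0$. For $g=0$ we have $m\ge3$, so $K_m$ contains a triangle and every graph appearing is non-bipartite.

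The base case is $G=E_n$, where we must show $24^g g!\,\lambda_{E_n,g,m}=(-1)^n(-(2g-2+m))^n=(2g-2+m)^n$. Here $(E_n\sqcup\bullet)*K_m = E_{n+1}*K_m$ is the heavy/light graph with $m$ heavy markings and $n+1$ light markings, and every $\psi$-class sits on a light marking. Lemma \ref{psi_kappa} (which needs $2g-2+m>0$) then gives
\[
\lambda_{E_n,g,m}=\int_{\overline{\mathcal M}_{g,m}}\kappa_0^{\,n}\,\kappa_{3g-3+m}.
\]
Since $\kappa_0=(2g-2+m)$ on $\overline{\mathcal M}_{g,m}$, this equals $(2g-2+m)^n\int_{\overline{\mathcal M}_{g,m}}\kappa_{3g-3+m}$. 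The remaining input is the classical identity $\int_{\overline{\mathcal M}_{g,m}}\kappa_{3g-3+m}=\frac{1}{24^g g!}$. I expect this to be the main obstacle, because it is not proved earlier in the paper. Two routes are available.

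\begin{itemize}
\item Cite it from the literature. Via the pushforward formula it equals $\int_{\overline{\mathcal M}_{g,m+1}}\psi_{m+1}^{3g-2+m}$, and the string equation reduces this to $\int_{\overline{\mathcal M}_{g,1}}\psi_1^{3g-2}=\frac{1}{24^g g!}$.
\item Reprove it within the paper. Apply Lemma \ref{lem:heavy} to $K_m*E_1$ with all powers on heavy markings, or note that when $n=0$ the integral is $\int_{\overline{\mathcal M}_{g,m+1}}\psi_\bullet^{3g-2+m}$. The string equation on $\overline{\mathcal M}_{g,m+1}$ (the $K_n$ case of Theorem \ref{string_complete}) then removes the $m$ markings that carry no $\psi$-class, leaving $\int_{\overline{\mathcal M}_{g,1}}\psi^{3g-2}$, a standard Witten--Kontsevich value.
\end{itemize}

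The case $g=0$ needs one check: the value $1/(24^0\,0!)=1$ matches $\int_{\overline{\mathcal M}_{0,m+1}}\psi^{m-2}=1$.

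Once the relation and the base case hold, induction on the number of edges gives the identity for every $G$. The final claim of the statement is then immediate: taking $m$ over infinitely many values (say with $g$ fixed) shows that the integrals $\lambda_{G,g,m}$ determine the polynomial $\chi_G$.
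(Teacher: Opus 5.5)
Your proof is correct and follows the paper's argument: you apply deletion--contraction via Theorem \ref{thm_del_con_psi}(i) to the edges of $G$, then settle the edgeless base case through Lemma \ref{psi_kappa}, $\kappa_0=(2g-2+m)$ and $\int_{\overline{\mathcal M}_{g,m}}\kappa_{3g-3+m}=1/(24^g g!)$. The paper simply quotes that last value, so your reduction of it via the string equation to $\int_{\overline{\mathcal M}_{g,1}}\psi_1^{3g-2}$ (with the separate $g=0$ check) and your explicit stability check are extra detail rather than a different route.
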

\begin{remark} \label{rmk_stanley_acyc}
    By a classic result of Stanley \cite{STANLEY_Acyc}, the value $(-1)^{|V(G)|}\chi_G(-1)$ equals to the number of acyclic orientations of $G$. More generally, $(-1)^{|V(G)|}\chi_G(-m)$ equals to the number of pairs $(\sigma, \mathcal{O})$, where $\sigma: V(G) \rightarrow \{1,...,m\}$ and $\mathcal{O}$ is an acyclic orientation such that if $u \rightarrow v$ in the orientation, then $\sigma(u) \geq \sigma(v)$. In particular, these values are all non-negative. 
\end{remark}
    
\begin{proof}[Proof of Proposition \ref{lemma_acyc_psi}]
    As before, first notice our integral satisfies the deletion--contraction relation: 
    \begin{align*}
        \lambda_{G,g,m} = \lambda_{G-e,g,m} + \lambda_{G/e,g,m}. 
    \end{align*}
    Next we compare the base cases of both sides. On the integral side we have
    \begin{align*}
        \lambda_{E_n,g,m} &= \int_{\overline{\mathcal{M}}_{g,(E_n \sqcup \bullet) * K_m }}\psi_1 \cdots \psi_n \psi_{\bullet}^{3g-2+m} \\
        & = \int_{\overline{\mathcal{M}}_{g,E_{n+1} * K_m }}\psi_1 \cdots \psi_n \psi_{n+1}^{3g-2+m}  \\
        & = \int_{\overline{\mathcal{M}}_{g,m}} \kappa^{n}_0 \kappa_{3g-3+m} \quad \text{by Lemma \ref{psi_kappa}} \\ 
        & = \frac{1}{24^g g!}(2g-2+m)^n.
    \end{align*}
    Here we also used the fact that $\kappa_0 = (2g-2+m) [\overline{\mathcal{M}}_{g,m}]$ and the degree of the top $\kappa$-class is $1/(24^g g!)$.

    On the other hand, we have 
    \begin{align*}
        (-1)^{|V(E_n)|} \chi_{E_n}(-(2g-2+m)) = (-1)^{n} (-(2g-2+m))^n = (2g-2+m)^n. 
    \end{align*}
    Therefore, the base cases are equal, so both sides of the statement are also equal to each other. 
\end{proof}
\begin{corollary} For any graph $G$, we have
    \begin{align*}
        \int_{\overline{\mathcal{M}}_{0,G * K_3 }}\psi_1 \cdots \psi_n = (-1)^{|V(G)|} \chi_G(-1).
    \end{align*}
\end{corollary}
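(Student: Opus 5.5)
This is the special case $g=0$, $m=2$ of Proposition~\ref{lemma_acyc_psi}, after relabelling the dummy vertex. With $g=0$ and $m=2$ we have $2g-2+m=2-2+0=0$... but the condition $2g-2+m>0$ fails, so instead I take $m=3$.

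For $g=0$, $m=3$, the dummy exponent is $3g-2+m = 1$ and $2g-2+m=1$. Proposition~\ref{lemma_acyc_psi} then reads
\[
\int_{\overline{\mathcal{M}}_{0,(G\sqcup \bullet)*K_3}} \psi_1\cdots\psi_n\psi_\bullet = (-1)^{|V(G)|}\chi_G(-1),
\]
since $24^0\,0! = 1$. This has an extra $\psi_\bullet$ and an extra vertex compared with the claimed integral over $\overline{\mathcal{M}}_{0,G*K_3}$. I remove both using the $G$-dilaton equation (Theorem~\ref{dilaton}(i)) applied at $\bullet$.

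In $(G\sqcup\bullet)*K_3$ the neighbours of $\bullet$ are exactly the three vertices of $K_3$, and the $\psi$-exponent on each of them is $0$. Order the edges $e_1,e_2,e_3$ joining $\bullet$ to the $K_3$ vertices. Contracting $e_i$ after deleting $e_1,\dots,e_{i-1}$ merges $\bullet$ into a $K_3$ vertex $u_i$. The merged vertex is adjacent to everything $u_i$ was adjacent to, so the contracted graph is isomorphic to $G*K_3$, with $\psi_{\bullet u_i}^{0}$ on the merged vertex. Hence each of the three contraction terms equals $\int_{\overline{\mathcal{M}}_{0,G*K_3}}\psi_1\cdots\psi_n$.

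The remaining term is $(2g-2)\int_{\overline{\mathcal{M}}_{0,G*K_3}}\psi_1\cdots\psi_n = -2\int_{\overline{\mathcal{M}}_{0,G*K_3}}\psi_1\cdots\psi_n$. Adding everything, the left side equals $(-2+3)\int_{\overline{\mathcal{M}}_{0,G*K_3}}\psi_1\cdots\psi_n$, which gives the claim.

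It remains to check stability, which I expect to be the only real point. All graphs involved are $0$-stable: each contains a triangle, so none is bipartite or edgeless, and $n\ge 3$. The intermediate graphs from deleting $e_1,\dots,e_{i-1}$ still contain $K_3$, so they are $0$-stable as well. As a sanity check, for $G=E_n$ the claimed integral is $1$, which agrees with Lemma~\ref{lem:heavy}.
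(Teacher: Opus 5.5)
Your proof is correct and is exactly the paper's argument: specialize Proposition~\ref{lemma_acyc_psi} to $g=0$, $m=3$, then remove $\psi_\bullet$ with the $G$-dilaton equation, which gives $(-2+3)$ times the desired integral. The only blemishes are cosmetic. The aborted $m=2$ opening should be deleted. Your $E_n$ sanity check should cite Lemma~\ref{psi_kappa}, since the $\psi$-classes sit on the light markings and give $\int_{\overline{\mathcal{M}}_{0,3}}\kappa_0^n=1$, rather than Lemma~\ref{lem:heavy}, which concerns heavy markings.
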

\begin{proof}
    By Proposition \ref{lemma_acyc_psi}, we have 
    \begin{align*}
        (-1)^{|V(G)|} \chi_G(-1) = \lambda_{G,0,3} = \int_{\overline{\mathcal{M}}_{0,(G \sqcup \bullet) * K_3 }}\psi_1 \cdots \psi_n \psi_{\bullet}.
    \end{align*}
    As shown in Figure~\ref{fig:join-example}, the vertex $\bullet$ has three incident edges all connected to the $K_3$. Apply the $G$-dilaton equation (\ref{G_dilaton}) to $\psi_\bullet$ yields the desired formula
    \begin{align*}
        \int_{\overline{\mathcal{M}}_{0,(G \sqcup \bullet) * K_3 }}\psi_1 \cdots \psi_n \psi_{\bullet} &= -2 \cdot \int_{\overline{\mathcal{M}}_{0, G  * K_3 }}\psi_1 \cdots \psi_n + 3 \cdot \int_{\overline{\mathcal{M}}_{0, G  * K_3 }}\psi_1 \cdots \psi_n \\
        & = \int_{\overline{\mathcal{M}}_{0, G  * K_3 }}\psi_1 \cdots \psi_n \qedhere
    \end{align*}
\end{proof}

\subsubsection{Comparison with integrals over Deligne--Mumford spaces}\label{subsec_comparison_psi} Following \cite{RobReinke}, we now recall the construction of certain graph-associated integrals over $\mgnbar$. Let $G$ be a graph. For each $i =1,...,n$, let $\pi_{G,i}$ be the morphism that forgets all but the closed neighbourhood of $i$ in $G$: 
\begin{align*}
    \pi_{G,i}\colon \mgnbar \longrightarrow \overline{\mathcal{M}}_{g,|N_G[i]|}.
\end{align*}
Here $N_G[i] :=  N_G(i) \cup \{i\} \subseteq [n]$ denotes the closed neighbourhood of $i$. Importantly, the ordering of the markings of $\overline{\mathcal{M}}_{g,|N_G[i]|}$ are inherited from $\mgnbar$. 

\begin{definition}[{\!\cite{RobReinke}}]\label{def_rob} For non-negative integers $g,m$ such that $2g-2+m > 0$, we define
    \begin{align*}
        \omega_{G,g,m} := \int_{\overline{\mathcal{M}}_{g, n + m}} \pi^*_{G * K_m,1}(\psi_1) \cdots  \pi^*_{G * K_m,n}(\psi_n) \cdot \pi_M^*([\mathrm{pt}]).
    \end{align*}
    where $\pi_M$ is the map forgetting the first $n$-markings.
\end{definition}
The integrals $\omega_{G,g,m}$ are defined independently of $\mgraphbar$: the graph $G$ is used only to determine which markings are forgotten.
\begin{theorem}[\!\cite{RobReinke}, Theorem 1.4] We have the formula
\begin{align*}
    \omega_{G,g,m} = &  \;(-1)^{|V(G)|} \chi_G(-(2g-2+m))\\
     24 \cdot \int_{\overline{\mathcal{M}}_{1,n}} \pi^*_{G ,1}(\psi_1) \cdots  \pi^*_{G,n}(\psi_n) &=  (-1)^{|V(G)|-1}\left. \frac{d}{dx} \chi_G(x) \right|_{x = 0}.
\end{align*}
\end{theorem}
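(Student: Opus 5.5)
The statement is Reinke--Silversmith's theorem, quoted from \cite{RobReinke}, so within this paper it is assumed. Still, here is how I would prove it directly, copying the template of Propositions \ref{balanced_elliptic} and \ref{lemma_acyc_psi}. Both sides satisfy the same deletion--contraction recursion in $G$, so it suffices to check the edgeless base case. Each right-hand side is a signed chromatic quantity of the kind in Remark \ref{rmk_chromatic}: $(-1)^{|V(G)|}\chi_G(x)$ evaluated at a point, or a signed derivative at $0$. Each therefore satisfies $f(G)=f(G-e)+f(G/e)$, with base values $(2g-2+m)^n$ and $\delta_{n,1}$ respectively. The task reduces to two points: (a) prove $\omega_{G,g,m}=\omega_{G-e,g,m}+\omega_{G/e,g,m}$, and the analogous relation for the genus-one integral; and (b) compute both integrals at $G=E_n$.

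\textbf{Base case.} For $G=E_n$, each $\pi_{E_n*K_m,i}$ forgets everything except $p_i$ and the $m$ heavy markings. So it is the map $f_i\colon\overline{\mathcal M}_{g,n+m}\to\overline{\mathcal M}_{g,m+1}\simeq\mathcal C_{g,m}$, and the integrand is $\prod_i f_i^*\psi_i\cdot\pi_M^*[\mathrm{pt}]$. I would push forward along $\pi_M$ exactly as in the proof of Lemma \ref{psi_kappa}. The map $\overline{\mathcal M}_{g,n+m}\to\mathcal C_{g,m}^{\times_{\overline{\mathcal M}_{g,m}} n}$ is birational, so the integral becomes $\int_{\overline{\mathcal M}_{g,m}}\kappa_0^n\cdot[\mathrm{pt}]=(2g-2+m)^n$. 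In genus one with $m=0$, the same computation gives $\int_{\overline{\mathcal M}_{1,n}}\prod_i\pi_i^*\psi_1$, where $\pi_i$ forgets all markings but $i$. Each factor is pulled back from $\overline{\mathcal M}_{1,1}$, which has dimension one, so the product vanishes for $n\ge2$ and equals $1/24$ for $n=1$.

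\textbf{Deletion--contraction.} This is the main obstacle, because there is no modular space for $G$ here; the graph only dictates which markings are forgotten. Deleting $e=ab$ changes only the two factors $\pi^*_{G,a}\psi_a$ and $\pi^*_{G,b}\psi_b$: one loses $b$ from $N[a]$ and the other loses $a$ from $N[b]$. I would use the comparison formula for $\psi$ under forgetting one point, $\psi_a=\pi_b^*\psi_a+D_{ab}$, where $D_{ab}$ is the boundary divisor of curves with a rational tail containing exactly $a$ and $b$ (pulled back suitably). Then
\[
\pi^*_{G,a}\psi_a=\pi^*_{G-e,a}\psi_a+D^{(a)},\qquad \pi^*_{G,b}\psi_b=\pi^*_{G-e,b}\psi_b+D^{(b)},
\]
where $D^{(a)}$ and $D^{(b)}$ are pulled-back boundary divisors whose general member has $a$ and $b$ bubbled together with points outside the respective neighbourhoods. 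Expanding the product gives the $G-e$ term plus three correction terms. I would aim to show that the corrections sum to the $G/e$ integral on $\overline{\mathcal M}_{g,n-1+m}$. The idea is to restrict to $D_{ab}\cong\overline{\mathcal M}_{g,n-1+m}\times\overline{\mathcal M}_{0,3}$, on which $\pi^*_{G,a}\psi_a$ restricts to the pullback of $\psi_{ab}$ with neighbourhood $N[a]\cup N[b]$. That neighbourhood is exactly $N_{G/e}[ab]$ (cf.\ Lemma \ref{tech}).

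The delicate part is bookkeeping. $D^{(a)}$ and $D^{(b)}$ are sums of boundary divisors, and I must check that the cross term $D^{(a)}D^{(b)}$ and the remaining summands vanish, or cancel against the other factors, using $\psi_i\cdot D=0$-type relations for the pulled-back classes. If a direct argument stalls, I would first try the case where $a$ is a leaf, to calibrate signs, and then induct on $|N(a)\cap N(b)|$.
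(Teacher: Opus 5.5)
Your overall strategy agrees with the paper, which only cites this theorem and notes that Reinke--Silversmith prove it from a base case plus the deletion--contraction relation \cite[(9)]{RobReinke}. Your base case for $\omega_{E_n,g,m}$, via the fibre power of $\mathcal C_{g,m}$, is correct.

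As a proof, however, the proposal has a genuine gap. The deletion--contraction step is the real content of the theorem, you do not carry it out, and the reduction you sketch for it is wrong as stated.

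\textbf{The restriction to $D_{ab}$ is misidentified.} Since $b\in N_G[a]$, the tail carrying $a,b$ survives under $\pi_{G,a}$. So $\pi^*_{G,a}\psi_a$ restricts on $D_{ab}$ to $\psi_a$ on the $\overline{\mathcal M}_{0,3}$ factor, which is zero; it is not a $\psi$-class at the node. The class that does restrict to $\psi_\bullet$ on $\overline{\mathcal M}_{g,n-1+m}$ is $\pi^*_{G-e,a}\psi_a$. But the neighbourhood it retains is $N_G(a)\setminus\{b\}$ (plus the heavy markings), not $N_{G/e}(ab)=(N_G(a)\cup N_G(b))\setminus\{a,b\}$. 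The factors $\pi^*_{G,i}\psi_i$ for $i\neq a,b$, and $\pi_M^*[\mathrm{pt}]$, do restrict to the correct $G/e$ factors. The missing vertices $N_G(b)\setminus N_G[a]$ must therefore come from the divisors $D_{\{a,b\}\cup C}$ with $C\neq\varnothing$ in $D^{(a)}$ and $D^{(b)}$. Those are precisely the summands you hoped would drop out.

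\textbf{The cross term does not vanish.} $D^{(a)}D^{(b)}$ contains self-intersections $D_{ab}^2=-(\psi_\bullet+\psi_\star)\cdot D_{ab}$. Already for $g=1$ and $G=K_2$, on $\overline{\mathcal M}_{1,2}$ one has $\psi_1\psi_2=(\lambda_1+D_{12})^2$ with $\lambda_1^2=0$, $\lambda_1D_{12}=\tfrac1{24}$ and $D_{12}^2=-\tfrac1{24}$. The correct value $\tfrac1{24}$ appears only because the quadratic term cancels one of the two linear terms.

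What is needed is the actual computation:
\begin{enumerate}
\item restrict each class $\pi_S^*\psi_i$ to every divisor $D_{\{a,b\}\cup C}$ (it lives on the tail or on the genus-$g$ side depending on how many points of $S$ lie on the tail);
\item use excess intersection for repeated divisors;
\item show that the sum over $C$ reassembles into the $G/e$ integral, upgrading the neighbourhood of the merged point from $N_G(a)\setminus\{b\}$ to $N_{G/e}(ab)$.
\end{enumerate}
Without this, or an explicit appeal to \cite[(9)]{RobReinke}, the argument is incomplete.

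A smaller point concerns the genus-one base case. The factors $\pi_i^*\psi_1$ are pulled back along \emph{different} maps $\overline{\mathcal M}_{1,n}\to\overline{\mathcal M}_{1,1}$, so $\dim\overline{\mathcal M}_{1,1}=1$ alone does not kill their product. You need that $\psi_1=\lambda_1$ on $\overline{\mathcal M}_{1,1}$; then every factor equals $\lambda_1$, and $\lambda_1^2=0$.
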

This theorem was proved with a similar strategy to our computations in this section---compute the base cases and prove a deletion--contraction relation \cite[(9)]{RobReinke}:
\begin{align}\label{eq:rob_del}
    \omega_{G,g,m} = \omega_{G-e,g,m} + \omega_{G/e,g,m}.
\end{align}
Comparing their results with Proposition \ref{balanced_elliptic} and Proposition \ref{lemma_acyc_psi}, we immediately see that
\[24^g g! \cdot \lambda_{G,g,m} = \omega_{G,g,m},\]
\begin{align*}
    \int_{\overline{\mathcal{M}}_{1,G}} \psi_1 \cdots \psi_n = \int_{\overline{\mathcal{M}}_{1,n}} \pi^*_{G ,1}(\psi_1) \cdots  \pi^*_{G,n}(\psi_n).
\end{align*}
It is clear that the definitions of Reinke--Silversmith integrals over $\overline{\mathcal{M}}_{g,n}$ are completely analogous to those over the graphically stable spaces $\overline{\mathcal{M}}_{g,G}$, and that their resulting values coincide. This leads us to conjecture the following.
\begin{conjecture}[Conjecture~{\hyperref[conj:I]{I}}]\label{conjecture_psi}
Let $g \geq 1$, and let $k_1, \dots, k_n$ be positive integers—with at most one $k_i$ allowed to be $0$—satisfying $\sum_{i=1}^n k_i = 3g - 3 + n$. Then for any $g$-stable graph $G$, we have
    \begin{align*}
        \int_{\mgraphbar} \psi_1^{k_1} \cdots \psi_n^{k_n} = \int_{\mgnbar} \pi^*_{G,1}(\psi_1^{k_1}) \cdots \pi^*_{G,n}(\psi_n^{k_n}).
    \end{align*}
\end{conjecture}

\begin{remark}\label{rmk:conj} 
    An immediate naive guess would be 
    \begin{align*}
        \rho_G^* (\psi_i) = \pi^*_{G,i}(\psi_i)
    \end{align*}
    for all $i$ where $\rho_G$ is the reduction morphism (\ref{reduction_morphism}), since then this conjecture follows after applying the projection formula to $\rho_G$. 
    However, this is not true in general as it fails due to boundary corrections, but this conjecture asserts that the boundary corrections always vanish after taking products. The claim does hold 
    in special cases such as when $G = E_n$ or when $G$ is the complement of a matching (i.e., the graph obtained by deleting a set of pairwise disjoint edges from a complete graph).
\end{remark}
\begin{lemma}\label{lem:complement_matching} Let $g\geq 0$, let $G$ be a $g$-stable graph, and let $k_1,\ldots,k_n$ be nonnegative integers. Suppose that either $G=E_n$ or $G$ is the complement of a matching. Then
    \begin{align*}
        \int_{\mgraphbar} \psi_1^{k_1} \cdots \psi_n^{k_n} = \int_{\mgnbar} \pi^*_{G,1}(\psi_1^{k_1}) \cdots \pi^*_{G,n}(\psi_n^{k_n}).
    \end{align*}
\end{lemma}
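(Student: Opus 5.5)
The plan is to prove the stronger, class-level statement of Remark~\ref{rmk:conj} in these two cases:
\[
\rho_G^*(\psi_i)=\pi_{G,i}^*(\psi_i)\in A^1(\mgnbar)\qquad\text{for every } i\in[n],
\]
where $\rho_G\colon \mgnbar\to\mgraphbar$ is the reduction morphism \eqref{reduction_morphism}. Granting this, the lemma follows from the projection formula. Indeed, $\rho_G$ is proper and birational, so $(\rho_G)_*[\mgnbar]=[\mgraphbar]$, and therefore
\[
\int_{\mgraphbar}\prod_i\psi_i^{k_i}=\int_{\mgnbar}\prod_i\rho_G^*(\psi_i)^{k_i}=\int_{\mgnbar}\prod_i\pi_{G,i}^*(\psi_i^{k_i}).
\]

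To compare the two pullbacks, I would express both as $\psi_i$ minus boundary divisors $D_S$. Here $D_S$ denotes the closure of the locus of curves having a genus-$0$ tail that carries exactly the markings $S$, with $i\in S$ and $|S|\geq 2$.

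\emph{The pullback $\rho_G^*\psi_i$.} Adapting the wall-crossing comparison of Alexeev--Guy \cite{AG08} and Blankers--Cavalieri \cite{BlankersRenzo} (as in \cite[Theorem 2.13]{SRI}), I expect
\[
\rho_G^*\psi_i=\psi_i-\sum_{\substack{S\ni i,\ |S|\geq 2\\ S\in\inde(G)}} D_S .
\]
The reason is that $\rho_G$ contracts exactly the rational tails whose markings form an independent set; these are the tails on which the markings are allowed to collide. On such a tail, the cotangent line at $p_i$ gets identified with the one at the image point, and the discrepancy is measured by the corresponding divisor.

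\emph{The pullback $\pi_{G,i}^*\psi_i$.} This is the standard comparison for forgetful maps on $\mgnbar$. Forgetting the set of markings $T=[n]\setminus N_G[i]$ gives
\[
\pi_{G,i}^*\psi_i=\psi_i-\sum_{\substack{S\ni i,\ |S|\geq 2\\ S\setminus\{i\}\subseteq T}} D_S .
\]

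Comparing the two expressions, it is enough to check that, for $S\ni i$ with $|S|\geq 2$,
\[
S\in\inde(G)\iff S\setminus\{i\}\subseteq NN_G(i).
\]
The forward direction always holds. The reverse direction holds exactly when $NN_G(i)$ is itself an independent set of $G$, so that every such $S$ is automatically independent. This is where the hypotheses enter.
\begin{itemize}
\item For $G=E_n$, every subset of $[n]$ is independent, and $NN_G(i)=[n]\setminus\{i\}$.
\item For $G$ the complement of a matching, $NN_G(i)$ is either $\{i'\}$ when $i$ is matched to $i'$, or empty when $i$ is unmatched. In both cases it is independent, and the sums on the two sides are $D_{\{i,i'\}}$ and $0$ respectively.
\end{itemize}
So in both cases the two expansions agree term by term.

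For $G=E_n$ I would also give an independent check that bypasses the divisor formula. The product map $(\pi_{G,1},\dots,\pi_{G,n})$ factors as $f\circ\rho_G$, where $f$ is the small resolution from the proof of Lemma~\ref{psi_kappa} (with $m=0$). In that proof one has $f^*\mathrm{pr}_i^*\psi=\wpsi_i$, which gives $\rho_G^*\psi_i=\pi_{G,i}^*\psi_i$ directly. The case $g=1$ requires working with stacks over $\overline{\mathcal M}_{1,1}$, and I would run the same argument there.

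The main obstacle I anticipate is rigorously justifying the formula for $\rho_G^*\psi_i$ in the graphically stable setting. This includes showing that the contracted locus corresponds to exactly the divisors $D_S$ with $S$ independent, each with multiplicity one. For this I would follow the Hassett reduction argument of \cite[§4]{AG08}, factoring $\rho_G$ as a sequence of single wall-crossings. Each wall-crossing contracts one family $D_S$ and changes $\psi_i$ by exactly $D_S$ when $i\in S$.
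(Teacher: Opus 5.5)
Your proposal is correct and is essentially the paper's argument: Remark~\ref{rmk:conj} deduces the lemma from the class-level identity $\rho_G^*\psi_i=\pi_{G,i}^*\psi_i$ together with the projection formula along the proper birational morphism $\rho_G$. Your term-by-term comparison of the two boundary-divisor expansions, which goes through because $NN_G(i)$ is independent, is exactly what justifies that identity in these two cases. (Your alternative check for $E_n$ via Lemma~\ref{psi_kappa} with $m=0$ only makes sense for $g\geq 2$, but the divisor comparison already covers $g=1$, so nothing is lost.)
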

We have verified Conjecture \ref{conjecture_psi} in many other cases with the help of the SageMath package \textsf{admcycles}~\cite{admcycles}, and a genus-$0$ version of this conjecture can also be formulated with suitable modifications. Moreover,
this conjecture is equivalent to the following conjecture. 
\begin{conjecture}\label{conj:psi_2} Let $e$ be an edge joining $a$ and $b$. Suppose $(k_a,k_b) \neq (0,0) $, then
    \begin{align*}
        \int_{\mgnbar} \pi^*_{G,1}(\psi_1^{k_1}) \cdots \pi^*_{G,n}(\psi_n^{k_n}) &=  \int_{\mgnbar} \pi^*_{G-e,1}(\psi_1^{k_1}) \cdots \pi^*_{G-e,n}(\psi_n^{k_n}) \\  & \hspace{0.8cm}+ \int_{\overline{\mathcal{M}}_{g,n-1}} \pi^*_{G/e,1}(\psi_1^{k_1}) \cdots  \pi^*_{G/e,ab}(\psi_{ab}^{k_a+k_b-1})  \cdots \pi^*_{G/e,n}(\psi_n^{k_n}). 
    \end{align*}
\end{conjecture}
In other words, the Reinke--Silversmith type integrals satisfy the same deletion--contraction relation as the integrals over the graphically stable spaces (Theorem \ref{thm_del_con_psi}). A special case (\ref{eq:rob_del}) of Conjecture \ref{conj:psi_2} has been proved in \cite{RobReinke}.
\begin{proposition}
    Conjecture \ref{conjecture_psi}
     is equivalent to Conjecture \ref{conj:psi_2}.
\end{proposition}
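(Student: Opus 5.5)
The plan is to write $A(G,\mathbf k)$ for $\int_{\mgraphbar}\psi_1^{k_1}\cdots\psi_n^{k_n}$ and $B(G,\mathbf k)$ for the Reinke--Silversmith type integral $\int_{\mgnbar}\pi^*_{G,1}(\psi_1^{k_1})\cdots\pi^*_{G,n}(\psi_n^{k_n})$. Throughout, I read Conjecture~\ref{conj:psi_2} under the same standing hypotheses as Conjecture~\ref{conjecture_psi}: $g\geq 1$, $\sum k_i=3g-3+n$, and at most one $k_i$ equal to $0$. The key observation is that this class of data is closed under deletion and contraction along an edge with $(k_a,k_b)\neq(0,0)$.

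First I would check this closure. Deletion keeps $\mathbf k$ unchanged. For contraction, the new exponent is $k_{ab}=k_a+k_b-1$, which is non-negative. It equals $0$ only if $\{k_a,k_b\}=\{0,1\}$. In that case the unique zero of $\mathbf k$ was at $a$ or $b$, so $G/e$ again has at most one zero exponent. Otherwise both $k_a,k_b\geq 1$ or one of them is at least $2$, so $k_{ab}\geq 1$ and no new zero is created. The dimension constraint $\sum k_i=3g-3+n$ becomes $3g-3+(n-1)$, as required. Stability is automatic for $g\geq1$: $2g-2+n>0$ holds for $G$ and $G-e$, and for $G/e$ it holds because an edge forces $n\geq 2$, so $n-1\geq 1$. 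The genus-$0$ bipartite condition does not arise.

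For Conjecture~\ref{conjecture_psi} $\Rightarrow$ Conjecture~\ref{conj:psi_2}, I would take an edge $e=ab$ with $(k_a,k_b)\neq(0,0)$. Theorem~\ref{thm_del_con_psi}(i) gives
\[A(G,\mathbf k)=A(G-e,\mathbf k)+A(G/e,\mathbf k/e).\]
By the closure step, all three data satisfy the hypotheses of Conjecture~\ref{conjecture_psi}, so each $A$ may be replaced by the corresponding $B$. This is exactly the relation of Conjecture~\ref{conj:psi_2}.

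For the converse, I would induct on $|E(G)|$; the same induction then also covers the vertex count, since contraction lowers both. If $G=E_n$, then $A=B$ by Lemma~\ref{lem:complement_matching}. Otherwise, since at most one $k_i$ vanishes, every edge has an endpoint with positive exponent, so I pick any edge $e$ and automatically have $(k_a,k_b)\neq(0,0)$. Theorem~\ref{thm_del_con_psi}(i) expresses $A(G,\mathbf k)$ in terms of $A$ on $(G-e,\mathbf k)$ and $(G/e,\mathbf k/e)$. Conjecture~\ref{conj:psi_2} expresses $B(G,\mathbf k)$ by the identical recursion. Both $G-e$ and $G/e$ have fewer edges and, by the closure step, admissible data, so the induction hypothesis identifies the right-hand sides and hence $A(G,\mathbf k)=B(G,\mathbf k)$.

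The main point needing care is the bookkeeping in the contracted term. The marking labelled $ab$ on $\overline{\mathcal M}_{g,n-1}$ must play the same role for $\pi^*_{G/e,ab}$ as the vertex $ab$ does in $G/e$. The paper's convention that markings inherit their ordering from the ambient space handles this, so both recursions are literally the same recursion on the pair $(G,\mathbf k)$.
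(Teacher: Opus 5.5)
Your proposal is correct and follows the paper's proof. The forward direction applies Theorem~\ref{thm_del_con_psi}(i) and replaces each of the three integrals using Conjecture~\ref{conjecture_psi}. The converse is an induction on the number of edges, with base case $G=E_n$ supplied by Lemma~\ref{lem:complement_matching}. Your explicit check that the standing hypotheses ($g\geq 1$, at most one vanishing $k_i$) survive deletion and contraction and force $(k_a,k_b)\neq(0,0)$ on every edge is exactly what the paper leaves implicit when it calls these steps ``clear'' and a ``straightforward induction.'' Reading Conjecture~\ref{conj:psi_2} under those same hypotheses is also necessary for the forward implication.
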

\vspace{-2em}
\begin{proof}
    The forward implication is clear, since we have shown the deletion-contraction property for the $\psi$-class integrals over graphically stable spaces in Theorem \ref{thm_del_con_psi}. For the backward implication, we show both sides of Conjecture \ref{conjecture_psi} integrate to the same value when $G = E_n$. Then Conjecture \ref{conjecture_psi} follows from a straightforward induction on the number of edges, using Conjecture \ref{conj:psi_2}. 

    When $G = E_n$, we are considering forgetful morphisms:
    \begin{align*}
        \pi_{E_n,i}\colon \mgnbar \longrightarrow \overline{\mathcal{M}}_{g,\{i\}}.
    \end{align*}
    The classes $\pi^*_{E_n,i}(\psi_i)$ are also known as $\omega$-classes. Now the base case of Conjecture \ref{conjecture_psi} follows from either noticing that $\rho_G^* (\psi_i) = \pi^*_{E_n,i}(\psi_i)$ as in Lemma \ref{lem:complement_matching}, or recalling \cite[Theorem 3.1]{blankers_Omega} and the wall-crossing formula (\ref{wallcross}). 
\end{proof}

\section{Grothendieck classes}\label{Sec_Gro}
We now turn our attention to another deletion--contraction property. In this section, we study the topology of the coarse moduli space $M_{g,G}$. Our main result, Theorem~\ref{Gro_Class}, provides a deletion--contraction relation for their classes within the \textit{Grothendieck ring of varieties}. In genus zero, we explicitly determine the Grothendieck class of $M_{0,G}$ in Theorem~\ref{Gro_thm_zero}. As an application, we derive various topological invariants of $M_{0,G}$ (Corollary~\ref{lhop}) and investigate several links to combinatorics in the subsequent subsections. Finally, in the last section, we investigate the topological invariants of $M_{g,G}$ when $g > 0$.

Throughout this section, we denote by $k$ the base field, by $\mathbb{L}$ the Grothendieck class of the affine line $\mathbb{A}_{k}^1$, and by $1$ the class of a point.

\subsection{Main result: Deletion--contraction for Grothendieck classes}
\hypertarget{GroClassTarget}{}
\begin{theorem}[{Theorem \hyperref[thm:D]{D}}]\label{Gro_Class}

    The following relation holds within the Grothendieck ring of varieties $K_0(\mathrm{Var}_{k})$:
    \begin{align} \label{del_con_gro}
        [M_{g,G}] = [M_{g, G-e}] - [M_{g, G/e}],
    \end{align}
    providing all three spaces are $g$-stable.
\end{theorem}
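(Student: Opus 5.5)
The plan is a cut-and-paste argument on the smooth loci. $M_{g,G-e}$ parametrizes smooth genus $g$ curves with markings $p_1,\dots,p_n$ such that coincident markings form independent sets of $G-e$. Every independent set of $G$ is independent in $G-e$, so the locus where $p_a\neq p_b$ is exactly $M_{g,G}$, and it sits inside $M_{g,G-e}$ as an open subvariety. Its complement is the closed locus $Z\subseteq M_{g,G-e}$ where $p_a=p_b$. This gives the relation
\[
[M_{g,G-e}]=[M_{g,G}]+[Z]
\]
in $K_0(\mathrm{Var}_k)$. It therefore suffices to identify $Z$ with $M_{g,G/e}$, or at least to show $[Z]=[M_{g,G/e}]$.

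To construct the map $Z\to M_{g,G/e}$, send a point of $Z$ to the curve with the merged marking $p_{ab}:=p_a=p_b$ and all other markings unchanged. We must check that the allowed coincidences match on both sides.

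In $Z$, a set $S\ni a,b$ of markings coinciding at one point must be independent in $G-e$. Writing $S=\{a,b\}\sqcup L$, this is exactly the condition $L\in \lk_{\inde(G)}(a)\cap\lk_{\inde(G)}(b)$ used in the proof of Theorem~\ref{thm_del_con_psi}. By Lemma~\ref{tech}, this is equivalent to $\{ab\}\sqcup L$ being independent in $G/e$. Coincidence sets not containing $a,b$ are governed by the same edges in $G-e$, $G$ and $G/e$. Hence the map is a bijection on points.

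Conversely, duplicating the marking $p_{ab}$ gives an inverse morphism $M_{g,G/e}\to Z$. Both constructions are defined on families of smooth curves. So on the level of moduli stacks, $\mathcal{Z}$ (the reduced closed substack where $\sigma_a=\sigma_b$) is isomorphic to $\mathcal{M}_{g,G/e}$, and passing to coarse spaces gives $Z\cong M_{g,G/e}$.

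The main obstacle is making this rigorous at the level of coarse spaces and their scheme structure. First, $M_{g,G}$, $M_{g,G-e}$ and $Z$ are coarse spaces of stacks. Open and closed substacks do descend to a decomposition of the coarse space into an open set and its closed complement, since the coarse map is a universal homeomorphism compatible with open immersions. Second, $K_0$ only sees reduced structure, so it is enough to produce a bijective morphism that is an isomorphism on reductions. I would argue at the stack level: $\mathcal{Z}\cong\mathcal{M}_{g,G/e}$ as reduced stacks, via the universal family with sections $\sigma_a=\sigma_b$. Then I would take coarse spaces, using functoriality of the coarse space construction. The $g$-stability hypotheses guarantee that all three stacks exist as in Section~\ref{sec: background_mgraph}. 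If there are difficulties with non-representability in positive characteristic, the fallback is to stratify by automorphism type and note that a bijective morphism of varieties can be refined to piecewise isomorphisms after stratification, at least in characteristic $0$, so the classes still agree in $K_0(\mathrm{Var}_k)$. Rearranging the displayed relation then gives $[M_{g,G}]=[M_{g,G-e}]-[M_{g,G/e}]$.
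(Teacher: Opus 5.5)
Your proposal is correct and matches the paper's proof. Both decompose $M_{g,G-e}$ into the open locus $M_{g,G}$ (where $p_a\neq p_b$) and its closed complement $\{p_a=p_b\}$, and identify that complement with $M_{g,G/e}$ by merging or duplicating the markings, using Lemma~\ref{tech} to show the permitted coincidences agree. Your extra discussion of passing from stacks to coarse spaces is more careful than the paper, which identifies the loci pointwise, but it does not change the argument.
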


\begin{proof}
Let $e$ be an edge that connects vertices $a$ and $b$. It is clear from the definition of $(G-e)$-stable curves that 
    \begin{align*}
        M_{g, G-e} \setminus M_{g, G} = 
        \{(C,p_1,...,p_n) \in M_{g, G-e} \;| \; p_a = p_b\}.
    \end{align*}
    By combining $p_a$ and $p_b$ into a new vertex $p_{ab}$, we obtain a smooth $(G/e)$-stable curve. This is because, by Lemma~\ref{tech}, the sets of markings allowed to collide with ${p_a,p_b}$ in $G-e$ is the same as the sets of markings allowed to collide with ${p_{ab}}$ in $G/e$. Conversely, given a $(G/e)$-stable curve $(C,p_1....p_{ab},...,p_n)$, we obtain a $(G-e)$-stable curve $(C,p_1,...,p_n)$ such that $p_a = p_{ab}, p_b = p_{ab}$.
    Thus we deduce
        \begin{align*}
        M_{g, G-e} \setminus M_{g, G} = 
        M_{g, G/e}.
    \end{align*}
     This becomes the desired additive relation in $K_0(\mathrm{Var}_{k})$:
    \[
    [M_{g,G-e}] - [M_{g,G}]
    = [M_{g,G/e}]. \qedhere
    \]
\end{proof}
\begin{remark}
We expect that the deletion--contraction relation of Theorem~\ref{Gro_Class} also holds for the class of the fine moduli stack $\mathcal{M}_{g,G}$ in the Grothendieck ring of Deligne--Mumford stacks over $k$. This would allow one to study stack-theoretic invariants of graphically stable spaces, such as their orbifold Euler characteristics. 
\end{remark}

\begin{remark}
    It is tempting to look for a similar deletion--contraction relation for the compactification $\overline{M}_{g,G}$. However, the coarse chamber decomposition of Hassett spaces \cite{Hassett} tells us sometimes even when we delete an edge, the coarse moduli space does not change at all. For example,
    \begin{align*}
        \overline{M}_{g,K_n} \simeq  \overline{M}_{g,K_n-e}  \simeq \overline{M}_{g,K_n-e-f},
    \end{align*}
    where $e,f$ are any two edges of the complete graph.
\end{remark}

\subsubsection{A wall-crossing formula for Grothendieck classes}\label{sec:alternative_GroClass}
In \cite{Sidd_TopHass}, the authors studied the Grothendieck class of the moduli space of smooth weighted  curves. A straightforward adaption of their result yields the following: 
\begin{proposition}[{\!\cite[Proposition 4.1]{Sidd_TopHass}}]\label{sidd}
In $K_0(\mathrm{Var}_{k})$, for a $g$-stable graph $G$ we have 
\begin{align}\label{gro_sum}
    [M_{g,G}] = \sum_{r= 0}^n N_{G,r} [M_{g,r}],
\end{align}
where $N_{G,r}$ is the number of $G$-stable partitions (Definition \ref{def:g_stable_partition}) of length $r$. Moreover, if $2g-2+r \leq 0$, we set $[M_{g,r}] = 0$. 
\end{proposition}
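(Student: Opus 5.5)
We will prove \eqref{gro_sum} by stratifying $M_{g,G}$ according to the collision pattern of the markings. Given a smooth $G$-stable curve $(C,p_1,\dots,p_n)$, declare $i\sim j$ when $p_i=p_j$. The equivalence classes form a partition $\mathcal{P}\vdash[n]$. By Definition~\ref{def_G}(iii), every part of $\mathcal{P}$ is an independent set of $G$, so $\mathcal{P}\in\mathfrak{P}_G$. This yields a set-theoretic decomposition
\[
M_{g,G}=\bigsqcup_{\mathcal{P}\in\mathfrak{P}_G} M_{g,G}^{\mathcal{P}},
\]
where $M_{g,G}^{\mathcal{P}}$ is the locus of curves whose collision partition is exactly $\mathcal{P}$. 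Each stratum is locally closed. The locus where $p_i=p_j$ for all $i,j$ in the same part is closed, since it is cut out by the diagonal conditions. The requirement that distinct parts give distinct points is open. So in $K_0(\mathrm{Var}_k)$ we obtain $[M_{g,G}]=\sum_{\mathcal{P}}[M^{\mathcal{P}}_{g,G}]$.

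Next we identify each stratum. Fix $\mathcal{P}$ of length $r$. Sending $(C,p_1,\dots,p_n)$ to $(C,q_1,\dots,q_r)$, where $q_m$ is the common point $p_i$ for $i\in P_m$, gives a bijective morphism $M^{\mathcal{P}}_{g,G}\to M_{g,r}$. Its inverse is given by repeating points, and is defined at the level of moduli functors: a family of smooth $r$-pointed curves with disjoint sections determines a family with $n$ sections with the prescribed coincidences, which is $G$-stable. When $2g-2+r>0$, this is an isomorphism of coarse spaces.

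When $2g-2+r\le 0$, which only happens for $g=0$ with $r\le 2$ or $g=1$ with $r=0$, we must show that the stratum is empty. For $g=1$, $r=0$ is impossible since $n\ge1$. For $g=0$, a smooth rational curve with at most two distinct marked points would have infinite automorphisms. Moreover, $r\le2$ with every part independent would force $G$ to be edgeless or bipartite, contradicting $0$-stability. Hence these strata are empty, consistent with the convention $[M_{g,r}]=0$. Summing over $\mathcal{P}$ and grouping by length gives $\sum_r N_{G,r}[M_{g,r}]$.

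The main obstacle is justifying the isomorphism of strata on coarse moduli spaces, rather than just a bijection on points. The first approach is to note that the moduli stacks are isomorphic, because the forgetful/repeating maps are inverse natural transformations of functors, and then pass to coarse spaces. Alternatively, over a field of characteristic zero, a bijective morphism onto a normal variety suffices. In general, a bijective morphism is a piecewise isomorphism, up to purely inseparable issues that are harmless in $K_0$ after the standard refinement. This is exactly the argument used in \cite[Proposition 4.1]{Sidd_TopHass}, which we would follow verbatim, replacing Hassett weight conditions by independence in $G$.
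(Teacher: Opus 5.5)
Your proposal is correct and follows the route the paper indicates: the paper does not reprove the statement but attributes it to \cite[Proposition 4.1]{Sidd_TopHass}, obtained by stratifying $M_{g,G}$ according to which markings coincide (i.e.\ the boundary of $M_{g,n}\subseteq M_{g,G}$) and identifying the stratum of each $G$-stable partition of length $r$ with $M_{g,r}$, which is exactly what you do. The one soft spot is your fallback remark that purely inseparable issues are ``harmless in $K_0$'': in positive characteristic it is not known that universal homeomorphisms induce equalities of classes in $K_0(\mathrm{Var}_k)$, so the identification of strata should rest on the stack-level isomorphism (the same kind of identification the paper uses in its proof of Theorem~\ref{Gro_Class}) rather than on bijectivity.
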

This proposition is obtained by stratifying the boundary of the partial compactification $M_{g,n}\subseteq M_{g,G}$ and can be viewed as a \enquote{wall-crossing} formula for Grothendieck classes, analogous to the wall-crossing formula for $\psi$-class integrals \eqref{wallcross}.
It turns out one can also express chromatic polynomials in a basis such that the coefficients are exactly $N_{G,r}$: 
\begin{align}\label{chrom}
    \chi_G(x) = \sum_{r = 0}^n N_{G,r} (x)_r,
\end{align}
where $(x)_r$ is the falling factorial:  
\begin{align*}
    (x)_r := \begin{cases}
    1 & \text{if } r = 0, \\
     x(x-1)\cdots (x-r+1) & \text{if } r \geq 1.
    \end{cases}
\end{align*}
This expression can be deduced as follows: a proper $x$-colouring using exactly $r$ colours first partitions $[n]$ into $r$ non-empty independent sets, counted by $N_{G,r}$, and then assigns $r$ distinct colours to these independent sets, giving $(x)_r$ choices. 

\begin{proof}[Second proof of Theorem \ref{Gro_Class}]
    By Definition \ref{def_chrom}, chromatic polynomial satisfies the deletion--contraction property. So by formally replacing $(x)_r$ by $[M_{g,r}]$,  we obtain another proof of Theorem~\ref{Gro_Class}. \qedhere
\end{proof}

\subsection{Graph invariants from motivic invariants}
A \textit{generalised Euler characteristic} with value in a ring $R$ is a ring homomorphism:
\begin{align*}
    \mathsf{e}\colon \grogrp \longrightarrow R.
\end{align*}
By the definition of Grothendieck ring of varieties, these morphisms satisfy the cut-and-paste property and are multiplicative with respect to fibre products over $k$. That is for a closed subvariety $Z$ of $X$, we have
\begin{align*}
    \mathsf{e}([X]) = \mathsf{e}([Z]) + \mathsf{e}([X \setminus Z]); \quad 
    \mathsf{e}([X \times_k Y]) = \mathsf{e}([X])\mathsf{e}([Y]).
\end{align*}

Given a non-negative integer $g$ and a generalised Euler characteristic $\mathsf{e}$, we can define a graph invariant:
\begin{align}\label{Gamma_graph_invarient}
    \Gamma_{g,\mathsf{e}}\colon \{\text{$g$-stable graphs} \} &\longrightarrow R \\
    G &\longmapsto \mathsf{e}([M_{g,G}]).\nonumber
\end{align}
As an immediate consequence of Theorem \ref{Gro_Class}, we deduce:
\begin{proposition}\label{prop_invarient} Let $f$ be an edge, then $\Gamma_{g,\mathsf{e}}$ satisfies the deletion--contraction relation:
    \begin{align*}
        \Gamma_{g,\mathsf{e}}(G)= \Gamma_{g,\mathsf{e}}(G-f)-\Gamma_{g,\mathsf{e}}(G/f).
    \end{align*}
\end{proposition}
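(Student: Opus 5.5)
This follows by applying the ring homomorphism $\mathsf{e}$ to the deletion--contraction relation of Theorem~\ref{Gro_Class}. Fix a $g$-stable graph $G$ and an edge $f$ with endpoints $a,b$. We assume, as in Theorem~\ref{Gro_Class}, that $G-f$ and $G/f$ are also $g$-stable, so that all three values of $\Gamma_{g,\mathsf{e}}$ are defined. Theorem~\ref{Gro_Class} then gives, in $K_0(\mathrm{Var}_k)$,
\begin{align*}
[M_{g,G}] = [M_{g,G-f}] - [M_{g,G/f}].
\end{align*}

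The only input beyond this is that $\mathsf{e}$ is additive, and in particular respects subtraction. This holds because $\mathsf{e}$ is a ring homomorphism, hence a homomorphism of abelian groups. Concretely, the proof of Theorem~\ref{Gro_Class} identified $M_{g,G}$ as the open complement in $M_{g,G-f}$ of the closed locus $\{p_a=p_b\}\cong M_{g,G/f}$. The cut-and-paste property therefore gives directly
\begin{align*}
\mathsf{e}([M_{g,G-f}]) = \mathsf{e}([M_{g,G}]) + \mathsf{e}([M_{g,G/f}]).
\end{align*}

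Rearranging and substituting the definition \eqref{Gamma_graph_invarient} of $\Gamma_{g,\mathsf{e}}$ yields $\Gamma_{g,\mathsf{e}}(G)=\Gamma_{g,\mathsf{e}}(G-f)-\Gamma_{g,\mathsf{e}}(G/f)$. The only point that needs any care is the implicit stability hypothesis on $G-f$ and $G/f$, without which the right-hand side is undefined.
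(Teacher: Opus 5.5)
Your proof is correct and is essentially the paper's argument: the paper presents this proposition as an immediate consequence of Theorem~\ref{Gro_Class}, obtained by applying the ring homomorphism $\mathsf{e}$, which is exactly what you do. Your remark that $G-f$ and $G/f$ must also be $g$-stable for $\Gamma_{g,\mathsf{e}}$ to be defined is the appropriate implicit hypothesis.
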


In the rest of this section, we will see how some of these graph invariants $\Gamma_{g,\mathsf{e}}$ are related to existing graph invariants. 

\begin{example}\label{example_euler} Here are some examples of the generalised Euler characteristics that will be recalled later.
\begin{enumerate}
    \item Over $k = \mathbb{C}$, the usual Euler characteristic is one such example: 
        \begin{align*}
        \chi \colon K_0(\mathrm{Var}_{\mathbb{C}}) \longrightarrow \mathbb{Z}; \quad
        [X] \longmapsto \chi(X(\mathbb{C}));
    \end{align*}
    with $\chi(\lef) = 1$. A more general example is the \textit{virtual Poincaré polynomial} from Deligne's weight filtration $W_{\bullet}H_c^*(X)$:
    \begin{align*}
        P_{\bullet}^{\mathrm{vir}}(t)\colon K_0(\mathrm{Var}_{\mathbb{C}}) &\longrightarrow \mathbb{Z}[t]; \quad [X] \longmapsto P_{X}^{\mathrm{vir}}(t) :=\sum_{m=0}^{2d}\chi_c^{m}(X)t^m ,
    \end{align*}
    where \begin{align*}
        \chi_c^{m}(X) := \sum_{i = 0}^{2d}(-1)^i \mathrm{dim \, Gr}_{m}^W H_c^{i}(X(\mathbb{C}); \mathbb{Q}).
    \end{align*}
    Here $d$ denotes the complex dimension of $X$ and we have $P_{\lef}^{\mathrm{vir}}(t) = t^2$. The evaluations of the virtual Poincaré polynomial are also examples of generalized Euler characteristics: 
    \begin{itemize}
    \item The ordinary Euler characteristic is obtained by the specialization
    \[
        P_X^{\mathrm{vir}}(1)=\chi_c(X)=\chi(X),
    \]
    where the second equality follows, for example, from \cite[p.~141]{Fulton_toric}.

    \item The \textit{weight-$0$ compactly supported Euler characteristic} is obtained by the specialization
    \[
        P_X^{\mathrm{vir}}(0) = \chi_c^0(X)\colon K_0(\mathrm{Var}_{\mathbb C}) \longrightarrow \mathbb Z.
    \]
    If $X$ is smooth, then Poincaré duality for mixed Hodge structures (e.g. \cite[§6.3]{MHS_book}) gives
    \[
        \dim \operatorname{Gr}_m^W H_c^j(X)
        =
        \dim \operatorname{Gr}_{2d-m}^W H^{2d-j}(X).
    \]
    Therefore in this case $\chi_c^0(X)$ agrees with the \textit{top-weight Euler characteristic} $\tw(X)$, defined by
    \[
        \tw(X)
        :=
        \sum_{i=0}^{2d}
        (-1)^i
        \dim \operatorname{Gr}_{2d}^W H^i(X(\mathbb C);\mathbb Q).
    \]
    Indeed, setting $m=0$ in Poincaré duality identifies the weight-$0$ compactly supported cohomology of $X$ with the weight-$2d$ ordinary cohomology of $X$, and the signs agree because $2d$ is even. Moreover, we have $\chi_c^0(\lef) = \tw(\lef) = 0$.
\end{itemize}
     \item Over $k = \mathbb{R}$, the usual Euler characteristic does not satisfy the cut-and-paste property but compactly supported  Euler characteristic does, and this defines a ring homomorphism.
    \begin{align*}
        \chi_c \colon K_0(\mathrm{Var}_{\mathbb{R}}) \longrightarrow \mathbb{Z}; \quad [X] \longmapsto \chi_c(X(\mathbb{R})).
    \end{align*}
    In particular, $\chi_c(\lef) = -1$.

    \item Over $k = \mathbb{F}_q$, every variety has finitely many $\mathbb{F}_q$-points. Point-counting also defines a generalised Euler characteristic:
    \begin{align*}
        \# \colon K_0(\mathrm{Var}_{\mathbb{F}_q}) \longrightarrow \mathbb{Z}; \quad
        [X] \longmapsto |X(\mathbb{F}_q)|,
    \end{align*}
    with $\#(\lef) = q$.
    
\end{enumerate}
    
\end{example}

\subsection{Genus 0} The main result of this subsection is an explicit formula of the Grothendieck class of $\mzerog$ over any field. Let $G$ be a $0$-stable graph. Since $G$ is non-empty, non-edgeless, and non-bipartite, we have
\[
    \chi_G(0)=\chi_G(1)=\chi_G(2)=0.
\]
Indeed, $\chi_G(0)=0$ for every non-empty graph, $\chi_G(1)=0$ for every non-edgeless graph, and $\chi_G(2)=0$ for every non-bipartite graph. It follows that
\begin{align}\label{eq:cancel}
    x(x-1)(x-2)\mid \chi_G(x).
\end{align}
Thus the following definition is well-defined.

\begin{definition}\label{def:Q_G}
For any $0$-stable graph $G$, we define the polynomial
\[
    Q_G(x):=\frac{\chi_G(x+1)}{x^3-x}\in\mathbb Z[x].
\]
\end{definition}
\hypertarget{GroThmZeroTarget}{}
\begin{theorem}[{Theorem \hyperref[thm:E]{E}}]\label{Gro_thm_zero}
Let $G$ be a $0$-stable graph. In $K_0(\mathrm{Var}_{k})$, we have
    \begin{align}\label{Gro_MzeroG}
        [\mzerog] = Q_G(\lef) \in\mathbb Z[\lef].
    \end{align}
\end{theorem}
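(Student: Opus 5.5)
The cleanest route is the wall-crossing formula for Grothendieck classes, Proposition~\ref{sidd}, combined with the falling-factorial expansion \eqref{chrom} of the chromatic polynomial. In genus zero, Proposition~\ref{sidd} reads
\[
  [\mzerog]=\sum_{r=3}^{n} N_{G,r}\,[M_{0,r}],
\]
since $[M_{0,r}]=0$ for $r\le 2$ by convention. It therefore suffices to know $[M_{0,r}]$ for all $r\geq 3$ and then to match the resulting sum against $\chi_G(\lef+1)/(\lef^3-\lef)$.

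\textbf{Computing $[M_{0,r}]$.} Use $\mathrm{PGL}_2$ to fix three markings at $0,1,\infty$. Then $M_{0,r}$ is the complement in $(\mathbb{P}^1\setminus\{0,1,\infty\})^{r-3}$ of the big diagonals. Adding the points one at a time gives a fibration whose $j$-th step has fibre $\mathbb{P}^1$ minus $j$ points, and each step is Zariski-locally trivial. Equivalently, one can count $\mathrm{Conf}_r(\mathbb{P}^1)$ directly and divide by $[\mathrm{PGL}_2]$. Either way,
\[
  [M_{0,r}]=(\lef-2)(\lef-3)\cdots(\lef-r+2)=\frac{(\lef+1)_r}{(\lef+1)\lef(\lef-1)} .
\]
The key point is that this is a genuine identity in $K_0(\mathrm{Var}_k)$ over any field, not merely after dividing by the class of $\mathrm{PGL}_2$. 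The fibration argument, done inductively by adding one marked point at a time, avoids any division.

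\textbf{Assembling the formula.} Substituting the formula for $[M_{0,r}]$ gives
\[
  [\mzerog]=\sum_{r\ge3}N_{G,r}\,\frac{(\lef+1)_r}{\lef^3-\lef}.
\]
The terms with $r=0,1,2$ have $(x+1)_r$ not divisible by $x^3-x$. They must therefore be shown to be absent or to cancel. This follows because the sum $\sum_{r\le 2}N_{G,r}(x)_r$ equals $\chi_G(x)$ modulo the ideal $(x)_3=x(x-1)(x-2)$, and by \eqref{eq:cancel} $\chi_G(x)$ lies in that ideal. Reducing $\sum_{r\le2}N_{G,r}(x)_r$ modulo $(x)_3$ is the identity, since this polynomial already has degree at most $2$. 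Hence $\sum_{r\le2}N_{G,r}(x)_r=0$ as a polynomial. As the falling factorials $(x)_0,(x)_1,(x)_2$ are linearly independent, $N_{G,0}=N_{G,1}=N_{G,2}=0$. Concretely, these three vanishings correspond to $G$ being non-empty, non-edgeless and non-bipartite, which is exactly $0$-stability.

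Consequently $\chi_G(x+1)=\sum_{r\ge3}N_{G,r}(x+1)_r$. Each $(x+1)_r$ with $r\geq 3$ is divisible by $(x+1)x(x-1)=x^3-x$, so dividing termwise and substituting $x=\lef$ gives $[\mzerog]=Q_G(\lef)$. The substitution is legitimate because every quotient $(x+1)_r/(x^3-x)$ is an honest polynomial, so no division takes place in $K_0(\mathrm{Var}_k)$.

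\textbf{Main obstacle.} The delicate step is justifying that Proposition~\ref{sidd} holds over an arbitrary field $k$, and that the class $[M_{0,r}]$ is the stated polynomial in $K_0(\mathrm{Var}_k)$ with no zero-divisor issues. I would handle this through the stratification of $M_{0,G}$ by coincidence partitions. Each stratum is isomorphic to $M_{0,l(\mathcal{P})}$, giving a genuine finite locally closed decomposition, and the explicit fibration computation of $[M_{0,r}]$ needs no division. As an alternative check, one could instead induct on edges using Theorem~\ref{Gro_Class}. The trouble there is the base cases: deletion eventually reaches non-$0$-stable graphs, so one would need $K_3\sqcup E_m$-type bases, where $[M_{0,K_3\sqcup E_m}]=(\lef+1)^m$ matches $\chi(x+1)/(x^3-x)$. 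That path seems messier, so I would use the wall-crossing route.
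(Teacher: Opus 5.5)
Your argument is correct and is essentially the paper's second proof of Theorem~\ref{Gro_thm_zero}: you combine Proposition~\ref{sidd} with $[M_{0,r}]=(\mathbb{L}-2)\cdots(\mathbb{L}-r+2)$ and the falling-factorial expansion \eqref{chrom}, and $0$-stability gives $N_{G,0}=N_{G,1}=N_{G,2}=0$. The only difference is that you divide by $x^3-x$ termwise in $\mathbb{Z}[x]$ before specializing $x\mapsto\mathbb{L}$, which avoids cancelling $\mathbb{L}^3-\mathbb{L}$ in $K_0(\mathrm{Var}_k)$ and hence the paper's appeal to injectivity of $\mathbb{Z}[x]\to K_0(\mathrm{Var}_k)$ (Proposition~\ref{Lef_relation}).
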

\begin{example}\label{ex_groclass_DM}
    Let $G = K_n$. The chromatic polynomial of the complete graph is given by:
    \[\chi_G(x+1) = (x+1)x(x-1) \cdots (x-n+2).\]
    By Theorem \ref{Gro_thm_zero}, we expect the Grothendieck class of $\mzerog$ to be:
    \begin{align}
        [\mzerog] = [M_{0,n}] = (\lef -2)(\lef - 3)\cdots (\lef - n +2) \in K_0(\mathrm{Var}_{k}).
    \end{align}
    Indeed, this identity can also be derived inductively by considering the forgetful morphism     $M_{0,n} \rightarrow M_{0,n-1}$, which is a Zariski local fibration with fibres isomorphic to $\mathbb{P}^1 \setminus \{n-1 \, \mathrm{points}\}$.
\end{example}

\begin{example} Let $G = K_2 * E_n$. The Losev--Manin space $\overline{M}_{0,G}$ is in fact toric \cite{LosevManin}, and the loci of smooth curves is the dense torus i.e. $\mzerog \simeq \mathbb{G}_m^{n-1}$. Now $G$ has chromatic polynomial:
\[\chi_G(x+1) = x(x+1) \chi_{E_n}(x-1) = x(x+1)(x-1)^n.\]
Thus, Theorem~\ref{Gro_thm_zero} implies that the Grothendieck class of $\mzerog$ is $(\lef-1)^{n-1}$,
which coincides with the class of the algebraic torus $\mathbb{G}_m^{n-1}$.
    
\end{example}
\begin{example}
    Let $G = E_n \sqcup K_3$, and recall that $M_{0,G} \simeq (\Pone)^n$ (Example \ref{Ex_Gra}). The chromatic polynomial is:
    \begin{align*}
        \chi_G(x+1) = \chi_{E_n}(x+1) \chi_{K_3}(x+1) = (x+1)^n (x+1)x(x-1).
    \end{align*}
    Thus, Theorem~\ref{Gro_thm_zero} implies that the Grothendieck class of $\mzerog$ is $(\lef+1)^n$,
which coincides with the class of $(\Pone)^n$.
\end{example}
This following relation will be useful when studying the compactification $\overline{M}_{0,G}$. 
\begin{corollary} Let $G_1,...,G_t$ be $0$-stable graphs, then in $K_0(\mathrm{Var}_{k})$ we have
\begin{align*}
    (\lef^3-\lef)^{t-1}[M_{0,G_1}] [M_{0,G_2}]  \cdots [M_{0,G_t}] =[M_{0,G_1 \sqcup G_2 \sqcup \cdots \sqcup G_t}].
\end{align*}
\end{corollary}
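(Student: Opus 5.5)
The plan is to reduce the statement to the multiplicativity of chromatic polynomials under disjoint union, using Theorem~\ref{Gro_thm_zero}. First I will check that the disjoint union $G:=G_1\sqcup\cdots\sqcup G_t$ is itself $0$-stable, so that Theorem~\ref{Gro_thm_zero} applies to it. Each $G_i$ is non-bipartite, so it contains an odd cycle; that cycle survives in $G$, hence $G$ is non-bipartite, in particular non-edgeless. The numerical condition $n>2$ holds because $G_1$ already has at least three vertices.

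Next I recall that $\chi_{G}(x)=\prod_{i=1}^t\chi_{G_i}(x)$. A proper colouring of a disjoint union is exactly an independent choice of proper colouring on each component, so the identity holds at every positive integer $x$, and therefore as polynomials. Substituting $x\mapsto x+1$ and dividing by $(x^3-x)$ gives
\begin{align*}
Q_G(x)=\frac{\prod_{i=1}^t\chi_{G_i}(x+1)}{x^3-x}=(x^3-x)^{t-1}\prod_{i=1}^t\frac{\chi_{G_i}(x+1)}{x^3-x}=(x^3-x)^{t-1}\prod_{i=1}^t Q_{G_i}(x).
\end{align*}
This is an identity in $\mathbb{Z}[x]$, since each $Q_{G_i}\in\mathbb{Z}[x]$ by \eqref{eq:cancel}.

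Finally I specialise $x=\lef$. The map $\mathbb{Z}[x]\to K_0(\mathrm{Var}_k)$ sending $x\mapsto\lef$ is a ring homomorphism, so the identity above transports to $K_0(\mathrm{Var}_k)$. Applying Theorem~\ref{Gro_thm_zero} to $G$ and to each $G_i$ then yields the claimed relation.

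The only point needing care is that we never divide by $\lef^3-\lef$ inside $K_0(\mathrm{Var}_k)$, where it may be a zero divisor. This is avoided because the division is performed in $\mathbb{Z}[x]$, and only the resulting polynomial identity is specialised. So no step should be a genuine obstacle.
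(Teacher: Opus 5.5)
Your proposal is correct and follows the paper's proof: combine the multiplicativity $\chi_{G_1\sqcup\cdots\sqcup G_t}=\prod_i\chi_{G_i}$ with Theorem~\ref{Gro_thm_zero}. Your extra steps fill in what the paper leaves implicit. These are checking that the disjoint union is $0$-stable, and doing the cancellation in $\mathbb{Z}[x]$ before specialising to $\lef$, so that you never divide by $\lef^3-\lef$ in $K_0(\mathrm{Var}_k)$.
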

\begin{proof}
    This relation follows from Theorem \ref{Gro_thm_zero} and a graph-theoretical observation:
    \begin{align*}
        \chi_{G_1}(x)\chi_{G_2}(x) \cdots \chi_{G_t}(x) = \chi_{G_1 \sqcup G_2 \sqcup \cdots \sqcup G_t}(x),
    \end{align*}
    which can be proven easily using a graph colouring argument.
\end{proof}

In what follows, we explain the proof of Theorem \ref{Gro_thm_zero} and discussed the combinatorics behind various generalized Euler characteristics of $\mzerog$ in details.

\subsubsection{Graphical configuration spaces} Let $G$ be any graph. We define the \textit{graphical configuration space} of a $k$-variety $M$ to be
\begin{align*}
    \mathrm{Conf}_G(M) := M^n - \bigcup_{ ij \in E(G)}\{(p_1,...,p_n) \in M^n: \, p_i = p_j\}.
\end{align*}
Such configuration spaces have been studied  in \cite{EulChrom,Khoroshkin_Lyskov,GraphicalConf}. 

By studying the Thom--Gysin exact sequence associated to the pair $\mathrm{Conf}_G(M) \subseteq \mathrm{Conf}_{G-e}(M)$, Eastwood and Huggett \cite{EulChrom} showed that the complex Euler characteristic of $\text{Conf}_G(M)$ satisfies a deletion--contraction relation similar to the one in Theorem \ref{Gro_Class}:
\begin{align}\label{del_con_euler}
    \chi(\mathrm{Conf}_G(M)) =  \chi(\mathrm{Conf}_{G-e}(M)) -  \chi(\mathrm{Conf}_{G/e}(M)).
\end{align}
We generalize their result as follows.
\begin{lemma}In $K_0(\mathrm{Var}_{k})$, we have
    \begin{align}\label{del_con_conf}
        [\mathrm{Conf}_G(M)] = [\mathrm{Conf}_{G-e}(M)] - [\mathrm{Conf}_{G/e}(M)].
    \end{align}
\end{lemma}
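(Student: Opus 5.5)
The plan is to mirror the proof of Theorem~\ref{Gro_Class}: realise $\mathrm{Conf}_G(M)$ as an open subvariety of $\mathrm{Conf}_{G-e}(M)$ and identify the closed complement with $\mathrm{Conf}_{G/e}(M)$.

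Let $e$ join $a$ and $b$. Since $E(G)=E(G-e)\sqcup\{e\}$, we get directly from the definition
\[
\mathrm{Conf}_G(M)=\mathrm{Conf}_{G-e}(M)\setminus Z,\qquad Z:=\{(p_1,\dots,p_n)\in \mathrm{Conf}_{G-e}(M): p_a=p_b\}.
\]
The set $Z$ is the intersection of $\mathrm{Conf}_{G-e}(M)$ with the diagonal $\Delta_{ab}\subseteq M^n$. For separated $M$ this diagonal is closed, so $Z$ is closed in $\mathrm{Conf}_{G-e}(M)$. The scissor relation therefore gives
\[
[\mathrm{Conf}_{G-e}(M)]=[\mathrm{Conf}_G(M)]+[Z].
\]

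It remains to show $Z\cong \mathrm{Conf}_{G/e}(M)$. The projection $\Delta_{ab}\to M^{n-1}$, which sends the common coordinate $p_a=p_b$ to the new coordinate $p_{ab}$, is an isomorphism. Its inverse duplicates the $ab$-coordinate. We check that this isomorphism carries $Z$ onto $\mathrm{Conf}_{G/e}(M)$ by comparing the defining conditions.

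A point of $\Delta_{ab}$ lies in $Z$ exactly when $p_i\neq p_j$ for every edge $ij$ of $G-e$. We sort these edges by how they meet $\{a,b\}$.
\begin{itemize}
\item Edges $ij$ with $i,j\notin\{a,b\}$ give the same conditions as the edges of $G/e$ away from $ab$.
\item Edges $ai$ or $bi$ give $p_{ab}\neq p_i$ for $i\in (N_G(a)\cup N_G(b))\setminus\{a,b\}$, and this set is $N_{G/e}(ab)$, as noted in the proof of Lemma~\ref{tech}.
\end{itemize}
The removal of duplicate edges in $G/e$ is harmless, because a repeated condition $p_{ab}\neq p_i$ defines the same open set. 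Hence the two sets of conditions agree, and $Z\cong \mathrm{Conf}_{G/e}(M)$ as locally closed subvarieties with their reduced structures. Substituting into the scissor relation gives \eqref{del_con_conf}.

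The only subtle point is the closedness of $Z$. This needs $M$ separated, which is implicit in calling $M$ a variety. There is no real obstacle beyond careful bookkeeping of the neighbourhood identification $N_{G/e}(ab)=(N_G(a)\cup N_G(b))\setminus\{a,b\}$.
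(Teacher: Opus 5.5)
Your proposal is correct and takes essentially the same approach as the paper. The paper proves this lemma only by saying it is ``analogous to the proof of Theorem~\ref{Gro_Class}'': remove the closed locus $p_a=p_b$ from $\mathrm{Conf}_{G-e}(M)$ and identify that locus with $\mathrm{Conf}_{G/e}(M)$ by merging the two coordinates. You do exactly this, and you additionally spell out the closedness of the diagonal and the identification $N_{G/e}(ab)=(N_G(a)\cup N_G(b))\setminus\{a,b\}$, which the paper leaves implicit.
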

\begin{proof}
    Analogous to the proof of Theorem \ref{Gro_Class}.
\end{proof}

\begin{proposition} \label{Gro_Conf}
In $K_0(\mathrm{Var}_{k})$, we have $[\mathrm{Conf}_G(M)] = \chi_G([M])$.
\end{proposition}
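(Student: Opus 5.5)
We argue by induction on the number of edges of $G$, using the deletion--contraction relation \eqref{del_con_conf} for graphical configuration spaces just established. Since the chromatic polynomial is characterised by $\chi_G(x)=\chi_{G-e}(x)-\chi_{G/e}(x)$ together with $\chi_{E_n}(x)=x^n$ (Definition~\ref{def_chrom}), it suffices to check that $G\mapsto[\mathrm{Conf}_G(M)]$ satisfies the same recursion and the same initial condition. Evaluation of a polynomial with integer coefficients at the element $[M]$ of the commutative ring $K_0(\mathrm{Var}_k)$ is well defined. Also, the identity $\chi_G(x)=\chi_{G-e}(x)-\chi_{G/e}(x)$ holds in $\mathbb{Z}[x]$, so it persists after substituting $x=[M]$.

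For the base case $G=E_n$ there are no edges, so nothing is removed and $\mathrm{Conf}_{E_n}(M)=M^n$. Since the product in $K_0(\mathrm{Var}_k)$ is given by fibre product over $k$, we get $[M^n]=[M]^n=\chi_{E_n}([M])$.

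For the inductive step, pick an edge $e=ab$ of $G$. Both $G-e$ and $G/e$ have strictly fewer edges than $G$; when forming $G/e$, duplicate edges are discarded, which only lowers the count further. By induction, $[\mathrm{Conf}_{G-e}(M)]=\chi_{G-e}([M])$ and $[\mathrm{Conf}_{G/e}(M)]=\chi_{G/e}([M])$. Then \eqref{del_con_conf} gives
\[[\mathrm{Conf}_G(M)]=\chi_{G-e}([M])-\chi_{G/e}([M])=\chi_G([M]).\]

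The only point needing care is \eqref{del_con_conf} itself, which we take as given. Its content is that $\mathrm{Conf}_{G-e}(M)\setminus\mathrm{Conf}_G(M)$ is the closed locus where $p_a=p_b$, and this locus is identified with $\mathrm{Conf}_{G/e}(M)$. Under this identification, the merged point $p_{ab}$ must differ from $p_j$ exactly when $j$ is a neighbour of $a$ or of $b$, and this is the neighbourhood of $ab$ in $G/e$. No stability hypotheses enter here, unlike in Theorem~\ref{Gro_Class}, so the induction runs over all graphs without restriction.
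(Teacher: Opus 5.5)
Your proof is correct and follows the paper's argument: the base case $\mathrm{Conf}_{E_n}(M)=M^n$, giving $[M]^n=\chi_{E_n}([M])$, combined with the deletion--contraction relation \eqref{del_con_conf}, run as an explicit induction on the number of edges. Your additional details are accurate and only spell out what the paper leaves implicit: that $G/e$ has strictly fewer edges, that the identity in $\mathbb{Z}[x]$ survives the substitution $x=[M]$, and that the locus $p_a=p_b$ is identified with $\mathrm{Conf}_{G/e}(M)$ without any stability hypothesis.
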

\begin{proof}
    Consider the base case which is when $G = E_n$, the edgeless graph. It is clear from the definition that $\mathrm{Conf}_{E_n}(M) = M^n$, thus $[\mathrm{Conf}_{E_n}(M)] = [M]^n$. Together with the deletion--contraction relation (\ref{del_con_conf}), we obtain the desired result.
\end{proof}

\begin{corollary}\label{Gro_Conf_Pone}
    In $K_0(\mathrm{Var}_{k})$, we have $[\mathrm{Conf}_G(\Pone)] = \chi_G(\lef+1) \in \mathbb{Z}[\lef]$.
\end{corollary}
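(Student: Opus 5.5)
The corollary is an immediate specialization of Proposition \ref{Gro_Conf}. Taking $M=\Pone$ there gives $[\mathrm{Conf}_G(\Pone)] = \chi_G([\Pone])$, so it only remains to identify $[\Pone]$ in $K_0(\mathrm{Var}_k)$.

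For this I use the standard decomposition of $\Pone$ into a closed point $\infty$ and its open complement $\mathbb{A}^1$. The cut-and-paste relation then gives $[\Pone] = [\mathbb{A}^1] + [\mathrm{pt}] = \lef + 1$.

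Since $\chi_G(x)\in\mathbb{Z}[x]$, substituting $x=\lef+1$ is evaluation of an integer polynomial in the ring $K_0(\mathrm{Var}_k)$. The result therefore lies in the subring generated by $\lef$, that is, in the image of $\mathbb{Z}[\lef]$, as the statement asserts.

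The only point needing a moment's care is that Proposition \ref{Gro_Conf} applies with $M=\Pone$ for an arbitrary graph $G$. Its proof uses only the deletion--contraction relation \eqref{del_con_conf} and the base case $\mathrm{Conf}_{E_n}(M)=M^n$, so no stability hypothesis on $G$ is needed, and $G$ may even be edgeless or bipartite. Hence no step here is a genuine obstacle.
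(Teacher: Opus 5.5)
Your proposal is correct and is the same argument as the paper's, which simply substitutes $[\Pone]=\lef+1$ into Proposition~\ref{Gro_Conf}. Your remark that no stability hypothesis on $G$ is needed is also accurate: that proposition rests only on \eqref{del_con_conf} and the edgeless base case.
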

\begin{proof}
Substitute $[\Pone] = \lef+1$ into the identity in Proposition~\ref{Gro_Conf}.   
\end{proof}

\subsubsection{The Grothendieck class of $M_{0,G}$}
Let $G$ be a $0$-stable graph; recall that this means that $G$ has at least three vertices and is neither edgeless nor bipartite. By definition, we have
\[
    M_{0,G}
    \simeq
    \mathrm{Conf}_G(\Pone)/\mathrm{PGL}_2.
\]
The $0$-stability assumption ensures that every configuration in $\mathrm{Conf}_G(\Pone)$ contains at least three distinct points. Consequently, the action of $\mathrm{PGL}_2$ is free, and the quotient above is well-defined. We also get an étale local $\mathrm{PGL}_2$-fibration:
\begin{align} \label{quotient_map_pgl2}
        \pi \colon  \mathrm{Conf}_{G}(\mathbb{P}^1) \longrightarrow \mathrm{Conf}_{G}(\mathbb{P}^1)/ \mathrm{PGL}_2 = \mzerog.
    \end{align}

\begin{remark} In \cite{EulChrom},
    using the identity (\ref{del_con_euler}) the authors showed that:
    \begin{align} \label{euler_conf}
    \chi(\mathrm{Conf}_G(\Pone({\mathbb{C}})) = \chi_G(2).
    \end{align}
This result holds for any graph and can also be deduced by substituting $\lef = 1$ into the formula in Corollary \ref{Gro_Conf_Pone}. It is therefore tempting to study $\chi(M_{0,G}(\mathbb{C}))$ via the fibration $\pi$ in (\ref{quotient_map_pgl2}), which leads to
\begin{align*}
    \chi(\mathrm{Conf}_G(\Pone(\mathbb{C}))) = \chi(\mathrm{PGL}_2(\mathbb{C})) \chi(M_{0,G}(\mathbb{C})).
\end{align*}
But since we are only considering $0$-stable graphs that cannot be bipartite, so $\chi_G(2) = 0$. We also have $\chi(\mathrm{PGL}_{2}(\mathbb{C})) = 0$, which can be deduced from Proposition \ref{Gro_PGL} by substituting $\lef = 1$. Therefore, we cannot conclude anything about the complex Euler characteristic of $M_{0,G}$ directly this way. However, it turns out by working with the Grothendieck classes, which are finer invariants than the Euler characteristics, we can actually say a lot more about $\mzerog$. 
\end{remark}

\begin{lemma}\label{Zariski}
    The quotient map $\pi$ (\ref{quotient_map_pgl2}) is also an Zariski local fibration. 
\end{lemma}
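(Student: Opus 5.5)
The idea is to produce, Zariski locally on $\mzerog$, a section of $\pi$; this trivializes the $\mathrm{PGL}_2$-torsor. Since $\pi$ is already known to be a principal $\mathrm{PGL}_2$-bundle (free action, étale locally trivial), a torsor with a section over an open set $U$ is isomorphic to $U\times \mathrm{PGL}_2$. So the whole task reduces to showing that $\mzerog$ is covered by opens $U_{T}$, each carrying a section $U_T\to \mathrm{Conf}_G(\Pone)$.

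\textbf{Choosing the triangles.} Because $G$ is $0$-stable, it contains an odd cycle. In fact, on any configuration $(p_1,\dots,p_n)\in\mathrm{Conf}_G(\Pone)$ there are three markings sitting at three distinct points. We index opens by ordered triples $T=(i,j,l)$ of vertices and set
\[
U_T=\{[C,p_1,\dots,p_n] : p_i,p_j,p_l \text{ pairwise distinct}\}.
\]
This is an open condition, it is $\mathrm{PGL}_2$-invariant, and so it descends to an open subset of $\mzerog$. The $U_T$ cover $\mzerog$, since every point has three distinct marked points among its markings, which is exactly the $0$-stability argument already used to see that the action is free.

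\textbf{The section.} On $\pi^{-1}(U_T)$ there is a unique element of $\mathrm{PGL}_2$ sending $(p_i,p_j,p_l)$ to $(0,1,\infty)$, given by the cross-ratio formula; it depends algebraically on the configuration. Applying it gives a $\mathrm{PGL}_2$-invariant morphism
\[
\pi^{-1}(U_T)\to \mathrm{Conf}_G(\Pone)\cap\{p_i=0,\,p_j=1,\,p_l=\infty\}=:S_T .
\]
It descends to an isomorphism $U_T\simeq S_T$. Composing its inverse with the inclusion $S_T\hookrightarrow \mathrm{Conf}_G(\Pone)$ yields the section. Equivalently, $\pi^{-1}(U_T)\simeq S_T\times\mathrm{PGL}_2$ via $(s,g)\mapsto g\cdot s$, with inverse $x\mapsto (g_x^{-1}x,\,g_x)$.

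\textbf{Main obstacle.} The one nontrivial point is checking that the normalizing map descends, i.e.\ that $U_T\to S_T$ is an isomorphism of varieties and not just a bijection on points. I would handle this by exhibiting the explicit inverse morphism $\pi^{-1}(U_T)\to S_T\times \mathrm{PGL}_2$ written above, which is regular since the entries of $g_x$ are polynomial in the coordinates divided by nonvanishing differences $p_i-p_j$ etc. Working in affine charts of $\Pone$ and handling the $\infty$ cases separately takes care of the remaining details.
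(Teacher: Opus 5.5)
Your proposal is correct and follows essentially the same route as the paper: cover $\mzerog$ by the $\mathrm{PGL}_2$-invariant opens where three chosen markings are pairwise distinct, and trivialize over each by normalizing those points to $(0,1,\infty)$. The paper restricts to triples on a fixed odd cycle and justifies the covering directly, since otherwise that cycle would be properly $2$-coloured, which is the justification your covering claim implicitly uses. Your explicit trivialization is more careful than the paper's one-line remark, but with $g_x$ defined as sending $(p_i,p_j,p_l)\mapsto(0,1,\infty)$, the inverse should read $x\mapsto(g_x\cdot x,\,g_x^{-1})$.
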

\begin{proof}
    Since $G$ is not bipartite nor edgeless, it must contain an odd cycle $C \subset G$.
    For any ordered triple $(i,j,k)$ of vertices of $C$, we define 
    \begin{align*}
        U_{ijk} := \{[\Pone,x_1,...,x_n] \in \mzerog: \, x_i,x_j,x_k \text{ are pairwise distinct} \}. 
    \end{align*}
    It is clear that these sets are preserved by the action of $\mathrm{PGL}_2$ and they are Zariski open sets. We claim that $\{U_{ijk}\}_{\{i,j,k\}\in\mathcal I_C}$ is an open covering of $\mzerog$, here $\mathcal{I}_C$ denotes the set of ordered three-element subsets of $V(C)$. Indeed, suppose $[\Pone,x_1,...,x_n] \in \mzerog$ that is not contained in any of the $U_{ijk}$, then for all $i \in C$ we must have $x_i \in \{a,b\}$, for some $a,b \in \Pone$. This gives a $2$-colouring of the odd cycle $C$ which is not possible. 
    
    Now we show that $\pi$ is a trivial principal $\mathrm{PGL}_2$-bundle over $U_{ijk}$ and hence our lemma follows. Indeed, since the $\mathrm{PGL}_2$ action on $\Pone$ is simply $3$-transitive, we can rigidify $[\Pone,x_1,...,x_n] \in U_{ijk}$ by sending $(x_i,x_j,x_k)$ to $(0,1,\infty)$ and the rest points becomes the cross-ratio of itself with $x_i,x_j,x_k$. This defines a local section of $\pi$ over $U_{ijk}$ and hence trivialises it.
\end{proof}
Next, we prove the polynomiality part of Theorem \ref{Gro_thm_zero}. 
\begin{lemma}\label{lem:poly_0}
    The Grothendieck class of $\mzerog$ belongs to the subring $\mathbb{Z}[\lef] \subset K_0(\mathrm{Var}_{k})$ i.e. $[\mzerog] \in \mathbb{Z}[\lef]$.
\end{lemma}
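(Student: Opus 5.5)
Two routes suggest themselves; I would use the wall-crossing one, since it does not require understanding how the classes of the open charts interact. By Proposition~\ref{sidd} (the Grothendieck-class wall-crossing formula), we have
\[
[\mzerog]=\sum_{r} N_{G,r}\,[M_{0,r}],
\]
where $[M_{0,r}]=0$ for $r\le 2$. So it suffices to know that each $[M_{0,r}]$ with $r\geq 3$ lies in $\mathbb{Z}[\lef]$. Example~\ref{ex_groclass_DM} gives exactly this, via the forgetful map $M_{0,r}\to M_{0,r-1}$.

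I would justify that step by induction on $r$. The base case is $M_{0,3}=\mathrm{pt}$. For the inductive step, the forgetful map $M_{0,r}\to M_{0,r-1}$ is Zariski locally trivial with fibre $\Pone$ minus $r-1$ points. Concretely, fix the first three markings at $0,1,\infty$; then $M_{0,r}$ is the complement in $M_{0,r-1}\times\mathbb{A}^1$ of the graphs of $r-1$ disjoint sections. These are the constant sections $0$ and $1$, together with the coordinates $x_4,\dots,x_{r-1}$, with $\infty$ already removed. Hence
\[
[M_{0,r}]=[M_{0,r-1}]\,(\lef+1-(r-1)).
\]
By induction this yields $[M_{0,r}]=(\lef-2)\cdots(\lef-r+2)\in\mathbb{Z}[\lef]$, and the lemma follows since $\mathbb{Z}[\lef]$ is a subring.

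An alternative route would use Lemma~\ref{Zariski}. On each open set $U_{ijk}$ the map $\pi$ is trivial, so $[\pi^{-1}U]=[\mathrm{PGL}_2][U]$. Summing over a stratification into locally closed pieces then gives
\[
[\mathrm{Conf}_G(\Pone)]=[\mathrm{PGL}_2][\mzerog].
\]
One would combine this with Corollary~\ref{Gro_Conf_Pone} and $[\mathrm{PGL}_2]=\lef^3-\lef$. The obstacle is that $\lef^3-\lef$ might be a zero-divisor in $K_0(\mathrm{Var}_k)$, so one cannot simply divide. This is presumably why polynomiality has to be established first, and why I would rely on the wall-crossing formula for this lemma.

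The only point needing care is confirming that Proposition~\ref{sidd} holds over an arbitrary field $k$. This requires the stratification of $M_{0,G}$ by collision patterns to be into locally closed subvarieties, each isomorphic to $M_{0,r}$. That in turn follows from the description $\mzerog\simeq\mathrm{Conf}_G(\Pone)/\mathrm{PGL}_2$: the stratum attached to a $G$-stable partition of length $r$ is exactly the configurations with those coincidences and no others, which is $\mathrm{Conf}_{K_r}(\Pone)/\mathrm{PGL}_2=M_{0,r}$.
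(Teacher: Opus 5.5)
Your argument is correct, but it is not the route the paper takes for this lemma. The paper covers $\mzerog$ by the opens $U_{ijk}$ of Lemma~\ref{Zariski} (ordered triples on an odd cycle). It identifies every finite intersection of these with some $M_{0,H}$ where $H$ contains a triangle, and applies inclusion--exclusion in $K_0(\mathrm{Var}_k)$. It then inducts on the edges of $H$ outside that triangle using the deletion--contraction relation of Theorem~\ref{Gro_Class}, with base case $M_{0,K_3\sqcup E_{n-3}}\simeq(\Pone)^{n-3}$. The triangle is an anchor: in genus $0$ the recursion cannot be run down to edgeless graphs, which are not $0$-stable.

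You instead stratify by exact coincidence pattern (Proposition~\ref{sidd}) and use $[M_{0,r}]=(\lef-2)\cdots(\lef-r+2)$. This is exactly what the paper does in its second proof of Theorem~\ref{Gro_thm_zero}, where the lemma is bypassed altogether.

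Your route is shorter and gives the explicit polynomial $\sum_{r\geq 3}N_{G,r}(\lef-2)\cdots(\lef-r+2)$. Theorem~\ref{Gro_thm_zero} then reduces to the identity $(x^3-x)\sum_r N_{G,r}(x-2)\cdots(x-r+2)=\chi_G(x+1)$ in $\mathbb{Z}[x]$, so no cancellation is ever performed in $K_0(\mathrm{Var}_k)$. The cost is that it rests on Proposition~\ref{sidd}, which the paper only quotes as an adaptation of \cite{Sidd_TopHass}. Your sketch supplies that justification: the exact-coincidence locus of a $G$-stable partition of length $r$ is $\mathrm{PGL}_2$-stable, locally closed, and equivariantly isomorphic to $\mathrm{Conf}_{K_r}(\Pone)$, so it descends through the locally trivial quotient to a copy of $M_{0,r}$. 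The paper's route, by contrast, stays inside the deletion--contraction framework and needs only the geometry of the single family $K_3\sqcup E_{n-3}$.

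One small slip: with $0,1,\infty$ fixed, $M_{0,r}$ is the complement in $M_{0,r-1}\times\mathbb{A}^1$ of $r-2$ disjoint graphs (the sections $0$, $1$, $x_4,\dots,x_{r-1}$), not $r-1$. Your recursion $[M_{0,r}]=[M_{0,r-1}](\lef-r+2)$ is nevertheless correct, since it counts $\Pone$ minus $r-1$ points.
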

\begin{proof} Let $C$ be an odd cycle and
 recall the open cover $\{U_{ijk}\}_{\{i,j,k\}\in\mathcal I_C}$ of $\mzerog$ from the proof of Lemma \ref{Zariski}. Notice that for each triple $\{i,j,k\}$ we have
 \[U_{ijk} \simeq M_{0,G_{ijk}},\]
 where $G_{ijk}$ is the graph obtained by adding in the edges $\{ij,ik,jk\}$. Observe that $G_{ijk}$ contains a copy of $K_3$ spanned by $i,j,k$. More generally, the intersection of an arbitrary intersection of these open sets $U_{ijk}$ can always be identified with $M_{0,H}$ such that $H$ contains a copy of $K_3$. By the inclusion–exclusion principle, we have:
\[
[M_{0,G}]=
\sum_{\varnothing \neq A \subseteq \mathcal{I}_C}
(-1)^{|A|+1}
\left[
\bigcap_{(i,j,k)\in A} M_{0,G_{ijk}}
\right] \in K_0(\mathrm{Var}_{k}).
\]
Therefore, it suffices to show $[M_{0,H}] \in \mathbb{Z}[\lef] $ for every graph $H$ that contains a copy of $K_3$. We argue by induction on the number of edges of $H$ lying outside the distinguished $K_3$. If there are no such edges i.e. $H = K_3 \sqcup E_{n-3}$, then in this case we have $M_{0,K_3 \sqcup E_{n-3}} \simeq (\Pone)^{n-3}$ (Example \ref{Ex_Gra}). Consequently, we have $[M_{0,K_3 \sqcup E_{n-3}}] = (\lef+1)^{n-3} \in \mathbb{Z}[\lef] $. The inductive step follows straightforwardly from the deletion--contraction relation satisfied by $[M_{0,H}]$ as shown in Theorem \ref{Gro_Class}.
\end{proof}

The following facts about $K_0(\mathrm{Var}_{k})$ are well-known to the experts and we include it for completeness. 

\begin{proposition}\label{Gro_PGL} In $K_0(\mathrm{Var}_{k})$, we have $[\mathrm{PGL}_2] = \lef^3 - \lef$. 
\end{proposition}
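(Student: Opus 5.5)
We will show $[\mathrm{PGL}_2] = \lef^3-\lef$ by realising $\mathrm{PGL}_2$ as the space of ordered triples of distinct points of $\Pone$. Since $\mathrm{PGL}_2$ acts simply $3$-transitively on $\Pone$, the orbit map
\[
\mathrm{PGL}_2 \longrightarrow \mathrm{Conf}_{K_3}(\Pone), \qquad g \longmapsto (g\cdot 0,\, g\cdot 1,\, g\cdot\infty)
\]
is a bijection on points over any field extension. It is in fact an isomorphism of varieties, because it has an explicit algebraic inverse: given distinct $(x_1,x_2,x_3)$, take the unique Möbius transformation sending $(0,1,\infty)$ to them, whose coefficients are regular functions of the $x_i$ on each standard chart. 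This is the same rigidification already used in the proof of Lemma~\ref{Zariski}. Granting this, Corollary~\ref{Gro_Conf_Pone} applied to $G=K_3$ gives
\[
[\mathrm{PGL}_2] = [\mathrm{Conf}_{K_3}(\Pone)] = \chi_{K_3}(\lef+1) = (\lef+1)\lef(\lef-1) = \lef^3-\lef .
\]

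As an independent check, and to avoid any subtlety about the isomorphism, one can instead stratify directly. Write $\mathrm{PGL}_2 = \mathrm{GL}_2/\mathbb{G}_m$. Then $\mathrm{GL}_2 \to \mathrm{PGL}_2$ is a $\mathbb{G}_m$-torsor, and it is Zariski locally trivial because $\mathbb{G}_m$ is special (Hilbert~90). Hence $[\mathrm{GL}_2] = (\lef-1)[\mathrm{PGL}_2]$.

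On the other hand, $[\mathrm{GL}_2]$ can be computed by counting the first column and then the second. The first column ranges over $\mathbb{A}^2\setminus 0$, and the second column over the complement of a line. This fibration is Zariski locally trivial, since it is a complement of a subbundle, so
\[
[\mathrm{GL}_2] = (\lef^2-1)(\lef^2-\lef) = (\lef-1)(\lef^3-\lef).
\]
This, however, only yields $(\lef-1)\bigl([\mathrm{PGL}_2]-(\lef^3-\lef)\bigr)=0$. Since $\lef-1$ may be a zero divisor in $K_0(\mathrm{Var}_k)$, it cannot simply be cancelled.

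The main obstacle is therefore exactly this: cancellation is not allowed. The proof must rest on a stratification or an isomorphism, not on division. For that reason we rely on the first argument, where the only real point is that the orbit map is an isomorphism of varieties rather than merely a bijection on points. If one prefers to avoid writing down the inverse explicitly, an alternative is to stratify $\mathrm{PGL}_2$ directly. Use the locus where the matrix entry $c \neq 0$, which after scaling by $c=1$ gives $\{(a,b,d)\colon ad-b\neq 0\}\cong \mathbb{A}^3\setminus\{b=ad\}$ with class $\lef^3-\lef^2$. Its complement $c=0$ is normalised by $d=1$, giving $\{(a,b)\colon a\neq 0\}$ with class $\lef^2-\lef$. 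Adding the two yields $\lef^3-\lef$ without any division.
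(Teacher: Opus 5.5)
Your argument is correct, but your main route differs from the paper's. The paper stays inside $\mathbb{P}(\mathrm{Mat}_{2\times 2})\cong\mathbb{P}^3$. There $\mathrm{PGL}_2$ is the complement of the determinantal quadric $\{xw-yz=0\}$, which is the Segre image of $\Pone\times\Pone$, so $[\mathrm{PGL}_2]=(\lef^3+\lef^2+\lef+1)-(\lef+1)^2$.

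You instead identify $\mathrm{PGL}_2\cong\mathrm{Conf}_{K_3}(\Pone)$ through the orbit of $(0,1,\infty)$ and feed this into Corollary~\ref{Gro_Conf_Pone}. This is legitimate. That corollary rests only on the deletion--contraction lemma for configuration spaces and the edgeless base case, so there is no circularity. The inverse is also genuinely a morphism: with $0=[0:1]$, $\infty=[1:0]$ and lifts $w_1,w_2,w_3\in k^2$ of $x_1,x_2,x_3$, take the matrix with columns $\det(w_1,w_2)\,w_3$ and $\det(w_2,w_3)\,w_1$. It is well defined projectively, sends $(0,1,\infty)$ to $(x_1,x_2,x_3)$, and has determinant $\det(w_1,w_2)\det(w_2,w_3)\det(w_3,w_1)\neq 0$.

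What your route buys is coherence with Section~\ref{Sec_Gro}: $[\mathrm{PGL}_2]$ becomes itself a chromatic evaluation. It is the $G=K_3$ case of $[\mathrm{PGL}_2][M_{0,G}]=[\mathrm{Conf}_G(\Pone)]$, with $M_{0,K_3}$ a point. The paper's route is shorter and independent of the graph machinery.

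Your fallback stratification by $c\neq 0$ and $c=0$ is the closest to the paper. It works in the same $\mathbb{P}^3$ but cuts $\mathrm{PGL}_2$ itself into two affine pieces, which avoids even the Segre identification. You are also right that the $\mathrm{GL}_2$ count only determines $(\lef-1)[\mathrm{PGL}_2]$ and cannot be used on its own.
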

\begin{proof}
    We know $\mathrm{PGL}_2$ is the open locus inside $\mathbb{P}(\mathrm{Mat}_{2\times 2}) \cong \mathbb{P}^3$ where the determinant is non-zero. The complement locus of which the determinant is zero can be identified with the quadric $\{xw-yz = 0\} \subset \mathbb{P}^3$, and it is isomorphic to $\mathbb{P}^1 \times \mathbb{P}^1$. Therefore, we get the desired equality:
    \[[\mathrm{PGL}_2] = [\mathbb{P}^3] - [\mathbb{P}^1 \times \mathbb{P}^1] 
        = \lef^3+\lef^2+\lef +1- (\lef+1)(\lef+1) 
         = \lef^3 - \lef. \qedhere\]
\end{proof}

\begin{proposition} \label{Lef_relation}
    For any field $k$, the following ring homomorphism is injective:
    \begin{align*}
        \varphi \colon \mathbb{Z}[x] \longrightarrow K_0(\mathrm{Var}_{k}); \quad x \longmapsto \mathbb{L}.
    \end{align*}
    In other words, we have a ring isomorphism $\mathbb{Z}[x] \simeq \mathbb{Z}[\lef]$.
\end{proposition}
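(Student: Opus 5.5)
We need $\varphi(p)\neq 0$ for every nonzero $p\in\mathbb{Z}[x]$. The plan is to compose $\varphi$ with a generalised Euler characteristic (a ring homomorphism out of $K_0(\mathrm{Var}_k)$) that sends $\lef$ to a transcendental element, or at least to something with no nontrivial integer polynomial relations. If $\mathsf{e}\colon K_0(\mathrm{Var}_k)\to R$ is a ring homomorphism with $\mathsf{e}(\lef)=t$ and $\mathbb{Z}[t]\subseteq R$ is a polynomial ring in $t$, then $\mathsf{e}\circ\varphi$ is the identity-like map $x\mapsto t$, which is injective. Hence $\varphi$ is injective.

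For $k=\mathbb{C}$, or any field of characteristic zero, I would use the virtual Poincaré polynomial $P^{\mathrm{vir}}_\bullet(t)$ from Example~\ref{example_euler}, which satisfies $P^{\mathrm{vir}}_{\lef}(t)=t^2$. Then $\mathsf{e}\circ\varphi$ sends $p(x)\mapsto p(t^2)$, which is injective. For a general characteristic-zero field, I would either base change to an algebraically closed field and embed a finitely generated subfield into $\mathbb{C}$, or use the Hodge--Deligne/$E$-polynomial motivic measure directly. Base change $K_0(\mathrm{Var}_k)\to K_0(\mathrm{Var}_{k'})$ is a ring map preserving $\lef$, so injectivity after base change suffices.

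For an arbitrary field, the uniform route is $\ell$-adic: choose a prime $\ell\neq \mathrm{char}\,k$. The compactly supported étale Euler characteristic in the Grothendieck ring of $\ell$-adic Galois representations (or, more simply, point counting when $k$ is finite) is a ring homomorphism. For $k=\mathbb{F}_q$, counting points over $\mathbb{F}_{q^m}$ gives $\#_m(\varphi(p))=p(q^m)$. If $\varphi(p)=0$, then $p(q^m)=0$ for all $m$, so $p$ has infinitely many roots and $p=0$. For general $k$, I would reduce to a finitely generated field. Any relation $\varphi(p)=0$ is witnessed by finitely many cut-and-paste isomorphisms, which are defined over a finitely generated subring $A\subseteq k$. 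Spreading out and specialising at a closed point of $\operatorname{Spec}A$ with finite residue field $\mathbb{F}_q$ transports the relation to $K_0(\mathrm{Var}_{\mathbb{F}_q})$, and the point-counting argument then applies. The Poincaré-polynomial argument covers characteristic zero directly, and this spreading-out argument covers positive characteristic, so together they handle all fields.

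The main obstacle is the spreading-out step: one must justify that a relation in $K_0(\mathrm{Var}_k)$ descends to a finitely generated subring and specialises well. The relation is a finite chain of scissor relations $[X]=[Z]+[X\setminus Z]$ and isomorphisms, so it involves finitely many varieties, closed immersions and isomorphisms. All of these are of finite presentation, so they descend to some $A$, and one can shrink $\operatorname{Spec}A$ so that they remain closed immersions and isomorphisms. A fibre over a closed point then gives the relation over a finite field. Alternatively, in the paper's setting one can simply cite the injectivity as well known and give only the point-counting argument over finite fields and the Poincaré-polynomial argument over $\mathbb{C}$.
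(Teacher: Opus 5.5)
Your proof is correct, but for general fields it takes a different route from the paper. In characteristic zero it is essentially the paper's argument: compose $\varphi$ with a Poincaré-type polynomial sending $\lef\mapsto t^2$.

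The paper handles every field at once. It cites Nicaise's Poincaré polynomial $P_\bullet(t)\colon K_0(\mathrm{Var}_k)\to\mathbb{Z}[t]$, which exists over an arbitrary field and satisfies $P_{\lef}(t)=t^2$, so the proof is one line. The cost is hidden in the citation: that invariant is built from $\ell$-adic Betti numbers, and showing it is well defined on $K_0(\mathrm{Var}_k)$ in positive characteristic needs weight arguments and a reduction to finitely generated fields.

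You instead use the elementary measures $[X]\mapsto \#X(\mathbb{F}_{q^m})$, where $p(q^m)=0$ for all $m$ forces $p=0$. You then have to justify that a relation in $K_0(\mathrm{Var}_k)$ descends to a finitely generated subring $A\subseteq k$ and specialises at a closed point. Such a relation is a finite $\mathbb{Z}$-combination of scissor relations among finitely many varieties, closed immersions and isomorphisms, and your sketch of the descent is sound. Since point counts are additive set-theoretically on fibres and ignore nilpotents, you do not even need the specialised relation to hold literally in $K_0(\mathrm{Var}_{\mathbb{F}_q})$.

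Your spreading-out argument also works verbatim in characteristic zero. Closed points of $\operatorname{Spec}A$ for a nonzero finitely generated $\mathbb{Z}$-algebra $A$ have finite residue fields. So point counting alone gives a uniform proof for all fields, and you can drop the case split and the Lefschetz-principle detour through $\mathbb{C}$.
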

\begin{proof}
Recall the virtual Poincaré polynomial introduced in Example~\ref{example_euler}, which is defined for varieties over $\mathbb{C}$. More generally, for varieties over an arbitrary field $k$, there is a Poincaré polynomial
\[
    P_{\bullet}(t)\colon K_0(\mathrm{Var}_k)\longrightarrow \mathbb{Z}[t]
\]
satisfying $P_{\lef}(t)=t^2$,
see \cite[Theorem~2.10]{Nicaise_poincare_poly}. We therefore obtain a composition
\[
    \mathbb{Z}[x]
    \xlongrightarrow{\varphi}
    K_0(\mathrm{Var}_k)
    \xrightarrow{P_{\bullet}(t)}
    \mathbb{Z}[t],
\]
which sends $x$ to $t^2$, and it is clearly injective. Hence, $\varphi$ must also be injective. 
\end{proof}

We are now ready to prove Theorem \ref{Gro_thm_zero}.

\begin{proof}[First proof of Theorem \ref{Gro_thm_zero}]
    Since multiplication in Grothendieck ring respects Zariski local fibration, 
    by Lemma \ref{Zariski}, we obtain the following relation:
    \begin{align}
        [\mathrm{PGL}_2][\mzerog] &= [\mathrm{Conf}_G(\mathbb{P}^1)] \in K_0(\mathrm{Var}_{k})]
        \intertext{By Corollary \ref{Gro_Conf_Pone}, Lemma \ref{lem:poly_0} and Proposition \ref{Gro_PGL}, we obtain a relation in the subring $\mathbb{Z}[\lef]$:}
        (\lef^3 - \lef)[\mzerog] &= \chi_G(\lef + 1) \in \mathbb{Z}[\lef].
    \end{align}
    Recall from (\ref{eq:cancel}) that 
    \[ (\lef^3 - \lef) \mid  \chi_G(\lef + 1),\]
    and the polynomial $Q_G$ was defined as their quotient (Definition \ref{def:Q_G}):
    \[Q_G(\lef):= \frac{\chi_G(\lef + 1)}{\lef^3 - \lef}.\]
    Therefore, we deduce that
    \[(\lef^3 - \lef)[\mzerog] = (\lef^3 - \lef) Q_G(\lef) \in \mathbb{Z}[\lef].\]
    By Proposition~\ref{Lef_relation}, we have $\lef^3-\lef\neq 0$ in $\mathbb{Z}[\lef]$. Since $\mathbb{Z}[\lef]$ is an integral domain, we may therefore cancel the factor $\lef^3-\lef$ from both sides, yielding the desired result.
\end{proof}
\begin{remark}
We remark that it is important to work in the subring $\mathbb{Z}[\lef]$, as $K_0(\mathrm{Var}_{k})$ is not an integral domain \cite{Gro_ring_domain}.
\end{remark}
We present a second proof utilizing Proposition \ref{sidd}. 
\begin{proof}[Second proof of Theorem \ref{Gro_thm_zero}]
    Recall from Example \ref{ex_groclass_DM}, via the forgetful map $M_{0,r+1} \rightarrow M_{0,r}$, we can easily deduce 
    \begin{align}
        [M_{0,r}] = (\lef -2)(\lef - 3)\cdots (\lef - r +2) \in K_0(\mathrm{Var}_{k}).
    \end{align}
    Substitute this identity into (\ref{gro_sum}), we get
    \[ [\mzerog] = \sum_{r = 3}^n N_{G,r} (\lef -2)(\lef - 3)\cdots (\lef - r +2).\]
    Since $G$ is $0$-stable, we have $N_{G,0}= N_{G,1}=N_{G,2} =0$. Thus in $\mathbb{Z}[\lef]$, we deduce once again that:
    \begin{align*}
        (\lef+1)\lef(\lef-1) [\mzerog] = \sum_{r = 0}^n N_{G,r} (\lef+1)\lef(\lef-1)(\lef -2)\cdots (\lef - r +2) 
         = \chi_G(\lef+1),
    \end{align*}
    where the latter equality follows from (\ref{chrom}).
\end{proof}

\subsubsection{Application: Generalised Euler characteristics of $\mzerog$} Everything we have discussed so far work over any field $k$. As an application of Theorem \ref{Gro_thm_zero}, we will now study the topology of $\mzerog$ over specific fields.

\begin{corollary}[{Corollary \hyperref[cor:F]{F}}] \label{lhop}The chromatic polynomial of $G$ determines the following information about $\mzerog$.
\begin{enumerate}
    \item Over $k = \mathbb{C}$, 
    \begin{align}
        \chi(\mzerog(\mathbb{C})) &= \frac{1}{2}\left. \frac{d}{dx} \chi_G(x) \right|_{x = 2}. \label{eul}
        \end{align}
        This quantity coincides with a conjectural count in \cite{GraphScattering} of critical values of certain CHY scattering potential associated to $G$, see Section \ref{subsec_euler_char} for a detailed discussion.
        \item Over $k = \mathbb{C}$,
        \begin{align}
        \tw(\mzerog(\mathbb{C})) &= -\left.\frac{d}{dx} \chi_G(x) \right|_{x = 1}. \label{tw}
        \end{align}
        This quantity is, up to sign, Crapo's $\beta$-invariant of $G$, which can also be interpreted as the number of acyclic orientations of $G$ with a specified unique source and a specified unique sink. See Section \ref{dual_complex} for a detailed discussion.
        \item Over $k = \mathbb{C}$,
        \begin{align}
        \pvir(t) &= \frac{\chi_G(t^2 + 1)}{t^6 - t^2} \in \mathbb{Z}[t^2]. \label{vir}
        \end{align}

    \item Over $k = \mathbb{R}$,
    \begin{align}\label{real}
        \chi_c(\mzerog(\mathbb{R})) = \frac{1}{2} \left. \frac{d}{dx} \chi_G(x) \right|_{x = 0}.
    \end{align}
     This quantity can be interpreted as, up to sign, the number of acyclic orientations of $G$ with a specified unique source. See Section \ref{real_locus} for a detailed discussion. 

    \item Over $k = \mathbb{F}_q$,
    \begin{align} \label{Fq}
        \# (\mzerog(\mathbb{F}_q)) = \frac{\chi_G(q + 1)}{q^3 - q} \in \mathbb{Z}[q].
    \end{align}
\end{enumerate}
\end{corollary}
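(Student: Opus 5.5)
The plan is to apply each generalised Euler characteristic from Example~\ref{example_euler} to the identity $[\mzerog]=Q_G(\lef)$ of Theorem~\ref{Gro_thm_zero}. Since these characteristics are ring homomorphisms out of $K_0(\mathrm{Var}_k)$, each invariant is obtained by evaluating $Q_G$ at the image of $\lef$. The values of $\lef$ are $q$ for point counts, $t^2$ for $\pvir$, $1$ for $\chi$, $0$ for $\tw=\chi^0_c$ (using smoothness of $\mzerog$, so that $\chi_c^0=\tw$), and $-1$ for $\chi_c$ over $\mathbb{R}$.

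Items (iii) and (v) are then immediate. For (iii), substitute $\lef\mapsto t^2$ to get $\chi_G(t^2+1)/(t^6-t^2)$; membership in $\mathbb{Z}[t^2]$ follows because $Q_G\in\mathbb{Z}[x]$. For (v), substitute $\lef\mapsto q$.

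The remaining items are evaluations of $Q_G(x)=\chi_G(x+1)/\bigl((x+1)x(x-1)\bigr)$ at the points $x=1,0,-1$. At each of these points the denominator vanishes, so I will compute them by factoring, i.e.\ by l'H\^opital. Write $\chi_G(y)=y(y-1)(y-2)R(y)$ with $R\in\mathbb{Z}[y]$, which is possible by \eqref{eq:cancel}. Then $Q_G(x)=R(x+1)$, and I will evaluate $R$ at $y=x+1\in\{2,1,0\}$.

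At a simple root $y_0\in\{0,1,2\}$ of $y(y-1)(y-2)$, differentiating the product gives
\[
\chi_G'(y_0)=R(y_0)\prod_{y_1\neq y_0}(y_0-y_1).
\]
The three products are $2$ at $y_0=2$, $-1$ at $y_0=1$, and $2$ at $y_0=0$. This yields
\begin{itemize}
\item $\chi=R(2)=\tfrac12\chi_G'(2)$,
\item $\tw=R(1)=-\chi_G'(1)$,
\item $\chi_c(\mzerog(\mathbb{R}))=R(0)=\tfrac12\chi_G'(0)$.
\end{itemize}
These match (i), (ii) and (iv).

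The only point needing care is that this derivative formula holds with $R(y_0)$ possibly zero, which is harmless. The main obstacle is therefore justifying that each Euler characteristic factors through the subring $\mathbb{Z}[\lef]$ by plain evaluation. Since each is a ring homomorphism and $[\mzerog]$ is a genuine polynomial in $\lef$ (Lemma~\ref{lem:poly_0} and Theorem~\ref{Gro_thm_zero}), this is automatic. No division ever occurs in $K_0(\mathrm{Var}_k)$: we evaluate $Q_G$ itself, and the factorisation only computes its value.

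The combinatorial interpretations mentioned in the statement (CHY counts, the $\beta$-invariant, acyclic orientations) are deferred to the later sections cited.
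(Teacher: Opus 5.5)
Your proposal is correct and follows the paper's proof: you apply the ring homomorphisms of Example~\ref{example_euler} to $[\mzerog]=Q_G(\lef)$ and evaluate at $\lef=q,\,t^2,\,1,\,0,\,-1$, where the paper does the three degenerate evaluations by L'H\^opital's rule. Your factorisation $\chi_G(y)=y(y-1)(y-2)R(y)$ is just a more explicit way of carrying out that L'H\^opital step, and it makes transparent that no division in $K_0(\mathrm{Var}_k)$ is needed.
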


\begin{proof}[Proof of Corollary \ref{lhop}] By Proposition~\ref{Lef_relation}, over $\mathbb{R}$ or $\mathbb{C}$, an element from  $\mathbb{Z}[\lef]$ can genuinely be viewed as a differentiable or holomorphic function of one variable $\lef$. Recall from Theorem \ref{Gro_thm_zero} we have
\begin{align*}
        [\mzerog] = Q_G(\lef) = \frac{\chi_G(\lef + 1)}{\lef^3 - \lef} \in \mathbb{Z}[\lef].
    \end{align*}
Therefore, we obtain (\ref{eul}), (\ref{tw}) and (\ref{real}) by evaluating the above formula using L'Hôpital's rule, at $\lef = 1, 0, -1$ respectively. 

Moreover, (\ref{vir}) \& (\ref{Fq}) are obtained by simply evaluating the above formula at $\lef = t^2,q$ respectively. The choices of these evaluations were explained in Example \ref{example_euler}.
\end{proof}

\begin{remark}
     We justify why $\frac{1}{2} \! \left. \frac{d}{dx} \chi_G(x) \right|_{x = 0}$ and $\frac{1}{2} \! \left. \frac{d}{dx} \chi_G(x) \right|_{x = 2}$ are integers. We know $\chi_G$ has no constant term, write 
    \begin{align*}
        \chi_G(x) = a_1x+a_2x^2+...+a_nx^n \implies \frac{d}{dx} \chi_G(x) = a_1+2a_2x+...+na_nx^{n-1}.
    \end{align*}
    Therefore both numbers are integral if and only if $a_1$ is even. By the assumption on stability, $\chi_G(2) = 0$, hence $0 \equiv 2a_1 \,\mathrm{ mod }\, 4$ and $a_1$ is even.
\end{remark}
\begin{remark} We give a combinatorial interpretation of  (\ref{Fq}). From the moduli functor point of view, we know the $\mathbb{F}_q$-points of $\mzerog$ are precisely the isomorphism classes of smooth $G$-stable rational curves  over $\mathrm{Spec } \, \mathbb{F}_q$ :
    \begin{align*} 
        \mzerog(\mathbb{F}_q) &=\{(x_1,...x_{|V(G)|} )\in (\mathbb{P}^1(\mathbb{F}_q))^{|V(G)|};  \; x_i \neq x_j \text{ if } ij\in E(G)\}/ \sim.
         \end{align*}
        Since $\mathbb{P}^1(\mathbb{F}_q)$ can be identified with $\{1,...,q+1\}$, we are essentially picking a number from $\{1,...,q+1\}$ for each vertex, such that the numbers must be distinct if two vertices connected by an edge. This is exactly a $(q+1)$-proper colouring of $G$, up to relabelling the colours $\{1,...,q+1\}$ using an element from $\mathrm{PGL}_2(\mathbb{F}_q)$. 
\end{remark}


\subsubsection{Boundary complex}\label{dual_complex} We work over $k = \mathbb{C}$ and we aim to discuss the combinatorics behind (\ref{tw}): 
    \begin{align*}
        \tw(\mzerog(\mathbb{C})) &= -\left.\frac{d}{dx} \chi_G(x) \right|_{x = 1}.
    \end{align*}
\begin{definition}[\!\cite{CRAPO_beta}]
        When $G$ has at least one edge, the $\beta$-invariant of $G$ is defined as 
\begin{align*}
    \beta(G) := (-1)^{|V(G)|-k(G)+1} \left.\frac{d}{dx} \chi_G(x) \right|_{x = 1}.
\end{align*}
Where $k(G)$ is the number of connected components of $G$.
\end{definition}
The $\beta$-invariants also have an interpretation as counts of certain orientations due to the work of Greene and Zaslavsky.
\begin{theorem}[{\!\cite[Theorem 7.2]{CurtisZaslavsky1983}}] \label{Las}  Pick two adjacent vertices $P$ and $Q$. Then $\beta(G)$ equals to      the number of acyclic orientations of $G$ that have a unique source at $P$ and a unique sink at $Q$. Moreover, it is independent of the choice of $P$ and $Q$, additionally we regard an isolated vertex as neither a source nor a sink. We denote the set of such orientations as $\mathrm{ACO}_{P,Q}(G)$. 
\end{theorem}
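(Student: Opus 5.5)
The statement is cited from Greene--Zaslavsky, but I would reprove it by deletion--contraction, as elsewhere in the paper. Fix the adjacent pair $P,Q$ joined by the edge $f=PQ$. Set $a(G):=|\mathrm{ACO}_{P,Q}(G)|$, where $G$ is assumed connected; otherwise both sides vanish (see below). The goal is to show that $a(G)$ and $\beta(G)$ satisfy the same recursion with the same base cases.

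\textbf{Recursion for $\beta$.} Write $\beta(G)=(-1)^{|V(G)|-k(G)+1}\chi_G'(1)$. Differentiating Definition \ref{def_chrom} gives $\chi_G'=\chi'_{G-e}-\chi'_{G/e}$. Take an edge $e\neq f$. If $e$ is not a bridge, then $k(G-e)=k(G)$, $k(G/e)=k(G)$ and $|V(G/e)|=|V(G)|-1$, so the signs combine to
\[
\beta(G)=\beta(G-e)+\beta(G/e).
\]
If $e$ is a bridge, then $G-e$ is disconnected. Its chromatic polynomial is a product of two factors each divisible by $x$ and $(x-1)$, so $\chi'_{G-e}(1)=0$ and $\beta(G)=\beta(G/e)$.

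There are two points to handle here. First, contracting $e$ may create parallel edges, which the convention deletes. This is harmless for both sides: the chromatic polynomial ignores multiplicity, and an acyclic orientation forces parallel edges to agree. Second, the base case is that every edge other than $f$ is a bridge, i.e.\ $G$ is a tree after contracting down. Repeated bridge contractions reduce this to $K_2=\{P,Q\}$, where $\chi=x(x-1)$ gives $\beta=1$.

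\textbf{Recursion for $a(G)$.} This is the main obstacle. I would use the classical bijective argument. For $e=uv\neq f$ not a bridge, split $\mathrm{ACO}_{P,Q}(G)$ into two kinds of orientation:
\begin{itemize}
\item[(a)] those where reversing $e$ still yields an acyclic orientation with the same unique source and sink;
\item[(b)] those where it does not.
\end{itemize}
Deleting $e$ from type (a) orientations gives orientations of $G-e$; contracting $e$ gives orientations of $G/e$. The aim is the standard Greene--Zaslavsky bijection
\[
\mathrm{ACO}_{P,Q}(G)\leftrightarrow \mathrm{ACO}_{P,Q}(G-e)\sqcup\mathrm{ACO}_{P,Q}(G/e).
\]
The delicate step is to check that deleting $e$ does not create a new source or sink. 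An orientation of $G-e$ whose extension forces $u$ (say) to be a new source must be matched with a contraction orientation. I would organize this exactly as in the proof of Stanley's acyclic orientation count, tracking where the sources and sinks lie.

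If $e$ is a bridge, the orientation of $e$ is forced by the requirement that $P$ be the unique source and $Q$ the unique sink. Each side of the bridge is then oriented independently, which gives $a(G)=a(G/e)$. The base case is $a(K_2)=1$.

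\textbf{Disconnected case and independence of $(P,Q)$.} For disconnected $G$, the convention is that isolated vertices are neither sources nor sinks. A non-trivial component not containing $P,Q$ would contain an extra source, so $a(G)=0$. This matches $\beta(G)=0$, since $\chi_G$ then has a factor of $x(x-1)$ from each nontrivial component, so $(x-1)^2\mid\chi_G$. Independence of the choice of $P,Q$ is then automatic, because $\beta(G)$ does not depend on $P,Q$.
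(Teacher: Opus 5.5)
The paper does not prove this statement; it simply quotes Greene--Zaslavsky. Your deletion--contraction route is a legitimate alternative, and your recursion $\beta(G)=\beta(G-e)+\beta(G/e)$ for non-bridge $e\neq f$ is correct. However, there are two genuine gaps.

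\textbf{Bridges and the base case.} You claim that for a bridge $e\neq f$ one has $\chi'_{G-e}(1)=0$, hence $\beta(G)=\beta(G/e)$, and likewise $a(G)=a(G/e)$. This is false for a pendant edge: the one-vertex side contributes a factor $x$, not $x(x-1)$, and the leaf is necessarily a source or a sink. Take the triangle $PQR$ with a pendant edge $e=RS$. Then $\chi_G=x(x-1)^2(x-2)$ gives $\beta(G)=0$, and $a(G)=0$ because of $S$; yet $G/e\cong K_3$ has $\beta=a=1$. So your base case is wrong as well: a tree with at least two edges (already the path $Q$--$P$--$R$) has $\beta=a=0$, not $1$.

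What is true, and easy, is that both sides vanish whenever a connected $G$ has a bridge $e\neq f$. Write $G-e=H_1\sqcup H_2$ with $f\in E(H_1)$. Then $\chi_G=\chi_{H_1}\chi_{H_2}\,(x-1)/x$, which is divisible by $(x-1)^2$. Moreover, $H_2$ always carries a source or sink of $G$ other than $P,Q$: the leaf if $H_2$ is a point, and otherwise whichever of a source and a sink of the restricted orientation is not the endpoint of $e$. So you only need the recursion at non-bridge edges, with $K_2$ plus isolated vertices as the only base case. Relatedly, ``otherwise both sides vanish'' is inaccurate when the extra components are isolated vertices; then both sides equal those of the nontrivial component.

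\textbf{The recursion for $a(G)$.} For non-bridge $e=uv\neq f$, the identity $a(G)=a(G-e)+a(G/e)$ is the entire content of the theorem, and you only gesture at it. There is no off-the-shelf Greene--Zaslavsky bijection to quote; their argument is arrangement-theoretic. Your split by whether reversing $e$ preserves the property is also not the right one. Restricting $u\to v$ to $G-e$ can make $u$ a new sink or $v$ a new source, so restriction does not land in $\mathrm{ACO}_{P,Q}(G-e)$. And an element of $\mathrm{ACO}_{P,Q}(G-e)$ in which $u,v$ are incomparable extends in both directions, so restriction is not injective either (e.g.\ $K_4$ with $e$ the edge opposite $f$).

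A correct count goes as follows. Each $O'\in\mathrm{ACO}_{P,Q}(G-e)$ has one or two acyclic extensions, all lying in $\mathrm{ACO}_{P,Q}(G)$: the tail cannot be $Q$ nor the head $P$, since every vertex is reachable from $P$ and reaches $Q$. There are two extensions exactly when $u,v$ are incomparable, so
\begin{align*}
a(G)={}&a(G-e)+\#\{O'\in\mathrm{ACO}_{P,Q}(G-e):\ u,v\ \text{incomparable}\}\\
&+\#\{O\in\mathrm{ACO}_{P,Q}(G):\ O|_{G-e}\notin\mathrm{ACO}_{P,Q}(G-e)\}.
\end{align*}
The missing idea is that contraction identifies the union of the last two sets with $\mathrm{ACO}_{P,Q}(G/e)$. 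The key point is that lifting an acyclic orientation of the simple graph $G/e$ to $G-e$ forces $u,v$ to be incomparable. Indeed, a directed $u$--$v$ path would give a directed cycle, or two oppositely oriented merged parallel edges, at the contracted vertex. Hence the lift is either in $\mathrm{ACO}_{P,Q}(G-e)$, or has a spurious source or sink at $u$ and/or $v$. In the latter case exactly one orientation of $e$ repairs it, because the contracted vertex is neither a source nor a sink unless it is $P$ or $Q$. Without this argument, or an appeal to the cited arrangement-theoretic proof, the proposal does not establish the theorem.
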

In summary, we may rewrite (\ref{tw}) as:
\begin{proposition}\label{prop_tw} For any $P,Q$ that are joined by an edge,  we have
    \begin{align*}
        \tw(\mzerog(\mathbb{C})) = (-1)^{n-k(G)}\beta(G) =  (-1)^{n-k(G)}|\mathrm{ACO}_{P,Q}(G)|.
    \end{align*}
\end{proposition}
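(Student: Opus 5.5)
The plan is to combine the formula \eqref{tw} from Corollary~\ref{lhop} with the definition of Crapo's invariant and Theorem~\ref{Las}; almost everything is sign bookkeeping.

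First I will recall \eqref{tw}:
\[
\tw(\mzerog(\mathbb{C})) = -\left.\frac{d}{dx}\chi_G(x)\right|_{x=1}.
\]
This was obtained by evaluating $Q_G(\lef)=\chi_G(\lef+1)/(\lef^3-\lef)$ at $\lef=0$ with L'H\^opital's rule. A $0$-stable graph has at least one edge, so $\beta(G)$ is defined. Inverting the definition of $\beta(G)$ gives
\[
\left.\frac{d}{dx}\chi_G(x)\right|_{x=1} = (-1)^{n-k(G)+1}\beta(G),
\]
where $n=|V(G)|$. Substituting this into \eqref{tw} yields
\[
\tw = -(-1)^{n-k(G)+1}\beta(G) = (-1)^{n-k(G)}\beta(G),
\]
which is the first equality.

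For the second equality I will invoke Theorem~\ref{Las}. This needs $P$ and $Q$ adjacent, which is exactly the hypothesis. Then $\beta(G)=|\mathrm{ACO}_{P,Q}(G)|$, independently of the choice of the edge $PQ$.

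The only point needing care is the case where $G$ is disconnected. Here the convention that isolated vertices are neither sources nor sinks matters. More seriously, if $G$ has two or more components containing edges, then $\chi_G$ is a product of at least two factors each vanishing at $1$. Hence $\chi_G'(1)=0$, so both $\beta(G)$ and the count of orientations with a \emph{unique} source and sink are $0$. The latter holds because every other non-trivial component contributes its own source.

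I will treat this as the main thing to check, and I will quote Theorem~\ref{Las} as stated in the generality it gives. If necessary I will add the product remark above so that the disconnected case is consistent.
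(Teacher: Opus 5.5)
Your proposal is correct and is essentially the paper's argument: the paper also obtains the proposition by substituting the definition of $\beta(G)$ into \eqref{tw}, using $-(-1)^{n-k(G)+1}=(-1)^{n-k(G)}$, and then quoting Theorem~\ref{Las} for adjacent $P,Q$. Your extra check of graphs with several non-trivial components is consistent, since both $\chi_G'(1)$ and the orientation count vanish there, but it is not needed once Theorem~\ref{Las} is quoted as stated.
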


The top-weight Euler characteristic of a complex variety is closely related to the combinatorics of the boundary of a simple normal crossing (snc) compactification of that variety, and this will give a further combinatorial simplification of Proposition $\ref{prop_tw}$.

Recall $\mzerog \subset \overline{M}_{0,G}$ is a snc compactification and let $\Delta_{0,G}$ be the \textit{boundary complex} of this compactification. The complex $\Delta_{0,G}$ is the cell complex whose $k$-faces corresponds to codimension $k+1$ strata. Such complex has been studied in the following cases:
\begin{enumerate}
    \item When $G = K_n$, Vogtmann in \cite{Vogtmann_1990} showed that $\Delta_{0,G}$ is homotopic to a wedge of $(n-2)!$ spheres of dimension $n-4$. 
    \item When $G = K_n * E_m$ with $n \geq 2$, Cavalieri, Hampe, Markwig and Ranganathan in \cite{CAVALIERI_HAMPE_MARKWIG_RANGANATHAN_2016} showed that $\Delta_{0,G}$ can be identified with the link of the Bergman fan of a graphical matroid, which is homotopic to a wedge of $(n-2)!(n-1)^m$ spheres of dimension $n+m-4$ by \cite{Ardila06}. 
\end{enumerate}

It is well-known that Deligne’s weight spectral sequence gives an isomorphism between the top-weight cohomology of a smooth variety $X$ and the reduced homology of the boundary complex of a snc compactification of $X$ (e.g. \cite{CGP21}, Theorem 5.8). In our case this gives:
\begin{align} \label{eq_tw_isom}
    \mathrm{Gr}_{2n-6}^WH^{2n-6-k}(\mzerog; \mathbb{Q}) \cong \widetilde{H}_{k-1}(\bound; \mathbb{Q}).
\end{align}
An immediate consequence is that
\begin{align}\label{CGP}
    \tw(\mzerog) = - \widetilde{\chi}(\Delta_{0,G}).
\end{align}
The boundary $\overline{M}_{0,G} \setminus \mzerog$ is stratified, and each stratum can be combinatorially represented by a tree --- a genus $0$ stable $G$-graph (\!\cite[Definition 2.6]{SRI}). We denote the set of such trees as $\Gamma_{0,G}$. We have the following simple relation: 
\begin{lemma}\label{prop_euler_complex}We have
    \begin{align}
        \sum_{T \in \Gamma_{0,G}} (-1)^{|E(T)|} = - \widetilde{\chi}(\Delta_{0,G}),
    \end{align}
    where $|E(T)|$ is the number of bounded edges of $T$. 
\end{lemma}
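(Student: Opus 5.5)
The plan is to compute the reduced Euler characteristic of $\Delta_{0,G}$ directly from its cell structure. By definition, $\Delta_{0,G}$ has one $k$-cell for each codimension $k+1$ boundary stratum of $\overline{M}_{0,G}$. The reduced Euler characteristic is then
\[
\widetilde{\chi}(\Delta_{0,G}) = -1 + \sum_{k\geq 0} (-1)^k \#\{k\text{-cells}\}.
\]
Here the $-1$ records the empty face, i.e.\ the augmentation in reduced homology.

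The first step is to identify the cells with trees. Recall that the strata of $\overline{M}_{0,G}$ are indexed by $\Gamma_{0,G}$, the genus $0$ stable $G$-graphs of \cite[Definition 2.6]{SRI}. The codimension of the stratum attached to $T$ equals $|E(T)|$, its number of bounded edges, i.e.\ the number of nodes. The open stratum $\mzerog$ itself corresponds to the one-vertex tree with $|E(T)|=0$. Hence the boundary strata of codimension $k+1$ are in bijection with the trees $T\in\Gamma_{0,G}$ having $|E(T)|=k+1$.

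This bijection needs one check. The complex $\Delta_{0,G}$ is a genuine regular cell complex: each stratum closure is an intersection of boundary divisors, and the strata are determined by which divisors contain them. Since the compactification is snc with tree-indexed strata, as in the $\overline{M}_{0,n}$ case, each codimension-$(k+1)$ stratum is connected and irreducible. So every such stratum contributes exactly one $k$-cell, and I would cite the stratification of \cite{SRI,BlankersBozlee} for this.

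Substituting the bijection, the sum becomes
\[
\widetilde{\chi}(\Delta_{0,G}) = -1 + \sum_{T\neq \text{trivial}} (-1)^{|E(T)|-1}.
\]
Equivalently,
\[
-\widetilde{\chi}(\Delta_{0,G}) = 1 + \sum_{T\neq\text{trivial}} (-1)^{|E(T)|} = \sum_{T\in\Gamma_{0,G}} (-1)^{|E(T)|},
\]
where the trivial tree contributes the term $1=(-1)^0$.

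The main obstacle is the bookkeeping of the first step. One must confirm that each codimension-$(k+1)$ stratum is irreducible, so that it contributes exactly one cell rather than several components. One must also confirm that the trivial tree is included in $\Gamma_{0,G}$ and matches the empty face. If \cite{SRI} excludes the trivial tree from $\Gamma_{0,G}$, the statement instead follows with the $-1$ absorbed differently, and I would adjust the sign convention accordingly.
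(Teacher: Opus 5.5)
Your proposal is correct and follows the paper's proof. Both identify the $k$-cells of $\Delta_{0,G}$ with the trees $T\in\Gamma_{0,G}$ having $|E(T)|=k+1$, sum $(-1)^{|E(T)|-1}$ to obtain $\chi(\Delta_{0,G})$, and then account for the trivial tree, which the paper does include in $\Gamma_{0,G}$ as the open stratum $\mzerog$, as the extra $-1$ in $\widetilde{\chi}$. Your check that each tree-indexed stratum is irreducible, and so contributes exactly one cell, is a point the paper takes for granted.
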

\begin{proof} 
     Each $T \in \Gamma_{0,G}$ represents a stratum of $\overline{M}_{0,G}$ of codimension $|E(T)|$. So if $|E(T)| \geq 1$, the corresponding stratum is an intersection of $|E(T)|$ many boundary divisors, which in turn contributes an $|E(T)|-1$ dimensional face of $\Delta_{0,G}$. Hence, we have
     \begin{align*}
         \sum_{T \in \Gamma_{0,G}; \; |E(T)| \geq 1} (-1)^{|E(T)|-1} = \chi(\Delta_{0,G}).
     \end{align*}
     Now there is one more tree that has $|E(T)|= 0$ corresponding to the stratum $\mzerog$, so we get the desired statement:
     \[-\sum_{T \in \Gamma_{0,G}} (-1)^{|E(T)|-1} = -(-1 + \chi(\Delta_{0,G})) = - \widetilde{\chi}(\Delta_{0,G}). \qedhere\]
\end{proof}

Combining Lemma \ref{prop_euler_complex} with ($\ref{CGP}$) and Proposition \ref{prop_tw}, we obtain the following combinatorial formula:
\begin{corollary}[Corollary~{\hyperref[cor:H]{H}}]\label{beta}
\label{beta}For any $0$-stable graph $G$ and any edge $e$ with end points labelled as $P$, $Q$, we have
    \begin{align}
        \sum_{T \in \Gamma_{0,G}} (-1)^{|E(T)|}  & = (-1)^{n-k(G)}\beta(G) = (-1)^{n-k(G)}|\mathrm{ACO}_{P,Q}(G)|.
    \end{align}
\end{corollary}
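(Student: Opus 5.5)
This is essentially a chain of already-established identities, so I will simply concatenate them in order.

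First, apply Lemma~\ref{prop_euler_complex}, which rewrites the left-hand side as
\[
\sum_{T \in \Gamma_{0,G}} (-1)^{|E(T)|} = -\widetilde{\chi}(\Delta_{0,G}).
\]
Second, invoke \eqref{CGP}, i.e.\ $\tw(\mzerog) = -\widetilde{\chi}(\Delta_{0,G})$. This in turn comes from the weight-spectral-sequence isomorphism \eqref{eq_tw_isom} for the snc compactification $\mzerog \subset \overline{M}_{0,G}$. Here I need that this compactification is indeed snc with strata indexed by $\Gamma_{0,G}$, which I take as given from the smoothness of $\overline{\mathcal{M}}_{0,G}$ and its boundary stratification. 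Combining the two gives
\[
\sum_{T \in \Gamma_{0,G}} (-1)^{|E(T)|} = \tw(\mzerog(\mathbb{C})).
\]

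Third, apply Corollary~\ref{lhop}\,(ii), which gives $\tw(\mzerog(\mathbb{C})) = -\chi_G'(1)$. Then unwind the definition of Crapo's invariant, $\beta(G) = (-1)^{n-k(G)+1}\chi_G'(1)$. This yields $-\chi_G'(1) = (-1)^{n-k(G)}\beta(G)$, which is Proposition~\ref{prop_tw}. Finally, Theorem~\ref{Las} identifies $\beta(G)$ with $|\mathrm{ACO}_{P,Q}(G)|$ for any adjacent pair $P,Q$, i.e.\ for the endpoints of any edge $e$. Since $G$ is $0$-stable, it has at least one edge, so $\beta(G)$ is defined and such an $e$ exists.

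The only point needing care is the sign bookkeeping. In the sum over $\Gamma_{0,G}$, the open stratum $\mzerog$ contributes $+1$, matching the $-1$ in the reduced Euler characteristic. In \eqref{CGP}, the relevant degree shift is $\mathrm{Gr}^W_{2d}H^{2d-k}\cong \widetilde{H}_{k-1}$ with $d = n-3$, so the parities of $2d-k$ and $k-1$ differ by one, which produces the minus sign. I would double-check these two signs on the example $G = K_4$. There $\overline{M}_{0,4} = \mathbb{P}^1$ has three boundary points, so the sum is $1 - 3 = -2$. On the other side, $\chi_{K_4}(x) = x(x-1)(x-2)(x-3)$ gives $\chi'_{K_4}(1) = 1\cdot(-1)(-2) = 2$, so $-\chi'_{K_4}(1) = -2$, and the two sides agree.
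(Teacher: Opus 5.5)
Your proposal is correct and follows the paper's own proof: the paper likewise just chains Lemma~\ref{prop_euler_complex}, \eqref{CGP} and Proposition~\ref{prop_tw}, and Proposition~\ref{prop_tw} is itself Corollary~\ref{lhop}(ii) combined with the definition of $\beta$ and Theorem~\ref{Las}, which is exactly what you unwound. Your sign bookkeeping and the $K_4$ check ($1-3=-2=-\chi_{K_4}'(1)$) are a sensible consistency test that the paper does not include.
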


When $G$ has two dominant vertices, Blankers, Gillespie and Levinson in \cite{SRI} constructed certain \textit{Sign Reversing Involution (SRI)} to evaluate the sum in Corollary \ref{beta}, and they arrived at the following theorem.
\begin{theorem}[\!\cite{SRI}, Theorem 3.1]\label{Thm_SRI} Let $P,Q$ be two dominant vertices of $G$ and $H = G-\{P,Q\}$, then we have
    \begin{align*}
        \sum_{T \in \Gamma_{0,G}} (-1)^{|E(T)|} = (-1)^{n-1}|\mathrm{ACO}(H)|.
    \end{align*}
    Here $\mathrm{ACO}(H)$ denotes the set of acyclic orientations of $H$.
\end{theorem}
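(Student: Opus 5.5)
The plan is to obtain the theorem as a specialization of Corollary~\ref{beta}, so no sign-reversing involution is needed. Only one combinatorial bijection has to be checked.

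First, the standing hypotheses. Since $G$ has the dominant vertex $P$, it is connected, so $k(G)=1$. Moreover, $G$ is $0$-stable: as noted before the definition of $\delta_G$, any graph on at least three vertices with two dominant vertices contains a triangle $P,Q,v$ for any $v\in V(H)$, and hence is not bipartite. (If $H$ were empty, $G=K_2$ would not be $0$-stable and $\Gamma_{0,G}$ would not be defined, so we may assume $n\ge 3$.) The vertices $P$ and $Q$ are adjacent because each is dominant. Applying Corollary~\ref{beta} to the edge $e=PQ$ therefore gives
\[
\sum_{T\in\Gamma_{0,G}}(-1)^{|E(T)|}=(-1)^{n-1}\,|\mathrm{ACO}_{P,Q}(G)|.
\]
It remains to show that $|\mathrm{ACO}_{P,Q}(G)|=|\mathrm{ACO}(H)|$.

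For this I would construct the bijection $\mathrm{ACO}_{P,Q}(G)\to\mathrm{ACO}(H)$ given by restricting an orientation to the edges of $H$.

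\textbf{Well defined.} The restriction of an acyclic orientation is acyclic, so the map lands in $\mathrm{ACO}(H)$.

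\textbf{Injective.} If $P$ is a source, every edge at $P$ is oriented away from $P$. Since $P$ is dominant, this fixes the orientation of every edge $Pv$, including $P\to Q$. Likewise, $Q$ being a sink fixes every edge $vQ$ as $v\to Q$. So an element of $\mathrm{ACO}_{P,Q}(G)$ is determined by its restriction to $H$.

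\textbf{Surjective.} Given $\mathcal O\in\mathrm{ACO}(H)$, extend it by orienting $P\to v$ for all $v\neq P$ and $v\to Q$ for all $v\neq Q$.
\begin{itemize}
\item The extension is acyclic. A directed cycle through $P$ would need an edge entering $P$, and a directed cycle through $Q$ would need an edge leaving $Q$; neither exists. So any directed cycle would lie in $H$, contradicting the acyclicity of $\mathcal O$.
\item $P$ is the unique source. Every $v\neq P$ receives the edge $P\to v$, so no other vertex is a source.
\item $Q$ is the unique sink. Every $v\neq Q$ has the outgoing edge $v\to Q$, so no other vertex is a sink.
\end{itemize}
There are no isolated vertices in $G$, so the convention in Theorem~\ref{Las} about isolated vertices plays no role.

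Combining the bijection with the displayed identity gives the statement. I expect no real obstacle here. The only point needing care is confirming that Corollary~\ref{beta} applies: that $G$ is $0$-stable, which holds once $n\ge 3$, and that $k(G)=1$, which follows from the presence of a dominant vertex.
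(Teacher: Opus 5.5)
Your proof is correct and follows the paper's argument: you specialize Corollary~\ref{beta} to the edge $PQ$, using $k(G)=1$ and $0$-stability via the triangle $P,Q,v$, and then identify $\mathrm{ACO}_{P,Q}(G)$ with $\mathrm{ACO}(H)$ by restriction, since the orientations at the two dominant vertices are forced. The only difference is cosmetic: you take $P$ as the source and $Q$ as the sink, which matches the corollary's definition of $\mathrm{ACO}_{P,Q}(G)$, whereas the paper's write-up swaps these roles --- this is harmless, because reversing all edges gives a bijection.
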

Corollary \ref{beta} can thus be viewed as a generalization of the above theorem to arbitrary $0$-stable graphs, and we now give a different proof of this theorem.  
\begin{proof}[Proof of Theorem \ref{Thm_SRI}]
    By Corollary $\ref{beta}$, this theorem is equivalent to showing that
    \begin{align*}
        (-1)^{n-1}|\mathrm{ACO}(H)| = (-1)^{n-k(G)}|\mathrm{ACO}_{P,Q}(G)|.
    \end{align*}
    Since $G$ has dominant vertices so it must be connected i.e. $k(G) = 1$. The two dominants vertices ensure $G$  must contain a $3$-cycle, hence $G$ is not bipartite and is $0$-stable. 
    
    Consider an acyclic orientation of $G$ with unique sink $P$ and unique source $Q$. Since both vertices are dominant, for all $v \in V(H)$, we must have orientations 
    \begin{align*}
        v \rightarrow P; \quad  v \leftarrow Q \quad \mathrm{and} \quad Q \rightarrow P.
    \end{align*}
    Therefore, the only edges haven't been oriented are the ones in $H$ and we get an acyclic orientation of $H$. Conversely, given any acyclic orientation of $H$, there is only one way to orient incident edges to $P$ and $Q$, such that $P$ is the unique sink and $Q$ is the unique source. So there is a bijection between $\mathrm{ACO}(H)$ and $\mathrm{ACO}_{P,Q}(G)$, and the theorem follows.
\end{proof}


\subsubsection{Euler characteristics and scattering potentials}\label{subsec_euler_char} We discuss the connection between the Euler characteristic of $\mzerog$ (\ref{eul})
\begin{align*}
    \chi(\mzerog(\mathbb{C})) = \frac{1}{2}\left. \frac{d}{dx} \chi_G(x) \right|_{x = 2},
\end{align*}
and certain CHY scattering potentials in particle physics \cite{CHY_scattering} associated to $G$. 

In \cite{GraphScattering}, the authors associated to each $G$ a potential on $M_{0,n}$. When $G$ is \textit{copious} (see \cite[Theorem 1.3]{GraphScattering}), the number of critical points of this potential is well-defined and this number is denoted as $\mu(G)$. On the other hand, as explained in \cite[§5]{GraphScattering}, this number $\mu(G)$ can also be viewed as the \textit{maximum likelihood degree} of a smooth very affine variety $M_G$ of dimension $n-3$. Then by the work of Huh \cite{Huh_likelihood}, we have: 
\[\mu(G) = (-1)^{n-3}\chi(M_G).\]
\vspace{-1em}
\begin{conjecture}[{\!\cite[Conjecture 7.1]{GraphScattering}}]\label{conj_scattering} For any copious graph $G$, we have
    \begin{align*}
        \mu(G) = \left| \sum_{r=3}^n N_{G,r}(-1)^{r-3} (r-3)!  \right|.
    \end{align*}
    Recall from Proposition \ref{sidd},  $N_{G,r}$ is the number of $G$-stable partitions of $[n]$ into $r$ non-empty parts.
\end{conjecture}
The authors showed this conjecture is true when $G$ has a dominant vertex \cite[Theorem 7.7]{GraphScattering}. This conjectural quantity turns out to agree with $\chi(\mzerog)$:
\begin{proposition}\label{prop:conj_scattering} Assume $G$ is $0$-stable, we have
    \begin{align*}
        \frac{1}{2}\left. \frac{d}{dx} \chi_G(x) \right|_{x = 2} = \sum_{r=3}^n N_{G,r} (-1)^{r-3} (r-3)!. 
    \end{align*}
\end{proposition}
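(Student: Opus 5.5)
We prove the identity directly from the falling-factorial expansion \eqref{chrom} of the chromatic polynomial:
\[
\chi_G(x)=\sum_{r=0}^n N_{G,r}(x)_r .
\]
Since $G$ is $0$-stable, $N_{G,0}=N_{G,1}=N_{G,2}=0$. Indeed, a $G$-stable partition into at most two parts would give a proper $2$-colouring, and this contradicts non-bipartiteness. So the sum runs over $r\ge 3$, and it suffices to compute $\frac{d}{dx}(x)_r$ at $x=2$ for each $r\geq 3$.

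For $r\ge 3$, write $(x)_r=(x-2)\cdot x(x-1)\prod_{j=3}^{r-1}(x-j)$. The factor $(x-2)$ vanishes at $x=2$, so the product rule gives
\[
\left.\frac{d}{dx}(x)_r\right|_{x=2}=2\cdot 1\cdot\prod_{j=3}^{r-1}(2-j)=2\prod_{i=1}^{r-3}(-i)=2(-1)^{r-3}(r-3)!.
\]
The empty product is $1$ when $r=3$, which is consistent with the formula.

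Summing over $r$ and dividing by $2$ gives
\[
\frac12\left.\frac{d}{dx}\chi_G(x)\right|_{x=2}=\sum_{r=3}^n N_{G,r}(-1)^{r-3}(r-3)!,
\]
which is the claim.

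As a cross-check, the same identity follows from Proposition~\ref{sidd} specialised to the Euler characteristic. We have $\chi(M_{0,r})=\chi(\mathbb{P}^1\setminus\{3\ \text{pts}\})\cdots$, which equals $(-1)^{r-3}(r-3)!$ by the fibration $M_{0,r}\to M_{0,r-1}$. Combining this with Corollary~\ref{lhop}\,(i) recovers the identity. There is no real obstacle here; the only point requiring care is the vanishing of $N_{G,r}$ for $r\le2$, which is exactly where $0$-stability is used.
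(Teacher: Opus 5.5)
Your proof is correct and is essentially the paper's argument. You expand $\chi_G$ in falling factorials via \eqref{chrom}, use $0$-stability to kill $N_{G,0},N_{G,1},N_{G,2}$, and evaluate $\frac{d}{dx}(x)_r$ at $x=2$, where only the term from differentiating the factor $(x-2)$ survives. Your product-rule factoring is a tidier version of the paper's logarithmic-derivative computation, and your cross-check via Proposition~\ref{sidd} and Corollary~\ref{lhop} is consistent, with one small slip: the fibres of $M_{0,r}\to M_{0,r-1}$ are $\mathbb{P}^1$ minus $r-1$ points, not always three.
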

\begin{proof}
    Recall from (\ref{chrom}) we can express $\chi_G(x)$ as
    \begin{align*}
        \chi_G(x) = \sum_{r = 0}^n N_{G,r} (x)_r,
    \end{align*}
    where $(x)_r = x(x-1)\cdots (x-r+1)$. Since $G$ is $0$-stable i.e it is not edgeless nor bipartite and has at least three vertices, we have $N_{G,0} = N_{G,1} = N_{G,2} = 0$. So we assume $r \geq 3$. Differentiate $(x)_r$ we get:
    \begin{align*}
        \frac{d}{dx} (x)_r &= x(x-1)(x-2)\cdots (x-r+1)\Bigl(\frac{1}{x}+\frac{1}{x-1}+\frac{1}{x-2}+ \cdots +\frac{1}{x-r+1}\Bigl) \\ 
        & = x(x-1)(x-3)\cdots (x-r+1) + (x)_r \Bigl(\frac{1}{x}+\frac{1}{x-1}+\frac{1}{x-3}+ \cdots +\frac{1}{x-r+1}\Bigl)
    \end{align*}
    Evaluate at $x = 2$ and notice that $(2)_r = 0$ for $r \geq 3$, we have: 
    \begin{align*}
        \left. \frac{d}{dx} (x)_r \right|_{x = 2} = 2(2-1)(2-3)\cdots (2-r+1)    
    \end{align*}
    There are $r-3$ negative factors, hence
    \begin{align*}
        \left. \frac{d}{dx} (x)_r \right|_{x = 2} = 2(-1)^{r-3}(r-3)!.
    \end{align*}
    Summing over $r$, we obtain the desired expression:
    \begin{align*}
        \frac{1}{2}\left. \frac{d}{dx} \chi_G(x) \right|_{x = 2} &= \frac{1}{2}\sum_{r=3}^n N_{G,r}  \left. \frac{d}{dx} (x)_r \right|_{x = 2} \\
        &=\sum_{r=3}^n N_{G,r} (-1)^{r-3} (r-3)!. \qedhere
    \end{align*}
\end{proof}
Therefore, when $G$ is copious and $0$-stable, Conjecture \ref{conj_scattering} is equivalent to the following:
\begin{align*}
    (-1)^{n-3}\chi(M_G) = |\chi(\mzerog)|.
\end{align*}
This statement is true if $M_G \simeq \mzerog$. Fry \cite{Fry} showed this is indeed the case when $G$ has a dominant vertex, see also \cite[Remark 7.6]{GraphScattering}. Hence, this gives another proof of Conjecture \ref{conj_scattering} in the case of dominant vertex.

\subsubsection{Poincaré polynomial}\label{sec:poincare_poly}Suppose $G$ has a dominant vertex, recall from \ref{section_hyperplane}, in this case $M_{0,G}$ is essentially the complement of a complex hyperplane arrangement:
\begin{align}
    M_{0,G} \simeq \mathrm{Conf}_{G-v}(\mathbb{C})/\mathrm{Aff}_1(\mathbb{C}).
\end{align}
By a classic result of Arnol’d--Orlik--Solomon, the mixed Hodge structure of  $H^k(\mathrm{Conf}_{G-v}(\mathbb{C}))$ is pure of weight $2k$. This remains true after quotienting by $\mathrm{Aff}_1(\mathbb{C})$ i.e. $H^k(\mzerog)$ is pure of weight $2k$ when $G$ has a dominant vertex, see \cite[Lemma 4.3.3]{Khoroshkin_Lyskov}. So in this case, we can deduce the usual Poincaré polynomial from the virtual Poincaré polynomial (Example \ref{example_euler}). Define the Poincaré polynomial of $\mzerog$ to be
\begin{align*}
    P_{\mzerog}(q) := \sum_{k=0}^{2d}  b_k (\mzerog)q^k.
\end{align*}
where $d$ is the complex dimension of $\mzerog$ and $b_k(\mzerog) := \mathrm{dim \,} H^k(\mzerog; \mathbb{Q})$. 
\begin{proposition}
    If $G$ has a dominant vertex, then we have 
    \begin{align*}
        P_{\mzerog}(q) = (-1)^{n}\frac{q^n}{q^2+1} \chi_G(1-\frac{1}{q}).
    \end{align*}
\end{proposition}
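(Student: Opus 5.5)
The plan is to deduce the Poincaré polynomial from the virtual Poincaré polynomial, using purity. By Corollary \ref{lhop} (iii), we have
\[
P^{\mathrm{vir}}_{\mzerog}(t)=\frac{\chi_G(t^2+1)}{t^6-t^2}.
\]
When $G$ has a dominant vertex, $H^k(\mzerog;\mathbb{Q})$ is pure of weight $2k$ by \cite[Lemma 4.3.3]{Khoroshkin_Lyskov}. Since $\mzerog$ is smooth of complex dimension $d=n-3$, Poincaré duality for mixed Hodge structures (as recalled in Example \ref{example_euler}) gives
\[
\mathrm{Gr}^W_m H^i_c \cong \bigl(\mathrm{Gr}^W_{2d-m}H^{2d-i}\bigr)^\vee .
\]
Hence $H^{i}_c$ is pure of weight $2d-2(2d-i)=2i-2d$, and its dimension is $b_{2d-i}$.

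Substituting this into the definition of $P^{\mathrm{vir}}$, the only contribution from $H^i_c$ sits in weight $m=2i-2d$. Setting $k=2d-i$, we get $m=2d-2k$, and the sign is $(-1)^i=(-1)^k$. This gives
\[
P^{\mathrm{vir}}_{\mzerog}(t)=\sum_k (-1)^k b_k\, t^{2d-2k}=t^{2d}\sum_k b_k\,(-t^{-2})^k = t^{2d}\,P_{\mzerog}(-t^{-2}).
\]
One should check that $m\ge 0$ forces $k\le d$, which is consistent with Artin vanishing for the affine variety $\mzerog$ (a hyperplane complement modulo $\mathrm{Aff}_1$). In any case the identity is purely formal once purity is known.

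Next we invert this relation by setting $q=-t^{-2}$, so that $t^2=-1/q$ and $t^{2d}=(-1)^d q^{-d}$. This yields
\[
P_{\mzerog}(q)=(-1)^d q^{d}\,\frac{\chi_G(1-1/q)}{(-1/q)^3-(-1/q)}.
\]
The denominator simplifies as
\[
-q^{-3}+q^{-1}=\frac{q^2-1}{q^3},
\]
so
\[
P_{\mzerog}(q)=(-1)^{n-3}\frac{q^{n}\,\chi_G(1-1/q)}{q^2-1}.
\]

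This should be compared with the stated formula, which has $(-1)^n$ and $q^2+1$. We did not write $q^2+1$, and I believe $q^2-1$ is correct. The extra sign $(-1)^{3}$ combines with the denominator to give
\[
(-1)^n\frac{q^n}{1-q^2}\,\chi_G\!\left(1-\frac1q\right).
\]
So the main bookkeeping obstacle is the sign and denominator conventions, and we would double-check the normalization against $G=K_3$. In that case $P=1$, and
\[
\chi_{K_3}(x)=x(x-1)(x-2),
\]
so
\[
\chi_{K_3}(1-1/q)=(1-1/q)(-1/q)(-1-1/q)=\frac{(q-1)(q+1)}{q^3}.
\]
Then
\[
(-1)^3\frac{q^3}{1-q^2}\cdot\frac{q^2-1}{q^3}=1,
\]
which confirms our version. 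We would therefore present the result with denominator $1-q^2$, equivalently $q^2-1$ with the sign adjusted, noting that the stated $q^2+1$ appears to be a typo.

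The genuinely nontrivial input is purity, which we take from the cited lemma. Everything else is the substitution carried out above.
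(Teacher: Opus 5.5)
Your argument is the same as the paper's: purity (via Khoroshkin--Lyskov) and Poincaré duality give $P^{\mathrm{vir}}_{\mzerog}(t)=t^{2d}P_{\mzerog}(-t^{-2})$, and one then substitutes $q=-t^{-2}$ into Corollary~\ref{lhop}(iii). Your correction is also right. The paper's proof slips at exactly that substitution, writing $t^6-t^2$ as $(-1/q)^3-1/q$ rather than $(-1/q)^3+1/q=(q^2-1)/q^3$; this is where the $q^2+1$ comes from, so the stated formula fails (for $G=K_3$ it gives $-(q^2-1)/(q^2+1)\neq 1$). The correct identity is $P_{\mzerog}(q)=(-1)^n\frac{q^n}{1-q^2}\chi_G(1-\frac1q)$, which also returns the correct value $1+2q$ for $G=K_4$.
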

\begin{proof}
    Recall the definition of virtual Poincaré polynomial:
    \begin{align*}
        P_{\mzerog}^{\mathrm{vir}}(t) :=\sum_{m=0}^{2d} \Bigl(\sum_{j = 0}^{2d}(-1)^j \mathrm{dim \, Gr}_{m}^W H_c^{j}(\mzerog) \Bigl)t^m.
    \end{align*}
    Since $\mzerog$ is smooth, the Poincaré duality of mixed Hodge structures (e.g. \cite[§6.3]{MHS_book}) applies and gives:
    \begin{align*}
        \mathrm{dim \, Gr}_{m}^W H_c^{j}(\mzerog) = \mathrm{dim \, Gr}_{2d-m}^W H^{2d-j}(\mzerog). 
    \end{align*}
    We know $H^{2d-j}(\mzerog)$ is pure of weight $2(2d-j)$, so the only graded piece that does not vanish is when $2d-m = 2(2d-j)$: 
    \begin{align*}
         \mathrm{dim \, Gr}_{m}^W H_c^{j}(\mzerog) = \mathrm{dim \, Gr}_{2d-m}^W H^{2d-j}(\mzerog) = \begin{cases}
     \mathrm{dim \,} H^{2d-j}(\mzerog)& \text{if } m = 2j-2d, \\
     0 & \text{otherwise}.
     \end{cases}
    \end{align*}
    Therefore, the double sum collapses down to: 
    \begin{align*}
        P_{\mzerog}^{\mathrm{vir}}(t) = \sum_{j = 0}^{2d}(-1)^j b_{2d-j}(\mzerog)t^{2j-2d}
    \end{align*}
    Substituting $k = 2d-j$ ,we get
        \begin{align*}
        P_{\mzerog}^{\mathrm{vir}}(t) &= \sum_{k = 0}^{2d}(-1)^k b_{k}(\mzerog)(t^2)^{d-k} \\
         &=t^{2d} \sum_{k = 0}^{2d} b_{k}(\mzerog)(-t^{-2})^k \\
         & = t^{2d} P_{\mzerog}(-t^{-2})
    \end{align*}
    Recall (\ref{vir}): 
    \begin{align*}
         \pvir(t) = \frac{\chi_G(t^2 + 1)}{t^6 - t^2} .
    \end{align*}
    Substituting $q = -t^{-2}$ we get:
    \begin{align*}
        \frac{1}{(-1/q)^3-1/q} \chi_G(-\frac{1}{q}+1) = (-1)^{n-3} \frac{1}{q^{n-3}} P_{\mzerog}(q).
    \end{align*}
    Simplify the fractions we get
    \[-\frac{q^3}{1+q^2}
\chi_G\left(1-\frac{1}{q}\right)
=
(-1)^{n-3}\frac{1}{q^{n-3}}P_{M_{0,G}}(q).\]
Rearranging then yields the desired expression.
\end{proof}

\subsubsection{The real locus} \label{real_locus}
We now work over $\mathbb{R}$ and suppose $G$ has a distinguished vertex labelled as $s$. Recall (\ref{real}) from Corollary \ref{lhop} which states that: 
\begin{align*}
    \chi_c(\mzerog(\mathbb{R})) = \frac{1}{2} \left. \frac{d}{dx} \chi_G(x) \right|_{x = 0}.
\end{align*}
Recall from Remark \ref{rmk_aco_source} that we have the following combinatorial interpretation of this quantity.
\begin{theorem}[\!{\cite[Theorem 7.3]{CurtisZaslavsky1983}}]\label{thm_ACO_source} Denote $\mathrm{ACO}_{s}(G)$ as the set of acyclic orientations of $G$ with a unique source $s$, then we have
\begin{align*}
    (-1)^{|V(G)|-1}\left. \frac{d}{dx} \chi_G(x) \right|_{x = 0} = |\mathrm{ACO}_{s}(G)|.
\end{align*} 
\end{theorem}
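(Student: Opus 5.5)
The plan is to argue by deletion--contraction, in the same way as Propositions \ref{balanced_elliptic} and \ref{lem:double_cone}. Fix the distinguished vertex $s$ and write
\[
a_s(G):=|\mathrm{ACO}_s(G)|, \qquad b(G):=(-1)^{|V(G)|-1}\left.\frac{d}{dx}\chi_G(x)\right|_{x=0}.
\]
We show that both quantities satisfy $f(G)=f(G-e)+f(G/e)$ for every edge $e$, and that they agree on edgeless graphs. Then induction on $|E(G)|$ finishes the proof.

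\textbf{Recursion for $b$.} By Remark \ref{rmk_chromatic}, the derivative $\chi_G'(0)$ satisfies $\chi_G'(0)=\chi_{G-e}'(0)-\chi_{G/e}'(0)$. Since $G/e$ has one fewer vertex, the sign $(-1)^{|V|-1}$ flips on the contraction term, which gives $b(G)=b(G-e)+b(G/e)$.

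\textbf{Base case.} For $G=E_n$ we have $b(E_n)=1$ if $n=1$ and $0$ otherwise; this was computed in \eqref{base_case_aco_s}. On the orientation side, $E_1$ has exactly one (empty) orientation, with $s$ as its unique source. For $n\ge 2$, every isolated vertex other than $s$ counts as a source, so $a_s(E_n)=0$. (We adopt the convention that an isolated vertex is a source, as is standard in Greene--Zaslavsky; this is needed for the base case to match.)

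\textbf{Recursion for $a_s$.} This is the main step. Take an edge $e=uv$, and when contracting $e$ label the merged vertex $s$ if $s\in\{u,v\}$. Split $\mathrm{ACO}_s(G)$ according to what happens when $e$ is removed from an orientation $\mathcal O$:
\begin{enumerate}
\item If $\mathcal O\setminus e$ is still acyclic with $s$ as its unique source, it contributes to $\mathrm{ACO}_s(G-e)$.
\item Otherwise, removing $e$ creates a new source at the head of $e$. In this case, contracting $e$ should give an element of $\mathrm{ACO}_s(G/e)$.
\end{enumerate}

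Conversely:
\begin{itemize}
\item Each $\mathcal O'\in\mathrm{ACO}_s(G-e)$ extends in at least one way. If $\mathcal O'$ has a directed path $u\to v$ or $v\to u$, the orientation of $e$ is forced. If there is no such path, both orientations of $e$ are acyclic, and both still have unique source $s$.
\item The surplus from the non-forced case should be matched bijectively with $\mathrm{ACO}_s(G/e)$. An acyclic orientation of $G/e$ with unique source $s$ lifts to $G-e$ with no directed path between $u$ and $v$, and this lift pairs off the second choice of orientation of $e$.
\end{itemize}

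This is the classical argument of Greene--Zaslavsky; the careful case analysis (especially when $s\in\{u,v\}$, and checking that unique-sourceness is preserved) is where I expect the real work to lie. If it becomes messy, I would instead deduce it from Stanley/Greene--Zaslavsky's characterisation through the coefficient of $x$ in $\chi_G$, namely $|\mathrm{ACO}_s(G)|=|a_1|$, via Whitney's broken-circuit expansion.

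With the two recursions and the matching base case in hand, induction on $|E(G)|$ gives $b(G)=a_s(G)$, which is the statement.
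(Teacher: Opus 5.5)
The paper gives no proof of this statement; it is quoted from Greene--Zaslavsky. Your overall scheme is sound. The recursion and base case for $b$ are correct, and your convention that an isolated vertex counts as a source is the right one (it is forced by $G=E_1$). The gap is the recursion for $a_s$: each of the two matchings you sketch is false as stated, and they contradict each other.

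Take $G=K_3$ on $\{s,t,w\}$, so $a_s(K_3)=2$.
For $e=tw$, both elements of $\mathrm{ACO}_s(K_3)$ restrict to the single element $s\to t,\ s\to w$ of $\mathrm{ACO}_s(G-e)$. So your case~(i) has two elements and case~(ii) has none, whereas $a_s(G-e)=a_s(G/e)=1$.
For $e=st$, the unique element $s\to w\to t$ of $\mathrm{ACO}_s(G-e)$ forces the orientation of $e$, so there is no surplus. Yet $a_s(G/e)=1$: the orientation $s\to t,\ s\to w,\ t\to w$ is never produced by extending an element of $\mathrm{ACO}_s(G-e)$, because its restriction has $t$ as a second source.
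Your ``conversely'' paragraph silently assumes that every element of $\mathrm{ACO}_s(G)$ restricts into $\mathrm{ACO}_s(G-e)$. Your own case~(ii) contradicts this.

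The missing idea is that the lift to $G-e$ of some $\mathcal O''\in\mathrm{ACO}_s(G/e)$ need not lie in $\mathrm{ACO}_s(G-e)$. Write $e=uv$, let $A_{\mathrm{i}},A_{\mathrm{ii}}\subseteq\mathrm{ACO}_s(G)$ be your two cases, and let $\sigma$ be the number of elements of $\mathrm{ACO}_s(G-e)$ with two admissible extensions.
If both $u$ and $v$ receive an in-edge from the lift, the lift lies in $\mathrm{ACO}_s(G-e)$ and has no directed $u$--$v$ path, so it is one of the $\sigma$ doubly-extendable elements.
Otherwise some endpoint $v$ receives no in-edge. This is exactly one endpoint if $s\notin e$; if $s\in e$ it is both, and you take $v\neq s$. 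Orienting $e$ as $u\to v$ then gives precisely an element of $A_{\mathrm{ii}}$, i.e.\ one whose head of $e$ has in-degree one.
Conversely, contracting $e$ in an element of $A_{\mathrm{ii}}$ gives a well-defined acyclic orientation of $G/e$ with unique source $s$. Consistency on common neighbours of $u,v$ and acyclicity both use that the head has in-degree one.
Hence $a_s(G/e)=\sigma+|A_{\mathrm{ii}}|$. Since $|A_{\mathrm{i}}|=a_s(G-e)+\sigma$, you get $a_s(G)=a_s(G-e)+a_s(G/e)$. The count of $|A_{\mathrm{i}}|$ also needs that $\sigma=0$ when $s\in e$. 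This holds because in an acyclic orientation with unique source $s$ every vertex is reachable from $s$, so the orientation of $e$ is always forced.

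Finally, your fallback is not an independent argument. The coefficient of $x$ in $\chi_G$ is $\chi_G'(0)$, so $|\mathrm{ACO}_s(G)|=|a_1|$ is the statement itself. Invoking it amounts to citing Greene--Zaslavsky, as the paper does.
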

  Thus we can rewrite (\ref{real}) as the following. 
 \begin{proposition} For any $0$-stable graph $G$, we have
    \begin{align} \label{prop_real_2}
    \chi_c(\mzerog(\mathbb{R}))  &=  \frac{1}{2} (-1)^{|V(G)|-1} |\mathrm{ACO}_{s}(G)|.\nonumber
    \intertext{Moreover, Poincaré duality implies that} \chi(\mzerog(\mathbb{R}))&=\frac{1}{2}|\mathrm{ACO}_{s}(G)|. 
\end{align}

 \end{proposition}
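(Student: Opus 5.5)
The first identity follows from combining \eqref{real} with Theorem~\ref{thm_ACO_source}. By \eqref{real},
\[
\chi_c(\mzerog(\mathbb{R})) = \tfrac12 \left.\tfrac{d}{dx}\chi_G(x)\right|_{x=0}.
\]
Multiplying Theorem~\ref{thm_ACO_source} by $(-1)^{|V(G)|-1}$ gives
\[
\left.\tfrac{d}{dx}\chi_G(x)\right|_{x=0} = (-1)^{|V(G)|-1}|\mathrm{ACO}_s(G)|.
\]
Substituting this into the previous display yields the first equality. Here $s$ is any fixed vertex. The count $|\mathrm{ACO}_s(G)|$ is independent of this choice, since the left-hand side of Theorem~\ref{thm_ACO_source} is.

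For the second identity, we pass from compactly supported to ordinary Euler characteristic. The space $X:=\mzerog(\mathbb{R})$ is a smooth real manifold of dimension $d=n-3$. This follows because $\mzerog$ is smooth over $\mathbb{R}$, being an open subset of the smooth space $\overline{M}_{0,G}$. Alternatively, Lemma~\ref{Zariski} exhibits it locally as $\mathrm{Conf}_G(\mathbb{P}^1)$ modulo a free $\mathrm{PGL}_2$-action, and $\mathrm{Conf}_G(\mathbb{P}^1)$ is open in $(\mathbb{P}^1)^n$. The manifold $X$ has finite topology, since it is a real semialgebraic set.

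Poincaré duality for a (possibly non-orientable) manifold, taken with $\mathbb{Z}/2$ coefficients, gives
\[
H^i_c(X;\mathbb{Z}/2)\cong H_{d-i}(X;\mathbb{Z}/2).
\]
Euler characteristics may be computed with any field coefficients, so
\[
\chi_c(X)=(-1)^d\chi(X)=(-1)^{n-3}\chi(X).
\]
Hence
\[
\chi(X) = (-1)^{n-3}\cdot\tfrac12(-1)^{n-1}|\mathrm{ACO}_s(G)| = \tfrac12|\mathrm{ACO}_s(G)|,
\]
because $(-1)^{n-3}(-1)^{n-1}=(-1)^{2n-4}=1$.

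The only step needing care is the justification of Poincaré duality. One must observe that $X$ need not be orientable or compact, which is exactly why $\mathbb{Z}/2$ coefficients and compactly supported cohomology are used. One must also check that the $\chi_c$ of Example~\ref{example_euler}(ii), the cut-and-paste invariant over $\mathbb{R}$, agrees with the topological compactly supported Euler characteristic of the real points. This agreement is standard for real algebraic varieties, so I expect no genuine obstacle.
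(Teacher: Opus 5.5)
Your proposal is correct and is essentially the paper's own argument. The first identity is \eqref{real} combined with the Greene--Zaslavsky count in Theorem~\ref{thm_ACO_source}. The second is the Poincar\'e duality step the paper invokes. You make that step precise: you use $\mathbb{Z}/2$-coefficient duality on the possibly non-orientable, non-compact smooth $(n-3)$-manifold $\mzerog(\mathbb{R})$ to get $\chi_c=(-1)^{n-3}\chi$.
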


We will justify this statement in the case of $s$ being a dominant vertex, from a combinatorial point of view. Recall from Section \ref{section_hyperplane}, in the case of dominant vertex we have
\[ M_{0,G}(\mathbb{R}) \simeq \mathrm{Conf}_{G-s}(\mathbb{R})/\mathrm{Aff}_1(\mathbb{R}).\]

\begin{theorem}[Zaslavsky \cite{Zaslavsky1975FacingUT}]\label{thm_zaslavsky}
    There is a one-to-one correspondence between connected components of $\mathrm{Conf}_{H}(\mathbb{R})$ and acyclic orientations of $H$.
\end{theorem}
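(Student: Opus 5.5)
The plan is to construct an explicit map from points of $\mathrm{Conf}_H(\mathbb{R})$ to orientations of $H$, and to show that it is locally constant and that it induces a bijection on connected components. Write $V(H)=[n]$, so that
\[
\mathrm{Conf}_H(\mathbb{R}) = \mathbb{R}^n \setminus \bigcup_{ij\in E(H)} \{x_i = x_j\}
\]
is the complement of a real graphical hyperplane arrangement. To a point $x=(x_1,\dots,x_n)$ in this complement, assign the orientation $\mathcal{O}(x)$ of $H$ that directs each edge $ij$ as $i\to j$ when $x_i<x_j$ and as $j\to i$ when $x_j<x_i$. For an edge $ij$ one of these two strict inequalities always holds, so $\mathcal{O}(x)$ is a genuine orientation. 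It is acyclic, since a directed cycle $i_1\to i_2\to\cdots\to i_1$ would give $x_{i_1}<x_{i_2}<\cdots<x_{i_1}$.

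Next I will check that $\mathcal{O}$ is constant on connected components. For each edge $ij$, the function $x\mapsto x_i-x_j$ is continuous and never vanishes on $\mathrm{Conf}_H(\mathbb{R})$, so its sign is constant on each connected component. Hence $\mathcal{O}$ descends to a well-defined map
\[
\pi_0\bigl(\mathrm{Conf}_H(\mathbb{R})\bigr)\longrightarrow \{\text{acyclic orientations of } H\}.
\]

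For injectivity, I will use convexity. Suppose $\mathcal{O}(x)=\mathcal{O}(y)$ and consider the segment $z_t=(1-t)x+ty$ for $t\in[0,1]$. For each edge $ij$, the quantities $x_i-x_j$ and $y_i-y_j$ have the same strict sign. Their convex combination $(1-t)(x_i-x_j)+t(y_i-y_j)$ then has that same strict sign for every $t$. Thus the whole segment lies in $\mathrm{Conf}_H(\mathbb{R})$, so $x$ and $y$ lie in the same component. In fact, each fibre of $\mathcal{O}$ is an open convex cone cut out by strict linear inequalities; these are exactly the regions of the arrangement.

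For surjectivity, fix an acyclic orientation $\mathcal{A}$ of $H$. Being acyclic, it admits a topological ordering, that is, a bijection $\tau\colon[n]\to[n]$ with $\tau(i)<\tau(j)$ whenever $i\to j$ in $\mathcal{A}$. This follows by repeatedly removing a source, which exists in any nonempty finite acyclic digraph. Setting $x_i=\tau(i)$ gives distinct coordinates, so $x\in\mathrm{Conf}_H(\mathbb{R})$, and by construction $\mathcal{O}(x)=\mathcal{A}$.

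None of these steps is a serious obstacle. The only points needing care are the convexity argument for injectivity and the existence of a linear extension of an acyclic orientation for surjectivity.
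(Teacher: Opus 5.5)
Your proof is correct and complete. The sign map $x\mapsto\mathcal{O}(x)$ is locally constant, each of its fibres is an open convex set and therefore exactly one connected component, and topological orderings give surjectivity. The paper gives no proof of its own and simply cites Zaslavsky, so there is no route to compare against; your sign-vector argument is the standard proof for graphic arrangements. It also proves the fact the paper uses immediately afterwards, namely that each component is an open convex subset of $\mathbb{R}^{|V(H)|}$ and hence homeomorphic to a Euclidean space.
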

\begin{corollary}
    There is a one-to-one correspondence between connected components of $\mathrm{Conf}_{G-s}(\mathbb{R})$ and $\mathrm{ACO}_{s}(G)$.
\end{corollary}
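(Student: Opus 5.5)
The correspondence comes from Zaslavsky's theorem (Theorem \ref{thm_zaslavsky}) applied to $H=G-s$, followed by a bijection between $\mathrm{ACO}(G-s)$ and $\mathrm{ACO}_s(G)$ that uses the dominance of $s$.

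First, Theorem \ref{thm_zaslavsky} identifies the connected components of $\mathrm{Conf}_{G-s}(\mathbb{R})$ with the acyclic orientations of $G-s$. Concretely, a point $(x_v)_{v\neq s}$ in a given component determines the orientation $u\to v$ whenever $uv\in E(G-s)$ and $x_u<x_v$. This orientation is constant on each component, since $x_u-x_v$ cannot change sign on a connected set that avoids the hyperplane $x_u=x_v$. It is acyclic because the strict inequalities along a cycle cannot all hold.

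Second, I will build the bijection
\[
\mathrm{ACO}(G-s)\longrightarrow \mathrm{ACO}_s(G).
\]
Given an acyclic orientation $\mathcal{O}$ of $G-s$, orient every edge at $s$ away from $s$, i.e.\ $s\to v$ for all $v\neq s$. This is possible because $s$ is dominant, so $sv\in E(G)$ for every $v\neq s$. The result is acyclic: any directed cycle through $s$ would have to enter $s$, but $s$ has no incoming edges, and a cycle avoiding $s$ already lies in $\mathcal{O}$. Moreover $s$ is a source, and no other vertex is a source since each has the incoming edge from $s$. Hence $s$ is the unique source.

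Conversely, in any orientation in $\mathrm{ACO}_s(G)$ the vertex $s$ is a source, so every edge at $s$ points outward. The orientation is therefore determined by its restriction to $G-s$, and this restriction is acyclic. So restriction inverts the construction above.

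Composing the two bijections gives the claimed correspondence between the components of $\mathrm{Conf}_{G-s}(\mathbb{R})$ and $\mathrm{ACO}_s(G)$. No step is difficult. The only point needing care is that the unique-source condition forces all edges at $s$ to be directed outward; this is exactly where dominance is used. One could also remark that translations and positive scalings in $\mathrm{Aff}_1(\mathbb{R})$ preserve each component, while negative scalings reverse the orientation. Since that is not needed for the corollary, I will omit it.
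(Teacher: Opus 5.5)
Your proposal is correct and follows the same route as the paper: you apply Zaslavsky's theorem to $G-s$, then extend each acyclic orientation by making $s$ a source, with dominance ensuring $s$ is the unique source; you also make explicit that restriction is the inverse, which the paper leaves implicit. One small correction to your closing remark: the outward orientation of the edges at $s$ follows from $s$ being a source alone, and dominance is actually needed, as your forward-direction argument correctly uses it, to guarantee that no other vertex is a source.
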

\begin{proof}
    Given a connected component of $\mathrm{Conf}_{G-s}(\mathbb{R})$, this corresponds to a unique acyclic orientation of $G-s$ by Theorem~ \ref{thm_zaslavsky}. We can extend this acyclic orientation to $G$ by letting $s$ be a source. Since $s$ is dominant, $s$ must be unique. 
\end{proof}
Now we state two more facts:
\begin{itemize}
    \item Every connected component of $\mathrm{Conf}_{G-s}(\mathbb{R})$ is open convex in $\mathbb{R}^{|V(G)|-1}$, as they are defined by a set of inequalities. Hence, each connected component is homeomorphic to $\mathbb{R}^{|V(G)|-1}$.
    \item $\mathrm{Aff}_1(\mathbb{R}) = \{ax+b; a \in \mathbb{R}^{\times}, b \in \mathbb{R}\}$ has two connected components corresponds to positive $a$ and negative $a$. 
\end{itemize}

Therefore, combining these results we can re-derive (\ref{prop_real_2}) in the case of $s$ being a dominant vertex as follows: 
\begin{align*}
    \chi (\mathrm{Aff}_1(\mathbb{R}))\chi(\mzerog(\mathbb{R})) &= \chi(\mathrm{Conf}_{G-s}(\mathbb{R})) \\
    2 \cdot \chi(\mzerog(\mathbb{R}))  & = \sum_{C \in \pi_0(\mathrm{Conf}_{G-s}(\mathbb{R}))} \chi(C) \\ 
    & = \sum_{C \in \pi_0(\mathrm{Conf}_{G-s}(\mathbb{R}))} \chi(\mathbb{R}^{|V(G)|-1})\\
    & =  |\pi_0(\mathrm{Conf}_{G-s}(\mathbb{R}))|\\
    &= |\mathrm{ACO}_s(G)|.
\end{align*}

\subsection{Higher genus}\label{sec_higher_g} In this subsection we study the topology of $M_{g,G}$ when $g > 0$. Recall from Theorem \ref{Gro_Class}, we have
\begin{align*}
    [M_{g,G}] = [M_{g, G-e}] - [M_{g, G/e}]\in K_0(\mathrm{Var}_k).
\end{align*} 

In the case of positive genus, the Grothendieck class $[M_{g,G}]$ is not generally a polynomial in $\lef$ as it is in the genus $0$ case (Theorem \ref{Gro_thm_zero}). For example,  \cite[Theorem 1.4]{MgnPoly} implies that the polynomiality of $[M_{g,n}]$ fails whenever $n \geq \mathrm{max}\{\left\lceil {(25 - 3g)}/2 \right\rceil, 0\}$. 

Nonetheless, we can still compute topological invariants of $M_{g,G}$ without knowing its Grothendieck class. Throughout Section \ref{sec_higher_g}, we will work over $\mathbb{C}$ and we will focus on the usual Euler characteristics and the top-weight Euler characteristics. The top-weight Euler characteristic is again determined by the topology of its boundary complex:
\begin{align}\label{def_tw_g}
\tw (M_{g,G}) = - \Tilde{\chi}(\Delta_{g,G}).
\end{align}
Here $\Delta_{g,G}$ is the boundary complex of $\mathcal{M}_{g,G} \subset \overline{\mathcal{M}}_{g,G}$ and 
the equality follows from a generalization of (\ref{eq_tw_isom}) to the setting of normal crossings compactification $\mathcal{M}_{g,G} \subset \overline{\mathcal{M}}_{g,G}$ of smooth Deligne--Mumford stacks, see \cite[Theorem~5.8]{CGP21}.

\subsubsection{Genus $1$} We compute the Euler characteristic and the top-weight Euler characteristic in the genus $1$ case.
\begin{proposition}[{Corollary \hyperref[cor:G]{G}}]\label{lem_chi_one} Suppose $G$ has at least one vertex, we have
    \begin{align*}
        \chi(M_{1,G}) &= - \frac{1}{12}\left. \frac{d}{dx} \chi_G(x) \right|_{x = 0} + \frac{\chi_G(1)}{3} +  \frac{\chi_G(2)}{4}+\frac{\chi_G(3)}{9}-\frac{\chi_G(4)}{48}. \\
        \tw(M_{1,G})& = -\frac{1}{2} \left. \frac{d}{dx} \chi_G(x) \right|_{x = 0} + \chi_G(1) -  \frac{\chi_G(2)}{4}.
    \end{align*}
\end{proposition}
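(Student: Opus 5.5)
Both formulas follow one pattern. By Proposition~\ref{prop_invarient}, for each of the generalised Euler characteristics $\mathsf{e}=\chi$ and $\mathsf{e}=\chi_c^0=\tw$ (Example~\ref{example_euler}; $M_{1,G}$ is smooth, so $\tw=\chi_c^0$), the graph invariant $\Gamma_{1,\mathsf{e}}(G)$ satisfies $\Gamma(G)=\Gamma(G-e)-\Gamma(G/e)$. In genus $1$ every graph with at least one vertex is $1$-stable, so there are no stability side conditions and the recursion runs down to edgeless graphs. The right-hand sides satisfy the same relation: $\chi_G(m)$ does for each $m$, and so does $\frac{d}{dx}\chi_G(x)|_{x=0}$ (Remark~\ref{rmk_chromatic}). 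Hence, as in the proof of Proposition~\ref{balanced_elliptic}, it suffices to check the base case $G=E_n$, $n\geq 1$, where $\chi_{E_n}(x)=x^n$.

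\emph{Euler characteristic.} A point of $M_{1,E_n}$ is a smooth genus-one curve with $n$ markings allowed to coincide arbitrarily. Using $p_1$ as origin, $M_{1,E_n}$ is fibred over the $j$-line $M_{1,1}\simeq\mathbb{A}^1$ with fibre $E^{n-1}/\mathrm{Aut}(E,0)$. I will stratify the $j$-line into the generic locus ($\chi=-1$, automorphism group $\pm1$) and the two points $j=1728$ ($\mathbb{Z}/4$) and $j=0$ ($\mathbb{Z}/6$). On each stratum I compute $\chi$ of the quotient by the orbifold Burnside formula $\chi(X/H)=\frac1{|H|}\sum_{h\in H}\chi(X^h)$, using $\chi(E^{n-1})=0$ and the fixed-point counts $4,2,3,1$ for the automorphisms $-1$, $\pm i$, order-$3$ and order-$6$ elements respectively. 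For $n\geq2$ this gives
\[
-\tfrac{4^{n-1}}{2}+\tfrac{4^{n-1}+2\cdot 2^{n-1}}{4}+\tfrac{4^{n-1}+2\cdot3^{n-1}+2}{6}=\tfrac13+\tfrac{2^n}{4}+\tfrac{3^n}{9}-\tfrac{4^n}{48}.
\]
This matches the claimed formula, because the derivative term vanishes for $n\geq2$. For $n=1$ one has $\chi(\mathbb{A}^1)=1$, which agrees with $-\tfrac1{12}+\tfrac13+\tfrac12+\tfrac13-\tfrac1{12}=1$. One point needs justification: the fibration is only locally trivial over each stratum, which I handle with multiplicativity of $\chi$ for stratified fibrations.

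\emph{Top-weight Euler characteristic.} Here the target base values are $0$ for $n=1$, where $\chi_c^0(\mathbb{A}^1)=0$ as required, and $1-2^{n-2}$ for $n\geq2$. This is the main obstacle, since $\chi_c^0$ is not multiplicative across fibrations with monodromy, so the stratification argument does not transfer directly. I would first try the alternative route via Proposition~\ref{sidd}: write $[M_{1,E_n}]=\sum_r S(n,r)[M_{1,r}]$ with Stirling numbers $S(n,r)=N_{E_n,r}$, then apply $\chi_c^0$ and insert the known top-weight Euler characteristics of $M_{1,r}$ from Chan--Galatius--Payne, namely $\tw(M_{1,r})=(-1)^r(r-1)!/2$ for $r\geq5$, with the small cases computed directly. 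The resulting sum of Stirling numbers against $(r-1)!$ should be summable via $\sum_r S(n,r)(-1)^r (r-1)!$-type identities, equivalently through the falling-factorial expansion \eqref{chrom}. As a fallback, compute $-\widetilde\chi(\Delta_{1,E_n})$ directly by \eqref{def_tw_g}. For $E_n$ the only boundary strata are cycles of rational components, since rational tails need an edge and bridges need a marking, so the boundary complex is an explicit complex of necklace configurations whose reduced Euler characteristic can be counted.
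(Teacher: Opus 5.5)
Your reduction to $G=E_n$ and your Euler-characteristic base case are the same as the paper's proof: a fibration over the $j$-line, with the fixed-point formula applied for $\mu_2,\mu_4,\mu_6$.

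For the top-weight base case, the paper takes what you list as the fallback. It builds $\Delta_{1,E_n}$ from the necklace types and notes that the $1$-skeleton is a contractible star. It then counts $(r+1)!\,S(n,r+1)/2(r+1)$ cells in each dimension $r\geq 2$, and finishes with the identity $\sum_k(-1)^{k-1}(k-1)!\,S(n,k)=\delta_{n,1}$.

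Your primary route through Proposition~\ref{sidd} also goes through, with one correction to the input. The closed formula $\tw(M_{1,r})=(-1)^r(r-1)!/2$ holds for all $r\geq 3$; the range $r\geq 5$ is Getzler's range for $\chi$. The small cases are $\tw(M_{1,1})=\tw(M_{1,2})=0$. With these values,
\[
\tw(M_{1,E_n})=\tfrac12\sum_{r\geq 3}(-1)^r(r-1)!\,S(n,r)=\tfrac12\bigl(1-S(n,2)-\delta_{n,1}\bigr),
\]
which equals $0$ for $n=1$ and $1-2^{n-2}$ for $n\geq 2$.

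Your remark about \eqref{chrom} gives the cleanest proof of the Stirling identity. Since $\frac{d}{dx}(x)_r|_{x=0}=(-1)^{r-1}(r-1)!$, one gets $\sum_r N_{G,r}(-1)^{r-1}(r-1)!=\chi_G'(0)$, and this equals $\delta_{n,1}$ for $E_n$.

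Pushed one step further, this removes the need for deletion--contraction entirely. Insert the known values of $\tw(M_{1,r})$, and Getzler's values of $\chi(M_{1,r})$, into Proposition~\ref{sidd} for arbitrary $G$. Rewriting $N_{G,1},\dots,N_{G,4}$ in terms of $\chi_G(1),\dots,\chi_G(4)$ then yields both formulas directly, with the low-$r$ exceptions producing the correction terms. This is the genus-one analogue of the paper's argument for Proposition~\ref{lem_tw_g}.

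The trade-off is as follows. Your route is formal once you import the Chan--Galatius--Payne computation of $\Delta_{1,r}$, which is a harder result about a complex that includes rational tails. The paper's route is self-contained and only needs the much simpler complex $\Delta_{1,E_n}$, which has no rational tails.

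A minor point: the coarse space $M_{1,G}$ is generally not smooth, since it has finite quotient singularities. So $\tw=\chi_c^0$ really rests on rational Poincaré duality for coarse spaces of smooth Deligne--Mumford stacks; the paper relies on the same identification implicitly.
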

\begin{remark}
    When $G = K_n$, we have $\chi_{K_n}(x) = x(x-1)(x-2) \cdots (x-n+1)$.
    Proposition~\ref{lem_chi_one} specializes to some known results about $M_{1,n}$:
    \begin{itemize}
        \item $\chi(M_{1,n}) = 1,1,0,0 \text{ for $n=1,2,3,4$ and }  \chi(M_{1,n}) = (-1)^n(n-1)!/12 \text{ for $n \geq 5$ }$, see \cite{Getzler_M1n}.
        \item $\tw(M_{1,n}) = 0, 0 \text{ for $n=1,2$ and }  \tw(M_{1,n}) = (-1)^n(n-1)!/2 \text{ for $n \geq 3$ }$, see \cite{Chan_M1n}.
    \end{itemize}
    In particular, Proposition~\ref{lem_chi_one} provides a uniform treatment for all possible numbers of markings.
\end{remark}
Both assertions in Proposition \ref{lem_chi_one} follow from direct computations of the base cases, namely when $G = E_n$.
\begin{proposition} \label{prop_genus_1_base}
 We have
\begin{align*}
    \chi(M_{1,E_n}) &= 
    \begin{dcases} 
        1 & \text{if } n = 1, \\
        2^{n-2} + 3^{n-2} - \frac{4^{n-2}}{3} + \frac{1}{3} \quad & \text{if } n > 1.
    \end{dcases} \\[2ex]
    \tw(M_{1,E_n}) &= 
    \begin{dcases} 
        0 & \text{if } n = 1, \\
        1-2^{n-2} & \text{if } n > 1.
    \end{dcases}
\end{align*}
\end{proposition}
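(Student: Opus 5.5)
The plan is to bypass the geometry of $M_{1,E_n}$ and work entirely with the wall-crossing formula for Grothendieck classes, Proposition~\ref{sidd}. For $G=E_n$ every subset of $[n]$ is independent, so every partition of $[n]$ is $E_n$-stable. Hence $N_{E_n,r}=S(n,r)$ is the Stirling number of the second kind, and
\[
[M_{1,E_n}]=\sum_{r=1}^{n} S(n,r)\,[M_{1,r}] \in K_0(\mathrm{Var}_{\mathbb C}).
\]
Here no terms vanish, since $2g-2+r=r>0$.

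Both $\chi$ and $\chi_c^0$ are generalised Euler characteristics (Example~\ref{example_euler}). Since $M_{1,E_n}$ and $M_{1,r}$ are smooth, $\chi_c^0$ agrees with $\tw$ on them. Applying these two ring homomorphisms gives
\[
\chi(M_{1,E_n})=\sum_r S(n,r)\chi(M_{1,r}), \qquad \tw(M_{1,E_n})=\sum_r S(n,r)\tw(M_{1,r}).
\]
For the inputs I will use the known values quoted in the remark after Proposition~\ref{lem_chi_one}:
\[
\chi(M_{1,r}) = 1,1,0,0 \ \text{for } r=1,\dots,4, \qquad \chi(M_{1,r}) = \tfrac{(-1)^r(r-1)!}{12} \ \text{for } r\ge 5,
\]
\[
\tw(M_{1,r}) = 0,0 \ \text{for } r=1,2, \qquad \tw(M_{1,r}) = \tfrac{(-1)^r(r-1)!}{2} \ \text{for } r\ge 3.
\]

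The key summation step is the identity
\[
\sum_{r\ge1} S(n,r)(-1)^{r-1}(r-1)! = \delta_{n,1}.
\]
To prove it, expand $x^n=\sum_r S(n,r)(x)_r$, which is \eqref{chrom} for $E_n$. Then differentiate at $x=0$, using $\frac{d}{dx}(x)_r\big|_{x=0}=(-1)^{r-1}(r-1)!$. This lets me write each sum as the generic term summed over all $r\ge1$, plus finitely many corrections at small $r$.

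For $\chi$, the generic term $\tfrac{(-1)^r(r-1)!}{12}$ takes the values $-\tfrac1{12},\tfrac1{12},-\tfrac16,\tfrac12$ at $r=1,\dots,4$. The correction terms are therefore $\tfrac{13}{12},\tfrac{11}{12},\tfrac16,-\tfrac12$, and
\[
\chi(M_{1,E_n})=-\tfrac{\delta_{n,1}}{12}+\tfrac{13}{12}S(n,1)+\tfrac{11}{12}S(n,2)+\tfrac16 S(n,3)-\tfrac12 S(n,4).
\]
Into this I substitute $S(n,2)=2^{n-1}-1$, $S(n,3)=(3^n-3\cdot2^n+3)/6$ and $S(n,4)=(4^n-4\cdot3^n+6\cdot2^n-4)/24$, and simplify to get both cases ($n=1$ and $n>1$) of the claim.

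For $\tw$, the corrections occur only at $r=1,2$, namely $+\tfrac12$ and $-\tfrac12$. This gives
\[
\tw(M_{1,E_n})=-\tfrac{\delta_{n,1}}{2}+\tfrac12-\tfrac12(2^{n-1}-1),
\]
which equals $0$ for $n=1$ and $1-2^{n-2}$ for $n>1$.

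I expect the main obstacle to be justifying the input values. The quoted Euler characteristics are those of the coarse spaces $M_{1,r}$, which is what Proposition~\ref{sidd} needs. I also need to confirm that the quoted $\tw$ values are compatible with the $\chi_c^0$ specialization, i.e.\ that Poincaré duality holds for the coarse spaces with rational coefficients, which should follow because they are quotients of smooth spaces by finite groups. The remaining work is routine algebra in the Stirling-number simplification for $\chi$.
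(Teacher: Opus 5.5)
Your proposal is correct, and it takes a genuinely different route from the paper.

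\textbf{The paper's route.} The paper proves this base case from the geometry of $M_{1,E_n}$ itself, independently of any known values for $M_{1,r}$:
\begin{itemize}
\item For $\chi$, it fibres $M_{1,E_n}$ over the $j$-line with fibres $E^{n-1}/\mathrm{Aut}(E,\mathcal{O})$. It then applies the fixed-point formula for finite quotients on the strata $j\neq 0,1728$, $j=1728$ and $j=0$.
\item For $\tw$, it counts the cells of the boundary complex $\Delta_{1,E_n}$ after collapsing the contractible $1$-skeleton. These cells come from cycle graphs $\Gamma_r$ with surjective marking distributions, modulo a free dihedral action for $r\geq 2$.
\item It finishes with the same identity $\sum_r(-1)^{r-1}(r-1)!\,S(n,r)=\delta_{n,1}$ (Claim~\ref{claim_An}), proved there by exponential generating functions. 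Your derivation, differentiating $x^n=\sum_r S(n,r)(x)_r$ at $x=0$, is shorter.
\end{itemize}

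\textbf{Your route.} You run in genus $1$ the strategy the paper reserves for $g\geq 2$ (Proposition~\ref{lem_tw_g}). You apply Proposition~\ref{sidd} with $N_{E_n,r}=S(n,r)$, take the Getzler and Chan values as input, and absorb the exceptional small-$r$ values as finite corrections. I checked the corrections $\tfrac{13}{12},\tfrac{11}{12},\tfrac16,-\tfrac12$ and $\tfrac12,-\tfrac12$, and the Stirling simplification; both cases come out right.

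\textbf{What each route buys.} Your route is quicker, and it actually proves Proposition~\ref{lem_chi_one} for every non-empty $G$ at once, with no deletion--contraction. Replacing $S(n,r)$ by $N_{G,r}$, the generic terms sum to $-\tfrac{1}{12}\left.\frac{d}{dx}\chi_G(x)\right|_{x=0}$ and $-\tfrac{1}{2}\left.\frac{d}{dx}\chi_G(x)\right|_{x=0}$. The corrections at $r\leq 4$ are exactly the $\chi_G(1),\dots,\chi_G(4)$ terms. The price is independence: the paper's remark that Proposition~\ref{lem_chi_one} recovers Getzler's and Chan's formulas for $K_n$ becomes circular once those formulas are your inputs.

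\textbf{On the point you flag.} The coarse spaces $M_{1,r}$ and $M_{1,E_n}$ are not smooth, contrary to your second paragraph. They are, however, locally quotients of smooth varieties by finite groups, so Poincar\'e duality for mixed Hodge structures holds with $\mathbb{Q}$-coefficients. Equivalently, you can pass to the smooth Deligne--Mumford stacks, which have the same rational cohomology. Hence $\chi_c^0=\tw$ on these spaces, and your argument goes through. The paper uses this same identification implicitly in \eqref{eq_sum_tw} and in the deletion--contraction step of the proof of Proposition~\ref{lem_chi_one}.
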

The proof of Proposition \ref{prop_genus_1_base} will be given in Appendix \ref{proof_genus_1}.
\begin{proof}[Proof of Proposition \ref{lem_chi_one}]
     A direct computation with $\chi_{E_n}(x) = x^n$ shows that:
     \begin{align*}
    -\frac{1}{12}\left.\frac{d}{dx}\chi_{E_n}(x)\right|_{x=0}
    + \frac{\chi_{E_n}(1)}{3}
    + \frac{\chi_{E_n}(2)}{4} 
    + \frac{\chi_{E_n}(3)}{9}
    - \frac{\chi_{E_n}(4)}{48} & \\
& \hspace{-5.2cm} =
\begin{dcases}
    -\frac{1}{12} + \frac{1}{3} + \frac{2}{4}
    + \frac{3}{9} - \frac{4}{48} = 1,
    & \text{if } n=1, \\
    0 + \frac{1}{3} + \frac{2^n}{4}
    + \frac{3^n}{9} - \frac{4^n}{48},
    & \text{if } n>1.
\end{dcases}
\end{align*}
\begin{align*}
-\frac{1}{2}\left.\frac{d}{dx}\chi_{E_n}(x)\right|_{x=0}
+ \chi_G(1)
- \frac{1}{4}\chi_{E_n}(2)
&=
\begin{dcases}
    -\frac{1}{2} + 1 - \frac{2}{4} = 0,
    & \text{if } n=1, \\
    1 - \frac{2^n}{4} = 1 - 2^{n-2},
    & \text{if } n>1.
\end{dcases}  
\end{align*}

    This computations match with Proposition \ref{prop_genus_1_base}. In addition, we know both the Euler characteristics and the top-weight Euler characteristics satisfy the same deletion--contraction relation (Proposition \ref{prop_invarient}) as $\chi_G(x)$, thus completes the proof.
\end{proof}

\subsubsection{Genus $g \geq 2$}\label{section:arb_genus}We compute the Euler characteristic and the top-weight Euler characteristic when $g \geq 2$ under certain chromatic assumptions on the graph.
\begin{definition}
    The \textit{chromatic number} of a graph $G$ is the smallest number of colours needed to colour the vertices of $G$ so that no two adjacent vertices share the same colour. We denote such number as $\chi(G)$.  
\end{definition}
\begin{remark}The following are some properties of $\chi(G)$ which can be easily deduced:
\begin{enumerate}
    \item $\chi(G) = \mathrm{min}\{r \geq 1 : N_{G,r} > 0\}$.
    \item The celebrated Four Colour Theorem is equivalent to saying that $\chi(G) \leq 4$ when $G$ is planar. 
\end{enumerate}
\end{remark}
The main result of this section is the following.
\begin{proposition}\label{lem_tw_g}
    Suppose $g \geq2$ and denote $B_k$ as the $k$-th Bernoulli number. 
    \begin{enumerate}
        \item If $\chi(G) > 2g+2$, then 
        \begin{align}\label{chi_Mg_G}
        \chi(M_{g,G}) = \frac{B_{2g}}{2g(2g-2)}\chi_G(2-2g).
        \end{align}
    \end{enumerate}
    \begin{enumerate}
        \item If $\chi(G) > g+1$, then 
        \begin{align}\label{chi_g_G}
        \tw(M_{g,G}) = \frac{B_g}{g(1-g)}\chi_G(1-g).
    \end{align}
    \end{enumerate} 
\end{proposition}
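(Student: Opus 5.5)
Apply the generalised Euler characteristics $\chi$ and $\tw$ to the wall-crossing formula of Proposition~\ref{sidd}:
\[
[M_{g,G}]=\sum_{r}N_{G,r}[M_{g,r}],
\]
which gives
\[
\chi(M_{g,G})=\sum_r N_{G,r}\,\chi(M_{g,r}),\qquad \tw(M_{g,G})=\sum_r N_{G,r}\,\tw(M_{g,r}).
\]
The chromatic hypothesis enters through $N_{G,r}=0$ for $r<\chi(G)$. So the sums run only over $r\ge \chi(G)$, i.e.\ over $r\ge 2g+3$ in case (i) and $r\ge g+2$ in case (ii).

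Next, I input known closed formulas for the Euler characteristics of $M_{g,r}$ in a stable range of $r$.

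\textbf{Case (i).} The Harer--Zagier formula gives $\chi(\mathcal{M}_{g,1})=\zeta(1-2g)=-B_{2g}/(2g)$ for the orbifold Euler characteristic. The forgetful fibrations then give
\[
\chi^{\mathrm{orb}}(\mathcal{M}_{g,r})=(2-2g-1)\cdots(2-2g-r+1)\,\chi^{\mathrm{orb}}(\mathcal{M}_{g,1}).
\]
For the coarse space $M_{g,r}$ one needs that, when $r\ge 2g+3$, generic-automorphism/orbifold corrections vanish. In that range every automorphism of a smooth genus $g$ curve fixing $r$ points is trivial: a nontrivial automorphism fixes at most $2g+2$ points, with equality for the hyperelliptic involution. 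Hence the stack is a scheme and
\[
\chi(M_{g,r})=\frac{B_{2g}}{2g(2g-2)}\,(2-2g)_r,
\]
where $(x)_r$ is the falling factorial. To check this, write $(2-2g)_r=(2-2g)(1-2g)\cdots(3-2g-r)$; multiplying $-B_{2g}/(2g)$ by $(1-2g)\cdots(3-2g-r)$ gives the orbifold value, so the factor $(2-2g)$ is cancelled by the denominator $(2g-2)$ with the correct sign. I would double-check the sign here.

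\textbf{Case (ii).} I would use the result of Chan--Faber--Galatius--Payne on the $S_n$-equivariant top-weight Euler characteristic of $M_{g,n}$. Its non-equivariant specialisation, for $n$ large relative to $g$ (the range where their generating function has only the leading term, which I expect is exactly $n\ge g+2$), reads
\[
\tw(M_{g,r})=\frac{B_g}{g(1-g)}\,(1-g)_r.
\]
I would verify that the lower-order terms in their formula vanish precisely for $r>g+1$; this is the main obstacle, since it requires carefully extracting the non-equivariant count from their formula and identifying the threshold. For odd $g\ge 3$ we have $B_g=0$, consistent with vanishing top-weight there.

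\textbf{Conclusion.} Substituting into the wall-crossing sums and using the expansion \eqref{chrom}, $\chi_G(x)=\sum_r N_{G,r}(x)_r$, yields
\[
\chi(M_{g,G})=\frac{B_{2g}}{2g(2g-2)}\sum_r N_{G,r}(2-2g)_r=\frac{B_{2g}}{2g(2g-2)}\chi_G(2-2g),
\]
and similarly
\[
\tw(M_{g,G})=\frac{B_g}{g(1-g)}\chi_G(1-g).
\]
The terms with small $r$ are absent because their $N_{G,r}$ vanish, so no correction terms arise. This is exactly why the chromatic hypothesis is needed.
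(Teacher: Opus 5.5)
Your proposal is correct and follows the paper's proof essentially step for step. You apply $\chi$ and $\tw$ to Proposition~\ref{sidd}, use $N_{G,r}=0$ for $r$ below the chromatic number, and substitute the closed formulas. These are Harer--Zagier, which gives an honest Euler characteristic for $r>2g+2$ since such pointed curves have no automorphisms, and the Chan--Faber--Galatius--Payne formula $\tw(M_{g,r})=(-1)^{r+1}\frac{(g+r-2)!}{g!}B_g$ for $r>g+1$. You then resum via $\chi_G(x)=\sum_r N_{G,r}(x)_r$. Your sign in case (i) checks out, and in case (ii) the threshold is exactly $r>g+1$, which the paper simply quotes from that reference as a corollary rather than extracting it from the equivariant generating function.
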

The strategy that will be used to prove Proposition \ref{lem_tw_g} is similar to \cite{Sidd_TopHass}. Recall Proposition~\ref{sidd} and deduce that:
\begin{align}
    \chi(M_{g,G}) &= \sum_{r=0}^n N_{G,r} \chi(M_{g,r})  \label{eq_sum_euler}\\ 
    \tw(M_{g,G})    &= \sum_{r=0}^n N_{G,r} \tw(M_{g,r}). \label{eq_sum_tw}
\end{align}
In general, $\chi(M_{g,r})$ and $\tw(M_{g,G})$ are known only through recursive formulas; see \cite{Euler_Mgn,Sn_Euler}. However, when the number of markings is sufficiently large, closed formulas can be obtained:
\begin{enumerate}
    \item It is well-known that a smooth curve with more then $2g + 2$ marked
    points is automorphism free (e.g. \cite[p. ~45]{ACGH_vol_1}). Thus, when $r > 2g+2$, the orbifold Euler characteristic of $\mathcal{M}_{g,r}$ equals to the ordinary Euler characteristic. The former is given by the celebrated Harer--Zagier formula \cite{HarerZagier1986}: 
    \begin{align}\label{closed_orb}
        \chi(M_{g,r}) = \chi_{\mathrm{orb}}(\mathcal{M}_{g,r}) =  (-1)^r (2g-1)\frac{(2g+r-3)!}{(2g)!}B_{2g}.
    \end{align}
    
    \item By \cite[Corollary 8.1]{Sn_Euler}, when $r > g+1$, we have
    \begin{align}\label{closed_tw}
    \tw(M_{g,r}) = (-1)^{r+1}\frac{(g+r-2)!}{g!}B_g.
\end{align}
\end{enumerate}
So we can try to simply substitute (\ref{closed_orb}), (\ref{closed_tw}) into (\ref{eq_sum_euler}), (\ref{eq_sum_tw}) respectively.

\begin{proof}[Proof of Proposition \ref{lem_tw_g}]
We provide the proof of \eqref{chi_Mg_G}; the proof of \eqref{chi_g_G} is completely analogous and is therefore omitted. First of all, since $\chi(G) > 2g+2$, we have $N_{G,r} = 0$ for $r \leq 2g+2$.
    Now substitute (\ref{closed_orb}) into (\ref{eq_sum_euler}) and obtain:
    \begin{align*}
        \chi(M_{g,G})  = 
        \sum_{r=0}^n N_{G,r} (-1)^r (2g-1)\frac{(2g+r-3)!}{(2g)!}B_{2g} .
    \end{align*}
    Recall the falling factorial:
    \begin{align*}
        (x)_r = x(x-1)\cdots (x-r+1).
    \end{align*}
    Substitute $x = 2-2g$ we get:
    \begin{align*}
        (2-2g)_r &= (2-2g)(1-2g) \cdots (2-2g-r+1) \\
            &= (-1)^r \frac{(2g+r-3)!}{(2g-3)!}.
    \end{align*}
    Substitute this back to $\chi(M_{g,G})$, we get
    \begin{align*}
        \chi(M_{g,G})  &= (2g-1)\frac{(2g-3)!}{(2g)!}B_{2g}
        \sum_{r=0}^n N_{G,r} (2-2g)_r \\
        & = \frac{B_{2g}}{2g(2g-2)}\chi_G(2-2g)
    \end{align*}
    where the last equality follows from the expression of the chromatic polynomial (\ref{chrom}) discussed earlier.
      \end{proof}

\appendix
\section{Some calculations on genus $1$ edgeless space} \label{proof_genus_1}
In the appendix we prove Proposition \ref{prop_genus_1_base}, which states that:
\begin{align}
    \chi(M_{1,E_n}) &= 
    \begin{dcases} \label{eq_chi_1_n}
        1 & \text{if } n = 1, \\
        2^{n-2} + 3^{n-2} - \frac{4^{n-2}}{3} + \frac{1}{3} \quad & \text{if } n > 1.
    \end{dcases} \\[2ex]
    \tw(M_{1,E_n}) &= 
    \begin{dcases} \label{eq_chi_tw_1_n}
        0 & \text{if } n = 1, \\
        1-2^{n-2} & \text{if } n > 1.
    \end{dcases}
\end{align}

\subsection{Euler characteristics}
\begin{proof}[Proof of (\ref{eq_chi_1_n})] It is clear that $\chi(M_{1,E_1}) = \chi(M_{1,1} \simeq \mathbb{C}) = 1 $. So we assume $n >1$. Notice that the fine moduli space $\mathcal{M}_{1,E_n}$ can be naturally identified with $\mathcal{E}^{n-1}$, the $(n-1)$-th fibre power of the universal curve  $\mathcal{E} \rightarrow \mathcal{M}_{1,1}$. Thus induces a map on the coarse spaces:
\begin{align*}
    \varphi \colon M_{1,E_n} \longrightarrow M_{1,1} \simeq \mathbb{C}_j;
\end{align*}
such that the fibre over $[E,\mathcal{O}]$ is $E^{n-1}/\mathrm{Aut}(E,\mathcal{O})$. It is well-known that:
\begin{align*}
\mathrm{Aut}(E,\mathcal{O}) =
    \begin{cases}
        \mu_6, \quad\text{if } j =0 \\
        \mu_4, \quad\text{if } j =1728 \\
        \mu_2, \quad\text{else.}
    \end{cases}
\end{align*}
Therefore, we can stratify $\varphi$ into fibrations and deduce
\begin{align}\label{eq_chi_quotient}
    \chi(M_{1,E_n}) &= \chi(\mathbb{C}_j \setminus \{0,1728\}) \chi(E^{n-1}/\mu_2) + \chi(E^{n-1}/\mu_4) + \chi(E^{n-1}/\mu_6) \\
    &= -\chi(E^{n-1}/\mu_2) + \chi(E^{n-1}/\mu_4) + \chi(E^{n-1}/\mu_6).\nonumber
\end{align}
To compute the Euler characteristics of these quotient spaces, we use the following standard fixed-point formula for a finite group $G$ acting smoothly on a compact manifold $X$; see, for example, \cite{Hirzebruch_EulerChar}:
\begin{align*}
    \chi(X/G)
    =
    \frac{1}{|G|}
    \sum_{g \in G}
    \chi(X^g),
\end{align*}
where $X^g$ denotes the fixed-point locus of $g$. In our setting, the action of the automorphism group on $E^{n-1}$ is diagonal, so we have $(E^{n-1})^{g} = (E^{g})^{n-1}$.
\begin{enumerate}
    \item In the generic case, we can identify $\mu_2$ with $\{1,-1\}$, where $-1$ acts via involution: $P \mapsto -P$. We deduce that:
    \begin{align*}
        E^{1} &= \{P \in E \; | \; P = P\} = E;\\
        E^{-1} &= \{P \in E \; | \; P = -P\} = E[2] \simeq \mathbb{Z}/{2\mathbb{Z}} \times \mathbb{Z}/{2\mathbb{Z}}.
    \end{align*} 
    Thus we have, 
    \begin{align*}
        \chi(E^{n-1}/\mu_2) = \frac{\chi(E^1)^{n-1}+ \chi(E^{-1})^{n-1}}{2} = \frac{0+4^{n-1}}{2}.
    \end{align*}
    \item When $j = 1728$, we can identify $E \simeq \mathbb{C}/\Lambda$, where $\Lambda = \mathbb{Z} \oplus \mathbb{Z}i $ and $\mu_4 = \{\pm 1, \pm i\}$ acts by multiplication. So we have:
    \[ E^1 = E; \quad E^{-1} = E[2];\]
    \[E^{i} = \{z \in \mathbb{C}/\Lambda \; | \; iz = z\} = \{\mathcal{O}, \frac{1+i}{2}\}; \quad  E^{-i} = \{z \in \mathbb{C}/\Lambda \; | \; -iz = z\} = \{\mathcal{O}, \frac{1+i}{2}\}.\]
    Thus we have,
    \begin{align*}
        \chi(E^{n-1}/\mu_4) =\frac{\chi(E^1)^{n-1}+ \chi(E^{-1})^{n-1} + \chi(E^{i})^{n-1}+\chi(E^{-i})^{n-1}}{4} = \frac{0+4^{n-1}+2\cdot 2^{n-1} }{4}.
    \end{align*}
    \item  When $j = 0$, we can identify $E \simeq \mathbb{C}/\Lambda$, where $\Lambda = \mathbb{Z} \oplus \mathbb{Z}\omega $ and $\omega = e^{2\pi i/3}$. In this case $\mu_6 = \langle \zeta \rangle$, where $\zeta = e^{\pi i/3}$. One can easily show that:
    \begin{align*}
        E^1 = E; \quad E^\zeta = E^{\zeta^5} =\{\mathcal{O}\}; \quad E^{\zeta^2} = E^{\zeta^4} =\{\mathcal{O}, \frac{1-\omega}{3},\frac{2(1-\omega)}{3}\}; \quad E^{\zeta^3} = E[2].
    \end{align*}
    Thus we have,
    \begin{align*}
        \chi(E^{n-1}/\mu_6) = \frac{0+2\cdot 1+2 \cdot 3^{n-1}+4^{n-1}}{6}.
    \end{align*}
\end{enumerate}
Combining these pieces into (\ref{eq_chi_quotient}), we obtain the desired expression:
\begin{align*}
    \chi(M_{1,E_n}) &= - \frac{4^{n-1}}{2} + \frac{4^{n-1}+2\cdot 2^{n-1} }{4} + \frac{2\cdot 1+2 \cdot 3^{n-1}+4^{n-1}}{6} \\
    & = -2 \cdot 4^{n-2} + 4^{n-2}+2^{n-2}+ \frac{1}{3}+ 3^{n-2}+ \frac{2}{3} \cdot 4^{n-2} \\
    & = 2^{n-2} + 3^{n-2} - \frac{4^{n-2}}{3} + \frac{1}{3}. \qedhere
\end{align*}
\end{proof}

\subsection{Top-weight Euler characteristics} We first prove a combinatorial identity.
\begin{claim}\label{claim_An} Let $S(n,k)$ be the Stirling numbers of the second kind, we have
    \begin{align*}
        A_n : = \sum_{k = 1}^{n} (-1)^{k-1} (k-1)! S(n,k) = \begin{cases}
    1 & \text{if } n = 1, \\
    0 & \text{if } n > 1.
    \end{cases}
    \end{align*}
\end{claim}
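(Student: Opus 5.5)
The cleanest route uses the falling-factorial expansion of the chromatic polynomial, \eqref{chrom}, in the edgeless case. For $G=E_n$ every subset is independent, so $N_{E_n,k}=S(n,k)$. Hence
\begin{align*}
    x^n = \chi_{E_n}(x) = \sum_{k=1}^{n} S(n,k)\,(x)_k \qquad (n\geq 1).
\end{align*}
This is the classical Stirling identity, and it can also be checked directly by counting maps $[n]\to[x]$ according to the partition of $[n]$ given by their fibres.

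The plan is to differentiate both sides at $x=0$. For $k\geq 1$ we have $(x)_k = x(x-1)\cdots(x-k+1)$, which has a simple zero at $x=0$. Its derivative there is the product of the remaining factors evaluated at $0$:
\begin{align*}
    \left.\frac{d}{dx}(x)_k\right|_{x=0} = (-1)(-2)\cdots(-(k-1)) = (-1)^{k-1}(k-1)!.
\end{align*}
This is the same computation as in the proof of Proposition~\ref{prop:conj_scattering}, now carried out at $x=0$ instead of $x=2$.

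Summing over $k$ then gives
\begin{align*}
    A_n = \sum_{k=1}^n (-1)^{k-1}(k-1)!\,S(n,k) = \left.\frac{d}{dx}x^n\right|_{x=0},
\end{align*}
which equals $1$ if $n=1$ and $0$ if $n>1$. This is exactly the base-case computation \eqref{base_case_aco_s}, without the sign.

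There is no real obstacle here. The only point needing care is confirming that the sum runs over $k\geq 1$, so that each term $(x)_k$ actually vanishes at $0$. This holds because $S(n,0)=0$ for $n\geq 1$.
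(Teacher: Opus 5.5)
Your proof is correct, but it takes a different route from the paper.

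The paper works with exponential generating functions. It substitutes $\sum_{n\geq k}S(n,k)\frac{x^n}{n!}=\frac{(e^x-1)^k}{k!}$, interchanges the two sums, and recognises $\sum_{k\geq 1}\frac{(-1)^{k-1}}{k}(e^x-1)^k=\log\bigl(1+(e^x-1)\bigr)=x$. So the exponential generating series of $(A_n)_{n\geq 1}$ is just $x$.

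You instead stay with polynomials. The falling-factorial expansion $x^n=\sum_k S(n,k)(x)_k$, i.e.\ \eqref{chrom} for $G=E_n$, together with $\left.\frac{d}{dx}(x)_k\right|_{x=0}=(-1)^{k-1}(k-1)!$, identifies $A_n$ with $\left.\frac{d}{dx}x^n\right|_{x=0}$.

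The paper's argument is a standard, self-contained generating-function manipulation that needs no input from colourings. Yours is a finite computation with no formal power series: no interchange of infinite sums and no composition with $\log$. It also fits the rest of the paper better, because it generalises verbatim. Replacing $S(n,k)$ by $N_{G,k}$ gives $\sum_{k\geq 1}(-1)^{k-1}(k-1)!\,N_{G,k}=\left.\frac{d}{dx}\chi_G(x)\right|_{x=0}$ for every nonempty graph $G$. Combined with Proposition~\ref{sidd} and the known values of $\tw(M_{1,r})$, this recovers the top-weight formula of Proposition~\ref{lem_chi_one} directly. That explains why the derivative $\left.\frac{d}{dx}\chi_G(x)\right|_{x=0}$ appears in that formula.
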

\begin{proof}
    Recall the exponential generating series of $S(n,k)$:
    \begin{align}\label{generate_stirling}
         \sum_{n \geq k} S(n,k)  \frac{x^n}{n!}  = \frac{(e^x-1)^k}{k!}.
    \end{align}
    Now consider the generating series of $A_n$: 
    \begin{align*}
        \sum_{n \geq 1} A_n \frac{x^n}{n!} &=  \sum_{n \geq 1} \sum_{k = 1}^{n} (-1)^{k-1} (k-1)! S(n,k)  \frac{x^n}{n!} 
        \intertext{Swap the sums:}
        & = \sum_{k \geq 1}(-1)^{k-1} (k-1)!  \sum_{n \geq k} S(n,k)  \frac{x^n}{n!} 
        \intertext{Substitute (\ref{generate_stirling}) into the equation we get:}
         & = \sum_{k \geq 1} (-1)^{k-1} (k-1)! \frac{(e^x-1)^k}{k!}\\
          & = \sum_{k \geq  1}\frac{(-1)^{k-1}}{k}(e^x-1)^k.
    \end{align*}
    Now recall that 
    \begin{align*}
        \sum_{k \geq 1} \frac{(-1)^{k-1}}{k} y^k =  \mathrm{log}(1+y).
    \end{align*}
    After a change of variable $y = e^x-1$, we have:
    \begin{align*}
        \sum_{n \geq 1} A_n \frac{x^n}{n!} =  \mathrm{log}(1+e^x-1) = x.
    \end{align*}
    Therefore, $A_1 = 1$ and $A_n = 0$ for $n > 1$.
\end{proof}

\begin{proof}[Proof of (\ref{eq_chi_tw_1_n})] By the stability condition, $E_n$-stable curves cannot possess any rational tails. So we can enumerate all the combinatorial types of genus-$1$ $E_n$-stable graphs easily:
\begin{center}

\def\leglen{0.78}   
\def\graphrot{90}   

\begin{tikzpicture}[
    scale=0.78,
    transform shape,
    every path/.style={line width=0.65pt, line cap=round, line join=round},
    v/.style={
        circle,
        draw,
        fill=white,
        inner sep=0pt,
        minimum size=18pt,
        font=\small,
        rotate=-\graphrot
    }
]

\newcommand{\leg}[2]{\draw (#1) -- ++(#2:\leglen);}

\begin{scope}[shift={(0,0)}]
\begin{scope}[rotate=\graphrot]
    \coordinate (a) at (0,0);

    \draw (0,-0.50) ellipse [x radius=0.23, y radius=0.33];

    \foreach \ang in {150,120,90,60,30}
        \leg{a}{\ang};

    \node[v] at (a) {$0$};
\end{scope}

\node[font=\small] at (0,-1.75) {$r=0$};
\end{scope}

\begin{scope}[shift={(4.2,0)}]
\begin{scope}[rotate=\graphrot]
    \coordinate (b1) at (0,0.82);
    \coordinate (b2) at (0,-0.82);

    \draw (b1) to[out=-165, in=165, looseness=1.20] (b2);
    \draw (b1) to[out=-15,  in=15,  looseness=1.20] (b2);

    \foreach \ang in {145,110,75,40}
        \leg{b1}{\ang};
    \foreach \ang in {215,250,285,320}
        \leg{b2}{\ang};

    \node[v] at (b1) {$0$};
    \node[v] at (b2) {$0$};
\end{scope}

\node[font=\small] at (0,-1.75) {$r=1$};
\end{scope}

\begin{scope}[shift={(8.9,0)}]
\begin{scope}[rotate=\graphrot]
    \coordinate (c1) at (0.70,0.40);
    \coordinate (c2) at (-0.70,0.40);
    \coordinate (c3) at (0,-0.81);

    \draw (c1) -- (c2);
    \draw (c2) -- (c3);
    \draw (c3) -- (c1);

    \foreach \ang in {0,35,70}
        \leg{c1}{\ang};
    \foreach \ang in {110,145,180}
        \leg{c2}{\ang};
    \foreach \ang in {230,270,310}
        \leg{c3}{\ang};

    \node[v] at (c1) {$0$};
    \node[v] at (c2) {$0$};
    \node[v] at (c3) {$0$};
\end{scope}

\node[font=\small] at (0,-1.75) {$r=2$};
\end{scope}

\begin{scope}[shift={(13.6,0)}]
\begin{scope}[rotate=\graphrot]
    \coordinate (d1) at (-0.60,0.60);
    \coordinate (d2) at (0.60,0.60);
    \coordinate (d3) at (-0.60,-0.60);
    \coordinate (d4) at (0.60,-0.60);

    \draw (d1) -- (d2);
    \draw (d2) -- (d4);
    \draw (d4) -- (d3);
    \draw (d3) -- (d1);

    \foreach \ang in {135,100}
        \leg{d1}{\ang};
    \foreach \ang in {45,80}
        \leg{d2}{\ang};
    \foreach \ang in {225,260}
        \leg{d3}{\ang};
    \foreach \ang in {-45,-80}
        \leg{d4}{\ang};

    \node[v] at (d1) {$0$};
    \node[v] at (d2) {$0$};
    \node[v] at (d3) {$0$};
    \node[v] at (d4) {$0$};
\end{scope}

\node[font=\small] at (0,-1.75) {$r=3$};
\end{scope}

\node[font=\Large] at (17,0) {$\cdots$};

\end{tikzpicture}
\end{center}

For $0 \leq r \leq n-1$, let $\Gamma_r$ denote the combinatorial type with
$r+1$ vertices and $r+1$ edges. Since $\Gamma_r$ has $r+1$ internal
edges, its associated simplex in $\Delta_{1,E_n}$ has dimension $r$. A marking distribution on $\Gamma_r$ is a map
\[
m \colon [n] \longrightarrow V(\Gamma_r),
\]
which assigns to each labelled leg the vertex to which it is attached.
The stability condition requires every vertex to carry at least one
labelled leg, so $m$ is necessarily surjective. In particular, the number
of vertices is at most $n$, which explains the bound $r \leq n-1$. Define the automorphism group of the marked graph $(\Gamma_r,m)$ by
\[\mathrm{Aut}(\Gamma_r,m):= \{\phi \in \mathrm{Aut}(\Gamma_r):\phi(m(i)) = m(i) \}.\]
Since $m$ is surjective, every automorphism
$\phi \in \operatorname{Aut}(\Gamma_r,m)$ fixes every vertex of
$\Gamma_r$.

For $r \geq 2$, every edge of $\Gamma_r$ is uniquely determined by its
two endpoints. Therefore, once all the vertices are fixed, all the edges
are fixed as well, and consequently $\mathrm{Aut}(\Gamma_r,m)=\{\mathrm{id}\}$. For $r=0$, there is an
involution exchanging the two half-edges of the loop. Thus $\mathrm{Aut}(\Gamma_0,m)
\simeq
\mathbb{Z}/2\mathbb{Z}.$
However, this involution fixes the unique internal edge and therefore acts
trivially on the associated $0$-simplex. Hence $\Gamma_0$ contributes a
single $0$-cell to the boundary complex.

For $r=1$, the two vertices are fixed by the markings, but the two
parallel edges may still be exchanged. Therefore,
\[
\mathrm{Aut}(\Gamma_1,m)
\simeq
\mathbb{Z}/2\mathbb{Z}.
\]
Write the normalized edge lengths as $(\ell_1,\ell_2)=(t,1-t)$, where $0 \leq t \leq 1$. 
Then the non-trivial automorphism $\tau$ acts by
\[\tau \colon t \longmapsto 1-t.\]
Moreover, the two endpoints $t=0$ and $t=1$ correspond to contracting one
of the two parallel edges, and both contractions yield the graph
$\Gamma_0$. Thus, before quotienting by the edge-exchanging involution,
the two endpoints are identified and the resulting space is $S^1$.
The involution acts on this $S^1$ by reflection, and therefore
\[
S^1/\langle \tau\rangle \simeq [0,1].
\]
 It follows that the $1$-skeleton
$\Delta_{1,E_n}^{(1)}$ is a star hence
contractible, and the quotient map
\[\Delta_{1,E_n} \longrightarrow \Delta_{1,E_n}/\Delta_{1,E_n}^{(1)}\]
is a homotopy equivalence hence preserves the Euler characteristics. 

It remains to count the $r$-cells for $r \geq 2$. Temporarily label the
$r+1$ vertices of $\Gamma_r$ by the elements of $[r+1]$. A stable marking
distribution is then a surjection
\[
m \colon [n] \twoheadrightarrow [r+1].
\]
The number of such surjections is $(r+1)!S(n,r+1)$, as the Stirling number of the second kind $S(n,r+1)$ counts the number of ways of distributing $n$ balls into $r+1$ unordered, non-empty baskets. 
The dihedral group $\mathrm{Dih}_{r+1} \simeq \mathrm{Aut}(\Gamma_r)$
acts on these marking distributions.
The stabilizer of a marking distribution $m$ under this action is exactly
$\operatorname{Aut}(\Gamma_r,m)$ which is
trivial for $r \geq 2$, so the action of $\operatorname{Dih}_{r+1}$ is free. Therefore,
the number $c_r$ of $r$-cells of $\Delta_{1,E_n}/\Delta_{1,E_n}^{(1)}$ is
\[
c_0 = 1, \, c_1 = 0 \text{ and }
c_r =
\frac{(r+1)!}{\lvert \operatorname{Dih}_{r+1} \rvert}
S(n,r+1), \quad r \geq 2 .
\]
Therefore, taking the alternating sum over $c_i$, we obtain:
\begin{align*}
    \chi(\Delta_{1,E_n}) =\chi(\Delta_{1,E_n}/\Delta_{1,E_n}^{(1)}) &= 1 - 0 +  \sum_{r = 2}^{n-1} (-1)^r \frac{(r+1)!}{|\mathrm{Dih}_{r+1}|}S(n,r+1).\\ 
    \intertext{Recall $|\mathrm{Dih}_{r+1}| = 2(r+1)$ and shift the summation index by $1$, we have: }
    \chi(\Delta_{1,E_n})  & = 1 + \frac{1}{2}\sum_{k = 3}^{n} (-1)^{k-1} (k-1)! S(n,k) \\ 
    & = 1 + \frac{1}{2}A_n - \frac{1}{2} + \frac{1}{2} S(n,2)
\end{align*}
where $A_n$ was defined and computed in Claim \ref{claim_An}. Hence, we conclude:
\begin{align*}
    \chi(\Delta_{1,E_n})  = \begin{cases}
1 + \frac{1}{2} - \frac{1}{2} + \frac{1}{2} S(1,2) = 1 & \text{if } n = 1, \\[1ex]
1 + 0 - \frac{1}{2} + \frac{1}{2} S(n,2) = 2^{n-2}   & \text{if } n > 1.
\end{cases}
\end{align*}
Recall from (\ref{def_tw_g}) that $\tw(M_{1,E_n})= - \Tilde{\chi}(\Delta_{1,E_n}) = 1 - \chi(\Delta_{1,E_n})$, so we obtain the desired expression:
\begin{align*}
    \tw(M_{1,E_n})
    &=
    \begin{cases}
        0, & \text{if } n=1, \\
        1-2^{n-2}, & \text{if } n>1.
    \end{cases}
    \qedhere
\end{align*}
\end{proof}

\bibliographystyle{alpha}
\bibliography{delcon}\medskip

Andy Xiaoan Yang \> \textit{Queen Mary University of London} \> \href{mailto:xiaoan.yang@qmul.ac.uk}{xiaoan.yang@qmul.ac.uk}

\end{document}